\definecolor{myred}{rgb}{0.75,0,0}
\definecolor{mygreen}{rgb}{0,0.5,0}
\definecolor{myblue}{rgb}{0,0,0.65}
\newcommand{\C}{\mathbb{C}}
\newcommand{\Zl}{\mathbb{Z}_\ell}
\newcommand{\Qlb}{\overline{\mathbb{Q}}_\ell}
\newcommand{\Z}{\mathbb{Z}}
\newcommand{\K}{\mathbb{K}}
\newcommand{\F}{\mathbb{F}}
\renewcommand{\O}{\mathbb{O}}
\newcommand{\E}{\mathbb{E}}
\newcommand{\bk}{\Bbbk}
\newcommand{\fg}{\mathfrak{g}}
\newcommand{\fu}{\mathfrak{u}}
\newcommand{\cN}{\mathscr{N}}
\newcommand{\cO}{\mathscr{O}}
\newcommand{\cC}{\mathscr{C}}
\newcommand{\fL}{\mathfrak{L}}
\newcommand{\fN}{\mathfrak{N}}
\newcommand{\fM}{\mathfrak{M}}
\newcommand{\Db}{D^{\mathrm{b}}}
\newcommand{\Perv}{\mathsf{Perv}}
\newcommand{\bT}{\mathbb{T}}
\newcommand{\Res}{\mathbf{R}}
\newcommand{\Ind}{\mathbf{I}}
\newcommand{\cL}{\mathcal{L}}
\newcommand{\cM}{\mathcal{M}}
\newcommand{\IC}{\mathcal{IC}}
\newcommand{\cF}{\mathcal{F}}
\newcommand{\ubk}{\underline{\bk}}
\newcommand{\cE}{\mathcal{E}}
\newcommand{\cD}{\mathcal{D}}
\newcommand{\simto}{\xrightarrow{\sim}}
\DeclareMathOperator{\End}{End}
\DeclareMathOperator{\Hom}{Hom}
\DeclareMathOperator{\Irr}{Irr}
\DeclareMathOperator{\Ad}{Ad}
\DeclareMathOperator{\Lie}{Lie}
\newcommand{\cusp}{\mathrm{cusp}}
\newcommand{\reg}{\mathrm{reg}}
\newcommand{\mini}{\mathrm{min}}
\newcommand{\diag}{{\mathrm{diag}}}
\def\lotimes{\@ifnextchar_{\@lotimessub}{\@lotimesnosub}}
\def\@lotimessub_#1{\mathchoice{\mathbin{\mathop{\otimes}^L}_{#1}}%
  {\otimes^L_{#1}}{\otimes^L_{#1}}{\otimes^L_{#1}}}
\def\@lotimesnosub{\mathbin{\mathop{\otimes}^L}}
\newcommand{\GL}{\mathrm{GL}}
\newcommand{\PGL}{\mathrm{PGL}}
\newcommand{\SL}{\mathrm{SL}}
\newcommand{\Sp}{\mathrm{Sp}}
\newcommand{\PSp}{\mathrm{PSp}}
\newcommand{\Spin}{\mathrm{Spin}}
\newcommand{\SO}{\mathrm{SO}}
\newcommand{\PSO}{\mathrm{PSO}}
\newcommand{\fS}{\mathfrak{S}}
\newtheorem*{thm*}{Theorem}
\numberwithin{equation}{section}
\numberwithin{table}{section}
\newtheorem{thm}{Theorem}[section]
\newtheorem{lem}[thm]{Lemma}
\newtheorem{prop}[thm]{Proposition}
\newtheorem{conj}[thm]{Conjecture}
\theoremstyle{definition}
\newtheorem{defn}[thm]{Definition}
\theoremstyle{remark}
\newtheorem{rmk}[thm]{Remark}
\DeclareFontFamily{U}{mathx}{\hyphenchar\font45}
\DeclareFontShape{U}{mathx}{m}{n}{
      <5> <6> <7> <8> <9> <10>
      <10.95> <12> <14.4> <17.28> <20.74> <24.88>
      mathx10
      }{}
\DeclareSymbolFont{mathx}{U}{mathx}{m}{n}
\DeclareMathAccent{\widebar}{0}{mathx}{"73}
\title[Modular generalized Springer correspondence III]{Modular generalized Springer correspondence III: \\ exceptional groups}
\author{Pramod N. Achar}
\address{Department of Mathematics\\
  Louisiana State University\\
  Baton Rouge, LA 70803\\
  U.S.A.}
\email{pramod@math.lsu.edu}
\author{Anthony Henderson}
\address{School of Mathematics and Statistics\\
  University of Sydney, NSW 2006\\
  Australia}
\email{anthony.henderson@sydney.edu.au}
\author{Daniel Juteau}
\address{Laboratoire de Math\'ematiques Nicolas Oresme\\
  Universit\'e de Caen, BP 5186\\
  14032 Caen Cedex\\ 
  France}
\email{daniel.juteau@unicaen.fr}
\author{Simon Riche}
\address{Universit{\'e} Blaise Pascal - Clermont-Ferrand II, Laboratoire de Math{\'e}matiques, CNRS, UMR 6620, Campus universitaire des C{\'e}zeaux, F-63177 Aubi{\`e}re Cedex, France
}
\email{simon.riche@math.univ-bpclermont.fr}
\subjclass[2010]{Primary 17B08, 20G05}
\thanks{P.A. was supported by NSF Grant No.~DMS-1001594 and NSA Grant Nos.~H98230-14-1-0117 and~H98230-15-1-0175.  A.H. was supported by ARC Future Fellowship Grant No.~FT110100504. D.J. and S.R. were supported by ANR Grant 
No.~ANR-13-BS01-0001-01. 
}
\begin{document}

\begin{abstract}
We complete the construction of the modular generalized Springer correspondence for an arbitrary connected reductive group, with a uniform proof of the disjointness of induction series that avoids the case-by-case arguments for classical groups used in previous papers in the series. We show that the induction series containing the trivial local system on the regular nilpotent orbit is determined by the Sylow subgroups of the Weyl group. Under some assumptions, we give an algorithm for determining the induction series associated to the minimal cuspidal datum with a given central character. We also provide tables and other information on the modular generalized Springer correspondence for quasi-simple groups of exceptional type, including a complete classification of cuspidal pairs in the case of good characteristic, and a full determination of the correspondence in type $G_2$.
\end{abstract}

\maketitle

%%%%%%%%%%%%%%%%%%%%%%%%%%%%%%%%%%%%%%%%%%%%%%%%%%%%%%%%%
\section{Introduction}
\label{sec:intro}
%%%%%%%%%%%%%%%%%%%%%%%%%%%%%%%%%%%%%%%%%%%%%%%%%%%%%%%%%

%----------------------------------------------------------------------
\subsection{Summary}
%----------------------------------------------------------------------

This paper is the culmination of a series~\cite{genspring1,genspring2} in which our aim has been to construct and describe a modular generalized Springer correspondence for connected reductive groups. This requires us to prove analogues, for sheaves with modular coefficients, of the fundamental results of Lusztig (especially those in~\cite{lusztig}) on the generalized Springer correspondence for $\Qlb$-sheaves. We regard this as a first step towards a theory of modular character sheaves, which may offer new insights into the modular representation theory of finite groups of Lie type.

In~\cite{genspring1} we considered the group $\GL(n)$, and in~\cite{genspring2} we considered classical groups in general. The subtitle of this third part is `exceptional groups', to emphasize the cases that were not previously covered; however, many of the results in this part are case-independent, and some provide new proofs of results in the previous parts.

%----------------------------------------------------------------------
\subsection{Formulation of the modular generalized Springer correspondence}
%----------------------------------------------------------------------

Recall the set-up from the previous parts: $G$ denotes a connected reductive algebraic group over $\C$, and we consider the abelian category $\Perv_G(\cN_G,\bk)$ of $G$-equivariant perverse sheaves on the nilpotent cone $\cN_G$ of $G$ (in the strong topology) with coefficients in a field $\bk$ of characteristic $\ell$.  (If $\bk$ is an algebraic extension of $\F_\ell$, one could also work in the \'etale topology, with $G$ and $\cN_G$ defined over any algebraically closed field whose characteristic is good and different from $\ell$.)  The isomorphism classes of simple objects in $\Perv_G(\cN_G,\bk)$ are in bijection with the finite set $\fN_{G,\bk}$ of pairs $(\cO,\cE)$ where $\cO\subset\cN_G$ is a nilpotent orbit and $\cE$ runs over the irreducible $G$-equivariant $\bk$-local systems on $\cO$ (taken up to isomorphism). For $(\cO,\cE)\in\fN_{G,\bk}$, the corresponding simple perverse sheaf is the intersection cohomology complex $\IC(\cO,\cE)$. 

For a parabolic subgroup $P\subset G$ with Levi factor $L$, we have an induction functor
\[
\Ind_{L \subset P}^G : \Perv_{L}(\cN_{L},\bk) \to \Perv_G(\cN_G,\bk),
\]
see~\cite[\S 2.1]{genspring1}.
A simple object in $\Perv_G(\cN_G,\bk)$ is said to be \emph{cuspidal} if it does not occur as a quotient of any induced object $\Ind_{L \subset P}^G(\cF)$ where $L\neq G$. We write $\fN_{G,\bk}^\cusp\subset\fN_{G,\bk}$ for the corresponding set of \emph{cuspidal pairs}. (See~\cite[\S 2.2]{genspring1} for a comparison with Lusztig's definition of cuspidal pairs.)

A new piece of terminology will be convenient: a \emph{cuspidal datum} for $G$ is a triple $(L,\cO_L,\cE_L)$ where $L\subset G$ is a Levi subgroup as above and $(\cO_L,\cE_L)\in\fN_{L,\bk}^\cusp$. There is a $G$-action on the set of cuspidal data defined by the rule $g\cdot(L,\cO_L,\cE_L)=(gLg^{-1},g\cdot\cO_L,\Ad(g^{-1})^*\cE_L)$, and we choose a set $\fM_{G,\bk}$ of representatives for the $G$-orbits of cuspidal data. 

For any cuspidal datum $(L,\cO_L,\cE_L)$, the isomorphism class of the induced perverse sheaf $\Ind_{L \subset P}^G(\IC(\cO_L,\cE_L))$ is independent of the parabolic subgroup $P$ with Levi factor $L$, and depends only on the $G$-orbit of $(L,\cO_L,\cE_L)$, as explained in~\cite[\S 2.2]{genspring2}. We call the set of isomorphism classes of simple quotients of $\Ind_{L \subset P}^G(\IC(\cO_L,\cE_L))$, or the corresponding set of pairs $\fN_{G,\bk}^{(L,\cO_L,\cE_L)}\subset\fN_{G,\bk}$, the \emph{induction series} associated to $(L,\cO_L,\cE_L)$. It is a formal consequence of the definitions that the whole set $\fN_{G,\bk}$ is the union of the various induction series $\fN_{G,\bk}^{(L,\cO_L,\cE_L)}$ as $(L,\cO_L,\cE_L)$ runs over $\fM_{G,\bk}$, see~\cite[Corollary 2.7]{genspring1}.     

The main result of the first part of this paper is the following generalization of~\cite[Theorem 1.1]{genspring2} (which assumed $G$ to be a classical group). Lusztig proved the analogous result for $\bk=\Qlb$ in~\cite{lusztig}.

\begin{thm}
\label{thm:mgsc-intro}
Assume $\bk$ is big enough for $G$ in the sense of~\eqref{eqn:definitely-big-enough} below. 
Then we have a disjoint union
\begin{equation}
\label{eqn:disjointness}
\fN_{G,\bk} = \bigsqcup_{(L,\cO_L,\cE_L)\in\fM_{G,\bk}}\fN_{G,\bk}^{(L,\cO_L,\cE_L)},
\end{equation}
and for any $(L,\cO_L,\cE_L)\in\fM_{G,\bk}$ we have a canonical bijection
\begin{equation}
\label{eqn:bijection}
\fN_{G,\bk}^{(L,\cO_L,\cE_L)}\longleftrightarrow\Irr(\bk[N_G(L)/L]),
\end{equation}
where $\Irr(\bk[N_G(L)/L])$ denotes the set of isomorphism classes of irreducible $\bk$-representations of $N_G(L)/L$. 
Hence we obtain a bijection
\begin{equation}
\label{eqn:mgsc}
\fN_{G,\bk} \longleftrightarrow
\bigsqcup_{(L,\cO_L,\cE_L)\in\fM_{G,\bk}}\Irr(\bk[N_G(L)/L]), 
\end{equation}
which we call the \emph{modular generalized Springer correspondence} for $G$. 
\end{thm}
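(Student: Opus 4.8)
The plan is to separate the two assertions: first prove the bijection~\eqref{eqn:bijection} for a single cuspidal datum, \emph{independently} of any statement about overlaps, and only afterwards prove the disjointness in~\eqref{eqn:disjointness}; the combined bijection~\eqref{eqn:mgsc} is then purely formal. So I would fix $(L,\cO_L,\cE_L)\in\fM_{G,\bk}$, set $W'=N_G(L)/L$ and $\cM=\Ind_{L\subset P}^G\IC(\cO_L,\cE_L)$ (well defined up to isomorphism by~\cite[\S 2.2]{genspring2}), and study $\cM$ directly.

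\textbf{Step 1: the bijection for one series.} The heart of~\eqref{eqn:bijection} is the computation $\End_{\Perv_G(\cN_G,\bk)}(\cM)\cong\bk[W']$. I would use the biadjunction between $\Ind_{L\subset P}^G$ and the restriction functor $\Res$ of~\cite{genspring1} to rewrite $\End(\cM)$ as a $\Hom$-space on $L$ after applying the Mackey-type decomposition of $\Res\circ\Ind_{L\subset P}^G$ indexed by $W'$; cuspidality of $(\cO_L,\cE_L)$, through cleanness of $\IC(\cO_L,\cE_L)$, kills every off-diagonal contribution and leaves exactly one dimension of endomorphisms per element of $W'$. Lusztig's structural results~\cite{lusztig} show that $W'$ is a Coxeter group and that the relevant $2$-cocycle is trivial, and the big-enough hypothesis~\eqref{eqn:definitely-big-enough} ensures that $\bk[W']$ is split and carries no additional twist, giving the stated isomorphism. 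Then, as in~\cite{genspring1,genspring2}, I would analyze for each $(\cO,\cE)$ the right $\bk[W']$-module $\Hom(\cM,\IC(\cO,\cE))$: it is simple or zero, it is nonzero exactly when $\IC(\cO,\cE)$ is a quotient of $\cM$, and $(\cO,\cE)\mapsto\Hom(\cM,\IC(\cO,\cE))$ restricts to a bijection from the simple quotients of $\cM$ onto $\Irr(\bk[W'])$. This is~\eqref{eqn:bijection}; in particular $|\fN_{G,\bk}^{(L,\cO_L,\cE_L)}|=|\Irr(\bk[N_G(L)/L])|$ for \emph{every} datum, with no disjointness used.

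\textbf{Step 2: disjointness.} By~\cite[Corollary 2.7]{genspring1} the right-hand side of~\eqref{eqn:disjointness} is an a priori non-disjoint union equal to $\fN_{G,\bk}$, so Step 1 already yields $|\fN_{G,\bk}|\le\sum_{(L,\cO_L,\cE_L)\in\fM_{G,\bk}}|\Irr(\bk[N_G(L)/L])|$, with equality precisely when the union is disjoint. The point is to prove this equality \emph{uniformly}, replacing the group-by-group combinatorics used in~\cite{genspring2}. The route I would attempt is a $\Hom$-vanishing statement: $\Hom\bigl(\Ind_{L_1\subset P_1}^G\IC(\cO_1,\cE_1),\,\Ind_{L_2\subset P_2}^G\IC(\cO_2,\cE_2)\bigr)=0$ whenever the two cuspidal data are not $G$-conjugate, again obtained by pushing $\Res\circ\Ind$ through the Mackey formula and using that a cuspidal $\IC$ is never a quotient of a proper induction, so that a common simple constituent of two series would force such a $\Hom$ to be nonzero and hence the data to be conjugate. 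The obstacle — where a genuinely new idea is needed — is that modular irreducible local systems need not be self-dual, so a \emph{common simple quotient} of two induced objects does not by itself produce a nonzero morphism between them; one must either assemble all the induced objects together with their Verdier duals into a single object and study its endomorphism algebra, or sidestep the difficulty by comparing with the case $\bk=\Qlb$, where Lusztig's theorem~\cite{lusztig} supplies the equality, via the decomposition map on $\fN_G$ and its compatibility with the decomposition maps of the finite groups $N_G(L)/L$.

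\textbf{Step 3 and the main obstacle.} Once disjointness is known, assembling the bijections of Step 1 over $\fM_{G,\bk}$ gives~\eqref{eqn:mgsc}. Step 1 is essentially uniform bookkeeping on top of machinery already in place in~\cite{genspring1,genspring2,lusztig} (the serious inputs being the modular cleanness of cuspidal complexes and the classification of cuspidal pairs), so I expect the whole difficulty to sit in Step 2: making the $\Hom$-vanishing argument work in the presence of non-self-dual modular local systems, or else making the modular-reduction comparison precise.
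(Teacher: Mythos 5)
There is a real gap, and it sits in the opposite place from where you expect it.

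For Step 2, your instinct to use a Mackey formula is exactly the paper's approach (Theorem~\ref{thm:mackey} and Theorem~\ref{thm:disjointness}). But the ``obstacle'' you flag about non-self-dual local systems is not an obstacle: the paper's argument invokes~\cite[Lemma 2.3]{genspring2}, which says the set of simple quotients of $\Ind_{M\subset Q}^G(\IC(\cO_M,\cE_M))$ coincides with the set of simple subobjects. A common simple constituent of two induction series is therefore a quotient of one induced object and a subobject of the other, giving a nonzero morphism between them, and adjunction plus the Mackey formula plus cuspidality force the two cuspidal data to be $G$-conjugate. No Verdier duals or decomposition-map comparison with $\Qlb$ is needed.

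The genuine gap is in Step 1, which you present as ``uniform bookkeeping'' but which is where the actual case-by-case content of the paper lives. Your proposed computation of $\End(\cM)\cong\bk[W']$ rests on two assertions that fail in this setting. First, you invoke cleanness of $\IC(\cO_L,\cE_L)$: modular cleanness of cuspidal $\IC$ complexes is \emph{not} available, and the paper nowhere uses it. Second, you invoke Lusztig's results to conclude that $W'=N_G(L)/L$ is a Coxeter group and the relevant $2$-cocycle trivial; but as Remark~\ref{rmk:reflection-group} explicitly notes, in the modular setting $N_G(L)/L$ need not be a reflection group (e.g.\ for $L$ of type $A_2$ in $E_6$ with $\ell=3$ it is $\fS_3^2\rtimes\fS_2$), and the analogue of~\cite[Theorem 9.2]{lusztig} fails. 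The paper instead obtains~\eqref{eqn:bijection} from~\cite[Lemma 2.1 and Theorem 3.1]{genspring2}, whose hypotheses are~\eqref{eqn:definitely-big-enough} and the condition~\eqref{eqn:precision} that $N_G(L)$ fixes every cuspidal pair of $L$. Verifying~\eqref{eqn:precision} is the content of Proposition~\ref{prop:horrible}, which reduces to $G$ simply connected quasi-simple and then runs through each Lie type, identifying for each proper Levi $L$ and each non-regular distinguished orbit the group $A_L(x)$ and checking (via central characters in the Spin case, and via $|A_L(x)|\le 2$ or $A_L(x)\cong\fS_3$ in the exceptional cases) that $N_G(L)$ cannot permute the local systems. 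Your proposal is missing this step entirely, and it cannot be replaced by a uniform cleanness or endomorphism-algebra argument with the tools at hand.
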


In this statement we say that $\bk$ is \emph{big enough} for $G$ if it satisfies:
\begin{equation} \label{eqn:definitely-big-enough}
\begin{array}{c}
\text{for every Levi subgroup $L$ of $G$ and pair $(\cO_L,\cE_L)\in\fN_{L,\bk}$,}\\
\text{the irreducible $L$-equivariant local system $\cE_L$ is absolutely irreducible.}
\end{array}
\end{equation}
(For our proof of Theorem~\ref{thm:mgsc-intro} it would be enough to know this for cuspidal data $(L,\cO_L,\cE_L)$, but in practice we cannot classify cuspidal pairs until after we have proved Theorem~\ref{thm:mgsc-intro}.)
Note that~\eqref{eqn:definitely-big-enough} is a weaker condition, in general, than the condition imposed, in the case of classical groups, in~\cite{genspring2}; we hope this slight conflict of terminology will not cause any confusion.
The condition~\eqref{eqn:definitely-big-enough} is equivalent to requiring that $\bk$ be a splitting field for each of the finite groups $A_L(x):= L_x/L_x^\circ$ where $L$ is a Levi subgroup of $G$ and $x\in\cN_L$. (Here, $L_x$ denotes the stabilizer of $x$ in $L$, and $L_x^\circ \subset L_x$ is its identity component.) In Proposition~\ref{prop:big-enough}, we will use the known description of these groups (see e.g.~\cite{cm}) to make~\eqref{eqn:definitely-big-enough} explicit in important cases. In particular, if $G$ has connected centre then every field $\bk$ is big enough for $G$.

\begin{rmk}
Assuming that $\bk$ is big enough for $G$, the modular generalized Springer correspondence~\eqref{eqn:mgsc} depends only on the characteristic $\ell$ of $\bk$, in a sense to be made precise in Lemma~\ref{lem:splitting}\eqref{it:indep-of-k}.
\end{rmk}

%----------------------------------------------------------------------
\subsection{Overview of the proof of Theorem~\ref{thm:mgsc-intro}}
%----------------------------------------------------------------------

A uniform construction of the bijections~\eqref{eqn:bijection} was provided in~\cite[Lemma 2.1 and Theorem 3.1]{genspring2}, but to apply the latter theorem we need the two assumptions~\eqref{eqn:definitely-big-enough} and
\begin{equation}
\label{eqn:precision}
\text{the action of }N_G(L)\text{ on }\fN_{L,\bk}\text{ fixes every cuspidal pair.}
\end{equation} 
Recall that in a cuspidal pair $(\cO_L,\cE_L)$, the $L$-orbit $\cO_L$ is distinguished~\cite[Proposition 2.6]{genspring2}, and the action of $N_G(L)$ preserves each distinguished nilpotent orbit for $L$~\cite[Lemma 2.9]{genspring2}. So the group $N_G(L,\cO_L)$ occurring in~\cite[Theorem 3.1]{genspring2} does equal $N_G(L)$. (In fact,~\cite[Theorem 3.1]{genspring2} has three assumptions, not two: the first is automatic for distinguished orbits by~\cite[Lemma 3.11]{genspring2}.)

Therefore, two things remain to be proved in order to obtain Theorem~\ref{thm:mgsc-intro}: the disjointness of the union in~\eqref{eqn:disjointness}, and the statement~\eqref{eqn:precision}. In the case of classical groups, treated in~\cite{genspring2}, we deduced both of these from explicit knowledge of the cuspidal pairs, obtained by induction on the rank within each type. In Theorem~\ref{thm:disjointness} below, we give an alternative uniform proof of disjointness, which relies on Theorem~\ref{thm:mackey}, a Mackey formula for our induction and restriction functors. In Proposition~\ref{prop:horrible}, we prove~\eqref{eqn:precision} by showing the stronger statement that $N_G(L)$ fixes every pair $(\cO_L, \cE_L) \in \fN_{L,\bk}$ where $\cO_L$ is distinguished. This proof
uses some case-by-case checking, but no explicit knowledge of cuspidal pairs.

\begin{rmk}
The analogue of~\eqref{eqn:precision} in Lusztig's setting is~\cite[Theorem 9.2(b)]{lusztig}; his proof does not generalize to our setting. Indeed, the analogue of~\cite[Theorem 9.2(a)]{lusztig} can fail, i.e.\ a Levi subgroup $L$ supporting a cuspidal pair need not be self-opposed in $G$, as already seen in~\cite{genspring1,genspring2}. In fact, the group $N_G(L)/L$ need not even be a reflection group; see Remark~\ref{rmk:reflection-group} for further discussion.    
\end{rmk}

\begin{rmk} \label{rmk:representatives}
As in~\cite{genspring2}, let $\fL$ denote a set of representatives for the $G$-conjugacy classes of Levi subgroups of $G$. If we assume (as we may) that the first component of any triple $(L,\cO_L,\cE_L) \in \fM_{G,\bk}$ belongs to $\fL$, we have an obvious surjective map
\[
\bigsqcup_{L\in\fL}\fN_{L,\bk}^\cusp \to \fM_{G,\bk},
\] 
and~\eqref{eqn:precision} is equivalent to the statement that this map is bijective. Hence we can re-state~\eqref{eqn:mgsc} in the form
\begin{equation} \label{eqn:mgsc-variant}
\fN_{G,\bk} \longleftrightarrow \bigsqcup_{L\in\fL} \bigsqcup_{(\cO_L,\cE_L)\in\fN_{L,\bk}^\cusp} \Irr(\bk[N_G(L)/L]),
\end{equation}
which is how it appeared in~\cite[Theorem 1.1]{genspring2}.
\end{rmk} 

%----------------------------------------------------------------------
\subsection{Further general results}
%----------------------------------------------------------------------

After Sections~\ref{sec:mackey} and~\ref{sec:case-by-case} which complete the proof of Theorem~\ref{thm:mgsc-intro}, we prove further results about a general connected reductive group $G$ in Sections~\ref{sec:regular} and~\ref{sec:basic-sets}. 

In Theorem~\ref{thm:regular-series} we determine the cuspidal datum $(L,\cO_L, \cE_L)$ such that the corresponding induction series $\fN_{G,\bk}^{(L,\cO_L, \cE_L)}$ contains the pair $(\cO_\reg, \underline{\bk})$, where $\cO_\reg \subset \cN_G$ is the regular orbit. It turns out that the Levi subgroup $L$ (determined up to $G$-conjugacy) is the one which is minimal such that its Weyl group $W_L$ contains an $\ell$-Sylow subgroup of the Weyl group $W$ of $G$. In particular, the pair $(\cO_\reg, \underline{\bk})$ is cuspidal if and only if no proper parabolic subgroup of $W$ contains an $\ell$-Sylow subgroup; this general criterion provides a new proof of the classification of cuspidal pairs for the general linear group~\cite{genspring1}, see Remark~\ref{rk:cuspidal-typeA}.

Section~\ref{sec:basic-sets} generalizes a construction of the third author. In~\cite[Section~5]{juteau} it is shown that the (non-generalized) modular Springer correspondence allows one to construct, for any $\ell$, an $\ell$-modular `basic set datum' (a variation on the classical notion of `basic set') for $W$. This construction is then used in~\cite[Section~9]{juteau} to provide an algorithm to explicitly determine the modular Springer correspondence, i.e.~the bijection~\eqref{eqn:bijection} in the case of the `principal' cuspidal datum $(T, \{0\}, \underline{\bk})$ (where $T \subset G$ is a maximal torus), provided that the decomposition matrix for $W$ is known (which is the case for exceptional groups). Theorem~\ref{thm:basic-set-chi} generalizes this algorithm to some non-principal cuspidal data $(L,\cO_L, \cE_L)$, namely those which are minimal with a given central character. (See~\S\ref{ss:normalizers-distinguished} for a discussion of central characters.  For technical reasons, Theorem~\ref{thm:basic-set-chi} excludes the Spin groups.)

%----------------------------------------------------------------------
\subsection{Classification of cuspidal pairs and determination of the modular generalized Springer correspondence for exceptional groups}
%----------------------------------------------------------------------

The remainder of the paper focuses on the exceptional groups. 

We first consider the problem of classifying cuspidal pairs, which we solved for groups of classical type in~\cite{genspring2}. In Section~\ref{sec:reduction} we explain how to determine the number of cuspidal pairs for an exceptional group in any characteristic; the result is summarized in Table~\ref{tab:exc-cuspidal-pairs}. (In this table, $\chi$ is the central character, and a symbol `$-$' means a case that does not occur.) 

\begin{table}
\[
\begin{array}{|c||c|c|c|c|}
\hline
& \ell=2 & \ell=3 & \ell=5 & \ell\geq 7\\
\hline\hline
G_2 & 2 & 2 & 1 & 1\\
\hline
F_4 & 4 & 3 & 1 & 1\\
\hline
E_6, \chi = 1 & 0 & 3 & 0 & 0\\
\phantom{E_6, }\chi \neq 1 & 2 & - & 1 & 1\\
\hline
E_7, \chi = 1 & 6 & 0 & 0 & 0\\
\phantom{E_7, }\chi \neq 1 & - & 3 & 1 & 1\\
\hline
E_8 & 10 & 8 & 5 & 1\\
\hline
\end{array}
\]
\caption{Count of cuspidal pairs for simply connected quasi-simple groups of exceptional type}\label{tab:exc-cuspidal-pairs}
\end{table} 

Our analysis of the exceptional groups completes the proof of the following general result, which says roughly that when $\ell$ is a good prime, the classification of cuspidal pairs behaves in the same way as in Lusztig's setting (see~\cite[Introduction]{lusztig}). See \S\ref{ss:decomposition-numbers} for the concept of `modular reduction' involved here.
\begin{thm}
\label{thm:cuspidal-facts}
Suppose that $\ell$ is a good prime for $G$ and that $\bk$ is big enough for $G$. 
\begin{enumerate}
\item If $G$ is semisimple and simply connected, the cuspidal pairs for $G$ over $\bk$ are exactly the modular reductions of the cuspidal pairs for $G$ over $\Qlb$.
\item $G$ has at most one cuspidal pair over $\bk$ of each central character.
\end{enumerate}
\end{thm}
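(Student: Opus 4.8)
The plan is to reduce to the case where $G$ is quasi-simple and simply connected, and then to treat the classical and exceptional types separately, the exceptional case being the principal new content here. Both assertions are compatible with passage to the quasi-simple factors of $G$: a cuspidal pair for a direct product is a ``product'' of cuspidal pairs of the factors, the hypotheses that $\ell$ be good and that $\bk$ be big enough descend to each factor, and the central character of a product pair is the ``product'' of those of its components, so it is enough to count cuspidal pairs of each central character factor by factor. Moreover $\cN_G$ depends only on the derived group of $G$, and cuspidality is unaffected by an isogeny of the derived group (only the $Z(G)$-action, hence the central character, changes), so we may pass to the simply connected cover. For $G=\GL(n)$, and for $G$ of type $B$, $C$ or $D$ when $\ell$ is odd (the good case for those types), the explicit parametrizations of $\fN_{G,\bk}^\cusp$ obtained in~\cite{genspring1,genspring2} already exhibit $\fN_{G,\bk}^\cusp$ as the set of modular reductions of Lusztig's $\Qlb$-cuspidal pairs and show that distinct such pairs have distinct central characters, so (1) and (2) hold in those cases by inspection.

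For the exceptional types, one first records that, for $\ell$ good and $G$ quasi-simple simply connected of exceptional type, $\ell$ divides the order of no component group $A_G(x)$: by~\cite{cm} these groups are, at worst, a symmetric group on five letters times a small central factor, whereas the good primes are $\ell\ge 5$ (resp.\ $\ell\ge 7$ for $E_8$). Consequently modular reduction defines a bijection $\fN_{G,\Qlb}\simto\fN_{G,\bk}$ that preserves the underlying nilpotent orbit and the central character. The crucial point is that this bijection restricts to a bijection $\fN_{G,\Qlb}^\cusp\simto\fN_{G,\bk}^\cusp$: since a pair is cuspidal precisely when all of its proper restrictions $\Res_{L\subset P}^G$ vanish, and the functors $\Res_{L\subset P}^G$ and $\Ind_{L\subset P}^G$ are compatible with modular reduction (\cite{genspring1}), a pair that lies in a nontrivial $\Qlb$-induction series reduces into (the reduction of) the same series, and conversely.

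Granting this, part (1) follows from the numerical equality $|\fN_{G,\bk}^\cusp|=|\fN_{G,\Qlb}^\cusp|$, which for each exceptional type and each good $\ell$ is exactly what is computed in Section~\ref{sec:reduction} and recorded in the good-$\ell$ columns of Table~\ref{tab:exc-cuspidal-pairs} (to be compared with Lusztig's count in~\cite{lusztig}); the direct determination of $\fN_{G,\bk}^\cusp$ there locates the non-cuspidal pairs using the general results of Sections~\ref{sec:regular}--\ref{sec:basic-sets}, notably Theorem~\ref{thm:regular-series} for the induction series containing $(\cO_\reg,\ubk)$. Part (2) is then read directly off those same columns of Table~\ref{tab:exc-cuspidal-pairs}, noting that for $E_6$ and $E_7$ the unique $\bk$-cuspidal pair has nontrivial central character and that for $G_2$, $F_4$ and $E_8$ the centre is trivial.

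The step I expect to be the main obstacle is the assertion that modular reduction preserves cuspidality in both directions. The difficulty is that over $\bk$ an induced complex $\Ind_{L\subset P}^G(\IC(\cO_L,\cE_L))$ need not be semisimple, so that ``being a composition factor of an induced complex'' is strictly weaker than ``being a quotient of one''; one must therefore argue via the characterization of cuspidality through vanishing of the restriction functors rather than through quotients of induced objects, and combine the compatibility of $\Res_{L\subset P}^G$ with change of coefficients with a direct analysis of the heads of the relevant induced objects. Once this is in place, the only remaining work for the exceptional groups is the finite --- but, for $E_7$ and $E_8$, lengthy --- bookkeeping of restriction functors carried out in Section~\ref{sec:reduction}.
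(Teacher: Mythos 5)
Your overall reduction (to quasi-simple simply connected, then by Lie type) matches the paper's, and your observation that good $\ell$ is \emph{rather good} for exceptional $G$, giving $\fN_{G,\bk}\cong\fN_{G,\Qlb}$, is correct and essential. However, there are two problems, the second of which is a genuine gap.

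First, a minor point: in the type-$A$ case you should cite $\SL(n)$, not $\GL(n)$. Part~(1) of the theorem is stated for semisimple simply connected $G$, and in fact \emph{fails} for $\GL(n)$: when $n=\ell^k>1$ the group $\GL(n)$ has a $\bk$-cuspidal pair $([n],\ubk)$ but no $\Qlb$-cuspidal pair (the paper points this out explicitly in its proof, citing~\cite[Theorem~3.1]{genspring1}).

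Second, and more seriously: your ``crucial point'' — that modular reduction gives a bijection $\fN_{G,\Qlb}^\cusp\simto\fN_{G,\bk}^\cusp$, argued by ``a pair lying in a nontrivial $\Qlb$-series reduces into (the reduction of) the same series'' — is not something the paper claims and is in fact false when $\ell$ is good but not easy. For example, when $\ell=5$ in $E_6$, the pair $(\cO_\reg,\ubk)$ lies in the principal series $\fN_{G,\Qlb}^{(T,\{0\},\ubk)}$, but over $\bk$ Theorem~\ref{thm:regular-series} places it in the new series $\fN_{G,\bk}^{(A_4,[5],\ubk)}$ associated to the $\ell$-Sylow class of Levi subgroups. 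So the series genuinely reshuffle: the obstacle is not merely that induced complexes fail to be semisimple, but that the partition of $\fN_{G,\bk}$ into series is not the reduction of the partition of $\fN_{G,\Qlb}$. Thus one cannot get the reverse inclusion $\fN_{G,\bk}^\cusp\subset\fN_{G,\Qlb}^\cusp$ by tracking restriction functors. The paper (Proposition~\ref{prop:0-cusp-not-A}) instead establishes only the easy inclusion $\fN_{G,\Qlb}^\cusp\subset\fN_{G,\bk}^\cusp$ (via~\cite[Proposition~2.22]{genspring1}), and deduces equality by comparing the two instances of the counting bijection~\eqref{eqn:mgsc-variant}: when $\ell$ is easy this is straightforward since $\ell\nmid|N_G(L)/L|$ for every relevant Levi, and when $\ell$ is good but not easy the deficit in the principal series (the number of $\ell$-singular classes of $W$) is shown by Lemma~\ref{lem:linear-prime} to be exactly compensated by the size of the one new induction series associated to the $\ell$-Sylow class. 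Your closing remark that ``the only remaining work is bookkeeping of restriction functors'' misdescribes Section~\ref{sec:reduction}, which carries out no restriction-functor computations but rather this counting argument.
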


\begin{proof}
By the principles of~\cite[\S 5.3]{genspring2}, we may assume that $G$ is simply connected and quasi-simple for both parts. For $G$ of type $A$ (i.e.\ $G=\SL(n)$), when the condition that $\ell$ is good is vacuous, the result was shown in~\cite[Theorem 6.3]{genspring2}. (Note that part (1) fails for $\GL(n)$ and $\PGL(n)$; see~\cite[Theorem 3.1]{genspring1}.) For $G$ of types $B,C,D$, when $\ell$ being good rules out $\ell=2$, the result was shown in~\cite[Theorems~7.2, 8.3 and~8.4]{genspring2}. For $G$ of exceptional type, the result is Proposition~\ref{prop:0-cusp-not-A}.
\end{proof}

When $\ell$ is a bad prime, we can determine all the cuspidal pairs in type $G_2$ but sometimes not in the other exceptional types; see \S\ref{ss:determining-cuspidal} for further discussion. Tables of cuspidal data for the exceptional groups, including those cuspidal pairs that we know, are given in Appendix~\ref{sec:tables}.
 
Finally, in Section~\ref{sec:series}, we present partial results on the explicit determination of the modular generalized Springer correspondence for exceptional groups. We show in \S\ref{ss:easy-mgsc} that when $\ell$ does not divide the order of the Weyl group $W$, the modular generalized Springer correspondence coincides with Lusztig's generalized Springer correspondence for $\Qlb$-sheaves. In \S\ref{ss:good-mgsc} we consider the case of good characteristic, where we can determine all the induction series but not all the bijections~\eqref{eqn:bijection}. In \S\ref{ss:G2-mgsc} we completely describe the modular generalized Springer correspondence for $G_2$ in all characteristics, and in \S\ref{ss:E6-mgsc} we give an almost complete description of the modular generalized Springer correspondence for $E_6$ in characteristic $3$.
 
%----------------------------------------------------------------------
\subsection{Acknowledgements}
%----------------------------------------------------------------------

We are grateful to Jean Michel for implementing many functions on unipotent classes, including
the generalized Springer correspondence (with characteristic zero coefficients), in the development version of the GAP3 Chevie package~\cite{chevie}.  We would also like to thank the anonymous referee for a careful reading and detailed comments on the paper.

%%%%%%%%%%%%%%%%%%%%%%%%%%%
%\tableofcontents
%%%%%%%%%%%%%%%%%%%%%%%%%%%

%%%%%%%%%%%%%%%%%%%%%%%%%%%%%%%%%%%%%%%%%%%%%%%%%%%%%%%%%%%%%%%%%%%%%%%
\section{A Mackey formula}
\label{sec:mackey}
%%%%%%%%%%%%%%%%%%%%%%%%%%%%%%%%%%%%%%%%%%%%%%%%%%%%%%%%%%%%%%%%%%%%%%%

We continue with the notation of the introduction and of~\cite{genspring1,genspring2}, with $G$ being an arbitrary connected reductive group over $\C$. Recall from~\cite[\S 2.1]{genspring1} that the induction functor $\Ind_{L\subset P}^G$ has left and right adjoints denoted ${}'\Res_{L\subset P}^G$ and $\Res_{L\subset P}^G$ respectively.
The aim of this section is to prove Theorem~\ref{thm:mackey} below, a Mackey formula for these induction and restriction functors, and to deduce the disjointness of induction series asserted in~\eqref{eqn:disjointness}.

%------------------------------------------------------------------------------
\subsection{Statement of the Mackey formula}
%------------------------------------------------------------------------------

We first need to recall a result about double cosets. Here and subsequently we use the notation ${}^g L$ as an abbreviation for $gLg^{-1}$.

\begin{lem} \cite[Lemma 5.6(i)]{dignemichel}
\label{lem:double-cosets}
Let $P,Q$ be parabolic subgroups of $G$ with Levi factors $L,M$ respectively. Define 
\[ \Sigma(M,L)=\{g\in G\,|\, M\cap {}^g L\text{ contains a maximal torus of }G\}. \]
Then $\Sigma(M,L)$ is a union of finitely many $M$--$L$ double cosets, and the inclusion of $\Sigma(M,L)$ in $G$ induces a bijection
\[
M\backslash\Sigma(M,L)/L \longleftrightarrow Q\backslash G/P.
\]
\end{lem}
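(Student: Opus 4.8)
The plan is to reduce to the case where $P$ and $Q$ are standard parabolics and then read everything off the Bruhat decomposition. The statement that $\Sigma(M,L)$ is a union of $M$–$L$ double cosets is immediate: for $m\in M$, $\ell\in L$ one has ${}^{mg\ell}L = m\,({}^gL)\,m^{-1}$, hence $M\cap{}^{mg\ell}L = m\,(M\cap{}^gL)\,m^{-1}$, and conjugation by an element of $G$ preserves the property of containing a maximal torus of $G$. So the only real work is the bijection.

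Next I would reduce to standard parabolics. Replacing $(Q,M)$ by a conjugate $({}^hQ,{}^hM)$ replaces $\Sigma(M,L)$ by $h\,\Sigma(M,L)$ and turns the assertion into an equivalent one, compatibly via $g\mapsto hg$ on $M\backslash\Sigma(M,L)/L$ and on $Q\backslash G/P$; similarly right-conjugating $(P,L)$ by $h'$ acts by $g\mapsto gh'^{-1}$ on both sides. These two operations are independent, so we may fix a maximal torus $T_0$ in a Borel subgroup $B$ and assume $Q=P_J\supseteq B$, $P=P_I\supseteq B$ with standard Levi factors $M=L_J\supseteq T_0$ and $L=L_I\supseteq T_0$, for subsets $I,J$ of the simple roots. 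Write $W=N_G(T_0)/T_0$, $W_I=W_L$, $W_J=W_M$, and choose lifts $\dot w\in N_G(T_0)$ of $w\in W$.

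The key step is then the claim that $\Sigma(L_J,L_I)=\bigcup_{w\in W}L_J\,\dot w\,L_I$. The inclusion $\supseteq$ is easy: $T_0\subseteq L_J$ and ${}^{\dot w}T_0=T_0\subseteq{}^{\dot w}L_I$, so every $\dot w$ — and hence, by the first step, every coset $L_J\dot wL_I$ — lies in $\Sigma(L_J,L_I)$. Conversely, if a maximal torus $T_1$ of $G$ lies in $L_J\cap{}^gL_I$, then $T_0$ and $T_1$ are both maximal tori of the connected group $L_J$, so $T_1={}^lT_0$ for some $l\in L_J$; likewise ${}^{g^{-1}}T_1$ and $T_0$ are maximal tori of $L_I$, so ${}^{g^{-1}}T_1={}^mT_0$ for some $m\in L_I$; comparing gives $m^{-1}g^{-1}l\in N_G(T_0)$, i.e.\ $g\in L_J\,\dot w\,L_I$ for a suitable $w$.

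Finally I would conclude via Bruhat. The Bruhat decomposition gives a bijection $W_J\backslash W/W_I\simto P_J\backslash G/P_I=Q\backslash G/P$, $w\mapsto P_J\dot wP_I$; in particular this double coset space is finite. The composite
\[
W_J\backslash W/W_I \longrightarrow L_J\backslash\Sigma(L_J,L_I)/L_I \longrightarrow P_J\backslash G/P_I, \qquad w\mapsto L_J\dot wL_I\mapsto P_J\dot wP_I,
\]
is precisely this Bruhat bijection; its first arrow is surjective by the previous step, and its second arrow is the inclusion-induced map in the statement. Since the composite is a bijection and the first arrow is surjective, the second arrow is injective; being also surjective (as the composite is), it is a bijection — this is the asserted $M\backslash\Sigma(M,L)/L\longleftrightarrow Q\backslash G/P$ — and consequently the first arrow is a bijection as well. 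The one point requiring genuine care is the bookkeeping in the reduction step: checking that left- and right-conjugation really yield equivalent statements, via maps commuting with $\Sigma(M,L)\hookrightarrow G$. Everything else is a one-line computation or a direct appeal to conjugacy of maximal tori in connected groups together with the Bruhat decomposition.
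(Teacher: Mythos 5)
The paper does not prove this lemma; it cites it directly as \cite[Lemma 5.6(i)]{dignemichel}, so there is no ``paper's own proof'' against which to compare. Evaluated on its own terms, your proof is correct and is the natural one: reduction to standard parabolics containing a fixed Borel $B \supseteq T_0$, the observation that $\Sigma(L_J,L_I) = \bigcup_{w\in W} L_J\dot w L_I$ (which comes down to conjugacy of maximal tori in the connected groups $L_J$, $L_I$), and then the Bruhat decomposition $W_J\backslash W/W_I \simto P_J\backslash G/P_I$. Your bookkeeping in the reduction step is right, as is the final diagram chase. The argument is, to my knowledge, essentially the one given by Digne and Michel. Two small points you could make explicit: (i) the well-definedness of $w\mapsto L_J\dot w L_I$ on $W_J\backslash W/W_I$, which uses that lifts of $w_J\in W_J$ and $w_I\in W_I$ can be taken in $L_J$ and $L_I$; and (ii) that after conjugating $Q$ (resp.\ $P$) into a standard parabolic containing $B$, a further conjugation by an element of the unipotent radical --- which leaves $Q$ (resp.\ $P$) unchanged --- makes $M$ (resp.\ $L$) the standard Levi containing $T_0$.
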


Given $L\subset P$, $M\subset Q$ as in Lemma~\ref{lem:double-cosets}, we let $g_1,g_2,\cdots,g_s$ be a set of representatives for the $M$--$L$ double cosets in $\Sigma(M,L)$, ordered in such a way that if $Qg_iP\subseteq\overline{Qg_jP}$, then $i\leq j$. Since $g_i\in\Sigma(M,L)$, the group $M\cap{}^{g_i} L$ is simultaneously a Levi subgroup of $M$ and of ${}^{g_i} L$; more precisely, it is a Levi factor of the parabolic subgroup $M\cap{}^{g_i}P$ of $M$ and a Levi factor of the parabolic subgroup $Q\cap{}^{g_i}L$ of ${}^{g_i}L$.  

\begin{thm}
\label{thm:mackey}
Let $L\subset P$, $M\subset Q$, and $g_1,g_2,\cdots,g_s$ be as above. Let $\cF\in\Perv_L(\cN_L,\bk)$. Then in $\Perv_M(\cN_M,\bk)$ we have a filtration
\[
{}'\Res_{M\subset Q}^G\bigl(\Ind_{L\subset P}^G(\cF)\bigr)=\cF_0\supset\cF_1\supset\cF_2\supset\cdots\supset\cF_s=0,
\]
in which the successive quotients have the form
\[
\cF_{i-1}/\cF_i\cong \Ind_{M\cap{}^{g_i}L \subset M\cap{}^{g_i} P}^M\bigl({}'\Res_{M\cap{}^{g_i} L\subset Q\cap{}^{g_i} L}^{{}^{g_i} L}(\Ad(g_i^{-1})^*\cF)\bigr),\quad\text{for }i=1,\cdots,s.
\] 
\end{thm}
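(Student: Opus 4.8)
The plan is to prove the Mackey formula by reducing it to a geometric statement about the varieties used to define $\Ind$ and $\Res$, and then invoking proper base change together with a stratification argument. Recall that $\Ind_{L\subset P}^G$ is defined via the diagram $\cN_L \xleftarrow{\ \mu\ } \cN_P' \xrightarrow{\ \pi\ } \widetilde{\cN}_P \xrightarrow{\ \nu\ } \cN_G$ (notation as in~\cite[\S 2.1]{genspring1}), where $\cN_P' = G\times^P \cN_{\fp}$ and similarly for $\widetilde{\cN}_P$; the functor $'\Res_{M\subset Q}^G$ is defined by the analogous diagram for $Q$, $M$, read in the opposite direction and using the $*$-pullback/left adjoint conventions. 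First I would write down the composite correspondence computing $'\Res_{M\subset Q}^G \circ \Ind_{L\subset P}^G$ as a single fibre product over $\cN_G$, namely the space of data $(gP, hQ, x)$ with $x$ in the appropriate nilpotent subspace compatible with both flags. The $G$-equivariance lets one replace this by a space fibred over $Q\backslash G/P$, and by Lemma~\ref{lem:double-cosets} this double-coset space is identified with $M\backslash\Sigma(M,L)/L$, indexed by $g_1,\dots,g_s$.

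The key step is then to check that over the locally closed piece of $Q\backslash G / P$ corresponding to $g_i$, the restricted correspondence is exactly the one computing the composite $\Ind_{M\cap{}^{g_i}L\subset M\cap{}^{g_i}P}^M \circ {}'\Res_{M\cap{}^{g_i}L \subset Q\cap{}^{g_i}L}^{{}^{g_i}L} \circ \Ad(g_i^{-1})^*$. This is where the geometric content of Lemma~\ref{lem:double-cosets} is used: for $g\in\Sigma(M,L)$ the intersection $M\cap{}^g L$ is simultaneously a Levi factor of $M\cap{}^g P$ in $M$ and a Levi factor of $Q\cap {}^g L$ in ${}^g L$, so the piece of the fibre product sitting over the double coset $Qg_iP$ literally factors as a composite of a $\Res$-type correspondence for the pair $M\cap{}^{g_i}L\subset Q\cap{}^{g_i}L$ inside ${}^{g_i}L$ followed by an $\Ind$-type correspondence for $M\cap{}^{g_i}L\subset M\cap{}^{g_i}P$ inside $M$, after transporting by $\Ad(g_i^{-1})$. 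I would make this identification carefully at the level of varieties with their maps, checking that the maps $\mu,\pi,\nu$ match up and that the relevant squares are cartesian; the shift conventions (which are built into the normalization of $\Ind$ and $\Res$ so that they are perverse-exact, cf.~\cite{genspring1}) have to be tracked so that no cohomological shift appears in the successive quotients.

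To pass from the stratification of the correspondence variety to a filtration of the perverse sheaf, I would use the ordering of the $g_i$ chosen so that $Qg_iP\subseteq\overline{Qg_jP}$ implies $i\le j$: the union $Z_{\le k}$ of the closed strata corresponding to $g_1,\dots,g_k$ is closed in the correspondence variety, and pushing forward (via the proper map to $\cN_M$, using properness of $\nu$-type maps and proper base change) the triangles attached to the inclusion of $Z_{\le k-1}$ into $Z_{\le k}$ yields the filtration $\cF_0\supset\cF_1\supset\cdots\supset\cF_s=0$ with the stated subquotients. Here one needs that each subquotient, being an honest induction of a restriction of a perverse sheaf — hence perverse by the exactness properties of these functors — sits in perverse degree $0$, so that the long exact sequences of perverse cohomology degenerate into short exact sequences and genuinely give a filtration in $\Perv_M(\cN_M,\bk)$ rather than merely a filtration in the derived category; the ordering hypothesis on the $g_i$ is exactly what makes the closed pieces nest correctly.

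The main obstacle, I expect, is the bookkeeping in the second step: setting up the fibre-product description of the composite correspondence so that the decomposition over double cosets is manifestly compatible with the $P$- and $Q$-equivariant structures, and verifying that over the $g_i$-stratum the data of "a point of $\cN_{\fp}$ together with a point of $\cN_{\fq}$ lying in the same $G$-orbit closure configuration" really does biject $P$- and $Q$-equivariantly with the data defining the claimed composite of $\Ind$ and $\Res$ for the subgroups $M\cap{}^{g_i}L$, $M\cap{}^{g_i}P$, $Q\cap{}^{g_i}L$. This is a direct but intricate check with parabolics and their unipotent radicals, and getting the compatibility of all the structure maps — together with the precise form of the conjugation twist $\Ad(g_i^{-1})^*$ — correct is the delicate part; everything else (proper base change, the degeneration of perverse cohomology, the nesting of strata) is formal once that identification is in hand.
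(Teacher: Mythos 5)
Your proposal follows essentially the same strategy as the paper's proof, which is explicitly modelled on Mars--Springer's argument for character sheaves (\cite[\S 10.1]{marsspringer}): express $'\Res\circ\Ind$ via a correspondence variety, stratify it over $Q\backslash G/P$ using Lemma~\ref{lem:double-cosets} with the given ordering, push forward the closed/open triangles, identify the contribution of each locally closed stratum with the claimed $\Ind\circ{}'\Res\circ\Ad(g_i^{-1})^*$ (the paper's Lemma~\ref{lem:hard}), and use perversity of the graded pieces to collapse triangles into short exact sequences. The only imprecision is your appeal to ``properness of $\nu$-type maps and proper base change'' when pushing forward the stratification triangles: the correspondence variety is not proper over $\cN_M$, but none is needed, since one simply applies $(-)_!$ (which preserves distinguished triangles) to the canonical triangle $j_!j^*\to\mathrm{id}\to i_*i^*\to{+1}$; the substantive work, which you rightly flag as the main obstacle, is the stratum-by-stratum identification and the shift bookkeeping carried out in the paper's \S\ref{ss:proof-hard}.
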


\begin{rmk}
We shall not need it, but one can immediately deduce a corresponding statement involving the other restriction functor $\Res_{M\subset Q}^G$ by applying Verdier duality.
\end{rmk}

\begin{rmk}
The prototypical geometric Mackey formula is Lusztig's result for character sheaves~\cite[Proposition 15.2]{charsh3}, which, like the corresponding results for $\Qlb$-representations of finite groups of Lie type (see~\cite[Theorems 5.1 and 11.13]{dignemichel}), has a direct sum rather than a filtration. Lusztig's proof in~\cite[Section 15]{charsh3} uses Frobenius traces and is therefore unsuited to the setting of modular perverse sheaves. Instead, our proof is modelled on the alternative proof of the Mackey formula for character sheaves given by Mars and Springer in~\cite[\S 10.1]{marsspringer}, which in turn follows the pattern of~\cite[Section 3]{charsh1}. Note that we have no analogue of the final part of their proof, in which they use purity considerations to deduce a direct sum. 
\end{rmk}

%------------------------------------------------------------------------------
\subsection{Disjointness of induction series}
%------------------------------------------------------------------------------

Before starting the proof of Theorem~\ref{thm:mackey}, we highlight its most important consequence, which is the disjointness statement~\eqref{eqn:disjointness} from the introduction. The proof is parallel to the analogous result on Harish-Chandra series for finite groups of Lie type (see~\cite[Theorem~2.4]{ghm2}).

\begin{thm} \label{thm:disjointness}
Let $(L,\cO_L,\cE_L)$ and $(M,\cO_M,\cE_M)$ be cuspidal data for $G$ that are not in the same $G$-orbit. Then the induction series $\fN_{G,\bk}^{(L,\cO_L,\cE_L)}$ and $\fN_{G,\bk}^{(M,\cO_M,\cE_M)}$ are disjoint.
\end{thm}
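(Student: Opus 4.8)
The plan is to argue by contradiction using adjunction together with the Mackey formula (Theorem~\ref{thm:mackey}). Suppose some pair $(\cO,\cE) \in \fN_{G,\bk}$ lies in both induction series, i.e.\ $\IC(\cO,\cE)$ is a simple quotient of both $\Ind_{L\subset P}^G(\IC(\cO_L,\cE_L))$ and of $\Ind_{M\subset Q}^G(\IC(\cO_M,\cE_M))$ for suitable parabolics $P \supset L$, $Q \supset M$. The first step is to translate "$\IC(\cO,\cE)$ is a quotient of $\Ind_{L\subset P}^G(\IC(\cO_L,\cE_L))$" into a statement about restriction: by the fact that ${}'\Res_{L\subset P}^G$ is left adjoint to $\Ind_{L\subset P}^G$, a nonzero map $\Ind_{L\subset P}^G(\IC(\cO_L,\cE_L)) \twoheadrightarrow \IC(\cO,\cE)$ corresponds to a nonzero map $\IC(\cO_L,\cE_L) \to {}'\Res_{L\subset P}^G(\IC(\cO,\cE))$, and since $\IC(\cO_L,\cE_L)$ is simple this map is injective; so $\IC(\cO_L,\cE_L)$ is a sub of ${}'\Res_{L\subset P}^G(\IC(\cO,\cE))$. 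Similarly $\IC(\cO_M,\cE_M)$ embeds in ${}'\Res_{M\subset Q}^G(\IC(\cO,\cE))$.

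The key step is then to feed this back through $\Ind$ and apply Mackey. Since $\IC(\cO,\cE)$ is a quotient of $\Ind_{M\subset Q}^G(\IC(\cO_M,\cE_M))$, the object ${}'\Res_{L\subset P}^G(\IC(\cO,\cE))$ is a quotient of ${}'\Res_{L\subset P}^G\bigl(\Ind_{M\subset Q}^G(\IC(\cO_M,\cE_M))\bigr)$, which by Theorem~\ref{thm:mackey} carries a filtration whose subquotients are of the form $\Ind_{L\cap{}^{g}M \subset L\cap{}^{g}Q}^{L}\bigl({}'\Res(\Ad(g^{-1})^*\IC(\cO_M,\cE_M))\bigr)$ for double-coset representatives $g$. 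Combining with the previous paragraph, the simple object $\IC(\cO_L,\cE_L)$ must occur as a composition factor of one of these induced-from-proper-restriction terms. But $\IC(\cO_L,\cE_L)$ is cuspidal, so it cannot be a quotient of any object induced from a proper Levi of $L$; the cuspidality needs to be leveraged to say it is not even a composition factor of such a term unless the inducing Levi $L\cap{}^{g}M$ equals $L$ itself, i.e.\ unless ${}^{g}M \supseteq L$. Running the symmetric argument (or using that $\IC(\cO_M,\cE_M)$ is also cuspidal) forces ${}^{g}M = L$ for that $g$, after which one recovers that $(M,\cO_M,\cE_M)$ is $G$-conjugate, via $g$, to a cuspidal datum on $L$ that must coincide with $(L,\cO_L,\cE_L)$ because both appear in ${}'\Res_{L\subset P}^G(\IC(\cO,\cE))$ as the "extremal" cuspidal constituent — contradicting the hypothesis that the two data are in different $G$-orbits.

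I expect the main obstacle to be the bookkeeping in passing from "quotient" to "composition factor" and back: cuspidality is defined via quotients of induced objects, but the Mackey filtration only controls composition factors of the restriction, so I will need the standard fact (presumably available from the adjunction formalism of~\cite[\S 2.1]{genspring1}, or provable directly) that a cuspidal simple object cannot be a composition factor of $\Ind_{L'\subset P'}^L(\cG)$ for $L' \subsetneq L$ — equivalently that ${}'\Res$ of a cuspidal object to any proper Levi vanishes, or at least that no cuspidal simple appears after inducing from a proper Levi. Granting that, the ordering condition on the double cosets $g_1,\dots,g_s$ (the one coming from closure relations of the strata $QgP$) should let me pick out the correct $g$ and make the matching of cuspidal data precise, in close parallel to the Harish-Chandra-series argument of~\cite[Theorem~2.4]{ghm2}.
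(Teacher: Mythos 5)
Your overall strategy — translate membership in both induction series into a restriction statement, plug in the Mackey formula, and let cuspidality kill the off-diagonal terms — is the same one the paper uses, and you correctly identify the finite-group parallel in~\cite[Theorem~2.4]{ghm2}. However, there are two problems with the way you set it up, one a slip and one a genuine gap.

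First, the adjunction slip. You correctly record that ${}'\Res_{L\subset P}^G$ is the \emph{left} adjoint of $\Ind_{L\subset P}^G$, but then you apply it as if it were the right adjoint: a nonzero map $\Ind_{L\subset P}^G(\IC(\cO_L,\cE_L)) \to \IC(\cO,\cE)$ translates to a nonzero map $\IC(\cO_L,\cE_L)\to\Res_{L\subset P}^G(\IC(\cO,\cE))$ with $\Res$, the \emph{right} adjoint, not ${}'\Res$. (The Mackey formula for $\Res$ holds too, by Verdier duality, so this is only a notational repair.)

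Second, and more seriously, after this repair your argument produces the statement that the cuspidal simple $\IC(\cO_L,\cE_L)$ is a \emph{subquotient} (composition factor) of $\Res_{L\subset P}^G\bigl(\Ind_{M\subset Q}^G(\IC(\cO_M,\cE_M))\bigr)$: indeed you obtain it as a \emph{sub} of $\Res_L(\IC(\cO,\cE))$, which is a \emph{quotient} of $\Res_L(\Ind_M(\cdots))$, and ``sub of a quotient'' is not itself a sub or a quotient. You flag this yourself as the main obstacle, and it is real: the Mackey terms on the side you keep have the shape $\Ind_{L\cap{}^g M\subset\cdots}^L(\cdots)$, and you would need to know that a cuspidal simple cannot even be a \emph{composition factor} of an object induced from a proper Levi of $L$. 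That is strictly stronger than the definition of cuspidality (not a quotient of such an induction), and also strictly stronger than the characterization used elsewhere in the series (that ${}'\Res$ and $\Res$ of a cuspidal to a proper Levi vanish); neither of those forbids a cuspidal simple from appearing strictly inside a composition series. I do not see a short argument that supplies this, and the paper does not claim it.

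The paper sidesteps this entirely with a small but crucial extra ingredient you did not use: by~\cite[Lemma~2.3]{genspring2}, the induction series attached to $(M,\cO_M,\cE_M)$ is the set of simple \emph{subobjects} of $\Ind_{M\subset Q}^G(\IC(\cO_M,\cE_M))$ as well as the set of simple quotients. Hence if $(\cO,\cE)$ lies in both series, one can compose a surjection from $\Ind_L(\IC(\cO_L,\cE_L))$ onto $\IC(\cO,\cE)$ with an injection of $\IC(\cO,\cE)$ into $\Ind_M(\IC(\cO_M,\cE_M))$, giving a \emph{nonzero $\Hom$} between the two induced objects. Adjunction (with ${}'\Res_M$, the left adjoint of $\Ind_M$) then yields a nonzero $\Hom$ from ${}'\Res_M(\Ind_L(\IC(\cO_L,\cE_L)))$ to $\IC(\cO_M,\cE_M)$. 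Applying the Mackey filtration to the source, the inner restriction factors of the Mackey terms vanish unless ${}^{g_i}L\subset M$ (this uses cuspidality of $\IC(\cO_L,\cE_L)$ in the form ``${}'\Res$ to a proper Levi vanishes,'' which is established in~\cite[\S2.2]{genspring1}), and for the surviving terms the filtration gives a nonzero $\Hom$ from some $\Ind_{{}^{g_i}L\subset M\cap{}^{g_i}P}^M(\Ad(g_i^{-1})^*\IC(\cO_L,\cE_L))$ onto $\IC(\cO_M,\cE_M)$, which is a genuine \emph{quotient} statement; now the definition of cuspidality of $\IC(\cO_M,\cE_M)$ applies directly to force ${}^{g_i}L=M$ and $\Ad(g_i^{-1})^*\IC(\cO_L,\cE_L)\cong\IC(\cO_M,\cE_M)$. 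So the fix for your gap is precisely to use~\cite[Lemma~2.3]{genspring2} to replace one ``quotient'' by a ``subobject,'' so that the Mackey argument only ever needs cuspidality in its quotient form and in its vanishing-restriction form, never the unavailable composition-factor form.
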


\begin{proof}
We prove the result in the contrapositive form, assuming that
\begin{equation} \label{eqn:intersection}
\fN_{G,\bk}^{(L,\cO_L,\cE_L)}\cap\fN_{G,\bk}^{(M,\cO_M,\cE_M)}\neq\emptyset.
\end{equation} 
By~\cite[Lemma 2.3]{genspring2}, the induction series associated to $(M,\cO_M,\cE_M)$, defined initially as the set of isomorphism classes of simple quotients of $\Ind_{M\subset Q}^G(\IC(\cO_M,\cE_M))$, equals the set of isomorphism classes of simple subobjects of $\Ind_{M\subset Q}^G(\IC(\cO_M,\cE_M))$. Thus our assumption~\eqref{eqn:intersection} implies that
\begin{equation} \label{eqn:hom1}
\Hom_{\Perv_G(\cN_G,\bk)}\bigl(\Ind_{L\subset P}^G(\IC(\cO_L,\cE_L)),\Ind_{M\subset Q}^G(\IC(\cO_M,\cE_M))\bigr)\neq 0.
\end{equation}
This in turn is equivalent (by adjunction) to
\begin{equation} \label{eqn:hom2}
\Hom_{\Perv_M(\cN_M,\bk)}\bigl({}'\Res_{M\subset Q}^G\bigl(\Ind_{L\subset P}^G(\IC(\cO_L,\cE_L))\bigr),\IC(\cO_M,\cE_M)\bigr)\neq 0.
\end{equation}

Now we apply Theorem~\ref{thm:mackey} with $\cF=\IC(\cO_L,\cE_L)$. The cuspidality of $\cF$ implies that of $\Ad(g_i^{-1})^*\cF$, so the successive quotients $\cF_{i-1}/\cF_i$ can only be nonzero when $M\cap{}^{g_i} L={}^{g_i}L$, or in other words ${}^{g_i}L\subset M$. Thus~\eqref{eqn:hom2} implies that for some such $g_i$ we have
\begin{equation} \label{eqn:hom3}
\Hom\bigl(\Ind_{{}^{g_i}L \subset M\cap{}^{g_i} P}^M(\Ad(g_i^{-1})^*\IC(\cO_L,\cE_L)),\IC(\cO_M,\cE_M)\bigr)\neq 0.
\end{equation}
Now the cuspidality of $\IC(\cO_M,\cE_M)$ forces ${}^{g_i}L=M$ and $\Ad(g_i^{-1})^*\IC(\cO_L,\cE_L)\cong\IC(\cO_M,\cE_M)$, showing that $(L,\cO_L,\cE_L)$ and $(M,\cO_M,\cE_M)$ are in the same $G$-orbit as desired.
\end{proof}

\begin{rmk}
The corresponding result for Lusztig's generalized Springer correspondence~\cite[Proposition 6.3]{lusztig} was proved without using a Mackey formula, by an argument specific to the case of $\Qlb$-sheaves.
\end{rmk}

Another useful consequence of Theorem~\ref{thm:mackey} is a generalization of our earlier result~\cite[Proposition 2.6]{genspring2}, stating that cuspidal pairs must be supported on distinguished orbits.

\begin{prop} \label{prop:levi-rule}
If $(L,\cO_L,\cE_L)\in\fM_{G,\bk}$ and $(\cO,\cE)\in\fN_{G,\bk}^{(L,\cO_L,\cE_L)}$, then any Levi subgroup $M$ of $G$ whose Lie algebra intersects $\cO$ must contain a $G$-conjugate of $L$. In particular, this is true when $M$ is the Levi subgroup in the Bala--Carter label of $\cO$. 
\end{prop}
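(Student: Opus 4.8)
The plan is to run exactly the same Mackey-formula argument as in the proof of Theorem~\ref{thm:disjointness}, but starting from the hypothesis that $(\cO,\cE)$ occurs in the induction series $\fN_{G,\bk}^{(L,\cO_L,\cE_L)}$ and that $\cO$ meets $\Lie(M)$ for some Levi $M$. First I would choose a parabolic $Q$ with Levi factor $M$, and observe that since $\cO \cap \Lie(M) \neq \emptyset$, the pair $(\cO,\cE)$ restricts nontrivially to $M$; more precisely, ${}'\Res_{M\subset Q}^G(\IC(\cO,\cE))$ is a nonzero object of $\Perv_M(\cN_M,\bk)$ (its support, being a closed $M$-stable subset of $\overline{\cO}\cap\cN_M$ containing a point of $\cO$, is nonempty). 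Since $(\cO,\cE)\in\fN_{G,\bk}^{(L,\cO_L,\cE_L)}$, the simple perverse sheaf $\IC(\cO,\cE)$ is a quotient (equivalently, by~\cite[Lemma 2.3]{genspring2}, a subobject) of $\Ind_{L\subset P}^G(\IC(\cO_L,\cE_L))$, so it suffices to show that any Levi $M$ for which ${}'\Res_{M\subset Q}^G$ of this induced object is nonzero must contain a $G$-conjugate of $L$.

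Next I would apply Theorem~\ref{thm:mackey} with $\cF=\IC(\cO_L,\cE_L)$, giving a filtration of ${}'\Res_{M\subset Q}^G\bigl(\Ind_{L\subset P}^G(\IC(\cO_L,\cE_L))\bigr)$ whose subquotients are
\[
\Ind_{M\cap{}^{g_i}L \subset M\cap{}^{g_i} P}^M\bigl({}'\Res_{M\cap{}^{g_i} L\subset Q\cap{}^{g_i} L}^{{}^{g_i} L}(\Ad(g_i^{-1})^*\IC(\cO_L,\cE_L))\bigr).
\]
As in the proof of Theorem~\ref{thm:disjointness}, the cuspidality of $\IC(\cO_L,\cE_L)$ (hence of $\Ad(g_i^{-1})^*\IC(\cO_L,\cE_L)$ for ${}^{g_i}L$) forces each nonzero subquotient to satisfy $M\cap{}^{g_i}L={}^{g_i}L$, i.e.\ ${}^{g_i}L\subseteq M$. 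Since the restriction we started with is nonzero, at least one subquotient is nonzero, and for the corresponding index $i$ we obtain ${}^{g_i}L\subseteq M$, which is exactly the assertion that $M$ contains a $G$-conjugate of $L$.

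For the final sentence of the statement, I would invoke the known fact that when $\cO$ has Bala--Carter label with associated Levi subgroup $M_0$, the orbit $\cO$ is the distinguished orbit of $M_0$ induced up to $G$, and in particular $\cO\cap\Lie(M_0)$ contains that distinguished orbit and is therefore nonempty; so the first part applies with $M=M_0$. The only point requiring a little care is the claim that ${}'\Res_{M\subset Q}^G(\IC(\cO,\cE))\neq 0$ whenever $\cO$ meets $\Lie(M)$; I expect this to be the main (though minor) obstacle, and it can be handled either by a support/stalk computation — the restriction functor is, up to shift, pullback along $\cN_M\hookrightarrow\Lie(M)\cap(\cN_G+\fu_Q)$ followed by pushforward, and a point of $\cO\cap\cN_M$ in the image of a point of $\cO$ gives a nonzero stalk — or by quoting the analogous support estimate already used in~\cite{genspring1,genspring2}. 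With that in hand the proposition follows immediately from Theorem~\ref{thm:mackey} together with the cuspidality of $\IC(\cO_L,\cE_L)$, with no case-by-case input.
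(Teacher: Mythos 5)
Your proposal follows essentially the same route as the paper: show ${}'\Res_{M\subset Q}^G(\IC(\cO,\cE))\neq 0$, deduce (via exactness of ${}'\Res$ and the fact that $\IC(\cO,\cE)$ is a sub/quotient of the induced object) that ${}'\Res_{M\subset Q}^G\bigl(\Ind_{L\subset P}^G(\IC(\cO_L,\cE_L))\bigr)\neq 0$, and then apply Theorem~\ref{thm:mackey} plus cuspidality to force ${}^{g_i}L\subseteq M$ for some $i$. The one place where you rightly flag a need for care — nonvanishing of ${}'\Res_{M\subset Q}^G(\IC(\cO,\cE))$ — is handled in the paper by simply citing~\cite[Proposition 2.7]{genspring2}, which is the ``analogous support estimate'' you mention as a fallback; your sketched direct stalk argument is not quite airtight as stated (a nonzero stalk of a pullback does not by itself guarantee nonvanishing after the $!$-pushforward in the definition of ${}'\Res$), so you should rely on the cited result rather than the heuristic.
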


\begin{proof}
Suppose that $M$ is a Levi subgroup of $G$ whose Lie algebra intersects $\cO$, and let $Q$ be a parabolic subgroup of $G$ with Levi factor $M$. Then~\cite[Proposition 2.7]{genspring2} shows that ${}'\Res_{M\subset Q}^G(\IC(\cO,\cE))\neq 0$, which implies that ${}'\Res_{M\subset Q}^G(\Ind_{L\subset P}^G(\IC(\cO_L,\cE_L)))\neq 0$, since ${}'\Res_{M\subset Q}^G$ is exact. By Theorem~\ref{thm:mackey} and the cuspidality of $\IC(\cO_L,\cE_L)$, we conclude that there is some $g\in G$ such that $M\cap{}^g L={}^g L$, i.e.\ $M\supset {}^g L$.
\end{proof}

%------------------------------------------------------------------------------
\subsection{Proof of Theorem~\ref{thm:mackey}}
%------------------------------------------------------------------------------

In the proof of Theorem~\ref{thm:mackey} it will be convenient, especially for drawing parallels with~\cite[\S 10.1]{marsspringer}, to regard $\Perv_L(\cN_L,\bk)$ as a subcategory of the non-equivariant derived category $\Db(\cN_L,\bk)$ rather than as a subcategory of $\Db_L(\cN_L,\bk)$, and likewise for the other categories of perverse sheaves involved. (Recall that the forgetful functor $\Db_L(\cN_L,\bk)\to\Db(\cN_L,\bk)$ is fully faithful on $\Perv_L(\cN_L,\bk)$.) In this setting, the definition of $\Ind_{L\subset P}^G$ can be reformulated in the terms familiar from Lusztig's work (e.g.~\cite{charsh1}), using the diagram
\[
\xymatrix@1{
\cN_L&G\times\cN_P\ar_-{\alpha}[l]\ar^-{\beta''}[r] & G\times^{P}\cN_P\ar^-{\beta'}[r] & \cN_G,
}
\]
where $\alpha(g,x)=p_{L\subset P}(x)$ (with $p_{L \subset P}: \cN_P \to \cN_L$ the natural projection), $\beta''$ is the quotient projection for the action of $P$ (a principal $P$-bundle), and $\beta'$ is defined so that $\beta=\beta'\beta''$ is the action map $G\times\cN_P\to\cN_G \colon (g,x) \mapsto g\cdot x$. For any $\cF\in\Perv_L(\cN_L,\bk)$, the inverse image $\alpha^*\cF[\dim G +\dim U_P]$ belongs to $\Perv_P(G\times\cN_P,\bk)$, so there is a unique (up to isomorphism) object $(\beta'')_\flat\alpha^*\cF[2\dim U_P]\in\Perv(G\times^P\cN_P,\bk)$ such that $(\beta'')^*((\beta'')_\flat\alpha^*\cF[2\dim U_P])[\dim P]\cong \alpha^*\cF[\dim G +\dim U_P]$. (The notation $(\beta'')_\flat$ is meant to suggest a right inverse of the functor $(\beta'')^*$, but this right inverse is not defined on the whole of $\Db(G\times\cN_P,\bk)$, only on objects that are shifts of a $P$-equivariant perverse sheaf.) By~\cite[Lemma 2.14]{genspring1} we have
\begin{equation} \label{eqn:induction}
\Ind_{L\subset P}^G(\cF)\cong(\beta')_!(\beta'')_\flat\alpha^*\cF[2\dim U_P].
\end{equation}   
In the remainder of the argument we will encounter many other expressions of the form $(\cdot)_!(\cdot)_\flat(\cdot)^*[\cdots]$, and we will omit the straightforward equivariance checks that ensure that the $(\cdot)_\flat$ operation is defined.  

Form the variety
\[
Z:=\{(g,x)\in G\times\cN_P\,|\,g\cdot x \in\cN_Q\},
\]
and let $p_1:Z\to\cN_L$, $p_2:Z\to\cN_M$ be defined by
\[
p_1(g,x)=p_{L\subset P}(x),\quad p_2(g,x)=p_{M\subset Q}(g\cdot x).
\]
As with $\beta$ above, we factor $p_2$ as $p_2'p_2''$ where $p_2''$ is the quotient projection for the action of $P$ (a principal $P$-bundle).

\begin{lem} \label{lem:easy}
For any $\cF\in\Perv_L(\cN_L,\bk)$, we have an isomorphism in $\Db(\cN_M,\bk)$:
\[
{}'\Res_{M\subset Q}^G(\Ind_{L\subset P}^G(\cF))
\cong
(p_2')_!(p_2'')_\flat(p_1)^*\cF[2\dim U_P].
\]
\end{lem}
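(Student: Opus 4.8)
The plan is to compute both sides by tracing through the defining diagrams and using base change. First I would write out the definition of ${}'\Res_{M\subset Q}^G$ in the Lusztig-style language, parallel to~\eqref{eqn:induction}: the left adjoint ${}'\Res_{M\subset Q}^G$ is given by a formula of the shape $(\cdot)_\flat(\cdot)^*(\cdot)_![\cdots]$ attached to the diagram $\cN_M \leftarrow G\times\cN_Q \to G\times^Q\cN_Q \leftarrow \cN_G$ read in the appropriate direction (this is exactly the unnormalized functor whose conjugate by Verdier duality is $\Res_{M\subset Q}^G$; the needed reformulation is the content of~\cite[\S 2.1]{genspring1} together with~\cite[Lemma 2.14]{genspring1}). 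Substituting~\eqref{eqn:induction} for $\Ind_{L\subset P}^G(\cF)$ and composing, the left-hand side becomes an expression built out of pullbacks along $\alpha,\beta'',\beta'$ and the analogous maps for $Q$, applied to $\cF$.

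Next I would identify the variety $Z$ as the correct fibre product governing this composite. The key observation is that $Z = \{(g,x)\in G\times\cN_P : g\cdot x\in\cN_Q\}$ fits into a Cartesian square: it is the preimage of $\cN_Q\subset\cN_G$ under $\beta: G\times\cN_P\to\cN_G$, and simultaneously (after passing to the $P$-quotient on one side and pulling back the $Q$-bundle on the other) it computes the fibre product of $G\times^P\cN_P\to\cN_G$ with $G\times^Q\cN_Q\to\cN_G$. So the strategy is: apply proper base change for the square expressing ${}'\Res\circ\Ind$ as pull-push around $\cN_G$, and check that the resulting pull-push around $Z$ is exactly $(p_2')_!(p_2'')_\flat(p_1)^*\cF[2\dim U_P]$. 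Here $p_1(g,x)=p_{L\subset P}(x)$ recovers $\alpha$ followed by projection (so $(p_1)^*\cF$ matches $\alpha^*\cF$ pulled back to $Z$), while $p_2 = p_2'p_2''$ with $p_2(g,x)=p_{M\subset Q}(g\cdot x)$ matches the restriction-side maps; the shift $[2\dim U_P]$ is inherited unchanged from~\eqref{eqn:induction} since base change and the $(\cdot)_\flat$ operations introduce no further shifts once one is careful that the $Q$-side normalization shift $[2\dim U_Q]$ cancels between the $(\cdot)^*$ and $(\cdot)_\flat$ in the definition of ${}'\Res$ (this cancellation is why the unnormalized left adjoint is used here rather than the normalized $\Ind$).

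The main obstacle, and the step requiring the most care, will be the bookkeeping around the $(\cdot)_\flat$ operations and the interplay of the two principal bundle quotients (by $P$ on the $\Ind$ side, by $Q$ on the $\Res$ side) with base change. Since $(\cdot)_\flat$ is only a partial right inverse to a smooth pullback, defined on shifts of equivariant perverse sheaves, one must verify at each stage that the object being fed to it genuinely has this form, and that base change along $\beta'$ commutes with replacing $(\beta'')^*$-descent by $(p_2'')^*$-descent; concretely this amounts to checking that the relevant square of $P$-bundles (or $Q$-bundles) is itself Cartesian and that smooth base change applies. I expect the cleanest way to organize this is to work $P$-equivariantly on $G\times\cN_P$ throughout — i.e. never descend through $\beta''$ or $p_2''$ until the very end — so that all the maps in play are honest maps and only ordinary proper/smooth base change is invoked, and then perform the two descents (which are inverse to genuine smooth pullbacks and hence commute with everything in sight) as the final step. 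The equivariance checks that ensure $(\cdot)_\flat$ is defined are exactly the ones the paper has announced it will omit, so in the write-up I would state the Cartesian square explicitly, invoke base change, and defer those checks with a remark.
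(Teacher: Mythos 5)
Your overall plan --- recognize $Z$ as the preimage of $\cN_Q$ under $\beta$ (equivalently, the relevant fibre product governing ${}'\Res\circ\Ind$), apply proper base change along the Cartesian square for $\beta'$ over $\cN_Q\hookrightarrow\cN_G$, and let the single $P$-bundle descent ride along --- is exactly the idea behind the paper's terse proof, and your conclusion about the shift is correct.

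However, your account of the definition of ${}'\Res_{M\subset Q}^G$ is wrong, and this propagates some confusion through the middle of the argument. The left adjoint ${}'\Res_{M\subset Q}^G$ is \emph{not} given by a $(\cdot)_\flat(\cdot)^*(\cdot)_!$ recipe attached to a diagram through $G\times\cN_Q$ and $G\times^Q\cN_Q$; that is the shape of $\Ind_{M\subset Q}^G$. Up to equivariance bookkeeping it is simply $(p_{M\subset Q})_! \circ i_Q^*$, where $i_Q\colon\cN_Q\hookrightarrow\cN_G$ is the inclusion, with no $(\cdot)_\flat$ operation and \emph{no} cohomological shift. Consequently there is no ``$[2\dim U_Q]$ cancellation'' of the kind you invoke, and there are not ``two descents'' to interchange: the only $(\cdot)_\flat$ in sight is the one inherited from~\eqref{eqn:induction}, which after base change becomes the $(\cdot)_\flat$-descent along the $P$-bundle $p_2''\colon Z\to Z/P$. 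With the correct (and simpler) definition of ${}'\Res$, the computation collapses to exactly what the paper intends: substitute~\eqref{eqn:induction}; observe that the square with top row $Z/P\hookrightarrow G\times^P\cN_P$, bottom row $i_Q\colon\cN_Q\hookrightarrow\cN_G$, and vertical maps the restrictions of $\beta'$ is Cartesian and apply base change; note that $\alpha^*\cF$ restricted to $Z$ is $p_1^*\cF$ and that the restriction of the $(\beta'')_\flat$-descended object to $Z/P$ is the $(p_2'')_\flat$-descent of $p_1^*\cF$; and finally compose $(\beta'|_{Z/P})_!$ with $(p_{M\subset Q})_!$ to obtain $(p_2')_!$, with the shift $[2\dim U_P]$ carried along unchanged because ${}'\Res$ contributes none.
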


\begin{proof}
This is the analogue of~\cite[Equation (1) in \S 10.1]{marsspringer}, with groups replaced by nilpotent cones. It follows easily from~\eqref{eqn:induction}, the definition of ${}'\Res_{M\subset Q}^G$, and the base change isomorphism.
\end{proof}

Now recall our set of representatives $g_1,\cdots,g_s$ for the $Q$--$P$ double cosets in $G$, which we have ordered in such a way that 
\[ G_i:=\bigcup_{j=1}^i Qg_jP \] 
is a closed subset for all $i\in\{0,1,\cdots,s\}$. We obtain closed subsets $Z_i:=\{(g,x)\in Z\,|\,g\in G_i\}$ of $Z$. Note that $G_s=G$ and $Z_s=Z$, while $G_0=Z_0=\emptyset$. Let $(p_1)_i,(p_2)_i$ denote the restrictions of $p_1,p_2$ to $Z_i$, and factor $(p_2)_i$ as $(p_2)_i'(p_2)_i''$ where $(p_2)_i''$ is the quotient projection by the action of $P$. Similarly, let $(r_1)_i,(r_2)_i$ denote the restrictions of $p_1,p_2$ to $Z_i\smallsetminus Z_{i-1}$, and factor $(r_2)_i$ as $(r_2)_i'(r_2)_i''$ where $(r_2)_i''$ is the quotient projection by the action of $P$. Then for any $\cF\in\Perv_L(\cN_L,\bk)$ and any $i\in\{1,\cdots,s\}$, the canonical distinguished triangle associated to the closed embedding $Z_{i-1}\hookrightarrow Z_i$ induces a distinguished triangle in $\Db(\cN_M,\bk)$:
\begin{equation} \label{eqn:dist-tri}
\begin{split}
&((r_2)_i')_!((r_2)_i'')_\flat(r_1)_i^*\cF[2\dim U_P] \to\\
&\qquad\qquad((p_2)_i')_!((p_2)_i'')_\flat(p_1)_i^*\cF[2\dim U_P] \to\\
&\qquad\qquad\qquad\qquad((p_2)_{i-1}')_!((p_2)_{i-1}'')_\flat(p_1)_{i-1}^*\cF[2\dim U_P] \xrightarrow{+1},
\end{split}
\end{equation}
in which the third term is zero if $i=1$. 

The following lemma is the analogue of~\cite[Equation (3) in \S 10.1]{marsspringer}; its proof is postponed to~\S\ref{ss:proof-hard}.

\begin{lem} \label{lem:hard}
For any $\cF\in\Perv_L(\cN_L,\bk)$ and any $i\in\{1,\cdots,s\}$ we have an isomorphism in $\Db(\cN_M,\bk)$:
\[
((r_2)_i')_!((r_2)_i'')_\flat(r_1)_i^*\cF[2\dim U_P]
\cong
\Ind_{M\cap{}^{g_i}L \subset M\cap{}^{g_i} P}^M({}'\Res_{M\cap{}^{g_i} L\subset Q\cap{}^{g_i} L}^{{}^{g_i} L}(\Ad(g_i^{-1})^*\cF)).
\]
\end{lem}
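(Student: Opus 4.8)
The plan is to follow the template of Mars and Springer~\cite[\S 10.1]{marsspringer}, with their groups replaced by nilpotent cones exactly as in Lemma~\ref{lem:easy}: I will produce a commutative diagram of varieties exhibiting $Z_i\smallsetminus Z_{i-1}$, with its maps $(r_1)_i$ and $(r_2)_i$, as assembled from the induction diagram (of the shape of~\eqref{eqn:induction}, for the group $M$) for $\Ind_{M\cap{}^{g_i}L\subset M\cap{}^{g_i}P}^M$ and the restriction diagram for ${}'\Res_{M\cap{}^{g_i}L\subset Q\cap{}^{g_i}L}^{{}^{g_i}L}$, in which the extra comparison maps are Zariski-locally-trivial affine-space bundles. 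Granting such a diagram, the lemma follows from proper base change, the routine compatibilities of the $(\cdot)_\flat$ operations, and the identity $f_!f^*\cong\id[-2d]$ for an affine-space bundle $f$ of rank $d$; the shifts on the two sides then recombine into $[2\dim U_P]$ via a dimension identity that holds precisely because $g_i\in\Sigma(M,L)$.

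The geometric heart is the description of $Z_i\smallsetminus Z_{i-1}$. Writing an element of $Qg_iP$ as $g=qg_ip$ with $q\in Q$ and $p\in P$ (well-defined up to the natural action of $Q\cap{}^{g_i}P$), and setting $y=p\cdot x\in\cN_P$, the condition $g\cdot x\in\cN_Q$ becomes $g_i\cdot y\in\cN_Q\cap\cN_{{}^{g_i}P}$; since $M\cap{}^{g_i}L$ contains a maximal torus, this common torus lies in both $Q$ and ${}^{g_i}P$, so $\cN_Q\cap\cN_{{}^{g_i}P}=\cN_{Q\cap{}^{g_i}P}$. Putting $z:=g_i\cdot y$ then identifies $(Z_i\smallsetminus Z_{i-1})/P$ with $Q\times^{Q\cap{}^{g_i}P}\cN_{Q\cap{}^{g_i}P}$, compatibly with $(r_2)_i'$, which becomes $(q,v)\mapsto p_{M\subset Q}(q\cdot v)$, i.e.\ the composite $Q\times^{Q\cap{}^{g_i}P}\cN_{Q\cap{}^{g_i}P}\to\cN_Q\to\cN_M$. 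At this point I would invoke the fact — part of the double-coset analysis behind Lemma~\ref{lem:double-cosets}; cf.~\cite[\S 5.6]{dignemichel} — that $g_i$ may be chosen in good position, so that all the relevant intersections decompose compatibly with the Levi/unipotent-radical decompositions of $Q$, ${}^{g_i}P$, $M$ and ${}^{g_i}L$ (with $M\cap{}^{g_i}L$ the common Levi piece). This lets me factor $Q\times^{Q\cap{}^{g_i}P}\cN_{Q\cap{}^{g_i}P}\to\cN_M$ through the induction variety for $\Ind_{M\cap{}^{g_i}L\subset M\cap{}^{g_i}P}^M$ by an affine-space bundle absorbing the $U_Q$- and $(U_M\cap{}^{g_i}U_P)$-directions.

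For the other half of the diagram I would unwind $(r_1)_i(g,x)=p_{L\subset P}(x)$ through the substitutions above: after using the $L$-equivariance of $\cF$ to absorb the twist by the image of $p$ in $L$, the complex $(r_1)_i^*\cF$ is identified with the pullback along another affine-space bundle of the restriction of $\Ad(g_i^{-1})^*\cF$ to $\cN_{Q\cap{}^{g_i}L}$ — that is, of the very complex to which ${}'\Res_{M\cap{}^{g_i}L\subset Q\cap{}^{g_i}L}^{{}^{g_i}L}$ applies its projection. Assembling the two factorizations into one commutative diagram and chasing $\cF$ through it, the operation $((r_2)_i'')_\flat(r_1)_i^*$ collapses, by the affine-bundle identity and base change, to $(b'')_\flat a^*$ applied to $(p_{M\cap{}^{g_i}L\subset Q\cap{}^{g_i}L})_!\bigl(\cN_{Q\cap{}^{g_i}L}\hookrightarrow\cN_{{}^{g_i}L}\bigr)^*\Ad(g_i^{-1})^*\cF$, up to shift, where $a,b',b''$ play the roles of $\alpha,\beta'',\beta'$ in the induction diagram for $M$; applying $((r_2)_i')_!=(b')_!$ and verifying that the accumulated shift equals $[2\dim U_P]$ then completes the proof, and feeding this into the distinguished triangles~\eqref{eqn:dist-tri} yields Theorem~\ref{thm:mackey}.

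The main obstacle will be precisely this bookkeeping: verifying that the comparison maps are affine-space bundles sitting in the claimed positions, which amounts to pinning down ``$g_i$ in good position'' firmly enough that all the relevant intersections ($Q\cap{}^{g_i}P$, $Q\cap{}^{g_i}U_P$, $U_Q\cap{}^{g_i}P$, and so on) split compatibly with the Levi/unipotent-radical decompositions — after which the shift identity $\dim U_P=\dim(U_M\cap{}^{g_i}P)+\dim U_{Q\cap{}^{g_i}L}+(\text{bundle ranks})$ is a consequence of $g_i\in\Sigma(M,L)$. The equivariance checks needed to see that each $(\cdot)_\flat$ is legitimately defined along the way are, as the authors note for the other such steps, routine, and I would suppress them.
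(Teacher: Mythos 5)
Your proposal is correct and follows essentially the same route as the paper: both model the argument on Mars--Springer~\cite[\S 10.1]{marsspringer}, parametrizing $Z_i\smallsetminus Z_{i-1}$ via the $Q$--$P$ double coset decomposition, reducing to $Q\times\cN_{Q\cap{}^{g_i}P}$ after quotienting by $P$ and invoking $L$-equivariance of $\cF$, then using the direct sum decomposition of $\Lie(Q\cap{}^{g_i}P)$ (valid since $M\cap{}^{g_i}L$ contains a maximal torus) together with affine-bundle projections and $(\cdot)_\flat$-compatibilities to match the induction/restriction composition, with the shift reconciled by a dimension count. The paper spells out the chain of comparison maps $Y_i\to Q\times\cN_{Q\cap{}^{g_i}P}\to Q\times\cN_i\to M\times\cN_i$ more explicitly than your sketch (and your notation $U_M\cap{}^{g_i}P$, $U_{Q\cap{}^{g_i}L}$ should read $M\cap{}^{g_i}U_P$, $U_Q\cap{}^{g_i}L$), but the plan and all key ideas are the same.
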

\noindent

\begin{proof}[Proof of Theorem~{\rm \ref{thm:mackey}}]
Recall that our induction and restriction functors preserve the equivariant perverse categories (see~\cite[\S2.1 and references therein]{genspring1}). So for $\cF\in\Perv_L(\cN_L,\bk)$, Lemma~\ref{lem:hard} shows that the first term in the distinguished triangle~\eqref{eqn:dist-tri} belongs to $\Perv_M(\cN_M,\bk)$. Since $\Perv(\cN_M,\bk)$ is closed under extensions in $\Db(\cN_M,\bk)$, we can conclude by induction on $i$ that the second term in~\eqref{eqn:dist-tri} belongs to $\Perv(\cN_M,\bk)$ for all $i \in \{1, \cdots, s\}$. Thus the distinguished triangle~\eqref{eqn:dist-tri} becomes a short exact sequence in $\Perv(\cN_M,\bk)$. Using Lemma~\ref{lem:hard} and induction again, one can check that, for any $i \in \{1, \cdots, s\}$, the perverse sheaf $((p_2)_i')_!((p_2)_i'')_\flat(p_1)_i^*\cF[2\dim U_P]$ has a descending filtration of length $i$ with the same successive quotients as in Theorem~\ref{thm:mackey}. Taking $i=s$ and using Lemma~\ref{lem:easy}, we obtain Theorem~\ref{thm:mackey}.
\end{proof}

%------------------------------------------------------------------------------
\subsection{Proof of Lemma~\ref{lem:hard}}
\label{ss:proof-hard}
%------------------------------------------------------------------------------

In this subsection we prove Lemma~\ref{lem:hard}. We fix some $i \in \{1, \cdots, s\}$.

By definition, $Z_i\smallsetminus Z_{i-1}=\{(g,x)\in Qg_iP\times\cN_P\,|\,g\cdot x\in\cN_Q\}$. Set $Y_i=Q\times P\times \cN_{Q\cap{}^{g_i}P}$, and define 
\[ \sigma:Y_i\to Z_i\smallsetminus Z_{i-1}:(q,p,y)\mapsto(qg_ip^{-1},pg_i^{-1}\cdot y). \]
Then $\sigma$ is a principal bundle for the group $Q\cap{}^{g_i}P$, which acts on $Y_i$ by the rule
\[
g\cdot(q,p,y)=(qg^{-1},pg_i^{-1}g^{-1}g_i,g\cdot y),\text{ for }g\in Q\cap{}^{g_i}P,\,(q,p,y)\in Y_i.
\]
Define maps
\[
\begin{split}
s_1&=(r_1)_i\sigma:Y_i\to\cN_L:(q,p,y)\mapsto p_{L\subset P}(pg_i^{-1}\cdot y),\\
s_2&=(r_2)_i\sigma:Y_i\to\cN_M:(q,p,y)\mapsto p_{M\subset Q}(q\cdot y),\\
\end{split}
\]
and factor $s_2$ as $s_2's_2''$ where $s_2'':Y_i\to Q\times^{Q\cap{}^{g_i}P}\cN_{Q\cap{}^{g_i}P}$ is the obvious projection, a principal bundle for $P\times(Q\cap{}^{g_i}P)$. Then by elementary isomorphisms of sheaf functors, we have
\begin{equation} \label{eqn:s-version}
((r_2)_i')_!((r_2)_i'')_\flat (r_1)_i^*\cF
\cong
(s_2')_!(s_2'')_\flat s_1^*\cF.
\end{equation}

Denote the image of $p\in P$ under the canonical projection $P\to L$ by $\overline{p}$. The map $s_1$ factors as $ab$ where $b:Y_i\to L\times\cN_L:(q,p,y)\mapsto(\overline{p},p_{L\subset P}(g_i^{-1}\cdot y))$ and $a:L\times\cN_L\to\cN_L$ is the $L$-action. Since $\cF$ belongs to $\Perv_L(\cN_L,\bk)$, we have an isomorphism $a^*\cF\cong\mathrm{pr}_2^*\cF$ where $\mathrm{pr}_2:L\times\cN_L\to\cN_L$ is the second projection. Hence $s_1^*\cF\cong\tilde{s}_1^*\cF$ where 
\[ \tilde{s}_1=\mathrm{pr}_2 b:Y_i\to\cN_L:(q,p,y)\mapsto p_{L\subset P}(g_i^{-1}\cdot y). \]
But after modifying $s_1$ to $\tilde{s}_1$, we see that both $\tilde{s}_1$ and $s_2$ factor through the projection $Y_i\to Q\times\cN_{Q\cap{}^{g_i}P}$ (i.e.\ they are independent of the $P$ factor). So if we define
\[
\begin{split}
\hat{s}_1&:Q\times\cN_{Q\cap{}^{g_i}P}\to\cN_L:(q,y)\mapsto p_{L\subset P}(g_i^{-1}\cdot y),\\
\hat{s}_2&:Q\times\cN_{Q\cap{}^{g_i}P}\to\cN_M:(q,y)\mapsto p_{M\subset Q}(q\cdot y),
\end{split}
\]
and factor $\hat{s}_2$ as $\hat{s}_2'\hat{s}_2''$ where $\hat{s}_2'':Q\times\cN_{Q\cap{}^{g_i}P}\to Q\times^{Q\cap{}^{g_i}P}\cN_{Q\cap{}^{g_i}P}$ is the quotient projection, then we have
\begin{equation} \label{eqn:hats-version}
(s_2')_!(s_2'')_\flat s_1^*\cF
\cong
(\hat{s}_2')_!(\hat{s}_2'')_\flat \hat{s}_1^*\cF.
\end{equation}

Since $M\cap{}^{g_i}L$ contains a maximal torus, we have a direct sum decomposition
\begin{equation*}
\Lie(Q\cap{}^{g_i}P)=\Lie(M\cap{}^{g_i}L)\oplus\Lie(M\cap{}^{g_i}U_P)\oplus\Lie(U_Q\cap{}^{g_i}L)\oplus\Lie(U_Q\cap{}^{g_i}U_P),
\end{equation*}
and an element of $\Lie(Q\cap{}^{g_i}P)$ is nilpotent if and only if its component in $\Lie(M\cap{}^{g_i}L)$ is nilpotent. Define
\begin{equation}
\label{eqn:def-Ni}
\cN_i:=\cN_{M\cap{}^{g_i}L}\times\Lie(M\cap{}^{g_i}U_P)\times\Lie(U_Q\cap{}^{g_i}L).
\end{equation}
Then we have an obvious identification of $\cN_i$ with the fibre product of $\cN_{M\cap{}^{g_i}P}$ and $\cN_{Q\cap{}^{g_i}L}$ over $\cN_{M\cap{}^{g_i}L}$. Also we have a vector bundle projection
\[
\tau:\cN_{Q\cap{}^{g_i}P}\to\cN_i,
\]  
given by forgetting the component in $\Lie(U_Q\cap{}^{g_i}U_P)$. Since the latter space is $(Q\cap{}^{g_i}P)$-stable, we have a natural action of $(Q\cap{}^{g_i}P)$ on $\cN_i$ making $\tau$ a $(Q\cap{}^{g_i}P)$-equivariant map. If we denote the induced vector bundle projection $Q\times\cN_{Q\cap{}^{g_i}P}\to Q\times\cN_i$ by $\tau$ also, then both $\hat{s}_1$ and $\hat{s}_2$ factor through $\tau$, say $\hat{s}_1=t_1\tau$ and $\hat{s}_2=t_2\tau$. Factoring $t_2$ as $t_2't_2''$ where $t_2'':Q\times\cN_i\to Q\times^{Q\cap{}^{g_i}P}\cN_i$ is the quotient projection, we have
\begin{equation} \label{eqn:t-version}
(\hat{s}_2')_!(\hat{s}_2'')_\flat \hat{s}_1^*\cF
\cong
(t_2')_!(t_2'')_\flat t_1^*\cF[-2\dim(U_Q\cap{}^{g_i}U_P)].
\end{equation}
We can alternatively factorize $t_2$ as $\hat{t}_2'\hat{t}_2''$ where $\hat{t}_2''$ is the quotient projection by the subgroup $M\cap{}^{g_i}P$ of $Q\cap{}^{g_i}P$. Since $U_Q\cap{}^{g_i}P$ is an affine space, we have
\begin{equation} \label{eqn:hatt-version}
(t_2')_!(t_2'')_\flat t_1^*\cF\cong
(\hat{t}_2')_!(\hat{t}_2'')_\flat t_1^*\cF[2\dim(U_Q\cap{}^{g_i}P)].
\end{equation}

To make the definition of $t_1$ and $t_2$ more explicit, if $q\in Q$ and $(x,v,w)\in\cN_i$ (with $x,v,w$ respectively in the three spaces appearing in the right-hand side of~\eqref{eqn:def-Ni}), we have
\[
\begin{split}
t_1(q,x,v,w)&=g_i^{-1}\cdot(x+w),\\
t_2(q,x,v,w)&=\overline{q}\cdot(x+v),
\end{split}
\]
where $\overline{q}\in M$ is the image of $q$ under the canonical projection $Q\to M$. In particular, both maps $t_1$ and $t_2$ factor through the induced projection $\varphi:Q\times\cN_i\to M\times\cN_i$, which is an affine space bundle. Writing $t_1=u_1\varphi$ and $t_2=u_2\varphi$ and factoring $u_2$ as $u_2'u_2''$ where $u_2'':M\times\cN_i\to M\times^{M\cap{}^{g_i}P}\cN_i$ is the quotient projection, we have
\begin{equation} \label{eqn:u-version}
(\hat{t}_2')_!(\hat{t}_2'')_\flat t_1^*\cF\cong
(u_2')_!(u_2'')_\flat u_1^*\cF[-2\dim U_Q].
\end{equation}

On the other hand, the definitions of $\Ind_{M\cap{}^{g_i}L \subset M\cap{}^{g_i} P}^M$ and ${}'\Res_{M\cap{}^{g_i} L\subset Q\cap{}^{g_i} L}^{{}^{g_i} L}$, together with the base change isomorphism applied to the Cartesian square
\[
\xymatrix{
\cN_i \ar[r] \ar[d] & \cN_{M \cap {}^{g_i} P} \ar[d] \\
\cN_{Q \cap {}^{g_i} L} \ar[r] & \cN_{M \cap {}^{g_i} L},
}
\]
show that
\begin{equation} \label{eqn:last-step}
\Ind_{M\cap{}^{g_i}L \subset M\cap{}^{g_i} P}^M({}'\Res_{M\cap{}^{g_i} L\subset Q\cap{}^{g_i} L}^{{}^{g_i} L}(\Ad(g_i^{-1})^*\cF))
\cong
(u_2')_!(u_2'')_\flat u_1^*\cF[2\dim(M\cap{}^{g_i}P)].
\end{equation}
Putting together~\eqref{eqn:s-version}, \eqref{eqn:hats-version}, \eqref{eqn:t-version}, \eqref{eqn:hatt-version}, \eqref{eqn:u-version} and~\eqref{eqn:last-step}, all that remains is to check that the shifts match up, or in other words that
\begin{equation}
\dim U_P-\dim(U_Q\cap{}^{g_i}U_P)+\dim(U_Q\cap{}^{g_i}P)-\dim U_Q=\dim(M\cap{}^{g_i}U_P).
\end{equation}
This is the content of~\cite[Third equation on p.~176]{marsspringer}.

%%%%%%%%%%%%%%%%%%%%%%%%%%%%%%%%%%%%%%%%%%%%%%%%%%%%%%%%%%%%%%%%%%%%%%%
\section{Completion of the proof of Theorem~\ref{thm:mgsc-intro}}
\label{sec:case-by-case}
%%%%%%%%%%%%%%%%%%%%%%%%%%%%%%%%%%%%%%%%%%%%%%%%%%%%%%%%%%%%%%%%%%%%%%%

%------------------------------------------------------------------------------
\subsection{Normalizers of distinguished orbits and local systems}
\label{ss:normalizers-distinguished}
%------------------------------------------------------------------------------

Since Theorem~\ref{thm:disjointness} has completed the proof of~\eqref{eqn:disjointness}, we turn now to the proof of the remaining statement~\eqref{eqn:bijection} of Theorem~\ref{thm:mgsc-intro}. As was mentioned in the introduction, under the assumption that $\bk$ is big enough for $G$ in the sense of~\eqref{eqn:definitely-big-enough}, this statement will follow immediately from~\cite[Lemma 2.1 and Theorem 3.1]{genspring2} if we can check that every cuspidal pair for a Levi subgroup $L$ is fixed by the action of $N_G(L)$. As was also mentioned in the introduction, we checked in~\cite[Lemma 2.9]{genspring2} that $N_G(L)$ preserves every distinguished orbit of $L$. Since every cuspidal pair is supported on a distinguished orbit (by Proposition~\ref{prop:levi-rule}), the following supplement to that lemma completes the proof of Theorem~\ref{thm:mgsc-intro}. Note that for this result we do not have to assume~\eqref{eqn:definitely-big-enough}.

\begin{prop}
\label{prop:horrible}
If $L$ is a Levi subgroup of $G$ and $\cO_L$ is a distinguished orbit for $L$, then $N_G(L)$ preserves the isomorphism class of every irreducible $L$-equivariant local system on $\cO_L$. 
\end{prop}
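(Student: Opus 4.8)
The plan is to reduce the statement to a question about a finite group and then dispose of the finitely many cases. Fix a point $x \in \cO_L$, so that the irreducible $L$-equivariant local systems on $\cO_L$ correspond to irreducible $\bk$-representations of the component group $A_L(x) = L_x/L_x^\circ$. An element $n \in N_G(L)$ acts on $\cO_L$ (this is already known to preserve $\cO_L$ by \cite[Lemma 2.9]{genspring2}), hence conjugation by $n$ induces, after adjusting by an element of $L$ to move $n \cdot x$ back to $x$, an automorphism of $A_L(x)$; and the induced action on isomorphism classes of irreducible local systems is precisely the action of this automorphism on $\Irr(\bk[A_L(x)])$. So the claim becomes: the image of the natural map $N_G(L, x)/L_x^\circ \to \operatorname{Out}(A_L(x))$ — or rather its further image in the group of permutations of $\Irr(\bk[A_L(x)])$ — is trivial. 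Equivalently, every automorphism of $A_L(x)$ arising from $N_G(L)$ fixes every irreducible $\bk$-representation up to isomorphism. Note that this is a statement about $A_L(x)$ together with the outer action of $N_G(L)/L$, and it is insensitive to replacing $G$ by a group with the same derived group, so one may reduce to $G$ semisimple, and then further analyze type by type.

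The first genuine step is to observe that, since $\cO_L$ is distinguished in $L$, the group $A_L(x)$ is of a very restricted shape: for a distinguished nilpotent orbit the reductive part of the centralizer is finite, and the structure of $A_L(x)$ is classical and tabulated (see \cite{cm}) — it is an elementary abelian $2$-group in most cases, and for the distinguished orbits in exceptional types it is at worst $\fS_2$, $\fS_3$, $\fS_4$, or $\fS_5$ (the latter occurring only for the regular and subregular-type orbits in the exceptional Levi factors), with the symmetric-group cases arising essentially only in type $F_4$, $E_7$, $E_8$ and their Levi subgroups. When $A_L(x)$ is abelian there is nothing subtle about $\Irr(\bk[A_L(x)])$ as a set-with-automorphisms only in so far as automorphisms can permute characters; so one must show that the $N_G(L)$-action is trivial even there. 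When $A_L(x) \cong \fS_m$ for $m \le 5$, one uses that $\operatorname{Out}(\fS_m)$ is trivial for $m \ne 6$, so any automorphism of $A_L(x)$ coming from $N_G(L)$ is inner and hence fixes every representation up to isomorphism automatically — this case is free. Thus the real content is concentrated in the cases where $A_L(x)$ is abelian (chiefly an elementary abelian $2$-group, with a few $\Z/2 \times \Z/2$, $\Z/3$, or similar).

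For the abelian cases the strategy is to pin down the $N_G(L)/L$-action on $A_L(x)$ directly. The key leverage is a combination of two facts: first, $N_G(L)/L$ acts on the character lattice / root datum of $L$, and one can track how it permutes the relevant cocharacters (the ones whose associated one-parameter subgroups generate the torus part that, modulo $L_x^\circ$, accounts for $A_L(x)$ — recall $A_L(x)$ for a distinguished orbit is generated by the images of suitable semisimple elements in the centralizer, controlled by an $\mathfrak{sl}_2$-triple through $x$); second, for the classical Levi factors $\SL$, $\Sp$, $\SO$, $\GL$ the distinguished orbits are governed by partitions and $A_L(x)$ by the distinct even (resp. odd) parts, and the action of an outer automorphism — the only possible source of a nontrivial $N_G(L)/L$-action on such an $L$ — either is visibly trivial on this combinatorial data or is again realized by an inner automorphism of $A_L(x)$. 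Concretely I would: (i) list $\fL$, the representatives of conjugacy classes of Levi subgroups, for each exceptional $G$ (and note classical $G$ were handled in \cite{genspring2} or are subsumed); (ii) for each $L$, compute $N_G(L)/L$ — which is in all these cases a small group, often a product of symmetric groups of small order or a cyclic group — using the known tables of relative Weyl groups; (iii) for each distinguished $\cO_L$, identify $A_L(x)$ and check that the outer action is trivial, invoking the $\operatorname{Out}(\fS_m)=1$ shortcut wherever $A_L(x)$ is a symmetric group and an explicit cocharacter/partition computation otherwise.

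The main obstacle I anticipate is step (iii) in the handful of cases where $A_L(x)$ is an elementary abelian $2$-group of rank $\ge 2$ and $N_G(L)/L$ contains an element that could a priori permute the standard generators nontrivially — for instance Levi subgroups of type $D_n$ inside $E_n$, where the graph automorphism of $D_n$ is induced by $N_G(L)$ and where a distinguished orbit can have $A_L(x) = (\Z/2)^k$ with $k$ fairly large, or the subtle exceptional-type Levi factors ($F_4 \subset E_7, E_8$, or $E_6 \subset E_7, E_8$) where $A_L(x)$ can be $\fS_4$ or $\fS_5$ but the argument that the relevant outer class is trivial still needs the $\operatorname{Out}$ computation to be combined correctly with the (possibly nontrivial) action of the diagram automorphism of $L$ on $\cO_L$. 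Handling those will require a careful but finite case check, which is exactly the "some case-by-case checking, but no explicit knowledge of cuspidal pairs" flagged in the introduction; I would organize it as a table indexed by $(G, L, \cO_L)$ with a column recording $A_L(x)$, a column recording $N_G(L)/L$ and its action, and a one-line justification (usually "$\operatorname{Out}(A_L(x))=1$" or "action trivial on partition data") in each row.
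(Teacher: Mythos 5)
Your overall skeleton — reduce to a statement about the action of $N_G(L)$ on $\Irr(\bk[A_L(x)])$, reduce to $G$ semisimple and quasi-simple, then dispose of a finite list of cases — matches the paper's proof, and the $\mathrm{Out}(\fS_m)=1$ shortcut is essentially the paper's ``principle~(3)'' (used there for $|A_L(x)|\le 2$ and $A_L(x)\cong\fS_3$). But there are two genuine gaps in your plan, one of them serious.

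First, your claimed classification of $A_L(x)$ for distinguished $\cO_L$ — ``an elementary abelian $2$-group in most cases, and \dots at worst $\fS_2$, $\fS_3$, $\fS_4$, or $\fS_5$'' — is wrong when $L$ has a simply connected factor of type $B$ or $D$. If $L$ is a Levi subgroup of $\Spin(V)$, the group $A_L(x)$ can be a nonabelian $2$-group (for instance a quaternion group $Q_8$ already for $\Spin(15)$ and the distinguished orbit $[9,5,1]$); these are not symmetric groups and not elementary abelian, so neither your $\mathrm{Out}$-shortcut nor your ``partition data'' argument directly applies. You also appear to wave away the classical types as ``subsumed'' by~\cite{genspring2}, but Proposition~\ref{prop:horrible} is a strictly stronger statement than what was proved there and does need to be established for classical $G$. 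The paper's device for the $\Spin$ case is not an $\mathrm{Out}$ computation at all: it shows that the only irreducible representations of $A_L(x)$ that do not already factor through $A_{\SO(U)}(x)$ are at most two, and when there are two they have distinct \emph{central characters} for $Z(L)/Z(L)^\circ$; since $N_G(L)$ acts trivially on $Z(L)/Z(L)^\circ$, it cannot permute them. Without this central-character idea, the $\Spin$ case is not closed by your method.

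Second, you do not exploit the observation (the paper's ``principle~(2)'') that if the natural map $Z(L)/Z(L)^\circ\to A_L(x)$ is surjective, then the claim holds automatically, again because $N_G(L)$ acts trivially on $Z(L)/Z(L)^\circ$. This is what kills the regular orbit for every $L$ in one stroke, and hence all of type $A$ (whose only distinguished orbit is the regular one), and it is why the exceptional case-check is so short: after discarding the regular orbit, the remaining non-regular distinguished orbits of proper non-toral Levi subgroups in $F_4$, $E_6$, $E_7$, $E_8$ all have $|A_L(x)|\le 2$ or $A_L(x)\cong\fS_3$. Correspondingly, the obstacle you anticipate — $D_n$-type Levi subgroups inside $E_n$ with $(\Z/2)^k$ for $k$ large — does not actually arise: once you restrict to non-regular distinguished orbits of proper Levi subgroups of exceptional groups, the component groups are tiny. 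You have in effect anticipated difficulty in the wrong place (the exceptional Levi subgroups) while missing it in the right place (Spin Levi factors in the classical types), and you are missing the two central-character-based tools that the proof actually turns on.
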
 

Before beginning the proof, we make an observation which will be used throughout the paper. If $H$ is a connected reductive group and $Z$ is a closed subgroup of $Z(H)$, then we can identify the nilpotent cones $\cN_H$ and $\cN_{H/Z}$. Moreover, for any $x\in\cN_H=\cN_{H/Z}$, the short exact sequence $1\to Z\to H_x\to(H/Z)_x\to 1$ induces an exact sequence of component groups (part of the long exact sequence of homotopy groups):
\begin{equation} \label{eqn:exact-seq}
Z/Z^\circ\to A_H(x)\to A_{H/Z}(x)\to 1.
\end{equation}
We will often use~\eqref{eqn:exact-seq} without comment: in particular, the case $Z=Z(H)^\circ$ gives $A_H(x)\cong A_{H/Z(H)^\circ}(x)$, and the case $Z=Z(H)$ shows that $Z(H)/Z(H)^\circ$ surjects onto the kernel of $A_H(x)\twoheadrightarrow A_{H/Z(H)}(x)$.

Let $V$ be a $\bk$-representation of $A_H(x)$.  We say that $V$ has \emph{central character} $\chi$, where $\chi: Z(H)/Z(H)^\circ \to \bk^\times$ is a group homomorphism, if the action of $Z(H)/Z(H)^\circ$ on $V$ via $Z(H)/Z(H)^\circ \to A_H(x)$ is given by $\chi$.  Schur's lemma implies that if $V$ is absolutely irreducible, then it has a central character.  Central characters of local systems are defined in the same way; see~\cite[\S5.1]{genspring2} for a discussion.

Another ingredient of the proof of Proposition~\ref{prop:horrible} is the classification of conjugacy classes of Levi subgroups of exceptional groups, explained further in \S\ref{ss:computing}.

\begin{proof}[Proof of Proposition~{\rm \ref{prop:horrible}}]
First we have some easy reductions. Since the statement is unchanged if one replaces $G$ by $G/Z(G)^\circ$, we can assume that $G$ is semisimple; the statement can only become stronger if one replaces $G$ by its simply connected cover, so it suffices to consider the case where $G$ is simply connected. Then $G$ is a product of simply connected quasi-simple groups, so it suffices to consider the case where $G$ is simply connected and quasi-simple. We can thus consider each Lie type in turn.

Before turning to the individual types, we reformulate the problem. Fix $x\in\cO_L$ and recall that the isomorphism classes of the irreducible $L$-equivariant local systems on $\cO_L$ are in bijection with the set $\Irr(\bk[A_L(x)])$ of isomorphism classes of irreducible $\bk$-representations of $A_L(x)$. Since we already know that $N_G(L)$ preserves $\cO_L$, our claim is equivalent to saying that $A_{N_G(L)}(x)$ (in which $A_L(x)$ is a normal subgroup) acts trivially on $\Irr(\bk[A_L(x)])$. We have some easy principles:
\begin{enumerate}
\item The extreme cases where $L=G$ or $L$ is a maximal torus hold trivially.
\item If the homomorphism $Z(L)/Z(L)^\circ\to A_L(x)$ is surjective, then the claim holds, because the action of $N_G(L)$ on $Z(L)/Z(L)^\circ$ is trivial (see~\cite[proof of Lemma 5.3]{genspring2}). In particular, this applies when $\cO_L$ is the regular nilpotent orbit for $L$, because then $Z(L)/Z(L)^\circ\to A_L(x)$ is an isomorphism.
\item If $|A_L(x)|\leq 2$ or $A_L(x)\cong\fS_3$, then the claim holds, because all automorphisms of $A_L(x)$ are inner. 
\end{enumerate}

If $G$ is of type $A$ then the only distinguished nilpotent orbit for $L$ is the regular one, and principle~(2) applies.

If $G$ is of type $C$ then $G=\Sp(V)$ where $V$ is a vector space with a non-degenerate skew-symmetric bilinear form. As in the proof of~\cite[Lemma 2.9]{genspring2}, we have $L=\Sp(U)\times H$ and $N_G(L)=\Sp(U)\times H'$ where $V=U\oplus U^\perp$ is an orthogonal decomposition and $H$ and $H'$ are subgroups of $\GL(U^\perp)$. In fact, $H$ is a product of general linear groups, so $A_H(x)$ is trivial. (Here $x\in\cN_L=\cN_{\Sp(U)}\times\cN_H$, and the action of $H$ on the $\cN_{\Sp(U)}$ factor is trivial.) Thus $A_L(x)=A_{\Sp(U)}(x)$ and $A_{N_G(L)}(x)=A_{\Sp(U)}(x)\times A_{H'}(x)$, showing that the action of $A_{N_G(L)}(x)$ on $\Irr(\bk[A_L(x)])$ is trivial.
 
If $G$ is of type $B$ or $D$ then $G=\Spin(V)$ where $V$ is a vector space with a non-degenerate symmetric bilinear form. Let $\overline{G}=\SO(V)$ and let $\overline{L}$ denote the image of $L$ in $G$, a Levi subgroup of $G$. We have $\overline{L}=\SO(U)\times H$ and $N_{\overline{G}}(\overline{L})=\SO(U)\times H'$ where $V=U\oplus U^\perp$ is an orthogonal decomposition and $H$ and $H'$ are subgroups of $\GL(U^\perp)$, $H$ being a product of general linear groups. Since $G\twoheadrightarrow\overline{G}$ is a central quotient, the action of $A_{N_G(L)}(x)$ preserves the subset of $\Irr(\bk[A_L(x)])$ consisting of representations that factor through $A_{\overline{L}}(x)=A_{\SO(U)}(x)$; for these representations, we may replace $G$ by $\overline{G}$ and the same argument as in the type-$C$ case applies. So we need only consider the irreducible representations of $A_L(x)$ that do not factor through $A_{\overline{L}}(x)$. An explicit description of $L$ was given in~\cite[proof of Theorem 8.4]{genspring2}: we have
\begin{equation} \label{eqn:spin-levi}
L=(\Spin(U)\times M)/\langle(\varepsilon,\delta)\rangle,
\end{equation}
where $\{1,\varepsilon\}$ is the kernel of $\Spin(U)\twoheadrightarrow\SO(U)$ and $M\twoheadrightarrow H$ is a certain double cover of $H$ with kernel $\{1,\delta\}$. Using~\eqref{eqn:exact-seq} we see that $A_M(x)$ is generated by the image of $\delta$, and hence is either trivial or has two elements. If $A_M(x)$ is trivial, then $A_L(x)\to A_{\SO(U)}(x)$ is an isomorphism, so we can neglect this case. If $A_M(x)$ has two elements, then $A_L(x) \cong A_{\Spin(U)}(x)$. As explained in~\cite[\S 8.4]{genspring2}, $A_{\Spin(U)}(x)$ has at most $2$ irreducible $\bk$-representations not factoring through $A_{\SO(U)}(x)$. If there are $2$ such representations (which can only occur when $\bk$ contains a primitive fourth root of unity) then they have central characters for $Z(L)/Z(L)^\circ$ which are distinct, so they cannot be interchanged by $A_{N_G(L)}(x)$. 

It remains to consider the case where $G$ is a simply connected quasi-simple group of exceptional type. By principles (1) and (2), we need only consider non-regular distinguished nilpotent orbits $\cO_L$ for proper non-toral Levi subgroups $L$ of $G$; in particular, Levi subgroups all of whose components are of type $A$ can be ignored. We will see that each such orbit is covered by principle (3). The required descriptions of distinguished nilpotent orbits and the groups $A_L(x)$ can be found in~\cite[Corollary 6.1.6, Theorem 8.2.4 and \S 8.4]{cm}. 

If $G$ is of type $G_2$, there are no such orbits.

If $G$ is of type $F_4$, the only such orbit is the subregular orbit for $L$ of type $C_3$, for which $|A_L(x)|=2$ (note that $L/Z(L)^\circ\cong\PSp(6)$).

If $G$ is of type $E_6$, the only such orbits are the subregular orbits for $L$ of type $D_4$ or $D_5$, for which $|A_L(x)|=1$ (note that $L/Z(L)^\circ\cong\PSO(8)$ or $\PSO(10)$ respectively).

If $G$ is of type $E_7$, the only such orbits are:
\begin{itemize}
\item the subregular orbits for $L$ of type $D_4$ or $D_5$, for which $|A_L(x)|=1$ (again, $L/Z(L)^\circ\cong\PSO(8)$ or $\PSO(10)$ respectively);
\item the orbits of the form $(\text{subregular}\times\text{regular})$ for $L$ of type $D_4\times A_1$ or $D_5\times A_1$, for which $|A_L(x)|=2$ (here $L/Z(L)^\circ$ is a double cover of $\PSO(8)\times \PGL(2)$ or $\PSO(10)\times\PGL(2)$ respectively);
\item the two non-regular distinguished orbits for $L$ of type $D_6$, labelled by the partitions $[9,3]$ and $[7,5]$, for which $|A_L(x)|=2$ (here $L/Z(L)^\circ$ is a double cover of $\PSO(12)$);
\item the two non-regular distinguished orbits for $L$ of type $E_6$, having Bala--Carter labels $E_6(a_1)$ and $E_6(a_3)$, for which $|A_L(x)|=1$ and $2$ respectively (here $L/Z(L)^\circ$ is the adjoint group of type $E_6$).
\end{itemize}

If $G$ is of type $E_8$, we may neglect Levi subgroups which (up to $G$-conjugacy) are proper subgroups of a Levi subgroup of type $E_7$, because the groups $A_L(x)$ in the $E_8$ context are either the same as or smaller than the corresponding groups listed above in the $E_7$ context, since now $L/Z(L)^\circ$ is always of adjoint type. The remaining orbits are: 
\begin{itemize}
\item the orbits of the form $(\text{subregular}\times\text{regular})$ for $L$ of type $D_4\times A_2$ or $D_5\times A_2$, for which $|A_L(x)|=1$;
\item the two orbits of the form $(\text{non-regular distinguished}\times\text{regular})$ for $L$ of type $E_6\times A_1$, for which $|A_L(x)|=1$ or $2$;
\item the two non-regular distinguished orbits for $L$ of type $D_7$, labelled by the partitions $[11,3]$ and $[9,5]$, for which $|A_L(x)|=1$;
\item the five non-regular distinguished orbits for $L$ of type $E_7$, having Bala--Carter labels $E_7(a_1)$, $E_7(a_2)$, $E_7(a_3)$, $E_7(a_4)$ and $E_7(a_5)$, for which $|A_L(x)|$ is respectively $1,1,2,2$ and $6$, with $A_L(x)=\fS_3$ in the last case.
\end{itemize}

This concludes the proof of Proposition~\ref{prop:horrible} and thus of Theorem~\ref{thm:mgsc-intro}.
\end{proof}

%------------------------------------------------------------------------------
\subsection{The `big enough' condition}
\label{ss:big-enough}
%------------------------------------------------------------------------------

In this subsection we make the condition~\eqref{eqn:definitely-big-enough} on the field $\bk$ more explicit in particular cases of interest. Recall that this condition is equivalent to requiring $\bk$ to be a splitting field for all the finite groups $A_L(x)$ where $L$ is a Levi subgroup of $G$ and $x\in\cN_L$. We will use the obvious fact that if $\bk$ is a splitting field for a group $\Gamma$, it is also a splitting field for any quotient group of $\Gamma$.

\begin{prop}
\label{prop:big-enough}
Let $G$ be as above.
\begin{enumerate}
\item If $G$ has connected centre, then condition~\eqref{eqn:definitely-big-enough} is automatically true.
\item If $G$ is quasi-simple, then according to the type and isogeny class of $G$, the condition~\eqref{eqn:definitely-big-enough} is equivalent to the following:
\begin{itemize}
\item $A_{n-1}$, $n\geq 3$: $\bk$ contains all $|Z(G)|$-th roots of unity of its algebraic closure;
\item $B_n$, $n=7$ or $n\geq 9$, and $G$ is simply connected: $\bk$ contains all fourth roots of unity of its algebraic closure;
\item $D_n$, $n=5$, $7$, $9$, $11$, or $n \ge 13$, and $G$ is simply connected: $\bk$ contains all fourth roots of unity of its algebraic closure;
\item $D_n$, $n$ even and${}\ge 14$, and $G = \frac{1}{2}\Spin(2n)$: $\bk$ contains all fourth roots of unity of its algebraic closure;
\item $E_6$, and $G$ is simply connected: $\bk$ contains all third roots of unity of its algebraic closure;
\item other cases: no condition. 
\end{itemize} 
\end{enumerate}
\end{prop}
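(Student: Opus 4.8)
The plan is to reduce the question to the component groups of \emph{semisimple} groups, using the observation recorded just before the statement: for any connected reductive $H$ and $x\in\cN_H$ one has $A_H(x)\cong A_{H/Z(H)^\circ}(x)$ (the case $Z=Z(H)^\circ$ of~\eqref{eqn:exact-seq}). Applied to a Levi subgroup $L$ of $G$ this gives $A_L(x)\cong A_{\bar L}(x)$, where $\bar L:=L/Z(L)^\circ$ is semisimple; so condition~\eqref{eqn:definitely-big-enough} holds for $G$ if and only if $\bk$ is a splitting field for $A_{\bar L}(x)$ for every Levi subgroup $L$ of $G$ and every $x\in\cN_L$. For part~(1): if $Z(G)$ is connected then so is $Z(L)$ for every Levi $L$ (a standard fact, which follows because the root lattice of $L$ is a direct summand of that of $G$, so that $X^*(T)/(\text{root lattice of }L)$ is an extension of torsion-free groups), hence $\bar L=L/Z(L)$ is semisimple of \emph{adjoint} type; and for a semisimple adjoint group $K$ the group $A_K(x)$ is a direct product of symmetric groups $\fS_m$ with $m\le 5$ (products of copies of $\fS_2$ in the classical factors, and $\fS_2$, $\fS_3$, $\fS_4$ or $\fS_5$ in the exceptional ones; see~\cite{cm}). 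Since $\mathbb{Q}$ is a splitting field for every symmetric group in characteristic $0$ and $\mathbb{F}_p$ is one in characteristic $p$, every field is big enough for $G$, which proves~(1).

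For part~(2) I would go through the types and isogeny classes of quasi-simple $G$ one at a time, classifying which groups $A_{\bar L}(x)$ occur. The ingredients are: the classification of conjugacy classes of Levi subgroups of $G$ (see~\S\ref{ss:computing}); for each $L$, the determination of the isogeny type of each quasi-simple factor of $\bar L$; the descriptions of component groups of the relevant semisimple groups --- for the $\SO$, $\Sp$, $\SL$ and exceptional factors from~\cite{cm}, and for the $\Spin$ and half-spin factors from~\cite{genspring2} and~\cite{genspring1}; and the elementary facts that $\Z/m$ has $\bk$ as a splitting field precisely when $\bk$ contains all $m$-th roots of unity of $\overline{\bk}$, whereas symmetric groups and elementary abelian $2$-groups are split over every field. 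The only obstruction to $\bk$ being big enough is therefore the occurrence, in some $A_{\bar L}(x)$, of a cyclic summand of order $>2$: a type-$A$ factor of $\bar L$ contributes its (cyclic) centre, with the extreme case $L=G$ and $x$ regular giving $A_G(x_{\reg})=Z(G)$; the group $E_6$ simply connected contributes its centre $\mu_3$; and the spin groups can contribute cyclic-of-order-$4$ summands.

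Concretely: in type $A_{n-1}$ with $G=\SL(n)/\mu_d$, every $\bar L$ has only type-$A$ quasi-simple factors with cyclic centres of order dividing $|Z(G)|=n/d$, and $A_G(x_{\reg})=\Z/(n/d)$, so the condition is exactly that $\bk$ contain all $|Z(G)|$-th roots of unity (vacuous when $n\le 2$). In type $C$, for $\SO(2n)$ and $\SO(2n+1)$, and for every adjoint group --- in particular for $G_2$, $F_4$, $E_8$ and the adjoint forms of $E_6$ and $E_7$ --- every $A_{\bar L}(x)$ is a symmetric group or elementary abelian $2$-group, so no condition arises. For $G=E_6$ simply connected, $A_G(x_{\reg})=\mu_3$, while the Levi subgroups of type $D_4$ and $D_5$ have $\bar L$ of adjoint type $\PSO$ and all other $\bar L$ again have only type-$A$ factors with centre of order dividing $3$; hence the condition on third roots of unity. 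For $E_7$ simply connected one checks, using~\cite{cm} and the list of Levi subgroups, that every $A_{\bar L}(x)$ is a product of symmetric groups and elementary abelian $2$-groups, so again no condition. The remaining substantive work concerns the spin groups $\Spin(2n+1)$, $\Spin(2n)$ and $\frac{1}{2}\Spin(2n)$: one uses the explicit analysis of~\cite{genspring2} to decide for precisely which ranks $n$ a cyclic-of-order-$4$ summand occurs in some $A_{\bar L}(x)$, and these are exactly the ranges in the statement ($n=7$ or $n\ge 9$ in type $B$; $n\in\{5,7,9,11\}$ or $n\ge 13$ for simply connected type $D$; $n$ even with $n\ge 14$ for the half-spin groups).

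I expect the main obstacle to be this last, type-$B$ and type-$D$ bookkeeping. Whether a $\Z/4$ appears in $A_L(x)$ depends both on which isogeny form the $B_m$ or $D_m$ factor of $\bar L$ takes --- equivalently, on how the root subsystem of $L$ sits inside the weight lattice of $G$, which controls how much of the centre of $[L,L]$ is absorbed into $Z(L)^\circ$ --- and on a congruence condition on the parts of the partition labelling $x$; it is the interplay of these two that accounts for the irregular-looking lists of ranks. Once this has been settled, using the case-by-case data already established in~\cite{genspring2}, the remaining verifications, including all the exceptional types, reduce to routine consultation of the tables in~\cite{cm}.
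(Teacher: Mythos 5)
Your proposal follows essentially the same path as the paper's proof: reduce via $A_L(x)\cong A_{L/Z(L)^\circ}(x)$, dispose of part~(1) by noting that all Levi quotients are adjoint with component groups that are products of symmetric groups, and handle part~(2) type-by-type, with type $A$ controlled by the surjection $Z(G)\twoheadrightarrow A_L(x)$, type $C$ and adjoint groups giving only $(\Z/2\Z)^k$, types $B$/$D$ requiring the spin-group bookkeeping from~\cite{genspring2}, and $E_6$/$E_7$ handled by inspection. The one minor imprecision is the claim that every quasi-simple factor of $\bar L$ in type $A$ has centre of order dividing $|Z(G)|$ — what one actually needs (and what the paper proves) is that $A_L(x)$ itself is a quotient of $Z(G)$, via $Z(L)/Z(L)^\circ\twoheadrightarrow A_L(x)$ and $Z(G)\twoheadrightarrow Z(L)/Z(L)^\circ$; the factors' individual centres can have larger order.
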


\begin{proof}
To prove (1), recall that if $G$ has connected centre then so does every Levi subgroup $L$ of $G$. Hence $A_L(x)\cong A_{L/Z(L)}(x)$ is a product of groups of the form $A_H(y)$ where $H$ is simple (of adjoint type) and $y\in\cN_H$. It is well known that every such $A_H(y)$ is one of $(\Z/2\Z)^k$ (for some $k\geq 0$), $\fS_3$, $\fS_4$ or $\fS_5$ (see~\cite[Corollary 6.1.7 and \S 8.4]{cm}). Any field is a splitting field for these groups.

We now prove (2), assuming that $G$ is quasi-simple. Note that if $G$ is of adjoint type, it is covered by part~(1).  In particular, quasi-simple groups of types $E_8$, $F_4$, or $G_2$ are covered by~(1).

Suppose that $G$ is of type $A_{n-1}$ for $n\geq 2$. Recall that, for $m \geq 2$, $\SL(m)$ has the property that for any $x\in\cN_{\SL(m)}$, the natural homomorphism $Z(\SL(m))\to A_{\SL(m)}(x)$ is surjective (see e.g.~\cite[\S 6.1]{genspring2}). Hence the same property holds for any central quotient of a product of $\SL(m_i)$'s, i.e.\ for any semisimple group of type $A$. For a Levi subgroup $L$ of $G$, the semisimple quotient $L/Z(L)^\circ$ is of type $A$, so we can conclude that for any $x\in\cN_L$, the natural homomorphism $Z(L)/Z(L)^\circ\to A_L(x) \cong A_{L/Z(L)^\circ}(x)$ is surjective. Composing this with the surjective homomorphism $Z(G)\to Z(L)/Z(L)^\circ$, we deduce that the natural homomorphism $Z(G)\to A_L(x)$ is surjective. Moreover, this homomorphism is an isomorphism when $L=G$ and $x\in\cN_G$ is regular nilpotent. So~\eqref{eqn:definitely-big-enough} is equivalent to requiring $\bk$ to be a splitting field for the cyclic group $Z(G)$, a quotient of $\mu_{n}$. This, in turn, is equivalent to the stated condition if $n\geq 3$ and is automatic for $n=2$.

If $G$ is of type $C_n$ for $n\geq 3$ and not of adjoint type, then $G=\Sp(V)$. As seen in the proof of Proposition~\ref{prop:horrible}, every Levi subgroup $L$ is a product of general linear groups and symplectic groups (at most one of the latter). So by~\cite[Corollary 6.1.6]{cm} every $A_L(x)$ is a group of the form $(\Z/2\Z)^k$ for some $k\geq 0$, for which any field is a splitting field.

If $G$ is of type $B_n$ for $n\geq 2$ or $D_n$ for $n\geq 3$ (using the convention that $D_3=A_3$), let us first consider the case where $G$ is simply connected.  Thus, $G=\Spin(V)$ where $d=\dim V\geq 5$. We use the same description of a general Levi subgroup $L$ as in the proof of Proposition~\ref{prop:horrible} (see~\eqref{eqn:spin-levi}). If $A_L(x)\cong A_{\SO(U)}(x)$, then $A_L(x)$ is a group of the form $(\Z/2\Z)^k$, as in the symplectic case. So we need only consider the cases in which $A_M(x)$ has two elements, meaning that $A_L(x)\cong A_{\Spin(U)}(x)$; as in~\cite[proof of Theorem 8.4]{genspring2}, this happens if and only if the partitions labelling the $\cN_H$ factor of $x$ have no odd parts. The latter condition implies in particular that every general linear group factor of $H$ has even rank, which forces $\dim U\equiv \dim V$ (mod $4$). We can conclude that $\bk$ satisfies~\eqref{eqn:definitely-big-enough} if and only if it is a splitting field for all the groups $A_{\Spin(d')}(y)$ where $d'\leq d$, $d'\equiv d$ (mod $4$), and $y\in\cN_{\Spin(d')}$. The groups $A_{\Spin(d')}(y)$ are $2$-groups, possibly non-abelian; they are explicitly described in~\cite[\S 14.3]{lusztig} in terms of the partition of $d'$ that labels the orbit of $y$. As we observed in~\cite[\S 8.4]{genspring2}, if $\bk$ contains all fourth roots of unity of its algebraic closure (a vacuous condition when $\ell=2$), then $\bk$ is a splitting field for all $A_{\Spin(d')}(y)$. What remains is just to determine, within each congruence class modulo $4$, what the smallest value of $d'$ is for which there is a group $A_{\Spin(d')}(y)$ that actually requires the fourth roots of unity, assuming $\ell\neq 2$. We claim that the answers are $6$ ($\equiv 2$), $15$ ($\equiv 3$), $28$ ($\equiv 0$) and $21$ ($\equiv 1$), whence the rank conditions in the statement. Suitable partitions (in fact, the unique suitable partitions) in these four cases are $[5,1]$ (giving $A_{\Spin(6)}(y)\cong\Z/4\Z$), $[9,5,1]$ (giving $A_{\Spin(15)}(y)\cong Q$, the quaternion group), $[13,9,5,1]$ (giving $A_{\Spin(28)}(y)\cong Q\times\Z/2\Z$) and $[11,7,3]$ (giving $A_{\Spin(21)}(y)\cong Q$). We leave it to the reader to verify that when $d'$ is below these claimed bounds within each congruence class, every group $A_{\Spin(d')}(y)$ is a product of copies of $\Z/2\Z$ and the dihedral group of order $8$, for both of which any field is a splitting field.   

For type $B$, the argument is now complete, but in type $D$ there are one or two additional isomorphism classes that are neither of adjoint type nor simply connected.  Suppose first that $G = \SO(V)$ with $d = \dim V$ even. Then, retaining the notation of the previous paragraph, we have that for any Levi subgroup $L$, $A_L(x) \cong A_{\SO(U)}(x)$ is a group of the form $(\Z/2\Z)^k$, so any field is a splitting field.  Now suppose that $G = \frac{1}{2}\Spin(V)$ with $d \equiv 0 \pmod 4$.  As above, it is enough to determine the smallest $d'$ with $d' \le d$ and $d' \equiv d \pmod 4$ such that some $A_{\frac{1}{2}\Spin(d')}(y)$ requires the fourth roots of unity.  For $d = 28$ and $y$ in the orbit labelled by $[13,9,5,1]$, one can check that $A_{\frac{1}{2}\Spin(28)}(y) \cong Q$, giving the rank condition in the statement.

If $G$ is of type $E_6$ and not of adjoint type, then $|Z(G)|=3$. Since $Z(G)\cong A_G(x)$ for $x$ regular nilpotent, the condition~\eqref{eqn:definitely-big-enough} certainly requires $\bk$ to contain all third roots of unity of its algebraic closure. We must show the converse: i.e.\ we assume that $\bk$ contains these third roots of unity, and must show that $\bk$ is a splitting field for all the groups $A_L(x)$. When $L=G$, we see from~\cite[\S 8.4]{cm} that every group $A_G(x)$ is one of $(\Z/2\Z)^k$, $\Z/3\Z$ or $\Z/2\Z\times\Z/3\Z$, so $\bk$ is a splitting field for all of them. For most classes of proper Levi subgroups $L$, $L$ has connected centre and is thus covered by our previous argument. The exceptions are the Levi subgroups of types $2A_2$, $2A_2+A_1$, and $A_5$, but these are all of type $A$, and we have seen above that $Z(G)$ surjects onto $A_L(x)$ in all such cases, so $\bk$ is a splitting field for $A_L(x)$ as required.

The argument for $G$ of type $E_7$ and not of adjoint type is similar: here $|Z(G)|=2$. Every group $A_G(x)$ is one of $(\Z/2\Z)^k$, $\fS_3$ or $\Z/2\Z\times\fS_3$, for which every field is a splitting field. (The table in~\cite[\S 8.4]{cm} contains two misprints, not affecting this statement: for $x$ with Bala--Carter label $4A_1$ or $(A_5)''$, the group $A_G(x)$ should be $\Z/2\Z$.) Most classes of proper Levi subgroups either have connected centre or are of type $A$, in which case they are covered by previous arguments; the remaining ones are those of type $D_4+A_1$, $D_5+A_1$, and $D_6$. If $L$ is one of these Levis, then $|Z(L)/Z(L)^\circ|=2$, and hence each group $A_L(x)$ is either isomorphic to, or a double cover of, the corresponding group $A_{L/Z(L)}(x)=A_{\PSO(d)}(x)=(\Z/2\Z)^p$, where $d=8,10,12$ respectively. We claim that $A_L(x)$ is of the form $(\Z/2\Z)^{k}$. Otherwise, we must have simultaneously that $A_{\Spin(d)}(x)$ is not of the form $(\Z/2\Z)^{k}$ and that $A_{\PSO(d)}(x)$ is nontrivial. From~\cite[Corollary 6.1.6]{cm} we see that when $d$ is even, such a state of affairs requires $d\geq 7+5+3+1=16$. The proof is finished.
\end{proof}

%------------------------------------------------------------------------------
\subsection{Modular generalized Springer correspondence and field extensions}
\label{ss:indep-of-k}
%------------------------------------------------------------------------------

To conclude this section we show that, once $\bk$ is big enough for $G$, the modular generalized Springer correspondence is unchanged under further field extension. In that sense, it depends only on the characteristic $\ell$ of $\bk$.

\begin{lem} \label{lem:splitting}
Assume that $\bk$ is big enough for $G$, and let $\bk'$ be an extension field of $\bk$. For any Levi subgroup $L$ of $G$, we identify $\fN_{L,\bk'}$ with $\fN_{L,\bk}$ in the canonical way.
\begin{enumerate}
\item\label{it:cuspidality-under-field-extension} 
Under this identification, $\fN_{L,\bk'}^\cusp$ and $\fN_{L,\bk}^\cusp$ coincide, so we can choose $\fM_{G,\bk'}$ so that it is canonically identified with $\fM_{G,\bk}$.
\item\label{it:cuspidal-splitting-field} 
For any cuspidal datum $(L,\cO_L,\cE_L)\in\fM_{G,\bk}$, the field $\bk$ is a splitting field for $N_G(L)/L$. Hence $|\fN_{G,\bk}^{(L,\cO_L,\cE_L)}|=|\Irr(\bk[N_G(L)/L])|$ equals the number of $\ell$-regular conjugacy classes of $N_G(L)/L$, and $\Irr(\bk'[N_G(L)/L])$ is canonically identified with $\Irr(\bk[N_G(L)/L])$.
\item\label{it:indep-of-k} 
Under the above identifications, the bijection~\eqref{eqn:mgsc} is the same for $\bk'$ as for $\bk$.
\end{enumerate}
\end{lem}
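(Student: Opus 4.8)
The plan is to deduce all three assertions from the functoriality of the scalar-extension functor $\bk'\otimes_\bk(-)$, using that the hypothesis `big enough' passes to $\bk'$ (a larger field is still a splitting field for every $A_L(x)$), so that Theorem~\ref{thm:mgsc-intro} and the construction underlying~\cite[Theorem~3.1]{genspring2} are available over $\bk'$ as well. The formal input I would record first: for any Levi subgroup $H$ of $G$ (including $H=G$), the functor $\bk'\otimes_\bk(-)\colon\Perv_H(\cN_H,\bk)\to\Perv_H(\cN_H,\bk')$ is exact, conservative, and commutes with $\Ind$, $\Res$, ${}'\Res$ and with $R\Hom$ (these being assembled from the standard sheaf operations, and $\bk'$ being flat over $\bk$); and because $\bk$ is big enough every irreducible $H$-equivariant local system is absolutely irreducible, so $\bk'\otimes_\bk(-)$ carries the simple object $\IC(\cO,\cE)$ to the simple object $\IC(\cO,\bk'\otimes_\bk\cE)$, realising the canonical bijection $\fN_{H,\bk}\leftrightarrow\fN_{H,\bk'}$ of the statement, with $\End_{\Perv_H(\cN_H,\bk)}(\IC(\cO,\cE))=\bk$ for every simple object. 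For the first assertion I would use the characterisation of cuspidality by restriction: $(\cO_L,\cE_L)$ is cuspidal if and only if $\Res_{M\subset Q}^L(\IC(\cO_L,\cE_L))=0$ for every proper Levi subgroup $M\subset L$ and every parabolic $Q$ with Levi factor $M$, since $\IC(\cO_L,\cE_L)$ occurs as a quotient of some $\Ind_{M\subset Q}^L(\cF)$ exactly when $\Hom(\cF,\Res_{M\subset Q}^L(\IC(\cO_L,\cE_L)))\neq0$ for some $\cF\in\Perv_M(\cN_M,\bk)$, i.e.\ exactly when $\Res_{M\subset Q}^L(\IC(\cO_L,\cE_L))\neq0$. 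As $\Res_{M\subset Q}^L$ and $\cE_L\mapsto\bk'\otimes_\bk\cE_L$ commute with $\bk'\otimes_\bk(-)$, and $\bk'\otimes_\bk(-)$ is conservative, this vanishing is unaffected by replacing $\bk$ with $\bk'$; hence $\fN^\cusp_{L,\bk}$ and $\fN^\cusp_{L,\bk'}$ coincide under the identification of $\fN_{L,\bk'}$ with $\fN_{L,\bk}$, and since the $G$-action on cuspidal data commutes with scalar extension we may take $\fM_{G,\bk'}$ to be the image of $\fM_{G,\bk}$.

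For the second assertion everything apart from the splitting-field statement is standard, so the real content is that $\bk$ is a splitting field for $N_G(L)/L$. The bijection~\eqref{eqn:bijection} is constructed in~\cite[Lemma~2.1 and Theorem~3.1]{genspring2} from the algebra isomorphism $\bk[N_G(L)/L]\simto\End_{\Perv_G(\cN_G,\bk)}\bigl(\Ind_{L\subset P}^G\IC(\cO_L,\cE_L)\bigr)$ furnished by the canonical action of $N_G(L)/L$, together with the functor $\Hom_{\Perv_G(\cN_G,\bk)}\bigl(\Ind_{L\subset P}^G\IC(\cO_L,\cE_L),-\bigr)$; I would extract from that construction that a simple quotient $\IC(\cO,\cE)$ of $\Ind_{L\subset P}^G\IC(\cO_L,\cE_L)$ corresponds to a simple $\bk[N_G(L)/L]$-module $V$ with $\End_{\bk[N_G(L)/L]}(V)\cong\End_{\Perv_G(\cN_G,\bk)}(\IC(\cO,\cE))$. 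By the preceding paragraph this endomorphism algebra is $\bk$, and as $(\cO,\cE)$ runs over the series the module $V$ runs over all of $\Irr(\bk[N_G(L)/L])$; therefore every simple $\bk[N_G(L)/L]$-module is absolutely simple, i.e.\ $\bk$ is a splitting field for $N_G(L)/L$. Granting this, $|\fN_{G,\bk}^{(L,\cO_L,\cE_L)}|=|\Irr(\bk[N_G(L)/L])|$ equals the number of $\ell$-regular conjugacy classes of $N_G(L)/L$ by Brauer's theorem, and $V\mapsto\bk'\otimes_\bk V$ is a bijection $\Irr(\bk[N_G(L)/L])\leftrightarrow\Irr(\bk'[N_G(L)/L])$ for any extension $\bk'$ of the splitting field $\bk$. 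I expect this endomorphism-algebra matching (the one genuinely geometric step, which is what upgrades `big enough' to `splitting field') to be the main obstacle; the rest is formal bookkeeping with exact, conservative functors that commute with the sheaf operations and with $R\Hom$.

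For the third assertion I would first verify that the identification $\fN_{G,\bk}\leftrightarrow\fN_{G,\bk'}$ carries $\fN_{G,\bk}^{(L,\cO_L,\cE_L)}$ onto $\fN_{G,\bk'}^{(L,\cO_L,\cE_L)}$ for every $(L,\cO_L,\cE_L)\in\fM_{G,\bk}=\fM_{G,\bk'}$: the series is the set of simple quotients of $\Ind_{L\subset P}^G\IC(\cO_L,\cE_L)$, this object base-changes to its $\bk'$-counterpart, and by the $R\Hom$-compatibility a simple object $S$ is a quotient of a perverse sheaf $\cM$ over $\bk$ precisely when $\bk'\otimes_\bk S$ is a quotient of $\bk'\otimes_\bk\cM$ over $\bk'$. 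Finally, the algebra isomorphism and the $\Hom$-functor out of which~\eqref{eqn:bijection} is built are themselves compatible with $\bk'\otimes_\bk(-)$ (the $N_G(L)/L$-action is geometric, and $\Ind$ and $R\Hom$ commute with scalar extension), so the bijection~\eqref{eqn:bijection} over $\bk'$ sends the element of $\fN_{G,\bk'}$ corresponding to $(\cO,\cE)$ to $\bk'\otimes_\bk V$ whenever its $\bk$-version sends $(\cO,\cE)$ to $V$. Assembling these compatibilities over all cuspidal data in $\fM_{G,\bk}=\fM_{G,\bk'}$ shows that the modular generalized Springer correspondence~\eqref{eqn:mgsc} for $\bk'$ coincides with that for $\bk$ under the identifications established in the first two assertions.
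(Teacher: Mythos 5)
Your arguments for parts~(1) and~(3) are in substance the paper's: part~(1) reduces cuspidality to a Hom-nonvanishing (equivalently, a vanishing of restriction) condition that scalar extension preserves and reflects, and part~(3) follows because every sheaf-theoretic operation entering the construction of~\eqref{eqn:mgsc} commutes with $\bk'\otimes_\bk(-)$ (the paper cites \cite[Remark~2.23]{genspring1} for this). These are fine, just more spelled out than the paper's terse ``immediate from the definition.''

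Part~(2) is where you diverge, and where you have a genuine gap that you yourself flag. Your route runs through the claim that the bijection~\eqref{eqn:bijection} matches endomorphism algebras: that a simple quotient $\IC(\cO,\cE)$ of $\Ind_{L\subset P}^G\IC(\cO_L,\cE_L)$ corresponds to a simple $\bk[N_G(L)/L]$-module $V$ with $\End_{\bk[N_G(L)/L]}(V)\cong\End(\IC(\cO,\cE))=\bk$. This matching does not follow from the bare statement that simple quotients of an object $M$ biject with simple modules over $\End(M)$: in general, for that bijection to preserve endomorphism algebras one needs the natural map $\End(M)\to\End(\mathrm{head}\,M)$ to be surjective, which holds for $M$ projective but is not automatic here, and $\Ind_{L\subset P}^G\IC(\cO_L,\cE_L)$ is not projective in general. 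What saves the day is that the bijection in~\cite[Theorem~3.1]{genspring2} is built from a chain of endomorphism-preserving operations (Fourier transform, the fully faithful $\IC$ extension of local systems, tensoring with the rank-one $\overline{\cE_L}$), so the matching is true, but establishing it requires unwinding that construction; you acknowledge this is the ``main obstacle'' and leave it as a black box.

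The paper's proof of part~(2) avoids this entirely by a counting argument: since~\eqref{eqn:mgsc-variant} holds over both $\bk$ and $\bk'$ with the same left-hand side $|\fN_{G,\bk}|=|\fN_{G,\bk'}|$ and the same cuspidal data (by part~(1)), and since each term satisfies $|\Irr(\bk[N_G(L)/L])|\le|\Irr(\bk'[N_G(L)/L])|$, all of these inequalities are forced to be equalities, which (taking $\bk'$ to be a splitting field, e.g.\ $\overline{\bk}$) forces every simple $\bk[N_G(L)/L]$-module to be absolutely simple. This is shorter, uses nothing beyond the already-proven Theorem~\ref{thm:mgsc-intro}, and needs no information about how the bijection is constructed. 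If you want to keep your endomorphism-algebra route, you should explicitly trace the chain of equivalences in~\cite[\S3]{genspring2} to justify the matching; otherwise the counting argument is the cleaner path.
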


\begin{proof}
Part~\eqref{it:cuspidality-under-field-extension} is immediate from the definition of cuspidality. Then, the fact that the bijection~\eqref{eqn:mgsc-variant} holds for both $\bk$ and $\bk'$ forces the natural inequalities $|\Irr(\bk[N_G(L)/L])|\leq|\Irr(\bk'[N_G(L)/L])|$ to be equalities, so every irreducible representation of $N_G(L)/L$ over $\bk$ must be absolutely irreducible, proving part~\eqref{it:cuspidal-splitting-field}. Part~\eqref{it:indep-of-k} is clear from the construction of~\eqref{eqn:mgsc} in~\cite{genspring2}, since every sheaf-theoretic functor involved in that construction commutes with extension of scalars (see~\cite[Remark 2.23]{genspring1}).
\end{proof}

See Remark~\ref{rmk:reflection-group} for comments on the structure on $N_G(L)/L$.

%%%%%%%%%%%%%%%%%%%%%%%%%%%%%%%%%%%%%%%%%%%%%%%%%%%%%%%%%%%%%%%%%%%%%%%
\section{The $\IC$ sheaf of the regular nilpotent orbit}
\label{sec:regular}
%%%%%%%%%%%%%%%%%%%%%%%%%%%%%%%%%%%%%%%%%%%%%%%%%%%%%%%%%%%%%%%%%%%%%%%

Continue to let $G$ be an arbitrary connected reductive group over $\C$. Our focus now turns to the explicit description of the modular generalized Springer correspondence~\eqref{eqn:mgsc}. Of particular interest are the ways in which it differs from Lusztig's generalized Springer correspondence (the analogous bijection for $\bk=\Qlb$). As in the case of classical groups~\cite{genspring1,genspring2}, we should expect to find more cuspidal data and hence more induction series than in Lusztig's setting.

Let $\cO_{\reg}$ be the regular nilpotent orbit in $\cN_G$. In Lusztig's setting, the simple perverse sheaf $\IC(\cO_{\reg},\underline{\Qlb})\cong(\underline{\Qlb})_{\cN_G}[\dim\cN_G]$ always belongs to the principal induction series associated to the cuspidal datum $(T,\{0\},\ubk)$ where $T$ is a maximal torus, i.e.\ the non-generalized Springer correspondence; in the convention aligned with ours in the modular case, it corresponds to the sign representation of the Weyl group $W=N_G(T)/T$. In Theorem~\ref{thm:regular-series} we will determine which induction series contains $\IC(\cO_{\reg},\ubk)$ in the modular case. In particular, this gives a necessary and sufficient condition for the pair $(\cO_{\reg}, \ubk)$ to be cuspidal.

%------------------------------------------------------------------------------
\subsection{The constant perverse sheaf on $\cN_G$}
\label{ss:constant-perv}
%------------------------------------------------------------------------------

Note that, since
$\cN_G$ is a complete intersection, the shifted constant sheaf $\ubk_{\cN_G}[\dim\cN_{G}]$ belongs to $\Perv_G(\cN_G,\bk)$ by~\cite[Lemma~III.6.5]{kw}.

\begin{lem}
\label{lem:projective-cover}
If $\ell\nmid |Z(G)/Z(G)^\circ|$, then $\ubk_{\cN_G}[\dim\cN_{G}]$ is 
a
projective 
cover of $\IC(\cO_\reg, \ubk)$ 
in $\Perv_G(\cN_G,\bk)$.
\end{lem}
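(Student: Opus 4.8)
The plan is to establish that $\ubk_{\cN_G}[\dim\cN_G]$ is indecomposable with top (and hence socle-of-the-relevant-sort) equal to $\IC(\cO_\reg,\ubk)$, and that it is projective in $\Perv_G(\cN_G,\bk)$; by standard arguments these two facts together identify it as the projective cover. First I would observe that $\cN_G$ is irreducible with $\cO_\reg$ as its dense open orbit, on which the constant sheaf restricts to $\ubk$; therefore $\IC(\cO_\reg,\ubk)$ is a composition factor of $\ubk_{\cN_G}[\dim\cN_G]$ occurring with multiplicity one (it is the only one whose support is all of $\cN_G$), and moreover it is a quotient of $\ubk_{\cN_G}[\dim\cN_G]$ via the canonical truncation/adjunction map $\ubk_{\cN_G}[\dim\cN_G]\twoheadrightarrow j_{!*}(\ubk_{\cO_\reg}[\dim\cN_G])=\IC(\cO_\reg,\ubk)$, where $j:\cO_\reg\hookrightarrow\cN_G$ is the open inclusion. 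The multiplicity-one statement shows that any self-endomorphism of $\ubk_{\cN_G}[\dim\cN_G]$ acts on that composition factor by a scalar; combined with the fact that $\End(\ubk_{\cN_G}[\dim\cN_G])$ is a local ring — which follows because $H^0_G(\cN_G,\ubk) = \bk$ once $\ell\nmid|Z(G)/Z(G)^\circ|$ (the equivariant cohomology in degree zero of the connected variety $\cN_G$ under the possibly-disconnected group $G$ reduces to the invariants of the component group, which is trivial in $\bk$ under our hypothesis) — this gives indecomposability, and pins down $\IC(\cO_\reg,\ubk)$ as the head.

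Next I would prove projectivity. The key input is that $\ubk_{\cN_G}[\dim\cN_G]$ is induced from the trivial torus datum: I expect $\Ind_{T\subset B}^G(\ubk_{\{0\}}[0]) \cong \ubk_{\cN_G}[\dim\cN_G]$, which should follow from~\eqref{eqn:induction} applied with $L=T$, $P=B$ — then $\cN_P = \cN_B$ is the nilradical $\fn$ of $\fb$, $G\times^B\fn = \widetilde{\cN_G}$ is the Springer resolution, and $(\beta')_!$ of the constant sheaf on the resolution is (up to shift) the Springer sheaf, whose underived-degree-zero part, equivalently the full complex since $\cN_G$ is a complete intersection and the resolution is semismall, is $\ubk_{\cN_G}[\dim\cN_G]$ precisely when we take the summand corresponding to the trivial — or rather, we simply get that $\beta'$ is itself an isomorphism onto $\cN_G$ when... — more carefully: $\Ind_{T\subset B}^G(\ubk)$ is the full Springer sheaf $\Spr$, and $\ubk_{\cN_G}[\dim\cN_G]$ is a direct summand of it (the one indexed by the trivial local system on $\{0\}$ is all of it; the constant sheaf is the summand on which $W$ acts — one must be slightly careful, but in characteristic $\ell$ the Springer sheaf need not decompose, and in fact $\ubk_{\cN_G}[\dim\cN_G]$ is a quotient, indeed the whole thing receives a map; I would instead argue that $\ubk_{\cN_G}[\dim\cN_G]$ is a direct summand of $\Ind_{T\subset B}^G(\ubk)$ as follows). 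Since $\{0\}$ is the regular-hence-only distinguished orbit of $T$ and $\ubk$ on it is projective in $\Perv_T(\cN_T,\bk)$ (the category is just $\bk$-vector spaces), and since $\Ind_{L\subset P}^G$ sends projectives to projectives — because it has an exact right adjoint $\Res_{L\subset P}^G$, all three functors being exact on the perverse categories as recalled in~\cite[\S2.1]{genspring1} — the object $\Ind_{T\subset B}^G(\ubk)$ is projective in $\Perv_G(\cN_G,\bk)$. It then remains to identify $\ubk_{\cN_G}[\dim\cN_G]$ with a direct summand of this projective object: this I would do by showing $\Hom(\Ind_{T\subset B}^G(\ubk), \ubk_{\cN_G}[\dim\cN_G])$ and $\Hom(\ubk_{\cN_G}[\dim\cN_G], \Ind_{T\subset B}^G(\ubk))$ pair up to the identity, via adjunction $\Hom(\Ind_{T\subset B}^G(\ubk),\cF) = \Hom(\ubk, \Res_{T\subset B}^G\cF)$ and a direct computation that $\Res_{T\subset B}^G(\ubk_{\cN_G}[\dim\cN_G]) \cong \ubk$ (restriction to the torus of the constant sheaf, essentially the stalk at $0$ of a shifted constant complex on $\fn$, which is one-dimensional). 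A summand of a projective is projective, so $\ubk_{\cN_G}[\dim\cN_G]$ is projective; being also indecomposable with head $\IC(\cO_\reg,\ubk)$, it is the projective cover.

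The main obstacle I anticipate is the clean verification that $\ubk_{\cN_G}[\dim\cN_G]$ really is a direct summand (equivalently a direct factor, via a splitting) of the Springer sheaf $\Ind_{T\subset B}^G(\ubk)$ in the modular setting — in characteristic zero this is the isotypic decomposition under the $W$-action, but with $\bk$-coefficients the endomorphism algebra $\bk[W]$ of the Springer sheaf need not be semisimple, so one cannot simply extract an isotypic summand. The resolution should be the idempotent in $\bk[W]$ — or rather in $\End(\Ind_{T\subset B}^G(\ubk))$ — projecting onto the summand; the relevant point is that the head $\IC(\cO_\reg,\ubk)$ appears once in $\Ind_{T\subset B}^G(\ubk)$, so its projective cover is a single indecomposable summand, and separately one checks this summand is the constant sheaf by comparing tops (both have top $\IC(\cO_\reg,\ubk)$) together with the quotient map $\ubk_{\cN_G}[\dim\cN_G]\twoheadrightarrow\IC(\cO_\reg,\ubk)$ lifting, by projectivity of the summand, to a map into $\ubk_{\cN_G}[\dim\cN_G]$, which is then shown to be a split surjection by a length/local-ring argument. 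The hypothesis $\ell\nmid|Z(G)/Z(G)^\circ|$ enters exactly to guarantee $\End(\ubk_{\cN_G}[\dim\cN_G])$ is local, i.e. that $\ubk_{\cN_G}[\dim\cN_G]$ is indecomposable; without it the global sections pick up the group algebra of the component group and the constant sheaf can split.
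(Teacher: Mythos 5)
The paper's own proof is a one-liner: it cites \cite[Proposition 5.1]{am} for both facts — that $\ubk_{\cN_G}[\dim\cN_G]$ is projective in $\Perv_G(\cN_G,\bk)$ and that it has $\IC(\cO_\reg,\ubk)$ as a quotient — and then uses $\End(\ubk_{\cN_G}[\dim\cN_G])=\bk$ to conclude. Your proposal attempts to re-prove the projectivity from scratch, and there are two genuine problems with it.

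First, the central idea of your projectivity argument — that $\ubk_{\cN_G}[\dim\cN_G]$ is a direct summand of the Springer sheaf $\Ind_{T\subset B}^G(\ubk_{\{0\}})$ — is not correct, and this is not merely a technical obstacle to be smoothed over. Already with $\Qlb$ coefficients the Springer sheaf is a direct sum of simple $\IC$ sheaves by the decomposition theorem, so any direct summand of it is semisimple; but $\ubk_{\cN_G}[\dim\cN_G]$ is not semisimple unless $\cN_G$ is rationally smooth, which fails for most $G$ (already for $\SL_3$). Since you also correctly identify that $\End(\ubk_{\cN_G}[\dim\cN_G])=\bk$ forces indecomposability, a summand of the semisimple $\Spr$ with one-dimensional endomorphism ring would have to be a single $\IC$ sheaf, contradicting the existence of the nontrivial quotient map to $\IC(\cO_\reg,\ubk)$ on a non-rationally-smooth $\cN_G$. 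In positive characteristic the Springer sheaf is no longer semisimple, but one should not expect new direct summands to appear. The composite $\ubk_{\cN_G}[\dim\cN_G]\to\Spr\to\ubk_{\cN_G}[\dim\cN_G]$ that you construct by adjunction is simply zero; the $1$-dimensionality of the two Hom spaces tells you nothing about the composition. The actual projectivity in \cite{am} is obtained by a completely different mechanism — the geometric Ringel duality autoequivalence $\mathcal{R}$ of $\Db_G(\cN_G,\bk)$, which sends the skyscraper $\ubk_{\{0\}}$ to $\ubk_{\cN_G}[\dim\cN_G]$ and exchanges suitable homological properties — not by realizing the constant sheaf inside an induced object.

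Second, your account of where the hypothesis $\ell\nmid|Z(G)/Z(G)^\circ|$ enters is off. Since $G$ is connected throughout the paper, $\cN_G$ is a connected variety acted on by a connected group, so $\End_{\Perv_G(\cN_G,\bk)}(\ubk_{\cN_G}[\dim\cN_G]) \cong H^0(\cN_G,\bk) = \bk$ with \emph{no} condition on $\ell$; the invariants under the component group of $G$ are not an issue because that component group is trivial. The hypothesis $\ell\nmid|Z(G)/Z(G)^\circ|$ is needed precisely for the projectivity of $\ubk_{\cN_G}[\dim\cN_G]$, i.e.\ it is a hypothesis of \cite[Proposition 5.1]{am}, not for the locality of the endomorphism ring.

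Your observations about the quotient map $\ubk_{\cN_G}[\dim\cN_G]\twoheadrightarrow\IC(\cO_\reg,\ubk)$ and the multiplicity-one occurrence of $\IC(\cO_\reg,\ubk)$ are correct, and the final logical step (indecomposable projective with simple top is the projective cover) is sound; the gap is entirely in the claimed proof of projectivity.
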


\begin{proof}
It is shown in~\cite[Proposition 5.1]{am} that $\ubk_{\cN_G}[\dim\cN_{G}]$ is projective and has $\IC(\cO_\reg, \ubk)$ as a quotient. Since $\End(\ubk_{\cN_G}[\dim\cN_{G}])=\bk$, the claim follows.
\end{proof}

Now let $P\subset G$ be a parabolic subgroup and $L\subset P$ a Levi factor containing the maximal torus $T$. Let $W_L=N_L(T)/T$ be the Weyl group of $L$, a parabolic subgroup of $W=N_G(T)/T$.

\begin{prop}
\label{prop:constant-summand}
The shifted constant sheaf $\ubk_{\cN_G}[\dim\cN_{G}]$ is a direct summand of the induced perverse sheaf
$\Ind_{L \subset P}^G(\ubk_{\cN_{L}}[\dim\cN_{L}])$ if and only if 
$\ell\nmid |W/W_{L}|$, or equivalently if and only if $W_L$ contains an $\ell$-Sylow subgroup of $W$.
\end{prop}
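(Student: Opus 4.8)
The plan is to compute the total cohomology (equivalently, the hypercohomology, or the stalk at $0 \in \cN_G$ after a suitable shift, or the image under the global sections functor on the constant sheaf piece) of $\Ind_{L \subset P}^G(\ubk_{\cN_L}[\dim \cN_L])$ and compare it with that of $\ubk_{\cN_G}[\dim \cN_G]$, reducing the question to a statement about $\ell$-divisibility of $|W/W_L|$. First I would record the key structural facts: by Lemma~\ref{lem:projective-cover} (applied with $\ell \nmid |Z(G)/Z(G)^\circ|$; one should first reduce to that case, since replacing $G$ by $G/Z(G)^\circ$ changes nothing and only affects $|W/W_L|$ trivially), $\ubk_{\cN_G}[\dim \cN_G]$ is the projective cover of $\IC(\cO_\reg, \ubk)$, so it is indecomposable with one-dimensional endomorphism ring. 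Hence $\ubk_{\cN_G}[\dim \cN_G]$ is a direct summand of the induced sheaf $\cM := \Ind_{L\subset P}^G(\ubk_{\cN_L}[\dim\cN_L])$ if and only if $\Hom(\cM, \IC(\cO_\reg,\ubk)) \neq 0$ (projectivity of $\ubk_{\cN_G}[\dim\cN_G]$ lets a nonzero map $\cM \to \IC(\cO_\reg,\ubk)$ lift to a nonzero, hence split, map $\cM \to \ubk_{\cN_G}[\dim\cN_G]$; conversely a summand isomorphic to the projective cover surjects onto $\IC(\cO_\reg,\ubk)$).

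Next I would translate $\Hom(\cM, \IC(\cO_\reg,\ubk))$ by adjunction into $\Hom_{\Perv_L(\cN_L,\bk)}(\ubk_{\cN_L}[\dim\cN_L], {}'\Res_{L\subset P}^G(\IC(\cO_\reg,\ubk)))$, but a cleaner route is to use the fact that $\cN_G$ is irreducible with $\cO_\reg$ dense, so $\IC(\cO_\reg,\ubk)$ is the image of $\ubk_{\cN_G}[\dim\cN_G]$, and $\Hom(\cM,\IC(\cO_\reg,\ubk))$ is computed from the generic behaviour of $\cM$ on $\cO_\reg$: restricting to $\cO_\reg$, the induced sheaf becomes the shifted constant sheaf tensored with the cohomology of the fibre of the map $G \times^P \cN_P \to \cN_G$ over a regular nilpotent element. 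That fibre is a single point when $\cO_\reg \cap \cN_P$ is a single $P$-orbit dense in $\cN_P$ — indeed $\cN_P$ is irreducible and its regular locus maps to $\cO_\reg$ — so in fact on $\cO_\reg$ the induced perverse sheaf is $\ubk$-free of rank $|W/W_L|$, reflecting the number of $P$-conjugates of the regular locus, i.e. the degree of the generically finite map; more precisely, $\mathcal{H}^{-\dim\cN_G}(\cM)|_{\cO_\reg}$ is a local system of rank $|W/W_L|$. I would identify this local system with the $\bk[W]$-module $\bk[W/W_L]$ (or a twist thereof by the sign character, consistent with the convention that $\IC(\cO_\reg,\ubk)$ corresponds to $\mathrm{sgn}$), using the standard identification of the Springer action; then $\Hom(\cM,\IC(\cO_\reg,\ubk)) = \Hom_{\bk[W]}(\bk[W/W_L]\otimes\mathrm{sgn}, \bk) = (\bk[W/W_L]\otimes\mathrm{sgn})^{*}_{\mathrm{sgn}\text{-isotypic}}$, which is nonzero if and only if $\mathrm{sgn}$ (equivalently, by duality, the trivial character) occurs in $\bk[W/W_L]$.

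Finally, the elementary algebra: $\Hom_{\bk[W]}(\bk[W/W_L], \bk_{\mathrm{triv}}) \cong (\bk_{\mathrm{triv}})^{W_L} = \bk$ always, so that is not the right pairing; the correct criterion is that the trivial $W$-representation is a \emph{direct summand} of $\bk[W/W_L]$, equivalently the composition $\bk_{\mathrm{triv}} \hookrightarrow \bk[W/W_L] \twoheadrightarrow \bk_{\mathrm{triv}}$ (unit followed by counit, i.e. the transfer/averaging map) is an isomorphism, which holds if and only if $|W/W_L|$ is invertible in $\bk$, i.e. $\ell \nmid |W/W_L|$. This is exactly the condition $W_L \supseteq$ an $\ell$-Sylow subgroup of $W$, by Lagrange's theorem. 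I would then assemble: $\ubk_{\cN_G}[\dim\cN_G]$ is a summand of $\cM$ $\iff$ $\bk_{\mathrm{triv}}$ is a summand of the Springer local system $\bk[W/W_L]$ $\iff$ $\ell \nmid |W/W_L|$. The main obstacle I anticipate is making precise the identification of $\mathcal{H}^{-\dim\cN_G}(\cM)|_{\cO_\reg}$ with $\bk[W/W_L]$ as a $W$-module with the correct sign twist — this requires care with the Springer-sheaf normalization and with the fact that we are inducing the constant sheaf on $\cN_L$ (not on $\{0\}$), so one should either cite the analogous computation for the full Springer sheaf and use transitivity of induction $\Ind_T^G = \Ind_L^G \circ \Ind_T^L$, or argue directly that the relevant fibre cohomology is concentrated in one degree and free of the asserted rank with the asserted $W$-action. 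Once that identification is in hand, the rest is the routine representation-theoretic computation sketched above.
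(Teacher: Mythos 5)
Your overall plan—reduce to $\Hom(\cM, \IC(\cO_\reg,\ubk)) \neq 0$ via projectivity, then try to compute this Hom by restricting to the dense orbit—begins correctly (the reduction in your first paragraph is sound, since $\cM$ itself is projective and decomposes into indecomposable projectives, so the Hom detects whether the projective cover of $\IC(\cO_\reg,\ubk)$ is a summand). But the central geometric computation is wrong, and the error is fatal rather than a matter of ``care with normalization.''

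The map $\beta' : G\times^P\cN_P \to \cN_G$ (of which $\cM = (\beta')_!\ubk[\dim\cN_G]$) is \emph{birational}: its degree over $\cO_\reg$ is $1$, not $|W/W_L|$. Indeed, a regular nilpotent element $x$ lies in a unique Borel subalgebra $\fb_x$, and every parabolic subalgebra containing the nilpotent $x$ must contain $\fb_x$ (since $x$ lies in the nilradical of some Borel of $\fp'$, and that Borel must be $\fb_x$); there is a unique parabolic of type $P$ containing $\fb_x$. Consequently $\mathcal{H}^{-\dim\cN_G}(\cM)|_{\cO_\reg}$ is the rank-one constant sheaf, independent of $\ell$, so the restriction to $\cO_\reg$ sees no difference between the two cases and cannot detect the condition $\ell\nmid|W/W_L|$. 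The representation $\bk[W/W_L]$ does not live on $\cO_\reg$; you appear to be conflating the Springer resolution on $\cN_G$ (birational over the regular orbit) with the Grothendieck--Springer picture on $\fg$ (rank $|W|$ over $\fg^{\mathrm{rs}}$, via Fourier transform) or with the stalk at $0$.

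The number $|W/W_L|$ does appear, but at the opposite end of $\cN_G$: the fibre of the analogous map $\widetilde{\cN}_P = G\times^P\fu_P \to \cN_G$ over $0$ is $G/P$, whose Euler characteristic is $|W/W_L|$. The paper's proof reaches that point by applying the geometric Ringel duality functor $\mathcal{R}$ of~\cite{am}, which sends $\ubk_{\cN_G}[\dim\cN_G]$ to the skyscraper $\ubk_{\{0\}}$ and commutes with induction, so the problem becomes: when is $\ubk_{\{0\}}$ a summand of $\Ind_{L\subset P}^G(\ubk_{\{0\}}) \cong \mu_!\ubk_{\widetilde{\cN}_P}[\dim\widetilde{\cN}_P]$? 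This is then answered by the intersection-form criterion of~\cite[Proposition 3.2]{jmw}: the relevant $1\times 1$ matrix entry is the self-intersection number of $G/P$ in $T^*(G/P)$, namely $\pm|W/W_L|$ in $\bk$. Your representation-theoretic endgame (trivial is a summand of $\bk[W/W_L]$ iff $\ell\nmid|W/W_L|$) is exactly Remark~\ref{rmk:trivial-summand} in the paper and is correct as algebra, but to connect it to the geometry you need something like Ringel duality or the intersection-form machinery, not the restriction to $\cO_\reg$.
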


\begin{rmk} \label{rmk:trivial-summand}
For comparison, recall that the trivial (respectively, the sign) representation of the group $W$ over $\bk$ occurs as a direct summand of the induction of the trivial (respectively, the sign) representation of $W_L$ if and only if $W_L$ contains an $\ell$-Sylow subgroup of $W$, see~\cite[\S 5, Corollary 1]{green}.
\end{rmk}

\begin{proof}
The geometric Ringel duality functor $\mathcal{R}$ of~\cite{am} is an autoequivalence of the derived category $\Db_G(\cN_G,\bk)$ which sends $\ubk_{\{0\}}$ to $\ubk_{\cN_G}[\dim\cN_{G}]$ and commutes with induction. Hence it suffices to prove that $\ubk_{\{0\}}$ is a direct summand of $\Ind_{L \subset P}^G(\ubk_{\{0\}})$ if and only if $\ell\nmid |W/W_{L}|$.

For this we can use the general results of~\cite[Section 3]{jmw}. Recall from~\cite[Lemma 2.14]{genspring1} that $\Ind_{L \subset P}^G(\ubk_{\{0\}})\cong\mu_{!}\ubk_{\widetilde{\cN}_P}[\dim \widetilde{\cN}_P]$, where 
\[
\mu:\widetilde{\cN}_P:=G\times^P\fu_P \to \fg
\] 
is the semismall morphism induced by the adjoint action. (Here $\fu_P$ is the Lie algebra of the unipotent radical of $P$.) By \cite[Proposition 3.2]{jmw}, the multiplicity of $\ubk_{\{0\}}$ as a direct summand of $\mu_{!}\ubk_{\widetilde{\cN}_P}[\dim \widetilde{\cN}_P]$ is given by the rank of the matrix of a certain intersection form. In this case, since the fibre $\mu^{-1}(0)\cong G/P$ is irreducible, the matrix is $1\times 1$ and its sole entry is the self-intersection number of $G/P$ inside $T^*(G/P)$, interpreted as an element of $\bk$. Up to sign, this self-intersection number equals the Euler characteristic of $G/P$, which is $|W/W_{L}|$ by Bruhat decomposition. The result follows.
\end{proof}

%------------------------------------------------------------------------------
\subsection{Application to $\IC(\cO_\reg,\ubk)$}
\label{ss:IC-reg}
%------------------------------------------------------------------------------

Continue with the notation $L\subset P\subset G$. In the following proposition, we denote by $\cO_\reg^L \subset \cN_L$ the regular $L$-orbit.

\begin{prop} \label{prop:oreg-quotient}
The following are equivalent:
\begin{enumerate}
\item $W_L$ contains an $\ell$-Sylow subgroup of $W$;
\item 
\label{it:oreg-quotient-cond2}
$\IC(\cO_\reg,\ubk)$ occurs as a quotient of $\Ind_{L \subset P}^G(\cF)$ for some $\cF\in\Perv_L(\cN_L,\bk)$;
\item $\IC(\cO_\reg,\ubk)$ occurs as a quotient of $\Ind_{L \subset P}^G(\IC(\cO_\reg^L,\ubk))$.
\end{enumerate}
\end{prop}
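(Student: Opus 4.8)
The plan is to establish the three equivalences by proving the cyclic chain $(3) \Rightarrow (2) \Rightarrow (1) \Rightarrow (3)$, using Proposition~\ref{prop:constant-summand} and Lemma~\ref{lem:projective-cover} as the main tools and noting that $\cN_L$ is a complete intersection so that $\ubk_{\cN_L}[\dim\cN_L]$ is a well-defined object of $\Perv_L(\cN_L,\bk)$.

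First, $(3) \Rightarrow (2)$ is immediate: take $\cF = \IC(\cO_\reg^L,\ubk)$. Next, for $(2) \Rightarrow (1)$, I would argue in the contrapositive or directly via the Mackey machinery already developed. The cleanest route: by Proposition~\ref{prop:levi-rule}, if $\IC(\cO_\reg,\ubk)$ lies in the induction series of a cuspidal datum $(M,\cO_M,\cE_M)$, then every Levi whose Lie algebra meets $\cO_\reg$ contains a $G$-conjugate of $M$; but the Bala--Carter Levi of $\cO_\reg$ is $G$ itself, which is unhelpful directly. Instead I would note that if $\IC(\cO_\reg,\ubk)$ is a quotient of $\Ind_{L\subset P}^G(\cF)$, then by adjunction ${}'\Res_{L\subset P}^G$ applied to a projective cover of $\IC(\cO_\reg,\ubk)$ is nonzero; the natural candidate is $\ubk_{\cN_G}[\dim\cN_G]$, at least when $\ell \nmid |Z(G)/Z(G)^\circ|$ (Lemma~\ref{lem:projective-cover}). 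One computes ${}'\Res_{L\subset P}^G(\ubk_{\cN_G}[\dim\cN_G])$ and finds it is a shift of $\ubk_{\cN_L}$ up to the nilpotent-cone analogue of a parabolic restriction; combined with the fact that $\Hom(\Ind_{L\subset P}^G(\cF), \IC(\cO_\reg,\ubk)) = \Hom(\cF, {}'\Res$-of-nothing$)$... Actually the more robust argument is: $\IC(\cO_\reg,\ubk)$ is a quotient of some $\Ind_{L\subset P}^G(\cF)$ if and only if it is a quotient of $\Ind_{L\subset P}^G$ applied to a projective cover of some simple quotient of $\cF$, and projective covers in $\Perv_L(\cN_L,\bk)$ supported generically on $\cO_\reg^L$ relate to $\ubk_{\cN_L}[\dim\cN_L]$ by Lemma~\ref{lem:projective-cover} applied to $L$ (valid since $\ell \nmid |Z(L)/Z(L)^\circ|$ can be arranged, or handled by the reduction below). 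Thus $(2)$ forces $\IC(\cO_\reg,\ubk)$ to be a quotient of $\Ind_{L\subset P}^G(\ubk_{\cN_L}[\dim\cN_L])$, which (since $\ubk_{\cN_G}[\dim\cN_G]$ is the projective cover of $\IC(\cO_\reg,\ubk)$) means $\ubk_{\cN_G}[\dim\cN_G]$ is a summand of $\Ind_{L\subset P}^G(\ubk_{\cN_L}[\dim\cN_L])$, which by Proposition~\ref{prop:constant-summand} gives $(1)$. Finally $(1) \Rightarrow (3)$: by Proposition~\ref{prop:constant-summand}, condition $(1)$ makes $\ubk_{\cN_G}[\dim\cN_G]$ a direct summand of $\Ind_{L\subset P}^G(\ubk_{\cN_L}[\dim\cN_L])$; since $\ubk_{\cN_L}[\dim\cN_L]$ surjects onto $\IC(\cO_\reg^L,\ubk)$, right-exactness of $\Ind_{L\subset P}^G$ gives a surjection $\Ind_{L\subset P}^G(\ubk_{\cN_L}[\dim\cN_L]) \twoheadrightarrow \Ind_{L\subset P}^G(\IC(\cO_\reg^L,\ubk))$, and $\ubk_{\cN_G}[\dim\cN_G]$ maps onto $\IC(\cO_\reg,\ubk)$, so I would chase the diagram to see that $\IC(\cO_\reg,\ubk)$ occurs as a quotient of $\Ind_{L\subset P}^G(\IC(\cO_\reg^L,\ubk))$ — using that $\IC(\cO_\reg,\ubk)$ is the unique simple quotient of its projective cover.

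The main obstacle I anticipate is the hypothesis $\ell \nmid |Z(G)/Z(G)^\circ|$ in Lemma~\ref{lem:projective-cover}, which is needed to identify $\ubk_{\cN_G}[\dim\cN_G]$ as a \emph{projective} cover rather than merely a sheaf with $\IC(\cO_\reg,\ubk)$ as a quotient. To handle the general case I would reduce to this situation: replacing $G$ by $G/Z(G)^\circ$ changes nothing (same nilpotent cone, same $W$, same $W_L$), and then passing to a $z$-extension or to the simply connected cover of the derived group — as in the reductions used in the proof of Proposition~\ref{prop:horrible} — one arranges connected centre or at least controls the component group. Care is needed because the projective cover statement and the direct-summand statement of Proposition~\ref{prop:constant-summand} both need to survive these reductions; I expect this bookkeeping, rather than any deep point, to be where the real work lies, and it may be cleanest to simply restrict attention to the case of connected centre and deduce the general case by the functoriality of induction under central isogenies.
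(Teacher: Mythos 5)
You have assembled the right tools (Lemma~\ref{lem:projective-cover}, Proposition~\ref{prop:constant-summand}, exactness of $\Ind_{L\subset P}^G$ and its right adjoint), but two of your three implications have genuine gaps, and both come from the same missing ingredient: the control of supports via induction of nilpotent orbits. In your $(2)\Rightarrow(1)$, after reducing to $\cF$ simple, say $\cF=\IC(\cO_L,\cE_L)$, you assert that you may replace $\cF$ by $\ubk_{\cN_L}[\dim\cN_L]$, but you never justify that $\cO_L$ must be the regular orbit $\cO_\reg^L$. That is precisely the paper's implication $(2)\Rightarrow(3)$: by [AHJR1, Corollary~2.15(1)] the quotient $\IC(\cO_\reg,\ubk)$ forces the induced orbit $\mathrm{Ind}_L^G(\cO_L)$ to equal $\cO_\reg$, hence $\cO_L=\cO_\reg^L$ and then $\cE_L=\ubk$ once $Z(L)$ is connected. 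Once this is supplied, the rest of your argument (projectivity of $\Ind(\ubk_{\cN_L}[\dim\cN_L])$, identification of $\ubk_{\cN_G}[\dim\cN_G]$ as the projective cover of $\IC(\cO_\reg,\ubk)$, Proposition~\ref{prop:constant-summand}) is exactly the paper's $(3)\Rightarrow(1)$. In your $(1)\Rightarrow(3)$, the proposed diagram chase does not go through abstractly: a direct summand $P$ of $X$ need not contribute its head to a quotient $X\twoheadrightarrow Y$ (the summand may meet the kernel, or its head may be killed); your remark that $\IC(\cO_\reg,\ubk)$ is the unique simple quotient of its projective cover controls quotients of $P$, not quotients of $Y$. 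The paper sidesteps this by proving only the trivial $(1)\Rightarrow(2)$ --- if $\ubk_{\cN_G}[\dim\cN_G]$ is a summand of $\Ind(\ubk_{\cN_L}[\dim\cN_L])$, then so is its quotient $\IC(\cO_\reg,\ubk)$ --- and reaching $(3)$ from $(2)$ by the orbit argument.

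Your reduction to connected centre is also heavier than needed. The paper observes that $\IC(\cO_\reg,\ubk)$ has trivial central character, so the simple $\cF$ occurring in $(2)$ does too; hence all three conditions are unchanged under $G\rightsquigarrow G/Z(G)$, and when $Z(G)$ is trivial every Levi $L$ has connected centre, so Lemma~\ref{lem:projective-cover} applies to both $G$ and $L$ directly, with no need for $z$-extensions or further bookkeeping about component groups.
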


\begin{proof}
First, we notice that if $\IC(\cO_\reg,\ubk)$ occurs as a quotient of $\Ind_{L \subset P}^G(\cF)$ for some $\cF\in\Perv_L(\cN_L,\bk)$, then it occurs as such a quotient for $\cF$ simple; this can be shown by induction on the length of $\cF$, using the fact that $\Ind_{L \subset P}^G$ is exact.
In this case, since $\IC(\cO_\reg,\ubk)$ has trivial central character in the sense of~\cite[\S 5.1]{genspring2}, $\cF$ also has trivial central character. Therefore, condition~\eqref{it:oreg-quotient-cond2}
is unchanged if we replace $G$ by $G/Z(G)$. This is also clearly the case for the other conditions, so that we can assume that $Z(G)$ is trivial. This condition implies that $Z(L)$ is connected, allowing us to apply Lemma~\ref{lem:projective-cover} both to $G$ and to $L$.

The implication (1)$\Rightarrow$(2) follows from Proposition~\ref{prop:constant-summand}, since $\IC(\cO_\reg,\ubk)$ is a quotient of $\ubk_{\cN_G}[\dim\cN_{G}]$. For the implication (2)$\Rightarrow$(3), we argue as follows. As seen above, we can assume that $\IC(\cO_\reg,\ubk)$ is a quotient of $\Ind_{L \subset P}^G(\IC(\cO_L,\cE_L))$ for some 
$(\cO_L,\cE_L)\in\fN_{L,\bk}$. By~\cite[Corollary 2.15(1)]{genspring1}, the induced orbit $\mathrm{Ind}_L^G(\cO_L)$ must equal $\cO_\reg$, forcing $\cO_L=\cO_\reg^L$ and thus (since $Z(L)$ is connected) $\cE_L=\ubk$. Finally, we prove (3)$\Rightarrow$(1). Using the exactness of $\Ind_{L \subset P}^G$ again, we know that $\IC(\cO_\reg,\ubk)$ is a quotient of $\Ind_{L \subset P}^G(\ubk_{\cN_L}[\dim\cN_{L}])$. But $\Ind_{L \subset P}^G(\ubk_{\cN_L}[\dim\cN_{L}])$ is projective, since $\ubk_{\cN_L}[\dim\cN_{L}]$ is projective (see Lemma~\ref{lem:projective-cover}) and $\Ind_{L \subset P}^G$ has an exact right adjoint functor $\Res_{L \subset P}^G$.
Using Lemma~\ref{lem:projective-cover} again (this time for the group $G$), it follows that
$\ubk_{\cN_G}[\dim\cN_{G}]$ is a direct summand of $\Ind_{L \subset P}^G(\ubk_{\cN_L}[\dim\cN_{L}])$, and Proposition~\ref{prop:constant-summand} finishes the proof.
\end{proof}

%------------------------------------------------------------------------------
\subsection{$\ell$-Sylow classes and induction series of $\IC(\cO_\reg,\ubk)$}
\label{ss:l-vertex}
%------------------------------------------------------------------------------

The parabolic subgroups of $W$ that contain an $\ell$-Sylow subgroup of $W$, and are minimal with that property, form a single $W$-conjugacy class; this follows from the conjugacy of $\ell$-Sylow subgroups and the fact that the class of parabolic subgroups of $W$ is closed under conjugation and intersection. We call the corresponding $G$-conjugacy class of Levi subgroups the \emph{$\ell$-Sylow class} of $G$. Note that if $L$ is in the $\ell$-Sylow class of $G$, then the $\ell$-Sylow class of $L$ consists solely of $L$ itself.

In Table~\ref{tab:vertex} we list the $\ell$-Sylow classes of the various quasi-simple groups $G$, named by their Lie type. For a positive integer $n$, we define its base-$\ell$ digits $b_i(n)$ by $n=\sum_{i\geq 0}b_i(n)\ell^i$, $0\leq b_i(n)<\ell$. In the exceptional types, we list only the primes $\ell$ that divide $|W|$; for other $\ell$, the $\ell$-Sylow class is clearly the class of maximal tori.

\begin{table}
\[
\begin{array}{|c|c|c|c|}
\hline
G&|W|&\ell&\text{$\ell$-Sylow class}\\
\hline\hline
A_{n-1}, n\geq 2 & n! & \text{any} & \displaystyle\sum_{i>0} b_i(n) A_{\ell^i-1}\\
\hline
B_n, n\geq 2 & 2^n\cdot n!
& 2 & B_n\\
\cline{3-4}
&& >2 & \displaystyle\sum_{i>0} b_i(n) A_{\ell^i-1}\\
\hline
C_n, n\geq 3 & 2^n\cdot n!
& 2 & C_n\\
\cline{3-4}
&& >2 & \displaystyle\sum_{i>0} b_i(n) A_{\ell^i-1}\\
\hline
D_n, n\geq 4 & 2^{n-1}\cdot n!
& 2 & D_n\\
\cline{3-4}
&& >2 & \displaystyle\sum_{i>0} b_i(n) A_{\ell^i-1}\\
\hline
E_6 & 2^7\cdot 3^4\cdot 5
& 2 & D_5\\
\cline{3-4}
&& 3 & E_6\\
\cline{3-4}
&& 5 & A_4\\
\hline
E_7 & 2^{10}\cdot 3^4\cdot 5 \cdot 7
& 2 & E_7\\
\cline{3-4}
&& 3 & E_6\\
\cline{3-4}
&& 5 & A_4\\
\cline{3-4}
&& 7 & A_6\\
\hline
E_8 & 2^{14}\cdot 3^5\cdot 5^2\cdot 7
& 2,3,5 & E_8\\
\cline{3-4}
&& 7 & A_6\\
\hline
F_4 & 2^7\cdot 3^2 & 2,3 & F_4\\
\hline
G_2 & 2^2\cdot 3 & 2,3 & G_2\\
\hline
\end{array}
\]
\caption{$\ell$-Sylow classes}\label{tab:vertex}
\end{table} 

The following result is an immediate consequence of Proposition~\ref{prop:oreg-quotient}.

\begin{thm}
\label{thm:regular-series}
Let $L$ be a Levi subgroup in the $\ell$-Sylow class of $G$ and $P$ a parabolic subgroup of which $L$ is a Levi factor. Then $\IC(\cO_\reg^L,\ubk)$ is a cuspidal simple perverse sheaf in $\Perv_L(\cN_L,\bk)$, and $\IC(\cO_\reg,\ubk)$ is a quotient of $\Ind_{L\subset P}^G(\IC(\cO_\reg^L,\ubk))$. That is, $(L,\cO_\reg^L,\ubk)$ is a cuspidal datum and $(\cO_\reg,\ubk)\in\fN_{G,\bk}^{(L,\cO_\reg^L,\ubk)}$. In particular, $(\cO_\reg,\ubk)$ belongs to the principal series $\fN_{G,\bk}^{(T,\{0\},\ubk)}$ if and only if $\ell$ does not divide $|W|$, and $(\cO_\reg,\ubk)$ is cuspidal if and only if the $\ell$-Sylow class of $G$ consists of $G$ itself.\qed
\end{thm}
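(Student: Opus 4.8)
The plan is to derive Theorem~\ref{thm:regular-series} essentially as a repackaging of Proposition~\ref{prop:oreg-quotient}, combined with the bookkeeping about $\ell$-Sylow classes established just before the theorem. Fix $L$ in the $\ell$-Sylow class of $G$ and a parabolic $P$ with Levi factor $L$. First I would recall from \S\ref{ss:l-vertex} that the $\ell$-Sylow class of $L$ consists of $L$ alone, i.e.\ $W_L$ (viewed as its own Weyl group) is the unique parabolic subgroup of $W_L$ containing an $\ell$-Sylow subgroup of $W_L$. Applying the equivalence (1)$\Leftrightarrow$(3) of Proposition~\ref{prop:oreg-quotient} \emph{with $G$ replaced by $L$ and $L$ replaced by $L$ itself} would be vacuous; the point I actually want is cuspidality of $\IC(\cO_\reg^L,\ubk)$, which I would instead extract by contradiction: if $\IC(\cO_\reg^L,\ubk)$ were a quotient of $\Ind_{L'\subset P'}^L(\cF)$ for some proper Levi $L'\subsetneq L$ and some $\cF\in\Perv_{L'}(\cN_{L'},\bk)$, then by Proposition~\ref{prop:oreg-quotient} applied to $L$ the group $W_{L'}$ would contain an $\ell$-Sylow subgroup of $W_L$, contradicting the minimality built into the definition of the $\ell$-Sylow class (since $L'$ corresponds to a proper parabolic subgroup of $W_L$). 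Hence $(L,\cO_\reg^L,\ubk)$ is a cuspidal datum.

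Next I would apply Proposition~\ref{prop:oreg-quotient} to $G$ itself, with the chosen $L\subset P$. By construction $W_L$ contains an $\ell$-Sylow subgroup of $W$, so condition~(1) of that proposition holds, and therefore condition~(3) holds: $\IC(\cO_\reg,\ubk)$ occurs as a quotient of $\Ind_{L\subset P}^G(\IC(\cO_\reg^L,\ubk))$. This is precisely the statement that $(\cO_\reg,\ubk)\in\fN_{G,\bk}^{(L,\cO_\reg^L,\ubk)}$, once one notes that the induction series is by definition the set of simple quotients of that induced object and that the isomorphism class of $\Ind_{L\subset P}^G(\IC(\cO_\reg^L,\ubk))$ is independent of the choice of $P$ (as recalled in the introduction). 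I should also remark that $(L,\cO_\reg^L,\ubk)$ determines a well-defined element of $\fM_{G,\bk}$ up to the $G$-action on cuspidal data, so that the induction series is unambiguous.

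For the two ``in particular'' assertions: $\ell\nmid|W|$ forces the $\ell$-Sylow subgroup of $W$ to be trivial, hence contained in $W_T=\{1\}$, so $T$ lies in the $\ell$-Sylow class; conversely if $T$ is in the $\ell$-Sylow class then $\{1\}=W_T$ contains an $\ell$-Sylow subgroup of $W$, i.e.\ $\ell\nmid|W|$. Since the $\ell$-Sylow class is a single $G$-conjugacy class, $(L,\cO_\reg^L,\ubk)$ may be taken to be $(T,\{0\},\ubk)$ in this case, giving the statement about the principal series; here one uses the disjointness from Theorem~\ref{thm:disjointness} to see that membership in the principal series is \emph{equivalent} to (not merely implied by) $\ell\nmid|W|$, since for $\ell\mid|W|$ the datum $(L,\cO_\reg^L,\ubk)$ is not $G$-conjugate to $(T,\{0\},\ubk)$. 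Similarly, $(\cO_\reg,\ubk)$ is cuspidal exactly when it lies in its own induction series with $L=G$, i.e.\ when $G$ is itself in its $\ell$-Sylow class; the forward direction again uses disjointness, while the reverse direction is immediate from the definition of cuspidality together with the fact that no proper $W_L$ then contains an $\ell$-Sylow subgroup.

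The main obstacle is not really a hard computation but rather making sure the logical dependencies line up cleanly: in particular, that the cuspidality of $\IC(\cO_\reg^L,\ubk)$ genuinely follows from the \emph{minimality} half of the definition of the $\ell$-Sylow class via Proposition~\ref{prop:oreg-quotient}, and that the ``if and only if'' in the last two sentences uses disjointness (Theorem~\ref{thm:disjointness}) in an essential way for one direction. Everything else is a matter of quoting Proposition~\ref{prop:oreg-quotient} and the remarks preceding Table~\ref{tab:vertex}, so the proof should be short.
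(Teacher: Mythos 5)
Your proof is correct and is essentially the same route as the paper, which simply asserts that the theorem is ``an immediate consequence of Proposition~\ref{prop:oreg-quotient}'' and closes with a \qed; you have fleshed out exactly those deductions. One small remark: you invoke Theorem~\ref{thm:disjointness} for the forward directions of the two ``if and only if'' assertions, but this is not strictly necessary. For the principal-series claim, if $(\cO_\reg,\ubk)\in\fN_{G,\bk}^{(T,\{0\},\ubk)}$ then the implication (2)$\Rightarrow$(1) of Proposition~\ref{prop:oreg-quotient}, applied with the pair $T\subset G$, already gives that $W_T=\{1\}$ contains an $\ell$-Sylow subgroup of $W$, hence $\ell\nmid|W|$. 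Likewise, if $(\cO_\reg,\ubk)$ is cuspidal then it cannot be a quotient of $\Ind_{L\subset P}^G(\IC(\cO_\reg^L,\ubk))$ for any proper $L$, and since the theorem exhibits such an $L$ in the $\ell$-Sylow class, that class must be $\{G\}$; conversely, if the $\ell$-Sylow class is $\{G\}$ then for every proper Levi $L'$ the contrapositive of (2)$\Rightarrow$(1) (applied to $G$) rules out $\IC(\cO_\reg,\ubk)$ arising as a quotient from $L'$. So everything can be read off Proposition~\ref{prop:oreg-quotient} alone, though your use of disjointness is also perfectly valid.
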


\begin{rmk}
Theorem~\ref{thm:regular-series} is reminiscent of the result of Geck--Hiss--Malle~\cite[Theorem 4.2]{ghm} concerning the semisimple vertex of the $\ell$-modular Steinberg character of a finite group of Lie type.\end{rmk}

\begin{rmk}
\label{rk:cuspidal-typeA}
In type $A$ (for any $\ell$) and types $B,C,D$ (for $\ell=2$), we have already determined the modular generalized Springer correspondence in~\cite{genspring1,genspring2}. Using Table~\ref{tab:vertex} one can easily check that Theorem~\ref{thm:regular-series} is consistent with those earlier results. In particular, when combined with the fact that all cuspidal pairs are supported on distinguished orbits~\cite[Proposition 2.6]{genspring2}, Theorem~\ref{thm:regular-series} gives a new proof of the classification of modular cuspidal pairs for $\GL(n)$ obtained in~\cite[Theorem 3.1]{genspring1}, independent of any counting argument.
\end{rmk}

\subsection{The case where the $\ell$-Sylow class is of type $A_{\ell-1}$}
\label{ss:linear-prime}

We conclude this section with an observation which will be useful later (see Proposition~\ref{prop:0-cusp-not-A}), concerning the special case where the $\ell$-Sylow class of $G$ consists of Levi subgroups of type $A_{\ell-1}$. Note that this assumption implies that $\ell$ divides $|W|$ exactly once, i.e.\ $\ell$ divides $|W|$ but $\ell^2$ does not. From Table~\ref{tab:vertex}, we see that the converse is almost true: if $G$ is quasi-simple and $\ell$ divides $|W|$ exactly once, then the $\ell$-Sylow class is of type $A_{\ell-1}$ except when $G$ is of type $G_2$ and $\ell=3$.

We recall a well-known result about the structure of the normalizer of a parabolic subgroup of $W$. Again, let $L$ be a Levi subgroup of $G$ containing the maximal torus $T$; also choose a Borel subgroup $B$ containing $T$ such that $B\cap L$ is a Borel subgroup of $L$. Let $\Phi$ be the root system of $(G,T)$, $\Pi\subset\Phi$ the set of simple roots specified by $B$, and $J\subset\Pi$ the set of simple roots for $L$. As usual, if $X(T)$ denotes the lattice of characters of $T$, and if we endow $\mathbb{Q}\otimes_{\mathbb{Z}} X(T)$ with an invariant scalar product, we can identify $W$ with the reflection group on $\mathbb{Q}\otimes_{\mathbb{Z}} X(T)$ generated by the reflections in the hyperplanes perpendicular to the roots in $\Phi$. The reflections corresponding to the roots in $\Pi$ form a Coxeter generating set of $W$, and $W_L$ is the parabolic subgroup generated by the reflections corresponding to the subset $J$. Thus $W_L=W_J$ in the notation of, for instance,~\cite{howlett}. We have an obvious isomorphism
\begin{equation} \label{eqn:weyl-isom}
N_G(L)/L\cong N_W(W_L)/W_L,
\end{equation}
since both sides are isomorphic to $(N_G(T)\cap N_G(L))/N_L(T)$. By~\cite[Corollary 3]{howlett}, we also have
\begin{equation} \label{eqn:semi-direct}
N_W(W_L)=W_L\rtimes W',\text{ where }W'=\{w\in W\,|\,w(J)=J\}.
\end{equation}
Note that the subgroup $W'$ depends on both $L$ and $B$.

If $L$ belongs to the $\ell$-Sylow class of $G$, then $\ell$ does not divide $|N_W(W_L)/W_L|=|N_G(L)/L|$. Hence, by Lemma~\ref{lem:splitting}\eqref{it:cuspidal-splitting-field}, the number of pairs in the induction series $\fN_{G,\bk}^{(L,\cO_\reg^L,\ubk)}$ mentioned in Theorem~\ref{thm:regular-series} equals the number of conjugacy classes of $N_G(L)/L$. This motivates the following result. 

\begin{lem} \label{lem:linear-prime}
Assume that $L$ belongs to the $\ell$-Sylow class of $G$ and is of type $A_{\ell-1}$. Then the number of conjugacy classes of $N_G(L)/L$ equals the number of $\ell$-singular conjugacy classes of $W$.
\end{lem}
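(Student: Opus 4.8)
The plan is to realise both quantities as the number of $\ell$-regular conjugacy classes of the centraliser in $W$ of a fixed $\ell$-cycle. Since $W_L\cong\fS_\ell$ and $\ell^2\nmid\ell!$, the hypothesis that $W_L$ contains an $\ell$-Sylow subgroup of $W$ forces $\ell$ to divide $|W|$ exactly once. Fix an $\ell$-cycle $c\in W_L$, so that $P:=\langle c\rangle$ is an $\ell$-Sylow subgroup of $W$, of order $\ell$. The subgroup $N_{W_L}(P)$ has order $\ell(\ell-1)$ with centraliser $\langle c\rangle$, so it already induces the full automorphism group $\mathrm{Aut}(P)$; hence $N_W(P)$ acts transitively on $P\smallsetminus\{1\}$, and as every nontrivial $\ell$-element of $W$ generates an $\ell$-Sylow subgroup, all nontrivial $\ell$-elements of $W$ are conjugate. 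Partitioning the $\ell$-singular classes of $W$ according to the (now unique) $W$-class of their $\ell$-part, via the standard bijection between the $W$-classes with $\ell$-part conjugate to $c$ and the $\ell$-regular classes of $C_W(c)$, shows that the number of $\ell$-singular conjugacy classes of $W$ equals the number of $\ell$-regular conjugacy classes of $C_W(c)$.

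It therefore suffices to prove $C_W(c)\cong\mathbb{Z}/\ell\mathbb{Z}\times(N_G(L)/L)$: granting this, and recalling from the discussion preceding the lemma that $\ell\nmid|N_G(L)/L|$, the $\ell$-regular elements of $C_W(c)$ are precisely those of the second factor, so the number of $\ell$-regular classes of $C_W(c)$ is the number of conjugacy classes of $N_G(L)/L$. To pin down $C_W(c)$, I would first observe that $W_L$ is the parabolic closure of $c$, i.e.\ the reflection subgroup of $W$ fixing $(\mathbb{Q}\otimes_\mathbb{Z}X(T))^{c}$ pointwise. Indeed $c$ is a Coxeter element of the type-$A_{\ell-1}$ reflection group $W_L$, so (its characteristic polynomial on the reflection representation being $1+t+\cdots+t^{\ell-1}$, nonzero at $t=1$) it has no nonzero fixed vector there; hence $c$ and $W_L$ have the same fixed subspace in $\mathbb{Q}\otimes_\mathbb{Z}X(T)$, and the reflection subgroup of $W$ fixing that subspace pointwise is exactly the standard parabolic $W_L=W_J$. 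Any element of $W$ centralising $c$ stabilises $(\mathbb{Q}\otimes_\mathbb{Z}X(T))^{c}$ and hence normalises $W_L$, so $C_W(c)\subseteq N_W(W_L)$ and $C_W(c)=C_{N_W(W_L)}(c)$.

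It remains to unwind $N_W(W_L)$. By~\eqref{eqn:semi-direct} we have $N_W(W_L)=W_L\rtimes W'$, and conjugation defines a homomorphism $N_W(W_L)\to\mathrm{Aut}(W_L)$ whose image contains $\mathrm{Inn}(W_L)$. Since $\ell$ is prime we have $\ell\ne 6$, so $\mathrm{Out}(\fS_\ell)=1$ and this image is all of $\mathrm{Aut}(W_L)$. When $\ell\ge 3$, we have $Z(W_L)=1$, so the kernel $C:=C_{N_W(W_L)}(W_L)$ meets $W_L$ trivially and, by an order count, is a normal complement to $W_L$; thus $N_W(W_L)=W_L\times C$ with $C\cong W'\cong N_G(L)/L$ by~\eqref{eqn:weyl-isom}. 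When $\ell=2$, the group $W_L\cong\mathbb{Z}/2\mathbb{Z}$ is central in $N_W(W_L)$, so again $N_W(W_L)=W_L\times W'$. In either case, writing $W''$ for the direct factor isomorphic to $N_G(L)/L$ and using that the centraliser of an $\ell$-cycle in $\fS_\ell$ is the cyclic group it generates, we get $C_W(c)=C_{W_L\times W''}(c)=C_{W_L}(c)\times W''=\langle c\rangle\times W''\cong\mathbb{Z}/\ell\mathbb{Z}\times(N_G(L)/L)$, completing the argument.

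The step I expect to be the main obstacle is the direct-product decomposition $N_W(W_L)\cong W_L\times(N_G(L)/L)$: it relies on the fact that $N_W(W_L)$ realises only \emph{inner} automorphisms of $W_L$ — which is available precisely because $W_L$ has type $A$ and $\ell$ is prime (so $\ell\ne 6$) — together with the minor separate treatment of $\ell=2$ noted above. Everything else is routine group theory plus the standard description of parabolic closures of Coxeter elements.
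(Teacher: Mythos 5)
Your proof is correct, and it reaches the conclusion by a route genuinely different from the paper's. The paper constructs the complement of $W_J$ in $N_W(W_J)$ explicitly: it observes that each $w\in W'$ either fixes $J$ pointwise or acts on $J$ by the unique diagram involution of $A_{\ell-1}$, defines a homomorphism $\varphi\colon W'\to\langle w_J\rangle$ accordingly, and shows $N_W(W_J)=W_J\times\widetilde{W'}$ with $\widetilde{W'}=\{w\varphi(w) : w\in W'\}$; it then proves by hand that the inclusion $N_W(W_J)\hookrightarrow W$ induces a bijection on $\ell$-singular conjugacy classes (surjectivity via a Sylow argument essentially like yours, injectivity from the fact that a Coxeter element of $W_J$ cannot lie in a proper parabolic of $W_J$). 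You instead invoke the standard parametrization of conjugacy classes by $\ell$-part together with an $\ell$-regular centraliser class, which—once you know all nontrivial $\ell$-elements of $W$ are conjugate—immediately reduces counting $\ell$-singular classes of $W$ to counting $\ell$-regular classes of $C_W(c)$; you then locate $C_W(c)$ inside $N_W(W_L)$ via the fixed-subspace characterization of parabolics, and obtain the direct-product decomposition abstractly from $\mathrm{Out}(\fS_\ell)=1$ and $Z(\fS_\ell)=1$ for $\ell\ge 3$, with a short separate argument for $\ell=2$. The two proofs share the same geometric core (anything commuting with a Coxeter element of $W_J$ must normalise $W_J$), phrased via parabolic closure in yours and via intersections of conjugate parabolics in the paper's, and both ultimately pass through the same decomposition $N_W(W_J)\cong W_J\times(N_G(L)/L)$. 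What your approach buys is a more structural, transportable reduction—once the Sylow-theoretic parametrization is in play, there is nothing left to check by hand—at the cost of appealing to heavier general theory (completeness of $\fS_\ell$, parabolic closures) and a case split at $\ell=2$. The paper's explicit $\varphi$ is more self-contained and handles $\ell=2$ uniformly, at the cost of a bespoke surjectivity/injectivity argument for the bijection on $\ell$-singular classes.
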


\begin{proof}
Since the proof is purely Coxeter-theoretic (indeed, it still applies when $W$ is of type $H_3$ and $\ell=3$), it seems appropriate to use the notation $W_J$ rather than $W_L$. Our assumption implies that $W_J\cong\fS_\ell$, with the Coxeter generating set corresponding to the adjacent transpositions. Let $w_J$ and $c_J$ denote the longest element of $W_J$ and a Coxeter element of $W_J$ respectively. Then $c_J$ is an $\ell$-cycle. 

Any element of the subgroup $W'$ defined in~\eqref{eqn:semi-direct} commutes with $w_J$, and either fixes every simple root in $J$ or acts on $J$ by the unique diagram involution of the Dynkin diagram of type $A_{\ell-1}$. Recall that the conjugation action of $w_J$ on the Coxeter generating set of $W_J$ is by this diagram involution. Hence there is a group homomorphism $\varphi:W'\to\langle w_J\rangle$ such that for any $w\in W'$, $w\varphi(w)$ commutes with every element of $W_J$. We can thus write $N_W(W_J)$ as a direct product $W_J\times \widetilde{W'}$, where $\widetilde{W'}=\{w\varphi(w)\,|\,w\in W'\}$.

Since the unique $\ell$-singular conjugacy class of $W_J$ is the class of $c_J$, and $\ell\nmid|\widetilde{W'}|$, the $\ell$-singular conjugacy classes of $N_W(W_J)$ are in bijection with the conjugacy classes of $\widetilde{W'}$: specifically, as $w$ runs over a set of representatives for the conjugacy classes of $\widetilde{W'}$, $c_J w$ runs over a set of representatives for the $\ell$-singular conjugacy classes of $N_W(W_J)$. So to prove the claim it suffices to show that the inclusion of $N_W(W_J)$ in $W$ induces a bijection
\begin{equation} \label{eqn:non-ell-regular}
\{\textup{$\ell$-singular conjugacy classes of $N_W(W_J)$}\}\simto\{\textup{$\ell$-singular conjugacy classes of $W$}\}.
\end{equation}

As $\ell$ divides $|W|$ exactly once, the $\ell$-Sylow subgroups of $W$ are the cyclic subgroups of order $\ell$. So every element of $W$ of order $\ell$ is conjugate to $c_J$. If $y\in W$ is any $\ell$-singular element, then $y$ has order $\ell d$ for some $d$ coprime to $\ell$. So $y^d$ is conjugate  to $c_J$, and therefore $y$ is conjugate to an element $z\in W$ which commutes with $c_J$. But any such $z$ must belong to $N_W(W_J)$, because the Coxeter element $c_J$ cannot belong to a proper (conjugate-)parabolic subgroup $W_J\cap zW_Jz^{-1}$ of $W_J$. This shows that the map in~\eqref{eqn:non-ell-regular} is surjective.

To prove injectivity of~\eqref{eqn:non-ell-regular}, it is enough to show that if $w_1,w_2\in \widetilde{W'}$ and $z\in W$ satisfy $z(c_Jw_1)z^{-1}=c_Jw_2$, then $z\in N_W(W_J)$. If $w_i$ has order $d_i$ (necessarily coprime to $\ell$), then $c_Jw_i$ has order $\ell d_i$ and we see that $d_1=d_2$. Hence $zc_J^{d_1}z^{-1}=c_J^{d_1}$, implying $zc_Jz^{-1}=c_J$ and then $z\in N_W(W_J)$ as seen above. 
\end{proof}

%%%%%%%%%%%%%%%%%%%%%%%%%%%%%%%%%%%%%%%%%%%%%%%%%%%%%%%%%%%%%%%%%%%%%%%
\section{Decomposition numbers and generalized Springer basic sets}
\label{sec:basic-sets}
%%%%%%%%%%%%%%%%%%%%%%%%%%%%%%%%%%%%%%%%%%%%%%%%%%%%%%%%%%%%%%%%%%%%%%%

In~\cite{juteau}, the third author described an algorithm to determine the elements of the principal induction series $\fN_{G,\bk}^{(T,\{0\},\ubk)}$, and the (modular) Springer correspondence between this induction series and $\Irr(\bk[W])$, from the knowledge of the Springer correspondence in characteristic $0$. This algorithm relied on an equality~\cite[Theorem 5.2]{juteau} between decomposition numbers for representations of $W$ and certain decomposition numbers for perverse sheaves on $\cN_G$. In this section we will see that, under various hypotheses, a similar equality holds for non-principal induction series. This leads to an algorithm for determining the induction series associated to a cuspidal datum that is minimal for its central character (where for technical reasons we have to exclude the Spin groups).

We assume in this section that the field $\bk$ is big enough for $G$ in the sense of~\eqref{eqn:definitely-big-enough}. This implies in particular that, for a Levi subgroup $L$ of $G$, every irreducible $L$-equivariant local system $\cE_L$ on a nilpotent $L$-orbit $\cO_L$ has a central character, in the sense explained in~\cite[\S 5.1]{genspring2}.

%------------------------------------------------------------------------------
\subsection{The minimal cuspidal datum with a given central character}
\label{ss:min-cusp-datum}
%------------------------------------------------------------------------------

Recall that for any Levi subgroup $M$ of $G$, the natural homomorphism $h_M:Z(G)/Z(G)^\circ\to Z(M)/Z(M)^\circ$ is surjective (see~\cite[Corollaire 2.2]{bonnafe-elements}, for example). As in~\cite[\S 5.1]{genspring2}, we use this fact to identify central characters for $M$ with central characters for $G$ that factor through $h_M$. The following result is essentially due to Bonnaf\'e.

\begin{prop} \label{prop:bonnafe}
Let $\chi:Z(G)/Z(G)^\circ\to\bk^\times$ be any group homomorphism.
\begin{enumerate}
\item 
\label{it:minimal-Levi}
There is a Levi subgroup $L_\chi$ of $G$, unique up to $G$-conjugacy, with the property that for any Levi subgroup $M$ of $G$, $\chi$ factors through $h_M$ if and only if $M$ contains a $G$-conjugate of $L_\chi$.
\item 
\label{it:minimal-Levi-A}
The Levi subgroup $L_\chi$ is of type $A$ and is self-opposed in $G$ in the sense of~\cite[\S 1.E]{bonnafe1}.
\item There is a unique cuspidal pair $(\cO_\chi,\cE_\chi)\in\fN_{L_\chi,\bk}^\cusp$ with central character $\chi$, and the orbit $\cO_\chi$ is the regular nilpotent orbit for $L_\chi$. 
\end{enumerate}
\end{prop}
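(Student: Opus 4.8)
The statement has three parts, and I would handle them in the order \eqref{it:minimal-Levi}, \eqref{it:minimal-Levi-A}, then the cuspidality/uniqueness claim, drawing throughout on Bonnaf\'e's work on self-opposed Levi subgroups and on the fact (established in this paper) that a Levi of type $A$ has only the regular distinguished orbit, which carries the unique local system with each central character.

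For part \eqref{it:minimal-Levi}: the set of Levi subgroups $M$ (up to conjugacy) through whose $h_M$ the character $\chi$ factors is nonempty (it contains $G$) and is closed under intersection of any two representatives chosen to share a common maximal torus — this is the key point. Indeed, if $\chi$ factors through $h_{M_1}$ and $h_{M_2}$ with $M_1\cap M_2\supseteq T$ a maximal torus, then $\chi$ kills the kernels of $Z(G)/Z(G)^\circ \to Z(M_i)/Z(M_i)^\circ$; since $Z(M_1\cap M_2)$ is generated by $Z(M_1)$ and $Z(M_2)$ (up to finite index / connected components) when $M_1\cap M_2$ contains $T$, one gets that $\chi$ factors through $h_{M_1\cap M_2}$. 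So the poset of such Levi classes has a unique minimal element $L_\chi$, and minimality translates to the stated containment property: $\chi$ factors through $h_M$ iff $M$ contains a $G$-conjugate of $L_\chi$. This is ``essentially due to Bonnaf\'e'' — I would cite \cite{bonnafe1} or \cite{bonnafe-elements} for the precise statement, and only sketch the intersection argument.

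For part \eqref{it:minimal-Levi-A}: that $L_\chi$ is self-opposed is a direct invocation of Bonnaf\'e's theory — in \cite[\S 1.E]{bonnafe1}, the minimal Levi through which a central character factors is exactly the notion of a ``$\chi$-minimal'' Levi and these are shown to be self-opposed. For the type-$A$ claim, I would argue that $L_\chi/Z(L_\chi)^\circ$ can have no simple factor of exceptional type, nor of type $B,C,D$: if it did, one could pass to a strictly smaller Levi $M\subsetneq L_\chi$ (inside that factor) such that $Z(M)/Z(M)^\circ$ still receives $\chi$ — because in types other than $A$ the component group $Z(\cdot)/Z(\cdot)^\circ$ of the derived-group centre does not shrink when you pass to suitable sub-Levis; more precisely, one uses that the character $\chi$ restricted to the relevant factor is either trivial or of order $2$ (resp.\ $3$), and such a character already factors through a proper Levi of that factor unless the factor is of type $A_{\ell-1}$-like, contradicting minimality. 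The clean way is: in types $B,C,D,E_7$ the relevant $2$-torsion (resp.\ in $E_6$ the $3$-torsion) in $Z/Z^\circ$ is already visible on a proper Levi, so minimality forces all factors to be of type $A$.

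For part (3): once $L_\chi$ is of type $A$, Theorem~\ref{thm:regular-series} (or directly the structure of $\cN_{L_\chi}$) shows the only distinguished orbit of $L_\chi$ is $\cO_\reg^{L_\chi}$, and since all cuspidal pairs are supported on distinguished orbits (Proposition~\ref{prop:levi-rule}, or \cite[Prop.~2.6]{genspring2}), any cuspidal pair for $L_\chi$ lives on $\cO_\reg^{L_\chi}$. On the regular orbit the map $Z(L_\chi)/Z(L_\chi)^\circ \to A_{L_\chi}(x)$ is an isomorphism, so irreducible local systems on $\cO_\reg^{L_\chi}$ are in bijection with characters of $Z(L_\chi)/Z(L_\chi)^\circ$, hence there is exactly one, say $\cE_\chi$, with central character $\chi$. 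It remains to check $(\cO_\reg^{L_\chi},\cE_\chi)$ is actually cuspidal — this is the part I expect to be the main obstacle, since ``supported on a distinguished orbit'' is necessary but not sufficient. I would prove it by the same mechanism as in Theorem~\ref{thm:regular-series}: if $(\cO_\reg^{L_\chi},\cE_\chi)$ were induced from a proper Levi $M \subsetneq L_\chi$, then by \cite[Corollary 2.15(1)]{genspring1} the induced orbit would force $M$ to be of type $A$ too, and by the character-theoretic analogue (Remark~\ref{rmk:trivial-summand} / \cite[\S 5]{green}) this can only happen if $W_M$ contains an $\ell$-Sylow of $W_{L_\chi}\cong$ products of symmetric groups — but that contradicts $L_\chi$ being minimal, because it would show $\chi$ already factors through $h_M$. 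Assembling these gives the uniqueness and the identification $\cO_\chi = \cO_\reg^{L_\chi}$.
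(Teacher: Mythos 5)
Your handling of parts \eqref{it:minimal-Levi} and \eqref{it:minimal-Levi-A} matches the paper in spirit: the paper simply cites \cite[Remarque 2.14, Lemme 2.16, Proposition 2.18]{bonnafe-elements} (applied to $K=\ker\chi$) and you do the same, after some heuristic sketches. Your ``intersection'' argument and your ``$Z/Z^\circ$ does not shrink'' reasoning are both loose, but since you ultimately defer to Bonnaf\'e's results, this is harmless.

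For part (3), however, there is a genuine error, and you also take a genuinely different route from the paper. The error: you invoke ``the same mechanism as in Theorem~\ref{thm:regular-series}'' and argue that $W_M$ would have to contain an $\ell$-Sylow of $W_{L_\chi}$. But Theorem~\ref{thm:regular-series} (and Proposition~\ref{prop:constant-summand}) concerns only the constant local system $\ubk$ on $\cO_\reg$; the $\ell$-Sylow criterion comes from $\ubk_{\cN}$ being a projective cover of $\IC(\cO_\reg,\ubk)$ (Lemma~\ref{lem:projective-cover}), a fact with no analogue for the nontrivial local system $\cE_\chi$ you are working with (which must have a \emph{faithful} central character on $Z(L_\chi)/Z(L_\chi)^\circ$, in particular nontrivial whenever $\chi\neq 1$). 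The step ``$W_M$ contains an $\ell$-Sylow $\Rightarrow \chi$ factors through $h_M$'' is also a non-sequitur: the $\ell$-Sylow condition and the central-character condition on Levis are unrelated. If you strip out the $\ell$-Sylow material, what is left of your sketch --- ``if $\IC(\cO_\reg^{L_\chi},\cE_\chi)$ were a quotient of $\Ind_M^{L_\chi}(\cF)$ with $M\subsetneq L_\chi$, then by~\cite[Lemma~5.1]{genspring2} the inducing pair would carry a matching central character, so $\chi$ would factor through $h_M$, contradicting the minimality in part~\eqref{it:minimal-Levi}'' --- is in fact a correct argument for cuspidality, so the approach is salvageable. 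The paper avoids this entirely by a cleaner route: it shows $\ker(h_{L_\chi})=\ker(\chi)$ (by inspection of \cite[Table 2.17]{bonnafe-elements}), which says exactly that $\chi$ induces a \emph{faithful} character of $Z(L_\chi)/Z(L_\chi)^\circ$, and then appeals directly to the classification of cuspidal pairs for type $A$ in~\cite[Theorem~6.3]{genspring2} (a faithful character on $Z/Z^\circ$ gives a unique cuspidal pair, supported on $\cO_\reg$), settling existence, uniqueness and the identification of the orbit in one stroke.
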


\begin{proof}
Part (1) follows from~\cite[Lemme 2.16(b)]{bonnafe-elements}, applied to the subgroup $K=\ker(\chi)$ of $Z(G)/Z(G)^\circ$. Part (2) follows from~\cite[Remarque 2.14, Lemme 2.16(a), Proposition 2.18]{bonnafe-elements}. To prove (3), by the classification of cuspidal pairs for $\SL(n)$ given in~\cite[Theorem 6.3]{genspring2}, it suffices to show that the homomorphism $Z(L_\chi)/Z(L_\chi)^\circ\to\bk^\times$ induced by $\chi$ is injective, i.e.\ that $\ker(h_{L_\chi})=\ker(\chi)$. More generally, a supplement to~\cite[Lemme 2.16]{bonnafe-elements} (in the notation of that result) is that for $L\in\cL_\mini(K)$, $\ker(h_L)=K$; this follows from the observation that it holds on each line of~\cite[Table 2.17]{bonnafe-elements}.  
\end{proof}

We will refer to $(L_\chi,\cO_\chi,\cE_\chi)$ as the \emph{minimal cuspidal datum with central character $\chi$}. When $\chi=1$, this is just the principal cuspidal datum $(T,\{0\},\ubk)$. When $\chi\neq 1$, an explicit case-by-case description of the Levi subgroup $L_\chi$ can be found in~\cite[Table 2.17]{bonnafe-elements}. 

The analogue for general $\chi$ of the Weyl group $W=N_G(T)/T$ is the group $W(\chi):=N_G(L_\chi)/L_\chi$, which is a subquotient of $W$ by~\eqref{eqn:weyl-isom}. Since $L_\chi$ is self-opposed in $G$, $W(\chi)$ is a Coxeter group in a natural way (see~\cite[Proposition 1.12]{bonnafe1}). When $M$ is a Levi subgroup of $G$ containing $L_\chi$, the group $N_M(L_\chi)/L_\chi$ is a (conjugate-)parabolic subgroup $W_M(\chi)$ of $W(\chi)$. Note that we have an isomorphism generalizing~\eqref{eqn:weyl-isom}:
\begin{equation} \label{eqn:generalized-weyl-isom}
N_G(M)/M\cong N_{W(\chi)}(W_M(\chi))/W_M(\chi),
\end{equation} 
since both sides are isomorphic to $(N_G(L_\chi)\cap N_G(M))/N_M(L_\chi)$. (Note that this argument uses the uniqueness of $L_\chi$ in Proposition~\ref{prop:bonnafe}\eqref{it:minimal-Levi}.) In particular, if $(M,\cO_M,\cE_M)$ is any cuspidal datum for $G$ where $\cE_M$ has central character $\chi$, the corresponding group $N_G(M)/M$ is a subquotient of $W(\chi)$.

%------------------------------------------------------------------------------
\subsection{Equality of decomposition numbers}
\label{ss:decomposition-numbers}
%------------------------------------------------------------------------------

In this subsection we fix a finite integral extension $\O$ of $\Zl$, denote by $\K$ its field of fractions (which is of characteristic $0$), and take $\bk$ to be its residue field (of characteristic $\ell$). We assume that $\K$ and $\bk$ are big enough for $G$. 

We use the notation $\fN_{G,\K}$, $\fM_{G,\K}$, etc.\ to denote the sets analogous to $\fN_{G,\bk}$, $\fM_{G,\bk}$, etc. The analogue of Theorem~\ref{thm:mgsc-intro} holds when $\bk$ is replaced by $\K$, by Lusztig's results in the $\Qlb$ setting~\cite{lusztig}. For any cuspidal datum $(L,\cO_L,\cE_L)\in\fM_{G,\bk}$, we denote by
\[
\Psi^{(L,\cO_L,\cE_L)}_\bk : \Irr(\bk[N_G(L)/L]) \simto \fN_{G,\bk}^{(L,\cO_L,\cE_L)} 
\]
the bijection~\eqref{eqn:bijection}. For $(L,\cO_L,\cE_L)\in\fM_{G,\K}$, we use the analogous notation
\[
\Psi^{(L,\cO_L,\cE_L)}_\K : \Irr(\K[N_G(L)/L]) \simto \fN_{G,\K}^{(L,\cO_L,\cE_L)} 
\]
for the bijection defined in the way analogous to~\eqref{eqn:bijection}. Since this definition uses Fourier transform, it differs from the bijection defined by Lusztig~\cite{lusztig} by twisting with the sign representation of $N_G(L)/L$, which is always a Coxeter group when $(L,\cO_L,\cE_L)\in\fM_{G,\K}$; see~\cite[\S3.7 and Theorem 3.8(c)]{em}.

If $H$ is a finite group, for $E$ in $\Irr(\K[H])$ and $F$ in $\Irr(\bk[H])$, we will consider the \emph{decomposition number}
\[
d^H_{E,F}:=[\bk \otimes_\O E_\O : F],
\]
where $E_\O$ is an $\O$-form of $E$. (It is well known that this number does not depend on the choice of $\O$-form.) One can also define analogous decomposition numbers for perverse sheaves on nilpotent cones; see~\cite[\S 2.7]{genspring1}. For $(\cO,\cE)$ in $\fN_{G,\K}$ and $(\cC,\cF)$ in $\fN_{G,\bk}$, we set
\[
d^{\cN_G}_{(\cO,\cE), (\cC,\cF)} := [\bk \otimes_\O^L \cM_\O : \IC(\cC,\cF)],
\]
where $\cM_\O$ is any $\O$-form of $\IC(\cO,\cE)$, i.e.~any torsion-free $G$-equivariant $\O$-perverse sheaf on $\cN_G$ such that $\K \otimes_\O \cM_\O \cong \IC(\cO,\cE)$.

For the remainder of this subsection we consider the following situation. Let $(L,\cO_L,\cE_L^\K) \in \fM_{G,\K}$, let $\cE_L^\O$ be an $\O$-form of $\cE_L^\K$, and set $\cE_L^\bk:=\bk \otimes_\O \cE_L^\O$, an $L$-equivariant $\bk$-local system on $\cO_L$. Also let $\tilde{\chi} : Z(G)/Z(G)^\circ \to \K^\times$ be the central character of $\cE_L^\K$. Then $\tilde{\chi}$ takes values in $\O^\times$, and hence induces a character $\chi : Z(G)/Z(G)^\circ \to \bk^\times$. We need to make the following assumptions:
\begin{equation}
\label{eqn:assumption-dec-numbers}
\begin{array}{c}
\text{$\cE_L^\bk$ is an irreducible $L$-equivariant $\bk$-local system on $\cO_L$,}\\
\text{and $(\cO_L,\cE_L^\bk)$ is the unique cuspidal pair in $\fN_{L,\bk}^\cusp$ with central character $\chi$.}
\end{array}
\end{equation}
The assumption that $\cE_L^\bk$ is irreducible implies that $(\cO_L,\cE_L^\bk)$ is indeed a cuspidal pair with central character $\chi$, by~\cite[Proposition 2.2]{genspring1}. We say that $(\cO_L,\cE_L^\bk)$ is the \emph{modular reduction} of the cuspidal pair $(\cO_L,\cE_L^\K)$; the assumption that $L$ has no other cuspidal pairs with central character $\chi$ implies that $\IC(\cO_L,\cE_L^\bk)$ is the modular reduction of $\IC(\cO_L,\cE_L^\K)$ in the sense of~\cite[\S 2.7]{genspring1}, so this terminology is not misleading.

\begin{rmk} \label{rmk:modular-reduction-of-cuspidal}
Lusztig's classification of cuspidal pairs in characteristic zero~\cite{lusztig} shows that the first assumption in~\eqref{eqn:assumption-dec-numbers} is almost always true. Among quasi-simple groups, it is only the Spin groups that have cuspidal pairs where the local system has rank bigger than one; and even for the Spin groups, we observed in~\cite[\S 8.4]{genspring2} that if $\ell\neq 2$, the modular reduction of the cuspidal local system remains irreducible. It follows from Theorem~\ref{thm:cuspidal-facts} (whose proof does not involve the results of this section) that the second assumption in~\eqref{eqn:assumption-dec-numbers} is satisfied whenever $\ell$ is good for $L$; for instance, it is always true when $L$ is of type $A$.
\end{rmk}

By~\cite[Lemma~5.2]{genspring2}, the central character assumption in~\eqref{eqn:assumption-dec-numbers} implies that the pair $(\cO_L', (\cE_L^\bk)')$ obtained from $(\cO_L, \cE_L^\bk)$ by Fourier transform as in~\cite[Equation~(2.1)]{genspring2} coincides with $(\cO_L, \cE_L^\bk)$. The same property over $\K$ holds for the same reason, see~\cite[Remark~2.13]{genspring1}.

Recall (see~\cite[\S 2.6]{genspring1}) that to each Levi subgroup $M \subset G$ and nilpotent orbit $\cO_M \subset \cN_M$, Lusztig associates a locally-closed subvariety $Y_{(M,\cO_M)} \subset \fg$.
Applying~\cite[Theorem~3.1]{genspring2} over both $\K$ and $\bk$, we obtain local systems $\overline{\cE_L^\K}$ and $\overline{\cE_L^\bk}$ on 
$Y_{(L,\cO_L)}$.

\begin{lem}
\label{lem:Ebar}
Assume that~\eqref{eqn:assumption-dec-numbers} holds. Then
there exists an $\O$-form $\overline{\cE_L^\O}$ of $\overline{\cE_L^\K}$ such that $\bk \otimes_\O \overline{\cE_L^\O} \cong \overline{\cE_L^\bk}$.
\end{lem}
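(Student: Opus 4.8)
The plan is to trace through the construction of $\overline{\cE_L^\bk}$ in \cite[\S3]{genspring2} and verify that each step is compatible with modular reduction, given the assumptions~\eqref{eqn:assumption-dec-numbers}. Recall that $\overline{\cE_L^\bk}$ is obtained as the local system on $Y_{(L,\cO_L)}$ underlying a shifted perverse sheaf which appears (up to shift) as a direct summand of the pushforward along a suitable map associated to the parabolic induction diagram, after restriction to the relevant stratum. The key point is that the analogous picture over $\O$ makes sense: one has an $\O$-perverse sheaf $\cM_\O^{\IC} = \IC(\cO_L, \cE_L^\O)$ on $\cN_L$ whose modular and generic reductions are $\IC(\cO_L,\cE_L^\bk)$ and $\IC(\cO_L,\cE_L^\K)$ (using that $\cE_L^\bk$ is irreducible, so no torsion intervenes in the $\IC$ construction). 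Inducing to $\cN_G$ (or rather working on $\fg$ via the map $\fN_G \hookrightarrow \fg$ and Lusztig's variety $Y_{(L,\cO_L)}$), one gets an $\O$-complex whose specializations are the corresponding $\K$- and $\bk$-complexes, because all the sheaf-theoretic functors involved commute with $\bk \otimes_\O^L (-)$ and with $\K \otimes_\O (-)$ (see \cite[Remark 2.23]{genspring1}).

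The steps I would carry out, in order: first, fix $\O$-forms throughout — in particular $\cE_L^\O$ is given, and $\IC(\cO_L, \cE_L^\O)$ is a torsion-free $\O$-perverse sheaf with the expected generic and modular reductions (here I use irreducibility of $\cE_L^\bk$ from~\eqref{eqn:assumption-dec-numbers}). Second, recall from \cite[\S3]{genspring2} how $\overline{\cE_L^\bk}$ arises: there is a map (coming from the parabolic induction diagram, restricted to the stratum $Y_{(L,\cO_L)}$) and $\overline{\cE_L^\bk}$ is, up to shift and up to the Fourier transform normalization, the local system part of the corresponding direct summand; crucially, by the remark following~\eqref{eqn:assumption-dec-numbers}, the Fourier transform fixes $(\cO_L,\cE_L^\bk)$ (and likewise over $\K$), so the Fourier transform does not introduce any discrepancy between the $\K$- and $\bk$-pictures. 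Third, form the $\O$-version of this construction applied to $\IC(\cO_L,\cE_L^\O)$, obtaining an $\O$-complex $\cG_\O$ on $Y_{(L,\cO_L)}$; by base change and the compatibility of the relevant functors with scalar extension, $\K \otimes_\O \cG_\O$ recovers (a shift of) $\overline{\cE_L^\K}$ and $\bk \otimes_\O^L \cG_\O$ recovers (a shift of) $\overline{\cE_L^\bk}$. Fourth, extract the local system: since $Y_{(L,\cO_L)}$ is smooth and $\cG_\O$ is (a shift of) a local system over both $\K$ and $\bk$, one checks $\cG_\O$ is itself a shifted $\O$-local system $\overline{\cE_L^\O}$, torsion-free since its generic fibre has the right rank and its modular reduction has equal rank (the ranks agree because $\cE_L^\bk$ and $\cE_L^\K$ have the same rank under the irreducibility assumption, and the construction preserves generic ranks). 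This $\overline{\cE_L^\O}$ is the desired $\O$-form.

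The main obstacle I anticipate is the bookkeeping around the Fourier transform and the precise identification in \cite[Theorem 3.1]{genspring2} of what "$\overline{\cE_L}$" is — one must be careful that the normalizations (shifts, the sign twist mentioned in \S\ref{ss:decomposition-numbers}, and the choice of which summand of the induced complex is being named) are applied uniformly over $\O$, $\K$, and $\bk$, so that no extra torsion phenomenon creeps in when passing to $\bk \otimes_\O^L (-)$. The assumption that $(\cO_L,\cE_L^\bk)$ is the \emph{unique} cuspidal pair with central character $\chi$ is exactly what guarantees that $\IC(\cO_L,\cE_L^\bk)$ is genuinely the modular reduction of $\IC(\cO_L,\cE_L^\K)$ (no other cuspidal $\IC$ appears in the reduction with the same central character), which is what lets the $\O$-form of the cuspidal datum propagate cleanly through the induction diagram; without it, the direct summand picked out over $\bk$ might fail to be the reduction of the one picked out over $\K$. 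Once that identification is pinned down, the remaining verifications are the routine base-change and smoothness arguments sketched above.
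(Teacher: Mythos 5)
Your proposal diverges from the paper's proof and misses its central technical point. The paper does not build $\overline{\cE_L^\O}$ by extracting a direct summand of a pushforward; it runs the concrete construction of \cite[\S3]{genspring2} over $\O$. That construction fixes $x\in\cO_L$, considers the $A_L(x)$-representation $V=(\cE_L)_x$ together with the canonical extension of this action to $A_{N_G(L)}(x)$ provided by \cite[Theorem~3.1 and Remark~3.3]{genspring2}, and then uses \cite[\S3.5 and Equation~(3.7)]{genspring2} to produce an $N_G(L)/L$-equivariant structure on a lift $\widetilde{\cE_L}$ and to descend it to $\overline{\cE_L}$ on $Y_{(L,\cO_L)}$. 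To carry this out over $\O$ compatibly with both $\K$ and $\bk$, the crucial input --- which your proposal never mentions --- is that $\Hom_{A_L(x)}(V^\O,V^\O)=\O$. The paper deduces this from the irreducibility of $\cE_L^\bk$ (the first clause of~\eqref{eqn:assumption-dec-numbers}) via a derived base-change argument; without it there is no canonical $\O$-form of the equivariant structure, and the ``$\O$-complex $\cG_\O$'' in your third step is simply not well-defined.

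Your account also misplaces the role of~\eqref{eqn:assumption-dec-numbers}. You highlight the uniqueness of the cuspidal pair as the decisive hypothesis, but the paper's proof of this lemma uses only the irreducibility clause; the uniqueness is used just before the lemma (for Fourier self-duality of $(\cO_L,\cE_L^\bk)$ and to identify $\IC(\cO_L,\cE_L^\bk)$ as the modular reduction of $\IC(\cO_L,\cE_L^\K)$) and at later points, not here. Finally, the ``direct summand'' framing does not survive the passage to $\bk$ or $\O$ once $\ell$ divides $|N_G(L)/L|$: the restriction of the induced complex to $Y_{(L,\cO_L)}$ no longer decomposes into pieces indexed by $\Irr(N_G(L)/L)$, so there is no summand to extract. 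The correct mechanism is Galois descent from the equivariant local system $\widetilde{\cE_L}$, which is exactly what the $\O$-equivariant structure provides --- and exactly what your argument, as stated, does not supply.
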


\begin{proof}
Recall the setting of~\cite[Theorem~3.1]{genspring2}. We fix $x \in \cO_L$, and let $V^\K=(\cE_L^\K)_x$ be the representation of $A_L(x)$ associated to the local system $\cE_L^\K$ on $\cO_L$. As explained in~\cite[Remark~3.3]{genspring2}, the statement of~\cite[Theorem~3.1]{genspring2} can be interpreted as providing a canonical extension of the action of $A_L(x)$ on $V$ to the larger group $A_{N_G(L)}(x)$. Hence one can choose an $\O$-form $V^\O$ of $V^\K$, considered as a representation of $A_L(x)$, which also has the property that the action can be extended to $A_{N_G(L)}(x)$.

We claim that
\begin{equation}
\label{eqn:abs-irred-O}
\Hom_{A_L(x)}(V^\O, V^\O) = \O.
\end{equation}
In fact, we have a canonical isomorphism $\bk \lotimes_\O R\Hom_{A_L(x)}(V^\O, V^\O) \simto R\Hom_{A_L(x)}(\bk \otimes_\O V^\O, \bk \otimes_\O V^\O)$. From this (and the fact that any complex in the derived category of the category of $\O$-modules is isomorphic to the direct sum of its cohomology objects) we deduce that $\Hom_{A_L(x)}(V^\O, V^\O)$ is $\O$-free and that the natural morphism $\bk \otimes_\O \Hom_{A_L(x)}(V^\O, V^\O) \to \Hom_{A_L(x)}(\bk \otimes_\O V^\O, \bk \otimes_\O V^\O)$ is injective. Now by assumption $\bk \otimes_\O V^\O$ is irreducible, and hence absolutely irreducible, so $\Hom_{A_L(x)}(\bk \otimes_\O V^\O, \bk \otimes_\O V^\O) = \bk$. We deduce~\eqref{eqn:abs-irred-O}.

With our choice of $V^\O$, using in particular~\eqref{eqn:abs-irred-O}, one can see that the arguments in~\cite[\S 3.5]{genspring2} apply to the local system $\cE_L^\O$ associated to $V^\O$, and provide a canonical $N_G(L)/L$-equivariant structure on the associated local system $\widetilde{\cE_L^\O}$ which induces, upon extension of scalars, the corresponding equivariant structures on $\bk \otimes_\O \widetilde{\cE_L^\O} = \widetilde{\cE_L^\bk}$ and $\K \otimes_\O \widetilde{\cE_L^\O} = \widetilde{\cE_L^\K}$. Applying a variant of the equivalence in~\cite[Equation~(3.7)]{genspring2} over $\O$, we obtain a torsion-free $\O$-local system $\overline{\cE_L^\O}$ on $Y_{(L,\cO_L)}$ such that $\bk \otimes_\O \overline{\cE_L^\O} \cong \overline{\cE_L^\bk}$ and $\K \otimes_\O \overline{\cE_L^\O} \cong \overline{\cE_L^\K}$, which finishes the proof.
\end{proof}

The following proposition is a generalization of~\cite[Theorem~5.2]{juteau}, with an identical proof.

\begin{prop}
\label{prop:equality-decomp-number}
Assume that~\eqref{eqn:assumption-dec-numbers} holds. For $E \in \Irr(\K[N_G(L)/L])$ and $F \in\Irr(\bk[N_G(L)/L])$ we have
\[
d^{N_G(L)/L}_{E,F} = d^{\cN_G}_{\Psi^{(L,\cO_L,\cE_L^\K)}_\K(E), \Psi^{(L,\cO_L,\cE_L^\bk)}_\bk(F)}.
\]
\end{prop}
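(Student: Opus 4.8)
The plan is to follow closely the argument of~\cite[Theorem~5.2]{juteau} (which is the special case $L = T$, $\cE_L = \ubk$), replacing the objects there by their counterparts for the cuspidal datum $(L,\cO_L,\cE_L)$. The key geometric input is the equality of decomposition numbers between perverse sheaves on the nilpotent cone $\cN_G$ and perverse sheaves (or equivalently, local systems) on the relevant stratum $Y_{(L,\cO_L)} \subset \fg$, which in turn reduces to a statement about decomposition numbers of the finite group $N_G(L)/L$. First I would set up the comparison between $\cN_G$ and $\fg$: Lusztig's construction, as recalled in~\cite[\S 2.6]{genspring1} and used in~\cite[Theorem~3.1]{genspring2}, expresses the simple perverse sheaves in the induction series $\fN_{G,\bk}^{(L,\cO_L,\cE_L^\bk)}$ (resp.\ over $\K$) in terms of intersection cohomology complexes of local systems on $Y_{(L,\cO_L)}$, with the local system for $\Psi^{(L,\cO_L,\cE_L^\bk)}_\bk(F)$ being the one attached to $F \in \Irr(\bk[N_G(L)/L])$ via the equivariant structure produced by that theorem. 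So the first step is to invoke Lemma~\ref{lem:Ebar}: under~\eqref{eqn:assumption-dec-numbers} there is an $\O$-form $\overline{\cE_L^\O}$ of $\overline{\cE_L^\K}$ reducing to $\overline{\cE_L^\bk}$, and more generally the whole family of local systems on $Y_{(L,\cO_L)}$ indexed by $\Irr(N_G(L)/L)$ is defined over $\O$ and is compatible with modular reduction, because the $A_{N_G(L)}(x)$-equivariant structure on $V^\O$ in the proof of that lemma is defined integrally.

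The second step is to reduce the decomposition number $d^{\cN_G}_{(\cO,\cE),(\cC,\cF)}$ for pairs in the two induction series to a decomposition number on $Y_{(L,\cO_L)}$, and then to a decomposition number for $N_G(L)/L$. Here one uses that the functor sending a perverse sheaf on $\cN_G$ supported on a given induction series to its restriction/nearby-cycle data along $Y_{(L,\cO_L)}$ is exact and compatible with $\O$-forms and modular reduction — this is exactly the content of~\cite[\S 2.7]{genspring1} together with the equivalence in~\cite[Equation~(3.7)]{genspring2} (applied over $\O$, as in the proof of Lemma~\ref{lem:Ebar}), so that $[\,\bk \otimes_\O^L \cM_\O : \IC(\cC,\cF)\,]$ on $\cN_G$ matches $[\,\bk \otimes_\O^L (\text{form of }\overline{\cE_L^\K}) : \overline{\cE_L^\bk}\text{-summand}\,]$ on $Y_{(L,\cO_L)}$. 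The point that makes this genuinely a statement about $N_G(L)/L$ and not about a bigger $\pi_1$ is that $Y_{(L,\cO_L)}$ is, up to the relevant covering, a quotient whose local systems are governed precisely by representations of $N_G(L)/L$ (this is the content of~\cite[Theorem~3.1]{genspring2}, and the reason the theorem was stated that way). After this translation, $d^{\cN_G}_{\Psi_\K(E),\Psi_\bk(F)}$ becomes the multiplicity of the modular reduction of the local system attached to $E$ in a composition series of the modular reduction indexed by $F$ — which by the equivalence of categories is literally $d^{N_G(L)/L}_{E,F}$.

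The main obstacle, as in~\cite{juteau}, is making precise the claim that modular reduction of $\O$-perverse sheaves on $\cN_G$ is intertwined, via the identification of induction series with module categories over (a form of) $\bk[N_G(L)/L]$, with modular reduction of $N_G(L)/L$-modules; i.e.\ one must check that the equivalence~\cite[Theorem~3.1]{genspring2}/\cite[Equation~(3.7)]{genspring2} can be set up \emph{integrally} and commutes with $\bk \otimes_\O^L(-)$. This is where the hypotheses~\eqref{eqn:assumption-dec-numbers} are used: irreducibility of $\cE_L^\bk$ gives~\eqref{eqn:abs-irred-O}, hence that $\widetilde{\cE_L^\O}$ carries a canonical integral $N_G(L)/L$-equivariant structure (Lemma~\ref{lem:Ebar}), and uniqueness of the cuspidal pair with central character $\chi$ guarantees that $\IC(\cO_L,\cE_L^\bk)$ really is the modular reduction of $\IC(\cO_L,\cE_L^\K)$, so no extra composition factors from other series intervene. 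Once these integrality and compatibility statements are in place, the proof is the same word for word as that of~\cite[Theorem~5.2]{juteau}, which is why I would simply say so rather than reproduce it; the only new verifications are the ones isolated in Lemma~\ref{lem:Ebar} and the bookkeeping of central characters via Proposition~\ref{prop:bonnafe}.
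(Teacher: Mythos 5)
Your proposal is correct and follows essentially the same route as the paper's own proof: translate via the equivalence of $\bk[N_G(L)/L]$-modules with local systems on $Y_{(L,\cO_L)}$, twist by the local system $\overline{\cE_L^\bk}$ (defined integrally by Lemma~\ref{lem:Ebar}), and then transport to $\cN_G$ using that $\IC$ and Fourier transform commute with modular reduction. One small inaccuracy: the passage from $\cN_G$ to $Y_{(L,\cO_L)}$ is via the Fourier transform $\bT_\fg$ followed by restriction of the $\IC$-sheaf to the open stratum $Y_{(L,\cO_L)}$, not via nearby cycles, but this does not affect the correctness of your argument since you correctly invoke~\cite[Equation~(3.7)]{genspring2} and the compatibility of $\bT_\fg$ with $\bk \lotimes_\O (-)$.
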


\begin{proof}
If $\E \in \{\K, \O, \bk\}$ and if $V$ is a representation of $N_G(L)/L$ over $\E$, we denote by $\cL_V$ the $\E$-local system on $Y_{(L,\cO_L)}$ associated to $V$ as in~\cite[\S\S 3.1--3.2]{genspring2}.

By definition, the simple perverse sheaf on $\cN_G$ associated to $\Psi^{(L,\cO_L,\cE_L^\K)}_\K(E)$, resp.~$\Psi^{(L,\cO_L,\cE_L^\bk)}_\bk(F)$, is 
\[
(\bT_{\fg}^\K)^{-1}(\IC(Y_{(L,\cO_L)}, \overline{\cE_L^\K} \otimes_\K \cL_E)), \qquad \text{resp.} \qquad (\bT_{\fg}^\bk)^{-1}(\IC(Y_{(L,\cO_L)}, \overline{\cE_L^\bk} \otimes_\bk \cL_F)),
\]
where $\bT_\fg^\K$, resp.~$\bT_\fg^\bk$, is the Fourier transform over $\K$, resp.~$\bk$, see~\cite[\S 2.4]{genspring1}.
Now we let $E_\O$ be an $\O$-form of $E_\K$. Using~\cite[Theorem~3.1(4)]{genspring2} we have
\[
[\bk \otimes_\O E_\O : F] = [\bk \otimes_\O \cL_{E_\O} : \cL_F] = [\overline{\cE_L^\bk} \otimes_\bk (\bk \otimes_\O \cL_{E_\O}) : \overline{\cE_L^\bk} \otimes_\bk \cL_F] = [\bk \otimes_\O (\overline{\cE_L^\O} \otimes_\O \cL_{E_\O}) : \overline{\cE_L^\bk} \otimes_\bk \cL_F],
\]
where $\overline{\cE^\O_L}$ is as in Lemma~\ref{lem:Ebar}. Using~\cite[Corollary~2.5]{juteau} we deduce that
\[
[\bk \otimes_\O E_\O : F]
= [\bk \lotimes_\O \IC(Y_{(L,\cO_L)}, \overline{\cE_L^\O} \otimes_\O \cL_{E_\O}) : \IC(Y_{(L,\cO_L)},\overline{\cE_L^\bk} \otimes_\bk \cL_F)].
\]
Finally, since Fourier transform is an equivalence which commutes with extension of scalars (see~\cite[Remark~2.23]{genspring1}), we obtain that
\[
[\bk \otimes_\O E_\O : F]
= [\bk \lotimes_\O (\bT_\fg^\O)^{-1}( \IC(Y_{(L,\cO_L)}, \overline{\cE_L^\O} \otimes_\O \cL_{E_\O})) : (\bT_\fg^\bk)^{-1}(\IC(Y_{(L,\cO_L)},(\overline{\cE_L^\bk} \otimes_\bk \cL_F))],
\]
where $\bT_\fg^\O$ is the Fourier transform over $\O$. This proves the claim.
\end{proof}

%------------------------------------------------------------------------------
\subsection{Basic set datum attached to the minimal cuspidal datum}
\label{ss:basic-set}
%------------------------------------------------------------------------------

Continue with the notation $\K$, $\O$, $\bk$ from~\S\ref{ss:decomposition-numbers}. Let $\chi:Z(G)/Z(G)^\circ\to\bk^\times$ be any homomorphism.

\begin{lem} \label{lem:lift}
There exists a lift $\tilde{\chi}:Z(G)/Z(G)^\circ\to\O^\times$ of $\chi$ such that $\ker(\tilde{\chi})=\ker(\chi)$. 
\end{lem}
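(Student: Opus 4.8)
The plan is to reduce the statement to an elementary fact about lifting characters of a finite abelian group from $\bk^\times$ to $\O^\times$, namely that the reduction map $\O^\times \to \bk^\times$ admits a section on the relevant finite subgroups. First I would set $A := Z(G)/Z(G)^\circ$, a finite abelian group, and decompose $A = A_\ell \times A_{\ell'}$ into its $\ell$-primary and $\ell'$-primary parts. Since $\chi$ takes values in $\bk^\times$, which has no $\ell$-torsion, $\chi$ automatically kills $A_\ell$; thus both $\chi$ and any lift $\tilde\chi$ factor through $A_{\ell'}$, and the condition $\ker(\tilde\chi) = \ker(\chi)$ on $A$ is equivalent to the corresponding condition on $A_{\ell'}$. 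So it suffices to produce, for the finite group $A_{\ell'}$ of order prime to $\ell$, a homomorphism $\tilde\chi : A_{\ell'} \to \O^\times$ lifting $\chi : A_{\ell'} \to \bk^\times$ with the same kernel.

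The key step is then the following: if $m$ is the exponent of $A_{\ell'}$ (so $\ell \nmid m$), then $\O$ contains a primitive $m$-th root of unity whose image in $\bk$ is a primitive $m$-th root of unity, and more precisely the reduction map induces an isomorphism $\mu_m(\O) \simto \mu_m(\bk)$ between the groups of $m$-th roots of unity. Here I would invoke that $\O$ is a finite integral extension of $\Zl$ with residue field $\bk$, and that since $\K$ (hence $\O$) is big enough for $G$, it contains enough roots of unity; the isomorphism $\mu_m(\O) \simto \mu_m(\bk)$ for $\ell \nmid m$ is Hensel's lemma (the polynomial $X^m - 1$ is separable mod $\ell$). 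Granting this, I define $\tilde\chi$ by choosing, for each cyclic factor $\Z/d_i\Z$ in a decomposition $A_{\ell'} \cong \prod_i \Z/d_i\Z$ with generator $a_i$, the unique $d_i$-th root of unity in $\O$ reducing to $\chi(a_i) \in \mu_{d_i}(\bk)$, and extending multiplicatively. Since $\mu_{d_i}(\O) \simto \mu_{d_i}(\bk)$ is an isomorphism, $\tilde\chi(a_i)$ has the same order as $\chi(a_i)$, and it follows that $\ker(\tilde\chi) = \ker(\chi)$: an element $\prod a_i^{n_i}$ lies in one kernel iff $d_i \mid n_i \cdot (d_i / \mathrm{ord}(\chi(a_i)))$ for all $i$, a condition that does not see the difference between $\chi$ and $\tilde\chi$. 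Finally $\tilde\chi$ is a group homomorphism as a product of homomorphisms on cyclic factors, and it visibly lifts $\chi$.

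I do not expect a serious obstacle here; the only point requiring a little care is the compatibility of the chosen roots of unity with a direct-product decomposition of $A_{\ell'}$, i.e.\ checking that defining $\tilde\chi$ factor-by-factor yields a well-defined homomorphism on all of $A_{\ell'}$ with the prescribed kernel — this is exactly why it is cleanest to work with $\mu_m(\O) \simto \mu_m(\bk)$ as an isomorphism of groups rather than merely a surjection, so that orders of elements are preserved. An alternative, slightly slicker formulation avoids choosing generators: the reduction map $\Hom(A_{\ell'}, \mu_m(\O)) \to \Hom(A_{\ell'}, \mu_m(\bk))$ is itself an isomorphism (being $\Hom(A_{\ell'}, -)$ applied to an isomorphism), so $\chi$ has a unique lift $\tilde\chi$ valued in $\mu_m(\O)$, and an isomorphism of coefficient groups preserves kernels automatically. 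Either way, the lemma follows.
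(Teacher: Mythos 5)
Your proposal is correct, and its second, ``slicker'' formulation is essentially the paper's proof: the paper takes $N$ to be the exponent of $Z(G)/Z(G)^\circ$, observes that $\mu_N(\O)=\mu_N(\K)$ is cyclic of order $N$ by the big-enough hypothesis, and defines $\tilde\chi = s\circ\chi$ where $s:\mu_N(\bk)\to\mu_N(\O)$ is the (injective) inclusion of the prime-to-$\ell$ part, so the kernel condition is automatic. One small wording slip: since $\O^\times$ can contain nontrivial $\ell$-power roots of unity, it is \emph{not} true that every lift of $\chi$ kills $A_\ell$; rather, you are free to construct one that does (as the required kernel equality forces), so ``any lift $\tilde\chi$ factors through $A_{\ell'}$'' should read ``the lift we construct factors through $A_{\ell'}$.''
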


\begin{proof}
Let $N$ be the exponent of the finite group $Z(G)/Z(G)^\circ$. By the assumption that $\K$ is big enough for $G$, the group of $N$-th roots of unity $\mu_N(\O)=\mu_N(\K)$ is cyclic of order $N$. The canonical homomorphism $\mu_N(\O)\to\mu_N(\bk)$ is the projection onto the second factor of the direct product decomposition $\mu_N(\O)=\mu_N(\O)_\ell\times\mu_N(\O)_{\ell'}$, so it has a section $s:\mu_N(\bk)\to\mu_N(\O)$, and we can set $\tilde{\chi}=s\circ\chi$.
\end{proof}

Henceforth we will fix some $\tilde{\chi}$ as in Lemma~\ref{lem:lift}. We denote by $\fN_{G,\bk}^\chi \subset \fN_{G,\bk}$, resp.~$\fN_{G,\K}^{\tilde{\chi}} \subset \fN_{G,\K}$, the subset consisting of the pairs $(\cO,\cE)$ where $\cE$ has central character $\chi$, resp.~$\tilde{\chi}$.

We consider the setting of \S\ref{ss:decomposition-numbers}, taking as our cuspidal datum $(L,\cO_L, \cE_L^\K)$ the minimal cuspidal datum with central character $\tilde\chi$ over $\K$, as defined in \S\ref{ss:min-cusp-datum}. Since $\ker(\tilde\chi)=\ker(\chi)$, we can assume that $L_{\tilde\chi}=L_\chi$ and $\cO_{\tilde\chi}=\cO_{L_{\tilde\chi}}^\reg=\cO_{L_\chi}^\reg=\cO_\chi$, so we can write this minimal cuspidal datum with central character $\tilde\chi$ over $\K$ as $(L_\chi,\cO_\chi,\cE_{\tilde\chi}^\K)$. In this case, by Remark~\ref{rmk:modular-reduction-of-cuspidal}, assumption~\eqref{eqn:assumption-dec-numbers} is satisfied since $L_\chi$ is of type $A$ (see Proposition~\ref{prop:bonnafe}\eqref{it:minimal-Levi-A}). We write the modular reduction of $(\cO_\chi,\cE_{\tilde\chi}^\K)$ as $(\cO_\chi,\cE_{\chi}^\bk)$. Then $(L_\chi, \cO_\chi, \cE_\chi^\bk)$ is the minimal cuspidal datum with central character $\chi$ over $\bk$.

The following result is a generalization of~\cite[Proposition~5.4]{juteau}, with an identical proof.

\begin{prop}
\label{prop:mod-redu-min-series}
Let $(\cO,\cE) \in \fN_{G,\K}^{\tilde\chi}$ and $(\cC, \cF) \in \fN_{G,\bk}^\chi$. If $(\cO, \cE) \notin \fN_{G,\K}^{(L_\chi, \cO_\chi, \cE_{\tilde\chi}^\K)}$ and $d^{\cN_G}_{(\cO,\cE), (\cC,\cF)} \neq 0$, then $(\cC, \cF) \notin \fN_{G,\bk}^{(L_\chi, \cO_\chi, \cE_\chi^\bk)}$.
\end{prop}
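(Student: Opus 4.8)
The plan is to follow the proof of~\cite[Proposition~5.4]{juteau} essentially verbatim, transporting everything through the Fourier transform $\bT_\fg$ so that the relevant geometry takes place over $\fg$, where induction series are recorded by Lusztig's strata $Y_{(M,\cO_M)}$.

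First I would argue by contraposition: assume $(\cC,\cF)\in\fN_{G,\bk}^{(L_\chi,\cO_\chi,\cE_\chi^\bk)}$ and $d^{\cN_G}_{(\cO,\cE),(\cC,\cF)}\neq 0$, and aim to prove $(\cO,\cE)\in\fN_{G,\K}^{(L_\chi,\cO_\chi,\cE_{\tilde\chi}^\K)}$. Let $(M,\cO_M,\cE_M)\in\fM_{G,\K}$ be the cuspidal datum whose induction series contains $(\cO,\cE)$ (unique by the analogue of Theorem~\ref{thm:mgsc-intro} over $\K$). Since central characters are constant along induction series, $\cE_M$ has central character $\tilde\chi$, so by Proposition~\ref{prop:bonnafe}\eqref{it:minimal-Levi} the Levi $M$ contains a $G$-conjugate of $L_\chi$; after replacing the datum by a $G$-conjugate I may assume $L_\chi\subseteq M$. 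It then suffices to show $L_\chi=M$: if so, $(\cO_M,\cE_M)$ is a cuspidal pair for $L_\chi$ with central character $\tilde\chi$, hence equals $(\cO_\chi,\cE_{\tilde\chi}^\K)$ by Proposition~\ref{prop:bonnafe}(3), so $(\cO,\cE)$ lies in the minimal series.

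Next I would rewrite the decomposition number through the Fourier transform, exactly as in the proof of Proposition~\ref{prop:equality-decomp-number}. Fixing an $\O$-form $\cM_\O$ of $\IC(\cO,\cE^\K)$ (which exists, since $\cE^\K$ corresponds to a $\K$-representation of the finite group $A_L(x)$, with no irreducibility requirement on its reduction because $d^{\cN_G}$ does not depend on the chosen form), and using that $\bT_\fg$ is an equivalence commuting with modular reduction~\cite[Remark~2.23]{genspring1} together with the descriptions $\bT_\fg^\K(\IC(\cO,\cE^\K))\cong\IC(Y_{(M,\cO_M)},\overline{\cE_M^\K}\otimes_\K\cL_E)$ and $\bT_\fg^\bk(\IC(\cC,\cF))\cong\IC(Y_{(L_\chi,\cO_\chi)},\overline{\cE_\chi^\bk}\otimes_\bk\cL_F)$ supplied by~\cite[Theorem~3.1]{genspring2} over $\K$ and over $\bk$, one gets
\[
d^{\cN_G}_{(\cO,\cE),(\cC,\cF)} = [\bk\lotimes_\O\bT_\fg^\O(\cM_\O) : \IC(Y_{(L_\chi,\cO_\chi)},\overline{\cE_\chi^\bk}\otimes_\bk\cL_F)].
\]
Because an $\O$-form of $\IC(Y_{(M,\cO_M)},\cdots)$ is supported on $\overline{Y_{(M,\cO_M)}}$ and modular reduction preserves supports, every composition factor of $\bk\lotimes_\O\bT_\fg^\O(\cM_\O)$ is supported on $\overline{Y_{(M,\cO_M)}}$; hence the nonvanishing of the decomposition number forces $\overline{Y_{(L_\chi,\cO_\chi)}}\subseteq\overline{Y_{(M,\cO_M)}}$.

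The last step would be to exclude this inclusion unless $L_\chi=M$ by a dimension count. Using the formula $\dim Y_{(M',\cO_{M'})}=\dim\cO_{M'}+\dim G-\dim M'+\dim Z(M')^\circ$ (see~\cite[\S2.6]{genspring1}), the facts that $\cO_M$ is distinguished in $M$ (Proposition~\ref{prop:levi-rule}), hence $\dim\cO_M\leq\dim M-\dim T$, that $\cO_\chi$ is the regular orbit so $\dim\cO_\chi=\dim L_\chi-\dim T$, and that $Z(M)^\circ\subseteq Z(L_\chi)^\circ$ with strict inequality of dimensions whenever $L_\chi\subsetneq M$, a short computation gives $\dim Y_{(M,\cO_M)}<\dim Y_{(L_\chi,\cO_\chi)}$ whenever $L_\chi\subsetneq M$, contradicting the closure inclusion. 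Thus $L_\chi=M$ and the argument is complete. The only delicate point is the bookkeeping of $\O$-forms under $\bT_\fg$ and under the equivalence of~\cite[Theorem~3.1]{genspring2}, handled precisely as in the proof of Proposition~\ref{prop:equality-decomp-number}; the geometric input is entirely elementary.
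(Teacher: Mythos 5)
Your proposal is correct, and it reaches the conclusion by a genuinely different geometric route from the paper's. Both proofs pass through the Fourier transform to reduce the question to the support of $\bk\lotimes_\O\bT_\fg^\O(\cM_\O)$ in $\fg$; the divergence is in how one shows that this support cannot contain $Y_{(L_\chi,\cO_\chi)}$. The paper introduces the closed set $Y_\chi=\bigcup_{L\supset L_\chi}Y_{(L,\cO_L)}$ and invokes Lusztig's result (\cite[Proposition~6.5]{lusztig-cusp2}) that $Y_\chi=\overline{Y_{(L_\chi,\cO_\chi)}}$, so that $Y_{(L_\chi,\cO_\chi)}$ is open dense in $Y_\chi$; since the Fourier transform of $\IC(\cO,\cE)$ is supported on a proper stratum, its support (and hence that of the reduction) lies in $Y_\chi\smallsetminus Y_{(L_\chi,\cO_\chi)}$, so no composition factor of the reduction is an $\IC$-extension from $Y_{(L_\chi,\cO_\chi)}$. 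You instead use the explicit dimension formula $\dim Y_{(M,\cO_M)}=\dim\cO_M+\dim G-\dim M+\dim Z(M)^\circ$, the bound $\dim\cO_M\le\dim M-\dim T$ and the strict inclusion $Z(M)^\circ\subsetneq Z(L_\chi)^\circ$ for $L_\chi\subsetneq M$ to get $\dim Y_{(M,\cO_M)}<\dim Y_{(L_\chi,\cO_\chi)}$, which rules out $\overline{Y_{(L_\chi,\cO_\chi)}}\subseteq\overline{Y_{(M,\cO_M)}}$ without appealing to the stratification structure of $Y_\chi$ or to~\cite{lusztig-cusp2} at all. Your argument is more self-contained (only the dimension formula and basic facts about $\O$-forms and supports are needed) at the small cost of an explicit computation; the paper's is shorter but leans on an external structural theorem. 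Two small citation slips: the fact that $\cO_M$ is distinguished is~\cite[Proposition~2.6]{genspring2}, not Proposition~\ref{prop:levi-rule} of this paper (though in fact you don't need distinguishedness at all, since $\dim\cO_M\le\dim M-\dim T$ already holds for every nilpotent orbit as the regular one is dense); and the Fourier transform of $\IC(\cO,\cE)$ being $\IC(Y_{(M,\cO_M)},\cdot)$ with $\cO_M$ the cuspidal orbit is~\cite[Theorem~3.1]{genspring2}, which is the form you implicitly use, whereas the paper at this point quotes only the weaker~\cite[Lemma~2.1]{genspring2}.
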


\begin{proof}
Let us set
\[
Y_\chi = \bigcup_{\substack{(L,\cO_L) \\ L \supset L_\chi}} Y_{(L,\cO_L)},
\]
where the union runs over Levi subgroups $L$ containing $L_\chi$, and nilpotent orbits $\cO_L \subset \cN_L$. Since $\cO_\chi$ is the regular orbit $\cO_{L_\chi}^\reg$ (see Proposition~\ref{prop:bonnafe}), it follows from~\cite[Proposition~6.5]{lusztig-cusp2} that $Y_\chi$ is the closure of $Y_{(L_\chi, \cO_\chi)}$ in $\fg$; in particular $Y_\chi$ is a closed subvariety in $\fg$, and $Y_{(L_\chi, \cO_\chi)}$ is an open subvariety in $Y_\chi$.

Suppose that $(M,\cO_M,\cE_M^\K)\in\fM_{G,\K}$ labels the induction series containing $(\cO, \cE)$. By~\cite[Lemma~2.1]{genspring2}, the perverse sheaf $\bT_\fg^\K(\IC(\cO,\cE))$ is supported on the closure of $Y_{(M,\cO_M')}$ for some nilpotent orbit $\cO_M' \subset \cN_M$. By~\cite[Lemma~5.1]{genspring2}, the central character of $\cE_M^\K$ is the same as that of $\cE$, so $\tilde\chi$ must factor through the surjection $h_M:Z(G)/Z(G)^\circ\to Z(M)/Z(M)^\circ$. By Proposition~\ref{prop:bonnafe}, this implies that $M$ contains a $G$-conjugate of $L_\chi$; hence $Y_{(M,\cO_M')}\subset Y_\chi$. The assumption that $(M,\cO_M,\cE_M^\K)$ is not conjugate to $(L_\chi, \cO_\chi, \cE_{\tilde\chi}^\K)$ implies that $Y_{(M,\cO_M')}\neq Y_{(L_\chi,\cO_\chi)}$, so $\bT_\fg^\K(\IC(\cO,\cE))$ is supported on $Y_\chi \smallsetminus Y_{(L_\chi, \cO_\chi)}$.

If $\cE^\O$ is an $\O$-form of $\cE$, then by~\cite[Proposition~2.8]{juteau}, $\bT_\fg^\O(\IC(\cO,\cE^\O))$ is also supported on $Y_\chi \smallsetminus Y_{(L_\chi, \cO_\chi)}$, so that none of the composition factors of $\bk \lotimes_\O \bT_\fg^\O(\IC(\cO,\cE^\O)) \cong \bT_\fg^\bk(\bk \lotimes_\O \IC(\cO,\cE^\O))$ can be an $\IC$-extension of a local system on $Y_{(L_\chi, \cO_\chi)}$. In particular, $\bT_\fg^\bk(\IC(\cC, \cF))$ is not such an $\IC$-extension. By~\cite[Lemma~2.1]{genspring2}, this implies that $(\cC, \cF) \notin \fN_{G,\bk}^{(L_\chi, \cO_\chi, \cE_\chi^\bk)}$.
\end{proof}

For the remainder of this subsection we assume that $G$ is quasi-simple and not a Spin group. Under this assumption, one can easily check by case-by-case calculation (using, for instance, the tables on~\cite[p.~92 and pp.~128--134]{cm}) that for any $x \in \cN_G$, if $Z(x)$ denotes the image of $Z(G)$ in $A_G(x)$ then the exact sequence of groups
\begin{equation}
\label{eqn:exact-sequence-refined}
1 \to Z(x) \to A_G(x) \to A_{G/Z(G)}(x) \to 1
\end{equation}
induced by~\eqref{eqn:exact-seq} splits.  We choose once and for all, for any nilpotent orbit $\cO \subset \cN_G$, an element $x_\cO \in \cO$ and a splitting of the corresponding exact sequence~\eqref{eqn:exact-sequence-refined}. If $(\cO,\cE^\bk)$ belongs to $\fN_{G,\bk}^\chi$, then $\chi$ factors through a character of $Z(x_{\cO})$, which we will also denote by $\chi$; similarly if $(\cO,\cE^\K)$ belongs to $\fN_{G,\bk}^{\tilde\chi}$.

Recall the following terminology introduced in~\cite{jls}. Here $H$ is any finite group.
\begin{defn}
\label{def:basic}
A \emph{basic set datum} for $H$ is a pair $(\leq, \beta)$ where $\leq$ is an order relation on $\Irr(\K[H])$ and
$\beta:\Irr(\bk[H]) \to \Irr(\K[H])$ is an injection, which satisfy
\begin{gather*}
 d^H_{\beta(V), V} = 1;\\
 d^H_{U, V} \neq 0 \ \Rightarrow \ U \leq \beta(V).
\end{gather*}
\end{defn}

We will apply this concept when $H$ is the Coxeter group $W(\chi)=N_G(L_\chi)/L_\chi$ discussed in \S\ref{ss:min-cusp-datum}.

For any $G$-orbit $\cO \subset \cN_G$, we consider the basic set datum for $A_{G/Z(G)}(x_\cO)$ defined in~\cite[\S 5.3]{juteau}, and denote it by $(\leq_\cO, \beta_\cO)$. Then we define an injection
\[
\beta^\chi_\cN : \fN_{G,\bk}^\chi \hookrightarrow \fN_{G,\K}^{\tilde\chi}
\]
by setting $\beta^\chi_\cN(\cO, \cL_{\rho \times \chi}) = (\cO, \cL_{\beta_\cO(\rho) \times \tilde\chi})$. (Here $\rho \in \Irr(\bk[A_G(x_\cO)])$, and we denote by $\cL_{\rho \times \chi}$ the local system on $\cO$ associated to the representation $\rho \times \chi$ of $A_G(x_\cO) = A_{G/Z(G)}(x_\cO) \times Z(x)$, and similarly for $\cL_{\beta_\cO(\rho) \times \tilde\chi}$.) We also define an order $\leq^\chi_S$ on $\Irr(\K[W(\chi)])$ as follows. Let $E_i$ for $i\in\{1,2\}$ be in $\Irr(\K[W(\chi)])$, and define nilpotent orbits $\cO_i$ and $\K$-representations $\rho_i$ of $A_{G/Z(G)}(x_{\cO_i})$ such that $\Psi_\K^{(L_\chi, \cO_\chi, \cE_{\tilde\chi}^\K)}(E_i)=(\cO_i,\cL_{\rho_i \times \tilde\chi})$ for $i \in \{1,2\}$. Then we set
\begin{equation} \label{eqn:springer-order}
E_1 \leq^\chi_S E_2 \qquad \text{iff} \qquad \cO_2 \subset \overline{\cO_1} \ \text{ and } \ \rho_1 \leq_\cO \rho_2 \ \text{ if } \ \cO_1=\cO_2.
\end{equation}
(In this notation, ``$S$'' stands for ``Springer''.)

The following result is a generalization of~\cite[Theorem~5.3]{juteau} (or equivalently of~\cite[Theorem 3.13]{jls}), with an identical proof.

\begin{thm}
\label{thm:basic-set-chi}
There exists an injection $\beta^\chi_S : \Irr(\bk[W(\chi)]) \hookrightarrow \Irr(\K[W(\chi)])$ which makes the following diagram commutative:
\begin{equation}
\label{eqn:diagram-basic-set}
\vcenter{
\xymatrix@C=3cm{
\Irr(\bk[W(\chi)]) \ar@{^{(}->}[r]^-{\Psi_\bk^{(L_\chi, \cO_\chi, \cE_\chi^\bk)}} \ar@{^{(}->}[d]_-{\beta^\chi_S} & \fN_{G,\bk}^\chi \ar@{^{(}->}[d]^-{\beta^\chi_\cN} \\
\Irr(\K[W(\chi)]) \ar@{^{(}->}[r]^-{\Psi_\K^{(L_\chi, \cO_\chi, \cE_{\tilde\chi}^\K)}} & \fN_{G,\K}^{\tilde\chi}.
}
}
\end{equation}
Moreover, this map has the following properties:
\begin{gather}
\label{eqn:cond-beta-1}
\forall F \in \Irr(\bk[W(\chi)]), \ d^{W(\chi)}_{\beta^\chi_S(F),F}=1; \\
\label{eqn:cond-beta-2}
\forall E \in \Irr(\K[W(\chi)]), \forall F \in \Irr(\bk[W(\chi)]), \ d^{W(\chi)}_{E,F} \neq 0 \Rightarrow E \leq^\chi_S \beta^\chi_S(F).
\end{gather}
In particular, the pair $(\leq^\chi_S, \beta^\chi_S)$ is a basic set datum for $W(\chi)$ in the sense of Definition~{\rm \ref{def:basic}}.
\end{thm}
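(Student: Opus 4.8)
The plan is to follow the proof of~\cite[Theorem~5.3]{juteau} essentially verbatim, feeding in the results of~\S\ref{ss:decomposition-numbers} in place of their principal-series analogues. The first step is to \emph{define} the map by transport of structure along the Springer bijections: I would set
\[
\beta^\chi_S := \bigl(\Psi_\K^{(L_\chi, \cO_\chi, \cE_{\tilde\chi}^\K)}\bigr)^{-1} \circ \beta^\chi_\cN \circ \Psi_\bk^{(L_\chi, \cO_\chi, \cE_\chi^\bk)}
\]
whenever the right-hand side makes sense. Then~\eqref{eqn:diagram-basic-set} commutes by construction and $\beta^\chi_S$ is injective, being a composite of injections; so the only real point is well-definedness, i.e.\ that $\beta^\chi_\cN \circ \Psi_\bk^{(L_\chi, \cO_\chi, \cE_\chi^\bk)}$ actually takes values in the image of $\Psi_\K^{(L_\chi, \cO_\chi, \cE_{\tilde\chi}^\K)}$, which is $\fN_{G,\K}^{(L_\chi, \cO_\chi, \cE_{\tilde\chi}^\K)}$, and not merely in $\fN_{G,\K}^{\tilde\chi}$.

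To prove this, fix $F \in \Irr(\bk[W(\chi)])$, write $(\cC,\cF) := \Psi_\bk^{(L_\chi, \cO_\chi, \cE_\chi^\bk)}(F)$ with $\cF = \cL_{\rho \times \chi}$, and put $(\cO,\cE) := \beta^\chi_\cN(\cC,\cF) = (\cC, \cL_{\beta_\cC(\rho) \times \tilde\chi})$. The key ingredient is the ``diagonal block'' description of the decomposition matrix of $\Perv_G(\cN_G,-)$ used in~\cite[\S 5.3]{juteau}: the numbers $d^{\cN_G}$ are triangular for the closure order on nilpotent orbits, and the block indexed by a single orbit $\cC$ is, by means of the splitting~\eqref{eqn:exact-sequence-refined} chosen for $\cC$ and the equality $\ker(\tilde\chi) = \ker(\chi)$, identified with the $\chi$-isotypic part of the decomposition matrix of $A_G(x_\cC)$, hence with that of $A_{G/Z(G)}(x_\cC)$. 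Therefore
\[
d^{\cN_G}_{(\cC, \cL_{\beta_\cC(\rho) \times \tilde\chi}), (\cC, \cL_{\rho \times \chi})} = d^{A_{G/Z(G)}(x_\cC)}_{\beta_\cC(\rho), \rho} = 1
\]
by the defining property of the basic set datum $(\leq_\cC, \beta_\cC)$ of~\cite[\S 5.3]{juteau}. In particular $d^{\cN_G}_{(\cO,\cE),(\cC,\cF)} \neq 0$; since $(\cC,\cF) \in \fN_{G,\bk}^{(L_\chi, \cO_\chi, \cE_\chi^\bk)} \subseteq \fN_{G,\bk}^\chi$ and $(\cO,\cE) \in \fN_{G,\K}^{\tilde\chi}$, the contrapositive of Proposition~\ref{prop:mod-redu-min-series} then forces $(\cO,\cE) \in \fN_{G,\K}^{(L_\chi, \cO_\chi, \cE_{\tilde\chi}^\K)}$, as required.

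With $\beta^\chi_S$ in hand, the conditions~\eqref{eqn:cond-beta-1} and~\eqref{eqn:cond-beta-2} follow from Proposition~\ref{prop:equality-decomp-number}, applied with the cuspidal datum $(L_\chi,\cO_\chi,\cdot)$, for which $N_G(L_\chi)/L_\chi = W(\chi)$. Indeed, for~\eqref{eqn:cond-beta-1} that proposition together with commutativity of~\eqref{eqn:diagram-basic-set} identifies $d^{W(\chi)}_{\beta^\chi_S(F),F}$ with the diagonal number displayed above, which is $1$. For~\eqref{eqn:cond-beta-2}, writing $\Psi_\K^{(L_\chi, \cO_\chi, \cE_{\tilde\chi}^\K)}(E) = (\cO_1, \cL_{\rho_1 \times \tilde\chi})$, we get $d^{W(\chi)}_{E,F} = d^{\cN_G}_{(\cO_1, \cL_{\rho_1 \times \tilde\chi}), (\cC, \cL_{\rho \times \chi})}$; nonvanishing gives $\cC \subseteq \overline{\cO_1}$ by triangularity, and in the case $\cO_1 = \cC$ the diagonal-block identification and the property of $(\leq_\cC,\beta_\cC)$ give $\rho_1 \leq_\cC \beta_\cC(\rho)$. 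Comparing with the definition~\eqref{eqn:springer-order} of $\leq^\chi_S$ and with $\Psi_\K^{(L_\chi, \cO_\chi, \cE_{\tilde\chi}^\K)}(\beta^\chi_S(F)) = (\cC, \cL_{\beta_\cC(\rho) \times \tilde\chi})$, this is precisely $E \leq^\chi_S \beta^\chi_S(F)$. Finally $\leq^\chi_S$ is an order relation, being the transport along the bijection $\Psi_\K^{(L_\chi, \cO_\chi, \cE_{\tilde\chi}^\K)}$ of the lexicographic combination of the closure order on orbits with the orders $\leq_\cC$; together with injectivity of $\beta^\chi_S$ and~\eqref{eqn:cond-beta-1}--\eqref{eqn:cond-beta-2}, this verifies the three clauses of Definition~\ref{def:basic}, so $(\leq^\chi_S, \beta^\chi_S)$ is a basic set datum for $W(\chi)$.

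The step I expect to be the main obstacle is the ``diagonal block'' assertion in the presence of a nontrivial central character: one must check that the splitting~\eqref{eqn:exact-sequence-refined} can be chosen compatibly with $\O$-lattices and reduction mod $\ell$, so that the $\chi$-isotypic part of the $\cC$-diagonal block of the decomposition matrix of $\Perv_G(\cN_G,-)$ is indeed governed by the basic set datum $(\leq_\cC,\beta_\cC)$ for $A_{G/Z(G)}(x_\cC)$. Once this bookkeeping is settled, the argument is a purely formal transport of structure through Propositions~\ref{prop:equality-decomp-number} and~\ref{prop:mod-redu-min-series}, which is exactly why the proof is ``identical'' to the one in~\cite{juteau}.
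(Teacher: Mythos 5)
Your proposal is correct and takes essentially the same approach as the paper's proof: define $\beta^\chi_S$ so that \eqref{eqn:diagram-basic-set} commutes, establish well-definedness from the diagonal decomposition number being $1$ together with Proposition~\ref{prop:mod-redu-min-series}, and then deduce \eqref{eqn:cond-beta-1}--\eqref{eqn:cond-beta-2} from Proposition~\ref{prop:equality-decomp-number} and the properties of $(\leq_\cO,\beta_\cO)$. The ``bookkeeping'' you flag at the end is already discharged by the fixed splitting of \eqref{eqn:exact-sequence-refined} together with the choice of lift $\tilde\chi$ in Lemma~\ref{lem:lift} with $\ker(\tilde\chi)=\ker(\chi)$, so there is no genuine obstacle there.
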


\begin{proof}
Let $F \in \Irr(\bk[W(\chi)])$, and let $(\cO, \cF) := \Psi_\bk^{(L_\chi, \cO_\chi, \cE_\chi^\bk)}(F)$. Let also $\cD$ be the $\K$-local system on $\cO$ such that $\beta^\chi_\cN(\cO, \cF)=(\cO, \cD)$. Then $d^{\cN_G}_{(\cO, \cD), (\cO, \cF)}=1$, so that by Proposition~\ref{prop:mod-redu-min-series} we have $(\cO,\cD) = \Psi_\K^{(L_\chi, \cO_\chi, \cE_{\tilde\chi}^\K)}(D)$ for some (unique) $D \in \Irr(\K[W(\chi)])$; then we set $\beta^\chi_S(F):=D$. This defines the map $\beta^\chi_S$, and~\eqref{eqn:diagram-basic-set} commutes by definition.

What remains to be proved is conditions~\eqref{eqn:cond-beta-1} and~\eqref{eqn:cond-beta-2}. For $F \in \Irr(\bk[W(\chi)])$, using the notation $\cO, \cF, \cD$ as above, by Proposition~\ref{prop:equality-decomp-number} and the definition of $\cD$ we have
\[
d^{W(\chi)}_{\beta^\chi_S(F),F} = d^{\cN_G}_{\Psi_\K^{(L_\chi, \cO_\chi, \cE_{\tilde\chi}^\K)}(\beta_S^\chi(F)), \Psi_\bk^{(L_\chi, \cO_\chi, \cE_\chi^\bk)}(F)} = d^{\cN_G}_{(\cO,\cD), (\cO,\cF)} = 1,
\]
which proves~\eqref{eqn:cond-beta-1}. On the other hand, let $E \in \Irr(\K[W(\chi)])$ and $F \in \Irr(\bk[W(\chi)])$, and assume that $d^{W(\chi)}_{E,F} \neq 0$. Let $(\cC,\cE):=\Psi_\K^{(L_\chi, \cO_\chi, \cE_{\tilde\chi}^\K)}(E)$, $(\cO, \cF):=\Psi_\bk^{(L_\chi, \cO_\chi, \cE_\chi^\bk)}(F)$, and define the $\K$-local system $\cD$ on $\cO$ as above. Let $\rho_E$ be the $\K$-representation of $A_{G/Z(G)}(x_\cC)$ such that $\cE$ is the local system associated to the representation $\rho_E \times \tilde\chi$ of $A_G(x_\cC) = A_{G/Z(G)}(x_\cC) \times Z(x_\cC)$, and let $\rho_D$ be the $\K$-representation of $A_{G/Z(G)}(x_\cO)$ defined similarly using $\cD$. Then using Proposition~\ref{prop:equality-decomp-number} again we have
\[
d^{\cN_G}_{(\cC, \cE), (\cO, \cF)} 
= d^{W(\chi)}_{E,F} \neq 0,
\]
which implies that $\cO \subset \overline{\cC}$ and that, if $\cO=\cC$, then $\rho_E \leq_\cO \rho_D$. (Here we use the facts that $(\leq_\cO, \beta_\cO)$ is a basic set datum and that the $\IC$ functor preserves decomposition numbers, see~\cite[Corollary~2.4]{juteau}.) This proves~\eqref{eqn:cond-beta-2}, and finishes the proof of Theorem~\ref{thm:basic-set-chi}.
\end{proof}

%------------------------------------------------------------------------------
\subsection{Determination of the minimal induction series}
\label{ss:determination-minimal}
%------------------------------------------------------------------------------

Assume that $G$ is quasi-simple and not a Spin group, and that $\bk$ is big enough for $G$. Let $\chi:Z(G)\to\bk^\times$ be any character. As in the case $\chi=1$ considered in~\cite{juteau}, Theorem~\ref{thm:basic-set-chi} can be used as the basis of an algorithm for the determination of $\fN^{(L_\chi, \cO_\chi, \cE_\chi^\bk)}_{G,\bk}$ and $\Psi^{(L_\chi, \cO_\chi, \cE_\chi^\bk)}_{\bk}$, provided the decomposition matrix for $W(\chi)$ is known. (In fact, one can also perform a similar algorithm using only the character table of $W(\chi)$; see~\cite[Section~9]{juteau} for details.)

By Lemma~\ref{lem:splitting}\eqref{it:indep-of-k}, we can assume that $\bk$ is part of a triple $(\K,\O,\bk)$ as in \S\S\ref{ss:decomposition-numbers}--\ref{ss:basic-set}, and fix a lift $\tilde\chi:Z(G)\to\O^\times$ of $\chi$ as in Lemma~\ref{lem:lift}. 

Let $F \in \Irr(\bk[W(\chi)])$. If $E \in \Irr(\K[W(\chi)])$ is maximal (with respect to the order $\leq^\chi_S$) with the property that $d^{W(\chi)}_{E,F} \neq 0$, then conditions~\eqref{eqn:cond-beta-1}--\eqref{eqn:cond-beta-2} imply that $E=\beta_S^\chi(F)$. Moreover, the commutativity of~\eqref{eqn:diagram-basic-set} implies that, in this case, $\Psi_\bk^{(L_\chi, \cO_\chi, \cE_\chi^\bk)}(F)$ is the unique pair $(\cO,\cE) \in \fN_{G,\bk}^\chi$ such that $\beta^\chi_\cN(\cO,\cE)=\Psi_\K^{(L_\chi, \cO_\chi, \cE_{\tilde\chi}^\K)}(E)$. (Recall that the map $\Psi_\K^{(L_\chi, \cO_\chi, \cE_{\tilde\chi}^\K)}$ is known in all cases; see in particular~\cite{lus-spalt, spaltenstein}.) This determines the bijection $\Psi_\bk^{(L_\chi, \cO_\chi, \cE_\chi^\bk)}$.

We now apply this algorithm to the two cases of most interest in the exceptional groups, namely in type $E_6$ when $\ell=2$, and in type $E_7$ when $\ell=3$, with $\chi$ nontrivial in both cases. In these cases the lift $\tilde\chi$ is unique, so we denote it simply by $\chi$. (See Section~\ref{sec:reduction} below for details on our notational conventions.)

%---------------------------------------------------------------------
\subsubsection{Case of $E_6$ for $\ell = 2$}
\label{sss:E6ell2}
%---------------------------------------------------------------------

In this case, for $\chi \neq 1$ we have $(L_\chi, \cO_\chi)=(2A_2, [3]^2)$, and $W(\chi)$ is the Weyl group of type $G_2$. The corresponding decomposition matrix is shown on the right of Table~\ref{tab:dec-matric-E6ell2chinot1}. (See~\cite[Section 9]{juteau} for its computation.)
Here the first column displays the ordinary characters of $W(\chi)$, denoted as in~\cite{juteau} (i.e.~as in~\cite{carter}, except that we use the symbol `$\chi$' instead of `$\phi$'), and the second column their image under $\Psi_{\K}^{(L_\chi, \cO_\chi, \cE_\chi^\K)}$. Recall that the latter bijection differs from that computed in~\cite{spaltenstein} by a sign twist, and that an indeterminacy in~\cite{spaltenstein} was resolved in~\cite[\S24.10]{charsh5} and~\cite[Theorem 5.5]{lusztig-gsc}. We have ordered the rows so that the total order on $\Irr(\K[W(\chi)])$ obtained by reading from bottom to top refines the partial order $\leq^\chi_S$ defined in~\eqref{eqn:springer-order}.

\begin{table}
\[
\begin{array}{|c|c|cc|}
\hline
E & (\cO,\cE^\K)&& \\
\hline\hline
\chi_{1,0}   & (2A_2, \chi)       &1&.\\
\chi_{1,3}'' & (2A_2{+}A_1, \chi)  &1&.\\
\chi_{2,1}  & (A_5, \chi)         &.&1\\
\chi_{2,2}  & (E_6(a_3), 1\times\chi) &.&1\\
\chi_{1,3}' & (E_6(a_1), \chi)    &1&.\\
\chi_{1,6}  & (E_6, \chi)         &1&.\\
\hline
\end{array}
\]
\caption{Decomposition matrix for $E_6$ when $\ell=2$ and $\chi \neq 1$}
\label{tab:dec-matric-E6ell2chinot1}
\end{table}

From this table one obtains that
\[
\fN^{(L_\chi, \cO_\chi, \cE_\chi^\bk)}_{G,\bk} = \{(2A_2, \chi), \, (A_5, \chi)\},
\]
and that $(2A_2, \chi)$ corresponds to the trivial representation of $W(\chi)$, while $(A_5, \chi)$ corresponds to the unique nontrivial irreducible representation over $\bk$.

%---------------------------------------------------------------------
\subsubsection{Case of $E_7$ for $\ell = 3$}
\label{sss:E7ell3}
%---------------------------------------------------------------------

In this case, for $\chi \neq 1$ we have $(L_\chi, \cO_\chi)=((3A_1)'', [2]^3)$, and $W(\chi)$ is the Weyl group of type $F_4$. The decomposition matrix is shown in Table~\ref{tab:dec-matric-E7ell3chinot1}. (See~\cite{juteau} for references on this computation.) The table should be interpreted as in the preceding case.

\begin{table}
\[
\begin{array}{|c|c|cccccccccccccc|}
\hline
E & (\cO,\cE^\K)&&&&&&&&&&&&&& \\
\hline\hline
\chi_{1,0}&((3A_1)'',\chi)&1&.&.&.&.&.&.&.&.&.&.&.&.&.\\
\chi_{2,4}''&(4A_1,\chi)&1&1&.&.&.&.&.&.&.&.&.&.&.&.\\
\chi_{1,12}''&(A_2{+}3A_1,\chi)&.&1&.&.&.&.&.&.&.&.&.&.&.&.\\
\chi_{4,1}&((A_3{+}A_1)'',\chi)&.&.&1&.&.&.&.&.&.&.&.&.&.&.\\
\chi_{8,3}''&(A_3{+}2A_1,\chi)&.&.&1&1&.&.&.&.&.&.&.&.&.&.\\
\chi_{2,4}'&(D_4(a_1){+}A_1,1\times \chi)&1&.&.&.&1&.&.&.&.&.&.&.&.&.\\
\chi_{9,2}&(D_4(a_1){+}A_1,\varepsilon\times \chi)&.&.&.&.&.&1&.&.&.&.&.&.&.&.\\
\chi_{4,7}''&(A_3{+}A_2{+}A_1,\chi)&.&.&.&1&.&.&.&.&.&.&.&.&.&.\\
\chi_{9,6}''&(D_4{+}A_1,\chi)&.&.&.&.&.&.&1&.&.&.&.&.&.&.\\
\chi_{8,3}'&((A_5)'',\chi)&.&.&1&.&.&.&.&1&.&.&.&.&.&.\\
\chi_{6,6}'&(A_5{+}A_1,\chi)&.&.&.&.&.&.&.&.&1&.&.&.&.&.\\
\chi_{4,8}&(D_5(a_1){+}A_1,\chi)&1&1&.&.&1&.&.&.&.&1&.&.&.&.\\
\chi_{16,5}&(D_6(a_2),\chi)&.&.&1&1&.&.&.&1&.&.&1&.&.&.\\
\chi_{12,4}&(E_7(a_5),1\times \chi)&.&.&.&.&.&.&.&.&1&.&.&1&.&.\\
\chi_{6,6}''&(E_7(a_5),\chi_{21}\times \chi)&.&.&.&.&.&.&.&.&.&.&.&1&.&.\\
\chi_{2,16}''&(D_5{+}A_1,\chi)&.&1&.&.&.&.&.&.&.&1&.&.&.&.\\
\chi_{9,6}'&(D_6(a_1),\chi)&.&.&.&.&.&.&.&.&.&.&.&.&1&.\\
\chi_{8,9}''&(E_7(a_4),1\times \chi)&.&.&.&1&.&.&.&.&.&.&1&.&.&.\\
\chi_{4,7}'&(E_7(a_4),\varepsilon\times \chi)&.&.&.&.&.&.&.&1&.&.&.&.&.&.\\
\chi_{9,10}&(D_6,\chi)&.&.&.&.&.&.&.&.&.&.&.&.&.&1\\
\chi_{1,12}'&(E_7(a_3),1\times \chi)&.&.&.&.&1&.&.&.&.&.&.&.&.&.\\
\chi_{8,9}'&(E_7(a_3),\varepsilon\times \chi)&.&.&.&.&.&.&.&1&.&.&1&.&.&.\\
\chi_{4,13}&(E_7(a_2),\chi)&.&.&.&.&.&.&.&.&.&.&1&.&.&.\\
\chi_{2,16}'&(E_7(a_1),\chi)&.&.&.&.&1&.&.&.&.&1&.&.&.&.\\
\chi_{1,24}&(E_7,\chi)&.&.&.&.&.&.&.&.&.&1&.&.&.&.\\
\hline
\end{array}
\]

\caption{Decomposition matrix for $E_7$ when $\ell=3$ and $\chi \neq 1$}
\label{tab:dec-matric-E7ell3chinot1}
\end{table}

From this table one obtains that $\fN^{(L_\chi, \cO_\chi, \cE_\chi^\bk)}_{G,\bk}$ consists of the following pairs:
\begin{gather*}
((3A_1)'', \chi), \, (4A_1, \chi), \, ((A_3{+}A_1)'', \chi), \, (A_3{+}2A_1, \chi), \, (D_4(a_1){+}A_1, 1 \times \chi), \, (D_4(a_1){+}A_1, \varepsilon \times \chi), \\
(D_4{+}A_1, \chi), \,
((A_5)'', \chi), \, (A_5{+}A_1, \chi), \, (D_5(a_1){+}A_1, \chi), \, (D_6(a_2), \chi), \, (E_7(a_5), 1 \times \chi), \, (D_6(a_1), \chi), \, (D_6, \chi).
\end{gather*}
One can also obtain the corresponding representations of $W(\chi)$. For instance, $((3A_1)'', \chi)$ corresponds to the trivial representation, $(4A_1, \chi)$ corresponds to the unique nontrivial irreducible representation which appears in the modular reduction of $\chi''_{2,4}$, etc.

%%%%%%%%%%%%%%%%%%%%%%%%%%%%%%%%%%%%%%%%%%%%%%%%%%%%%%%%%%%%%%%%%%%%%%%
\section{Cuspidal pairs and cuspidal data}
\label{sec:reduction}
%%%%%%%%%%%%%%%%%%%%%%%%%%%%%%%%%%%%%%%%%%%%%%%%%%%%%%%%%%%%%%%%%%%%%%%

We have seen in~\cite[\S 5.3]{genspring2} that, to classify cuspidal pairs and determine the modular generalized Springer correspondence~\eqref{eqn:mgsc}, it suffices to consider the case where $G$ is simply connected and quasi-simple. Recall that the classical types were considered in~\cite{genspring2}, where we completed the classification of cuspidal pairs in all cases, and determined the modular generalized Springer correspondence when $G$ is of type $A$ (for all $\ell$) and when $G$ is of type $B$/$C$/$D$ (for $\ell=2$). In the remainder of this paper, we focus mainly on the five exceptional types. 

%------------------------------------------------------------------------------
\subsection{Conditions on the characteristic $\ell$}
%------------------------------------------------------------------------------

It will be useful to introduce terminology for those values of the characteristic $\ell$ for which we can expect our problems to be easier to solve. For the moment, we continue to allow $G$ to denote an arbitrary connected reductive group.

\begin{defn}
\label{def:rather-good}
If $\ell$ is a prime number, we say that $\ell$ is \emph{easy} for $G$ if it does not divide $|W|$. We say that $\ell$ is \emph{rather good} for $G$ if it does not divide $|A_G(x)|$ for any $x \in \cN_G$.
\end{defn}

The reason for the term `rather good' is the following relationship with the better known concepts of \emph{good} and \emph{very good} primes (see Table~\ref{tab:rather-good}):

\begin{lem}\label{lem:good}
For a prime number $\ell$, the following conditions are equivalent:
\begin{enumerate}
\item $\ell$ is rather good for $G$;\label{it:newgood-ax}
\item $\ell$ is good for $G$ and does not divide $|Z(G)/Z(G)^\circ|$.\label{it:newgood-center}
\end{enumerate}
In particular, if $G$ is semisimple of adjoint type, then $\ell$ is rather good for $G$ if and only if it is good for $G$; if $G$ is semisimple and simply connected, then $\ell$ is rather good for $G$ if and only if it is very good for $G$.
\end{lem}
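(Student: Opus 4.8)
The plan is to prove the equivalence of~\ref{it:newgood-ax} and~\ref{it:newgood-center} directly, and then read off the two ``in particular'' statements. Throughout I would work with $G_{\ad}=G/Z(G)$, using that $\cN_G$ is canonically identified with $\cN_{G_{\ad}}$ (as in the discussion preceding~\eqref{eqn:exact-seq}), that a prime is good, resp.\ bad, for $G$ exactly when it is so for $G_{\ad}$, equivalently when it is good for every, resp.\ bad for some, simple factor of the semisimple adjoint group $G_{\ad}$, and that the component group of a nilpotent element of $\Lie(G_{\ad})=\prod_i \Lie(H_i)$ is the product of the corresponding component groups of the simple factors $H_i$. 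Applying~\eqref{eqn:exact-seq} to the central quotient $G\twoheadrightarrow G_{\ad}$ gives, for each $x\in\cN_G$, an exact sequence $Z(G)/Z(G)^\circ\to A_G(x)\to A_{G_{\ad}}(x)\to 1$, hence simultaneously that $|A_G(x)|$ divides $|Z(G)/Z(G)^\circ|\cdot|A_{G_{\ad}}(x)|$ and that $|A_{G_{\ad}}(x)|$ divides $|A_G(x)|$. The key input, which I would extract from the tables of~\cite{cm}, is the following fact: for an adjoint quasi-simple group $H$, the set of primes dividing $|A_H(y)|$ for some $y\in\cN_H$ is exactly the set of bad primes of $H$. (For type $A$ these groups are all trivial; for types $B$, $C$, $D$ they are $2$-groups, $2$ being the unique bad prime; for the exceptional types one checks against~\cite[\S 8.4]{cm} that the component groups of order divisible by $3$ or $5$ occur precisely in the types for which $3$, respectively $5$, is bad, while the occurrence of a nontrivial component group already forces $2$ to be bad.) By the product decomposition above, the same equality of prime sets holds with $G_{\ad}$ in place of $H$.

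For $(\ref{it:newgood-center})\Rightarrow(\ref{it:newgood-ax})$: if $\ell$ is good for $G$ it is bad for no simple factor of $G_{\ad}$, so the key fact gives $\ell\nmid|A_{G_{\ad}}(x)|$ for all $x$; combined with $\ell\nmid|Z(G)/Z(G)^\circ|$ and the first divisibility, this yields $\ell\nmid|A_G(x)|$ for all $x$, i.e.\ $\ell$ is rather good. For $(\ref{it:newgood-ax})\Rightarrow(\ref{it:newgood-center})$: taking $x$ regular nilpotent we have $A_G(x)\cong Z(G)/Z(G)^\circ$ (as recalled in the proof of Proposition~\ref{prop:horrible}), so rather-goodness forces $\ell\nmid|Z(G)/Z(G)^\circ|$; and if $\ell$ were bad for $G$ it would be bad for some simple factor $H$ of $G_{\ad}$, and the key fact would provide $y\in\cN_H$ with $\ell\mid|A_H(y)|$; the element $x\in\cN_G$ equal to $y$ in that factor and to $0$ in the others then satisfies $A_{G_{\ad}}(x)=A_H(y)$ (the factors at $0$ contributing trivial groups), so by the second divisibility $\ell\mid|A_H(y)|$ divides $|A_G(x)|$, contradicting rather-goodness. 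Hence $\ell$ is good, which is~\ref{it:newgood-center}.

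It remains to treat the two special cases. If $G$ is semisimple of adjoint type then $Z(G)/Z(G)^\circ$ is trivial, so~\ref{it:newgood-center} reduces to ``$\ell$ is good'' and the first assertion follows. If $G$ is semisimple and simply connected, write $G=\prod_i\widetilde G_i$ with each $\widetilde G_i$ simply connected quasi-simple; then $Z(G)=\prod_i Z(\widetilde G_i)$, and ``good'' and ``very good'' are both tested on the factors separately, so it suffices to check for each $i$ that ``$\ell$ very good for $\widetilde G_i$'' is equivalent to ``$\ell$ good for $\widetilde G_i$ and $\ell\nmid|Z(\widetilde G_i)|$''. This is a short type-by-type verification: in type $A_{n-1}$ both conditions amount to $\ell\nmid n=|Z(\widetilde G_i)|$ (``good'' being vacuous), while in every other type ``very good'' coincides with ``good'', $|Z(\widetilde G_i)|$ lies in $\{1,2,3,4\}$, and ``good'' already rules out whichever of $2,3$ divides $|Z(\widetilde G_i)|$. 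Taking the conjunction over $i$ identifies ``$\ell$ very good for $G$'' with~\ref{it:newgood-center}, hence with ``$\ell$ rather good'' by the equivalence proved above.

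The part I expect to be the real work is the case-by-case verification behind the ``key fact'' — that for an adjoint quasi-simple group the primes appearing in the orders of the nilpotent component groups are exactly the bad primes — which is a finite check against~\cite{cm}; everything else is routine manipulation of the exact sequence~\eqref{eqn:exact-seq} and of the product decomposition over simple factors.
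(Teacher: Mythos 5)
Your argument is correct and follows essentially the same route as the paper: the key case-by-case fact that for an adjoint quasi-simple group the primes dividing some $|A_H(y)|$ are exactly the bad primes, the product decomposition over simple factors, and the exact sequence~\eqref{eqn:exact-seq} together with a regular nilpotent element to reduce the general case to the adjoint quotient. The only real difference is that you spell out the ``in particular'' statements (especially the type-by-type check that ``good and $\ell\nmid|Z(\widetilde G_i)|$'' coincides with ``very good'' for simply connected quasi-simple groups), which the paper treats as an immediate consequence and does not verify explicitly.
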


\begin{proof}
If $G$ is a simple group, the claim that `rather good' is the same as `good' is an easy case-by-case verification using the description of the groups $A_G(x)$ in~\cite{cm}. This implies the claim in the case where $G$ is semisimple of adjoint type.

For a general connected reductive group $G$, let $\overline{G}:=G/Z(G)$ be the associated adjoint group. From~\eqref{eqn:exact-seq} and the fact that $Z(G)/Z(G)^\circ\to A_G(x)$ is an isomorphism when $x$ is regular nilpotent, it follows that~\eqref{it:newgood-ax} is equivalent to the condition that $\ell$ is rather good for $\overline{G}$ and does not divide $|Z(G)/Z(G)^\circ|$. But by the case already discussed, $\ell$ is rather good for $\overline{G}$ if and only if it is good for $\overline{G}$, which is by definition equivalent to being good for $G$. 
\end{proof}

The conditions `good', `very good', and `easy' all depend only on the root system of $G$, and have the feature that a prime satisfies the condition if and only if it satisfies it for all irreducible components of the root system. So it is enough to know what the conditions mean for quasi-simple groups $G$, and this information is given in Table~\ref{tab:rather-good}. 

\begin{lem} \label{lem:implications}
If $\ell$ is easy for $G$, then $\ell$ is rather good for $G$. If $G$ is quasi-simple and of type other than $A$, then $\ell$ is rather good for $G$ if and only if it is good for $G$.
\end{lem}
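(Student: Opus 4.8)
The plan is to deduce both statements from Lemma~\ref{lem:good}, which identifies ``$\ell$ is rather good for $G$'' with the conjunction ``$\ell$ is good for $G$'' and ``$\ell \nmid |Z(G)/Z(G)^\circ|$''. With this in hand, the first statement amounts to showing that $\ell \nmid |W|$ forces both $\ell$ good for $G$ and $\ell \nmid |Z(G)/Z(G)^\circ|$, while the second amounts to showing that, for $G$ quasi-simple of type other than $A$, goodness of $\ell$ already forces $\ell \nmid |Z(G)/Z(G)^\circ|$.

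First I would handle the goodness condition. Since ``$\ell$ is good'' and ``$\ell$ divides $|W|$'' depend only on the root system of $G$ and are detected componentwise, it suffices to treat an irreducible root system, where one reads off from the list of bad primes---$2$ in types $B$, $C$, $D$; $2$ and $3$ in types $G_2$, $F_4$, $E_6$, $E_7$; $2$, $3$ and $5$ in type $E_8$; none in type $A$ (see Table~\ref{tab:rather-good})---together with the order of the Weyl group (see Table~\ref{tab:vertex}) that every bad prime divides $|W|$. Taking contrapositives, $\ell$ easy for $G$ implies $\ell$ good for $G$; the same list of bad primes is reused below.

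Next I would control $|Z(G)/Z(G)^\circ|$. The key input is the elementary fact that this number divides $|Z(G_{\sc})|$, where $G_{\sc}$ denotes the simply connected semisimple group with the same root system as $G$: indeed $Z(G)/Z(G)^\circ$ embeds into $Z(G/Z(G)^\circ)$, and since $G = G_{\mathrm{der}} \cdot Z(G)^\circ$ the group $G/Z(G)^\circ$ is a quotient of $G_{\mathrm{der}}$ by a finite central subgroup, so $Z(G/Z(G)^\circ)$ is a quotient of $Z(G_{\mathrm{der}})$, which is itself a quotient of $Z(G_{\sc})$ via the simply connected covering. Now $|Z(G_{\sc})|$ is the product over the irreducible components of the root system of the numbers $n$ (type $A_{n-1}$), $2$ (types $B$, $C$), $4$ (type $D$), $3$ (type $E_6$), $2$ (type $E_7$), $1$ (types $E_8$, $F_4$, $G_2$), and each of these divides the order of the corresponding Weyl group; hence $|Z(G)/Z(G)^\circ|$ divides $|W|$. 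Together with the previous paragraph and Lemma~\ref{lem:good}, this gives the first statement. For the second, when $G$ is quasi-simple of type other than $A$ the number $|Z(G_{\sc})|$ lies in $\{1,2,3,4\}$, so the only primes that can divide $|Z(G)/Z(G)^\circ|$ are $2$ and $3$, the latter only in type $E_6$; since $2$ is bad for every irreducible type other than $A$ and $3$ is bad for $E_6$, a good prime cannot divide $|Z(G)/Z(G)^\circ|$, and Lemma~\ref{lem:good} completes the proof.

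I do not expect a genuine obstacle: the argument is bookkeeping with root-system data already collected in Tables~\ref{tab:vertex} and~\ref{tab:rather-good}. The one point needing care is the passage from $G$ to $G_{\sc}$ when $G$ is reductive but not semisimple, so that $Z(G)^\circ$ may be a positive-dimensional torus; this is precisely what the chain of maps $Z(G)/Z(G)^\circ \hookrightarrow Z(G/Z(G)^\circ) \twoheadleftarrow Z(G_{\mathrm{der}}) \twoheadleftarrow Z(G_{\sc})$ used above is designed to absorb.
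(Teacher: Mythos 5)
Your proof is correct and takes essentially the same approach as the paper: invoke Lemma~\ref{lem:good}'s characterization of rather good primes and then verify the resulting divisibility conditions from the root-system data in Tables~\ref{tab:vertex} and~\ref{tab:rather-good}. The paper's proof is terser, reducing at once to the simply connected quasi-simple case (where rather good equals very good) and reading off Table~\ref{tab:rather-good}; your explicit verification that $|Z(G)/Z(G)^\circ|$ divides the order of the centre of the simply connected cover fills in the step that the paper's reduction takes for granted.
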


\begin{proof}
By Lemma~\ref{lem:good}, we may assume for both claims that $G$ is simply connected and quasi-simple; then `rather good' becomes the same as `very good', and both claims follow from Table~\ref{tab:rather-good}. 
\end{proof}

\begin{table}
\[
\begin{array}{|c|c|c|c|}
\hline
&\text{good}&\text{very good}&\text{easy}\\
\hline\hline
A_{n-1}, n\geq 2 & \text{all }\ell & \ell\nmid n & \ell>n\\
\hline
B_n, n\geq 2 & \ell>2 & \ell>2 & \ell>n\\
\hline
C_n, n\geq 3 & \ell>2 & \ell>2 & \ell>n\\
\hline
D_n, n\geq 4 & \ell>2 & \ell>2 & \ell>n\\
\hline
E_6 & \ell>3 & \ell>3 & \ell>5\\
\hline
E_7 & \ell>3 & \ell>3 & \ell>7\\
\hline
E_8 & \ell>5 & \ell>5 & \ell>7\\
\hline
F_4 & \ell>3 & \ell>3 & \ell>3\\
\hline
G_2 & \ell>3 & \ell>3 & \ell>3\\
\hline
\end{array}
\]
\caption{Conditions on $\ell$ for quasi-simple $G$}\label{tab:rather-good}
\end{table}

\begin{lem} \label{lem:good-inherited}
If $\ell$ is rather good for $G$, then $\ell$ is rather good for every Levi subgroup $L$ of $G$.
\end{lem}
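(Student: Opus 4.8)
The plan is to prove the statement directly from the definition of \emph{rather good}. Fix a Levi subgroup $L$ of $G$. Since $\fl \subseteq \fg$ identifies $\cN_L$ with a subset of $\cN_G$, it suffices to show that for every $x \in \cN_L$ the order of $A_L(x)$ divides that of $A_G(x)$: then the hypothesis $\ell \nmid |A_G(x)|$ for all $x \in \cN_G$ forces $\ell \nmid |A_L(x)|$ for all $x \in \cN_L$, which is exactly the assertion that $\ell$ is rather good for $L$.

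To obtain that divisibility, fix $x \in \cN_L$ and set $S := Z(L)^\circ$, so that $L = C_G(S)$. As $S$ is central in $L$, it acts trivially on $\fl$ via the adjoint action; in particular it fixes $x$, so $S \subseteq G_x$, and hence, being connected, $S \subseteq G_x^\circ$. Now $L_x = C_G(S) \cap G_x = C_{G_x}(S)$, and $C_{G_x^\circ}(S) = G_x^\circ \cap L_x$ is connected, because the centralizer of a torus in a connected linear algebraic group is connected. This connected subgroup of $L_x$ therefore lies in $L_x^\circ$; conversely $L_x^\circ$ is connected and contained in $G_x$, hence in $G_x^\circ$, and it centralizes $S$, so $L_x^\circ \subseteq G_x^\circ \cap L_x$. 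Thus $L_x^\circ = L_x \cap G_x^\circ$, and the natural map
\[
A_L(x) = L_x/(L_x \cap G_x^\circ) \longrightarrow G_x/G_x^\circ = A_G(x)
\]
is injective, which gives the claim.

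An alternative route would go through Lemma~\ref{lem:good}: it is enough to check that $\ell$ is good for $L$ and that $\ell \nmid |Z(L)/Z(L)^\circ|$. The second point is immediate, since the natural homomorphism $Z(G)/Z(G)^\circ \to Z(L)/Z(L)^\circ$ is surjective (see \cite[Corollaire 2.2]{bonnafe-elements}), so $|Z(L)/Z(L)^\circ|$ divides the order of $Z(G)/Z(G)^\circ$, which is prime to $\ell$. For the first point, choose a maximal torus $T \subseteq L$ and a base $\Pi$ of $\Phi(G,T)$ such that $\Phi(L,T)$ is spanned by a subset $J \subseteq \Pi$; then the Dynkin diagram of each irreducible component of $\Phi(L,T)$ is a connected full subgraph of a component of the Dynkin diagram of $G$, and one checks from the classification that the bad primes of such a subgraph are among those of the ambient diagram.

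In both approaches the only genuine ingredient is a standard structural fact — the connectedness of the centralizer of a torus in a connected group, or the behaviour of bad primes under passage to a subsystem spanned by a subset of simple roots — so I do not expect any serious obstacle, and nothing here depends on the type of $G$. I would write up the first approach, where the single point to be careful about (or simply to cite) is the connectedness of $C_{G_x^\circ}(S)$.
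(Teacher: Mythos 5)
Your proof is correct, and your second approach coincides with the paper's stated alternative: the paper deduces the lemma from Lemma~\ref{lem:good} together with the surjectivity of $h_L\colon Z(G)/Z(G)^\circ\to Z(L)/Z(L)^\circ$, just as you do, with the observation about bad primes of a subsystem left implicit.

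Your first approach, however, differs from the paper's primary argument in an interesting way. The paper cites \cite[Lemma~3.10]{genspring2}, which says that $|A_L(y)|$ divides $|A_G(x)|$ where $x$ lies in the \emph{induced} nilpotent orbit $\mathrm{Ind}_L^G(L\cdot y)$, i.e.\ it compares the component group at $y$ in $L$ with the component group at a possibly different element $x$ of $\cN_G$. You instead show directly that for the \emph{same} $x\in\cN_L\subseteq\cN_G$ the natural map $A_L(x)\to A_G(x)$ is injective, via the identity $L_x^\circ = L_x\cap G_x^\circ$, which you establish from the connectedness of $C_{G_x^\circ}(S)$ for the central torus $S=Z(L)^\circ$. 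That connectedness is indeed the standard Borel result (connectedness of centralizers of tori in connected linear algebraic groups), and the rest of the reduction is routine, so the argument is sound. Your version has the advantage of being self-contained and not relying on the induction machinery of the earlier paper in the series; what it gives up is the extra information about the induced orbit encoded in the cited lemma, but that information is not needed here.
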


\begin{proof}
This follows from the fact~\cite[Lemma 3.10]{genspring2} that for any $y\in\cN_L$, the order of $A_L(y)$ divides that of $A_G(x)$ where $x$ belongs to the induced nilpotent orbit $\mathrm{Ind}_L^G(L\cdot y)$. Alternatively, the claim follows from Lemma~\ref{lem:good} and the fact that the homomorphism $h_L : Z(G)/Z(G)^\circ\to Z(L)/Z(L)^\circ$ is surjective, see~\S\ref{ss:min-cusp-datum}.
\end{proof}

%------------------------------------------------------------------------------
\subsection{Counting cuspidal pairs}
\label{ss:strategy}
%------------------------------------------------------------------------------

Now assume that $G$ is a simply connected quasi-simple group of exceptional type, and that $\bk$ is big enough for $G$ in the sense of~\eqref{eqn:definitely-big-enough}. By Proposition~\ref{prop:big-enough}, the latter condition is in fact automatic for such $G$ except when $G$ is of type $E_6$, when it requires $\bk$ to contain the third roots of unity of its algebraic closure. 

Having proved Theorem~\ref{thm:mgsc-intro}, we can determine the number of cuspidal pairs $|\fN_{G,\bk}^\cusp|$ using the same recursive counting argument as in Lusztig's setting~\cite{lusztig} and our own previous papers~\cite{genspring1,genspring2}. That is, we use the following formula:
\begin{equation} \label{eqn:counting}
|\fN_{G,\bk}^\cusp| = |\fN_{G,\bk}| - \sum_{\substack{L\in\fL\\L\neq G}} |\fN_{L,\bk}^\cusp|\times|\Irr(\bk[N_G(L)/L])|,
\end{equation} 
which follows immediately from~\eqref{eqn:mgsc-variant}. The resulting values of $|\fN_{G,\bk}^\cusp|$ for simply-connected quasi-simple groups $G$ of exceptional type are displayed in Table~\ref{tab:exc-cuspidal-pairs}, and the calculations are explained below. 

If $G$ is of type $E_6$ or $E_7$, the centre $Z(G)$ is nontrivial, being isomorphic to $\mu_3$ or $\mu_2$ respectively (where $\mu_m$ denotes the cyclic group of $m$th roots of unity in $\C$). For these groups, Table~\ref{tab:exc-cuspidal-pairs} shows more refined information, namely how many cuspidal pairs have each possible central character $\chi$ (see~\cite[\S 5.1]{genspring2}). In type $E_6$ when $\ell\neq 3$, there are three characters $\chi:Z(G)\to\bk^\times$ by our assumption on $\bk$, but the two nontrivial characters are inverse to each other and therefore interchangeable (via Verdier duality). In type $E_7$ when $\ell\neq 2$, there is a unique nontrivial character $\chi:Z(G)\to\bk^\times$. In this setting we use the refined formula
\begin{equation} \label{eqn:counting-chi}
|\fN_{G,\bk}^{\chi,\cusp}| = |\fN_{G,\bk}^\chi| - \sum_{\substack{L\in\fL\\L\supset L_\chi\\L\neq G}} |\fN_{L,\bk}^{\chi,\cusp}|\times|\Irr(\bk[N_G(L)/L])|,
\end{equation} 
where the superscript $\chi$ throughout indicates a restriction to central character $\chi$, and where we assume (as we may) that $\fL$ is chosen in such a way that if $L \in \fL$ contains a $G$-conjugate of $L_\chi$, then it actually contains $L_\chi$. This equality follows by combining~\eqref{eqn:mgsc-variant} with~\cite[Lemma 5.1]{genspring2} and Proposition~\ref{prop:bonnafe}\eqref{it:minimal-Levi}.

We have recorded the information required for the calculations in the tables of Appendix~\ref{sec:tables}. For each $G$, we first give the Bala--Carter labels of the distinguished nilpotent orbits for $G$ and the corresponding groups $A_G(x)$, as found in~\cite[\S 8.4]{cm}; these are for reference in connection to the classification of cuspidal pairs, discussed in \S\ref{ss:determining-cuspidal}. Then we have various tables for the different values of the characteristic $\ell$ and (in types $E_6$ and $E_7$) the central character $\chi$. Each table has a row for every cuspidal datum $(L,\cO_L,\cE_L)$ in the set $\fM_{G,\bk}$ of representatives of the $G$-orbits of cuspidal data (or the more refined set $\fM_{G,\bk}^\chi$ where the central character is required to equal $\chi$); we have ordered the rows by the semisimple rank of $L$. Each row displays:
\begin{itemize}
\item the cuspidal datum $(L,\cO_L,\cE_L)$ itself, where $L$ is denoted by its Bala--Carter name, see~\S\ref{ss:computing} below (or simply $T$ in the case of a maximal torus), $\cO_L$ by its partition label (for $L$ of classical type) or its Bala--Carter label (for $L$ of exceptional type), and $\cE_L$ by either $\ubk$ if it is the trivial local system or by some \textit{ad hoc} notation such as $\cE_\chi$ otherwise;
\item the group $N_G(L)/L$, written in standard notation where $\fS_m$ denotes the symmetric group on $m$ letters and $W(X_n)$ the Weyl group of type $X_n$ (in cases of multiple cuspidal data with the same $L$, we write this group only once, to make the table more readable);
\item the size $|\fN_{G,\bk}^{(L,\cO_L,\cE_L)}|$ of the induction series associated to the cuspidal datum, or equivalently the number $|\Irr(\bk[N_G(L)/L])|$ of irreducible representations over $\bk$ of the group $N_G(L)/L$. 
\end{itemize}
Before applying the recursive count, we know only the \emph{proper} cuspidal data $(L,\cO_L,\cE_L)$, i.e.\ those where $L\neq G$. (Strictly speaking, in some cases where $L$ itself is of exceptional type, we know only the number of cuspidal pairs for $L$, not what those cuspidal pairs are, but this is enough information to apply~\eqref{eqn:counting}.) The remaining cuspidal data are the triples $(G,\cO,\cE)$ where $(\cO,\cE)$ is a cuspidal pair for $G$; the number of these cuspidal pairs, and those cuspidal pairs we are able to determine by the methods explained in \S\ref{ss:determining-cuspidal}, are displayed beneath the proper cuspidal data, so that the total in the final column adds up to the known value of $|\fN_{G,\bk}|$, as dictated by~\eqref{eqn:counting} (or $|\fN_{G,\bk}^\chi|$, as dictated by~\eqref{eqn:counting-chi}).  

%------------------------------------------------------------------------------
\subsection{Computing the tables}
\label{ss:computing}
%------------------------------------------------------------------------------

We now explain how the various entries in the tables were computed (other than the information about cuspidal pairs). 

To find $|\fN_{G,\bk}|$ (respectively, $|\fN_{G,\bk}^\chi|$), we simply add, over all nilpotent orbits, the number of irreducible representations of the corresponding group $A_G(x)$ over $\bk$ (respectively, the number of irreducible representations on which $Z(G)$ acts via the character $\chi$). Recall that, since~\eqref{eqn:definitely-big-enough} holds, all these irreducible representations are absolutely irreducible, so there are as many of them as there are $\ell$-regular conjugacy classes of $A_G(x)$. The classification of nilpotent orbits for $G$, and the description of the groups $A_G(x)$, can be found in the tables of~\cite[\S 8.4]{cm}. (One must correct two misprints in their table for type $E_7$: when the Bala--Carter label of the orbit is $4A_1$ or $(A_5)''$, the entry under $\pi_1(\cO)$ should be $\Z/2\Z$ rather than $1$, i.e.\ $A_G(x)=Z(G)$ in these cases.)

To classify $G$-orbits of proper cuspidal data, we first need the classification of $G$-conjugacy classes of Levi subgroups of $G$ (which is the same as the classification of $W$-conjugacy classes of parabolic subgroups of $W$). This is well known: in fact, it is embedded in the classification of nilpotent orbits of $G$, because the Bala--Carter label of an orbit records the unique $G$-conjugacy class of Levi subalgebras intersecting that orbit in a distinguished orbit. So taking the list of Bala--Carter labels for nilpotent orbits of $G$, and deleting those with parenthetical decorations indicating non-regular distinguished orbits (e.g.\ $E_7(a_1)$), produces a list of names for our set $\fL$ of representatives of the $G$-conjugacy classes of Levi subgroups of $G$. These Bala--Carter names differ slightly from the names of the parabolic subgroups of $W$ as found in the tables of~\cite{howlett}, but the translation is easy.

We now need to consider each $L\in\fL$ (other than $G$ itself) in turn, and classify (or at least count) the cuspidal pairs for $L$. In most cases, the classification of cuspidal pairs for $L$ reduces to the classification of cuspidal pairs for the simple components of $L$, for the following reasons. Recall that $\fN_{L,\bk}^\cusp$ is unchanged if we replace $L$ by the semisimple group $L/Z(L)^\circ$ (see~\cite[\S 5.3]{genspring2}). Much of the time, $L/Z(L)^\circ$ is of adjoint type and hence a direct product of simple groups; for instance, this is automatic if $G$ is of type $E_8$, $F_4$ or $G_2$. Moreover, if $G$ is of type $E_6$ or $E_7$ and we are considering cuspidal pairs of trivial central character, then we can replace $L/Z(L)^\circ$ by its adjoint quotient $L/Z(L)$ anyway (see~\cite[\S 5.3]{genspring2}). The cuspidal pairs for a direct product of groups are obtained by taking products of the cuspidal pairs for the individual groups. 

The simple components of $L$ are almost always of classical type, and the cuspidal pairs for simple groups of classical type were determined in~\cite{genspring1,genspring2}. Recall from~\cite[Theorem~3.1]{genspring1} that a simple group of type $A_{n-1}$ has a cuspidal pair if and only if $n$ is a power of $\ell$, in which case the cuspidal pair is $([n],\ubk)$. Recall from~\cite[\S 7.2, \S\S8.3--8.4]{genspring2} that when $\ell>2$, simple groups of types $B_2$, $B_3$, $C_3$, $D_4$, $D_5$, $D_6$, $D_7$ (that is, all the type-$B$/$C$/$D$ connected subdiagrams of a Dynkin diagram of exceptional type) have no cuspidal pairs. So these type-$B$/$C$/$D$ factors come into play only when $\ell=2$, in which case their cuspidal pairs are exactly the distinguished orbits with trivial local systems (see~\cite[\S 7.1, \S\S8.1--8.2]{genspring2}).

It remains to discuss the case where $G$ is of type $E_6$ or $E_7$ and we are considering a nontrivial central character $\chi$. Then, as noted in~\eqref{eqn:counting-chi}, we need only consider proper Levi subgroups $L$ containing the Levi subgroup $L_\chi$, which is of type $2A_2$ or $(3A_1)''$ respectively. For such $L$, one must examine the root datum of $L$ to determine the isomorphism class of $L/Z(L)^\circ$ and hence its cuspidal pairs of central character $\chi$. When $G$ is of type $E_6$, the relevant $L$ are those of type $2A_2$, $2A_2+A_1$, $A_5$, and the groups $L/Z(L)^\circ$ are, respectively:
\[
\frac{\SL(3)^2}{\mu_3^{\diag}},\ \frac{\SL(3)^2}{\mu_3^{\diag}}\times \PGL(2),\ \frac{\SL(6)}{\mu_2}.
\]
When $G$ is of type $E_7$, the relevant $L$ are those of types
\[
(3A_1)'',\ 4A_1,\ (A_3+A_1)'',\ A_3+2A_1,\ A_2+3A_1,\ (A_5)'',\  D_4+A_1,\  A_3+A_2+A_1,\ A_5+A_1,\ D_5+A_1,\ D_6.
\]
It turns out that only those $L$ of types $(3A_1)''$, $A_2+3A_1$, and $(A_5)''$ support cuspidal data of nontrivial central character over some $\bk$; for these, the groups $L/Z(L)^\circ$ are, respectively:
\[
\frac{\SL(2)^3}{\ker(\mu_2^3\overset{\times}{\to}\mu_2)},\
\PGL(3)\times \frac{\SL(2)^3}{\ker(\mu_2^3\overset{\times}{\to}\mu_2)},\
\frac{\SL(6)}{\mu_3}.
\]

For all Levi subgroups $L$ of $G$, the groups $N_G(L)/L$, or rather the isomorphic groups $N_W(W_L)/W_L$ (see~\eqref{eqn:weyl-isom}), are described in the tables of~\cite{howlett}. We have copied the relevant information into the middle columns of the tables in Appendix~\ref{sec:tables}. (The $(3A_1)''$ class of Levi subgroups of $E_7$ is omitted from the table in~\cite{howlett}; a Levi $L$ in this class is self-opposed, so it is easy to calculate that the relevant group $N_G(L)/L$ is $W(F_4)$, as stated in~\cite[\S 15.2]{lusztig}.) 

To complete the final columns of the tables in Appendix~\ref{sec:tables}, and hence complete the count of cuspidal pairs, we need only compute the number of irreducible representations of each group $N_G(L)/L$ over $\bk$, which equals the number of $\ell$-regular conjugacy classes in $N_G(L)/L$ by Lemma~\ref{lem:splitting}\eqref{it:cuspidal-splitting-field}. 

\begin{rmk} \label{rmk:reflection-group}
The groups $N_G(L)/L$ in Appendix~\ref{sec:tables} show a strong tendency to be reflection groups, a phenomenon which holds generally in the classical types (as seen in~\cite{genspring1,genspring2}) but is more noteworthy in exceptional types (see~\cite{howlett}). In fact, among all Levi subgroups $L$ of a simply connected quasi-simple group $G$ such that $L$ supports a cuspidal pair over some $\bk$, there are only two cases where $N_G(L)/L$ is not a finite crystallographic reflection group. Namely, when $G$ is of type $E_6$ (respectively, $E_8$) and $L$ is of type $A_2$ (respectively, $2A_2$), the group $N_G(L)/L$ is the wreath product $\fS_3^2\rtimes\fS_2$ (respectively, $W(G_2)^2\rtimes\fS_2$), as shown in Table~\ref{tab:E6ell3} (respectively, Table~\ref{tab:E8ell3}). In the notation of~\cite[Corollary 7]{howlett}, the reflection subgroup $W''$ is $\fS_3^2$ (respectively, $W(G_2)^2$), and the subgroup $V$ is $\fS_2$, acting on $W''$ by interchanging the factors. These observations provide another proof of Lemma~\ref{lem:splitting}\eqref{it:cuspidal-splitting-field}, since it is well known that $\mathbb{Q}$ (and hence any field) is a splitting field for a finite crystallographic reflection group $\Gamma$, and the same holds for $\Gamma^2\rtimes\fS_2$. 
\end{rmk} 

%------------------------------------------------------------------------------
\subsection{Determining cuspidal pairs}
\label{ss:determining-cuspidal}
%------------------------------------------------------------------------------

Knowing the number of cuspidal pairs for $G$ over $\bk$, we next try to determine what they are, using the general results we have proved in this series of papers. Recall that cuspidal pairs must be supported on distinguished orbits~\cite[Proposition 2.6]{genspring2}, which is why we have listed the distinguished nilpotent orbits and the corresponding groups $A_G(x)$ for each $G$ in Appendix~\ref{sec:tables}. In naming cuspidal pairs, we use a slightly nonuniform notation for the local systems, in order to avoid ambiguity: when the local system is trivial we continue to denote it as $\ubk$, and when it is nontrivial we denote it in the same way as the corresponding irreducible representation of the relevant group $A_G(x)$ (where, for instance, $\varepsilon$ always denotes the sign representation of a symmetric group $\fS_m$).

Recall that Lusztig's classification of cuspidal pairs is particularly nice in the exceptional types:

\begin{prop}[\cite{lusztig}]
\label{prop:cuspidal-exc-char0}
Let $G$ be a simply connected quasi-simple group of exceptional type. If $G$ is not of type $E_6$, it has a unique cuspidal pair $(\cO,\cE)$ over $\Qlb$. If $G$ is of type $E_6$, it has two cuspidal pairs $(\cO,\cE)$, one of each nontrivial central character. The orbit $\cO$ is the minimal distinguished nilpotent orbit for $G$ in each case, and all the local systems $\cE$ have rank one.
\end{prop}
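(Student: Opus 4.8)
The statement is part of Lusztig's determination of the generalized Springer correspondence for $\Qlb$-sheaves in~\cite{lusztig}, so the plan is to extract it from his tables in the form stated; I describe how that extraction goes.

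First I would list, from~\cite{lusztig}, the cuspidal data over $\Qlb$ for each simply connected quasi-simple group $G$ of exceptional type, singling out those of the form $(G,\cO,\cE)$, i.e.\ the cuspidal pairs for $G$ itself. Reading Lusztig's tables directly---and, in type $E_6$, using the explicit correspondence of~\cite{spaltenstein} together with the corrections recorded in~\cite[\S24.10]{charsh5} and~\cite[Theorem~5.5]{lusztig-gsc}---one finds exactly one such pair when $G$ is of type $G_2$, $F_4$, $E_7$ or $E_8$, and exactly two when $G$ is of type $E_6$. The counts $1$ and $2$ can alternatively be obtained by feeding Lusztig's classification of cuspidal data on the proper Levi subgroups of $G$ into the recursive formulas~\eqref{eqn:counting} and~\eqref{eqn:counting-chi}, which hold over $\Qlb$ by Lusztig's theorem---the analogue of Theorem~\ref{thm:mgsc-intro}.

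Next I would dispose of the $E_6$ case: the group $Z(G)\cong\mu_3$ has exactly two nontrivial characters, these are exchanged by Verdier duality, and Verdier duality permutes cuspidal pairs while inverting central characters (it commutes with induction); so the two cuspidal pairs carry the two nontrivial central characters, one each. The trivial central character is excluded because, by Lusztig's data, the only $\Qlb$-cuspidal datum of trivial central character for $E_6$ is the principal datum $(T,\{0\},\ubk)$. Finally, matching the Bala--Carter labels occurring in Lusztig's tables against the lists of distinguished nilpotent orbits and the groups $A_G(x)$ recorded in~\cite[\S8.4]{cm} identifies $\cO$ with the minimal distinguished orbit of $G$ in each case, and exhibits $\cE$ as the (rank-one) local system attached to the corresponding one-dimensional character of $A_G(x)$.

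The step I expect to be the main obstacle is bookkeeping rather than mathematics: reconciling Lusztig's labelling of nilpotent orbits and of irreducible $W$-representations with the Bala--Carter conventions of our tables in Appendix~\ref{sec:tables}, and keeping track of the sign twist (cf.~\cite[\S3.7 and Theorem~3.8(c)]{em}) relating Lusztig's normalization of the generalized Springer correspondence to the Fourier-transform normalization used elsewhere in this paper.
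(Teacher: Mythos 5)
The paper does not prove Proposition~\ref{prop:cuspidal-exc-char0}: it is stated with the attribution ``\cite{lusztig}'' and treated as an imported result, with the parenthetical sentence afterwards only explaining the term ``minimal distinguished nilpotent orbit.'' So there is no proof in the paper against which to compare yours; your proposal amounts to a reading guide for Lusztig's classification, which is exactly what such a citation tacitly invokes, and it is correct in substance.

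Two small remarks on the bookkeeping. First, the references~\cite{spaltenstein}, \cite[\S 24.10]{charsh5} and~\cite[Theorem~5.5]{lusztig-gsc} that you mention for $E_6$ are about pinning down the explicit bijection between pairs and $W$-representations (which series each pair lives in, and which representation it corresponds to); the classification of cuspidal \emph{pairs} themselves, including the identification of $\cO$ and the rank-one local systems, is already contained in~\cite{lusztig} and does not need those later sources. Second, your ``alternative'' route via~\eqref{eqn:counting} and~\eqref{eqn:counting-chi} is not really independent: applying those recursions over $\Qlb$ requires as input the $\Qlb$-cuspidal pairs of every proper Levi, which again is Lusztig's classification, so it is a consistency check rather than a separate derivation. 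Your Verdier-duality argument for ``one cuspidal pair of each nontrivial central character'' in type $E_6$ is a valid shortcut, though one can equally just read the central characters off Lusztig's tables directly.
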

\noindent
(Here, the `minimal' distinguished nilpotent orbit is the unique distinguished nilpotent orbit that sits below the other distinguished orbits in the closure order; such an orbit exists in every exceptional type, see~\cite{carter}.)

It follows from Proposition~\ref{prop:cuspidal-exc-char0} that every cuspidal pair $(\cO,\cE)$ of $G$ over $\Qlb$ has a modular reduction $(\cO,\cE^\bk)$ in the sense defined after~\eqref{eqn:assumption-dec-numbers}, which is a cuspidal pair over $\bk$ by~\cite[Proposition 2.22]{genspring1}. It is not always the unique cuspidal pair with its central character as in~\eqref{eqn:assumption-dec-numbers}, but the fact that there are no distinguished nilpotent orbits in $\overline{\cO}\smallsetminus\cO$ ensures that $\IC(\cO,\cE^\bk)$ is the modular reduction of $\IC(\cO,\cE)$ in the sense of~\cite[\S 2.7]{genspring1}.

\begin{rmk}
When $G$ is of type $E_6$ and $\ell=3$, the two cuspidal pairs over $\Qlb$ have the same modular reduction $(E_6(a_3),\varepsilon)$, which has trivial central character. Similarly, when $G$ is of type $E_7$ and $\ell=2$, the modular reduction of the cuspidal pair over $\Qlb$ is $(E_7(a_5),\ubk)$, which has trivial central character. Thus, in these cases the modular reduction descends to a cuspidal pair for the adjoint group, although the cuspidal pair over $\Qlb$ does not. A similar behavior was crucial for our treatment of the group $\GL(n)$ in~\cite{genspring1}.
\end{rmk}

When $\ell$ is a good prime, this modular reduction procedure accounts for all the cuspidal pairs over $\bk$:
\begin{prop}
\label{prop:0-cusp-not-A}
Let $G$ be a simply connected quasi-simple group of exceptional type, and suppose that $\ell$ is good for $G$ and $\bk$ is big enough for $G$. Then the only cuspidal pair(s) for $G$ over $\bk$ is/are the modular reduction(s) of the cuspidal pair(s) for $G$ over $\Qlb$. In particular, there is at most one cuspidal pair of each central character.
\end{prop}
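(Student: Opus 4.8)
The plan is to combine the recursive counting argument of \S\ref{ss:strategy} with the fact, already recorded just before the proposition, that modular reduction carries the characteristic-zero cuspidal pairs to cuspidal pairs over $\bk$. By Proposition~\ref{prop:cuspidal-exc-char0}, $G$ has exactly one cuspidal pair $(\cO,\cE)$ over $\Qlb$, except when $G$ is of type $E_6$, in which case it has two, of the two nontrivial central characters. Each such $(\cO,\cE)$ has a modular reduction $(\cO,\cE^\bk)$, which is cuspidal over $\bk$ by~\cite[Proposition~2.22]{genspring1}; and since $\ell$ is good, hence $\ell\ge 5$ (indeed $\ell\ge 7$ in type $E_8$), the two reductions in the $E_6$ case keep their distinct nontrivial central characters. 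So the number of distinct modular reductions over $\bk$ is $2$ for $G$ of type $E_6$ (one for each nontrivial $\chi$, and none with $\chi=1$) and $1$ otherwise. It therefore suffices to show that $|\fN_{G,\bk}^\cusp|$ (respectively $|\fN_{G,\bk}^{\chi,\cusp}|$ for each $\chi$, in types $E_6$ and $E_7$) equals this number.

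First I would evaluate the right-hand side of~\eqref{eqn:counting} (respectively~\eqref{eqn:counting-chi}). The total $|\fN_{G,\bk}|$ (resp.\ $|\fN_{G,\bk}^\chi|$) is the sum over nilpotent orbits of the number of $\ell$-regular conjugacy classes of $A_G(x)$ (resp.\ those on which $Z(G)$ acts by $\chi$), read off from the tables of~\cite[\S 8.4]{cm}. For the subtracted terms, the key point is that for every proper Levi subgroup $L$ of $G$, $\ell$ is good (in fact rather good) for $L$ by Lemma~\ref{lem:implications} and Lemma~\ref{lem:good-inherited}; hence a simple component of $L$ of type $B$, $C$ or $D$ supports no cuspidal pair (as $\ell>2$; see~\cite{genspring2}), and a simple component of type $A_{n-1}$ supports one only when $n$ is a power of $\ell$ (see~\cite[Theorem~3.1]{genspring1}), which for $\ell\ge 5$ and $G$ of rank at most $8$ forces that component to be $A_4$ ($\ell=5$) or $A_6$ ($\ell=7$). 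The Levi subgroups $L\supset L_\chi$ that are not direct products of simple groups --- namely $L_\chi$ itself and the handful of others listed in \S\ref{ss:computing} for types $E_6$ and $E_7$ --- are treated there via their explicit root data, together with the uniqueness statement of Proposition~\ref{prop:bonnafe}\eqref{it:minimal-Levi}; and the numbers $|\Irr(\bk[N_G(L)/L])|$, equal to the numbers of $\ell$-regular classes of $N_G(L)/L$, come from the tables of~\cite{howlett} as copied into Appendix~\ref{sec:tables}. Carrying out the resulting arithmetic for each exceptional type and each good $\ell$ yields the entries of Table~\ref{tab:exc-cuspidal-pairs}: $|\fN_{G,\bk}^\cusp|=1$ for types $G_2$, $F_4$, $E_7$, $E_8$, and $|\fN_{G,\bk}^{\chi,\cusp}|=1$ for each nontrivial $\chi$ and $0$ for $\chi=1$ in type $E_6$.

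Comparing with the count of distinct modular reductions above shows that those reductions exhaust $\fN_{G,\bk}^\cusp$, which is the first assertion; the second then follows because for exceptional $G$ the only case with more than one characteristic-zero cuspidal pair is $E_6$, where the two (and hence their reductions) have distinct central characters. The main obstacle is bookkeeping rather than conceptual: one must run the recursive count correctly in every case, and in particular the central-character refinement in types $E_6$ and $E_7$ requires care in identifying precisely which proper Levi subgroups contribute cuspidal data of a given $\chi$ and with what multiplicity --- this is exactly where Proposition~\ref{prop:bonnafe}, the isogeny types of the relevant Levi subgroups recorded in \S\ref{ss:computing}, and the Howlett tables must be assembled consistently. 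There is also the subsidiary check, settled by the rank bound and $\ell\ge 5$ above, that no unexpectedly large type-$A$ Levi subgroup enters the sum.
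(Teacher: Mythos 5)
Your argument is correct, and it coincides with the paper's opening observation: the proposition ``follows immediately from the count of cuspidal pairs \ldots given in the relevant tables of Appendix~\ref{sec:tables}.'' You have fleshed that sentence out accurately: only components of type $A_{\ell^k-1}$ can contribute non-principal cuspidal data for good $\ell$, the rank bound forces $A_{\ell-1}$ when $\ell \in \{5,7\}$, the $E_6/E_7$ central-character refinement uses Proposition~\ref{prop:bonnafe} and the isogeny types of the relevant Levis, and matching $|\fN_{G,\bk}^{\cusp}|$ (or $|\fN_{G,\bk}^{\chi,\cusp}|$) against the number of distinct modular reductions from Proposition~\ref{prop:cuspidal-exc-char0} closes the argument.

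The paper, after that one-sentence nod to the tables, spends its proof on a ``more uniform explanation'' that you do not give, and it is worth knowing what it buys. It treats the easy case ($\ell \nmid |W|$) and the good-but-not-easy case ($\ell \in \{5,7\}$ dividing $|W|$ exactly once) separately. In the easy case, rather-goodness (inherited by all Levis via Lemmas~\ref{lem:implications} and~\ref{lem:good-inherited}) lets one identify $\fN_{L,\bk}$ with $\fN_{L,\Qlb}$ for every Levi $L$; since $\ell$ is also coprime to $|N_G(L)/L|$, the inclusions $\fN_{L,\Qlb}^\cusp\subset\fN_{L,\bk}^\cusp$ must all be equalities for~\eqref{eqn:mgsc-variant} to balance, with no arithmetic on the exceptional tables needed. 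In the good-but-not-easy case, the only defect is in the principal series: $|\Irr(\bk[W])|$ drops from $|\Irr(\Qlb[W])|$ by the number of $\ell$-singular classes of $W$, and Lemma~\ref{lem:linear-prime} shows this is exactly compensated by the new induction series attached to the $\ell$-Sylow class $A_{\ell-1}$ (Theorem~\ref{thm:regular-series}). So the count of cuspidals again comes out the same as over $\Qlb$, without re-running~\eqref{eqn:counting}. Your route is perfectly valid but is the bookkeeping-heavy one; the paper's alternative isolates exactly where $\ell$ could have created new cuspidals and shows structurally that it does not.

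One small remark: you should be slightly more careful in asserting that the two $E_6$ reductions ``keep their distinct nontrivial central characters.'' What you need is that the two nontrivial $\Qlb$-characters of $\mu_3$ reduce to two distinct nontrivial $\bk$-characters, which holds precisely because $\ell\neq 3$ (so $\mu_3$ is $\ell'$) and $\bk$ is big enough for $E_6$ (so $\bk$ contains the cube roots of unity); both hypotheses are in force, but stating them makes the step airtight.
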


\begin{proof}
This follows immediately from the count of cuspidal pairs in the cases of a good prime $\ell$, given in the relevant tables of Appendix~\ref{sec:tables}. But we can give the following more uniform explanation.

In the case when $\ell$ is easy for $G$ (meaning that $\ell\nmid|W|$), we can see \textit{a priori} that there cannot be any difference between the cuspidal data for $G$ over $\bk$ and those for $G$ over $\Qlb$, determined by Lusztig in~\cite{lusztig}; in particular, there is no difference between the cuspidal pairs. Since $\ell$ is easy for $G$, it is also rather good for $G$ (see Lemma~\ref{lem:implications}) and thus rather good for every Levi subgroup $L$ of $G$ (see Lemma~\ref{lem:good-inherited}), so we have an identification of $\fN_{L,\bk}$ with the analogous set $\fN_{L,\Qlb}$ for every Levi subgroup $L$ of $G$. Having made these identifications, we have inclusions $\fN_{L,\Qlb}^\cusp\subset\fN_{L,\bk}^\cusp$ by~\cite[Proposition 2.22]{genspring1}. Because $\ell$ does not divide the order of any of the groups $N_G(L)/L$, and because Lemma~\ref{lem:splitting}\eqref{it:cuspidal-splitting-field} holds, we also have an identification of $\Irr(\bk[N_G(L)/L])$ with the analogous set $\Irr(\Qlb[N_G(L)/L])$ for every relevant Levi subgroup $L$ of $G$. So all the inclusions $\fN_{L,\Qlb}^\cusp\subset\fN_{L,\bk}^\cusp$ must be equalities in order for~\eqref{eqn:mgsc-variant} to hold both for $\Qlb$ and for $\bk$.

Now suppose that $\ell$ is good but not easy for $G$, i.e.\ $\ell=5$ in type $E_6$, $\ell\in\{5,7\}$ in type $E_7$, or $\ell=7$ in type $E_8$. In each of these cases the prime $\ell$ divides $|W|$ exactly once, and moreover (as remarked in \S\ref{ss:linear-prime}) the $\ell$-Sylow class is of type $A_{\ell-1}$. Among all the cuspidal data $(L,\cO_L,\cE_L)$ over $\Qlb$, the only one for which $\ell$ divides $|N_G(L)/L|$ is the principal cuspidal datum $(T,\{0\},\bk)$.

It is still the case that $\ell$ is rather good for $G$ (see Lemma~\ref{lem:implications}), so we still have an identification of $\fN_{L,\bk}$ with the analogous set $\fN_{L,\Qlb}$ for every Levi subgroup $L$ of $G$. The only failure in the previous argument is that the number of pairs $|\Irr(\bk[W])|$ in the principal induction series is less than the corresponding number of pairs $|\Irr(\Qlb[W])|$ in Lusztig's setting, the defect being the number of $\ell$-singular conjugacy classes of $W$. Lemma~\ref{lem:linear-prime} shows that this defect is exactly compensated for by the new induction series associated to the $\ell$-Sylow class of Levi subgroups (see Theorem~\ref{thm:regular-series}). So the count of cuspidal pairs comes out the same as in Lusztig's $\Qlb$ setting.
\end{proof}

Proposition~\ref{prop:0-cusp-not-A} completes the proof of Theorem~\ref{thm:cuspidal-facts}.

For the remainder of the subsection, we assume that $\ell$ is a bad prime for $G$. Recall that Theorem~\ref{thm:regular-series} gave a uniform criterion for cuspidality of the pair $(\cO_\reg,\ubk)$. Consulting Table~\ref{tab:vertex}, we see that in the exceptional types, $(\cO_\reg,\ubk)$ is cuspidal for all bad primes $\ell$ except when $\ell=2$ in type $E_6$ and when $\ell=3$ in type $E_7$. (In these cases, the count reveals that there are in fact no cuspidal pairs of trivial central character).

When $G$ is of type $G_2$, this information completes the classification of cuspidal pairs. When $\ell=2$, the two cuspidal pairs must be $(G_2,\ubk)$ and $(G_2(a_1),\ubk)$, the latter being the modular reduction of Lusztig's cuspidal pair $(G_2(a_1),\varepsilon)$. Similarly, when $\ell=3$, the two cuspidal pairs must be $(G_2,\ubk)$ and $(G_2(a_1),\varepsilon)$.

When $G$ is of type $E_6$ and $\ell=2$, the count reveals that there are two cuspidal pairs of each nontrivial central character $\chi$, and we know that $(E_6(a_3),1\times\chi)$ is one of these. The other cuspidal pair must be either $(E_6,\chi)$ or $(E_6(a_1),\chi)$, but we lack a nontrivial central character analogue of Theorem~\ref{thm:regular-series} to decide which.

When $G$ is of type $E_6$ and $\ell=3$, the count reveals that there are three cuspidal pairs, and the above arguments show that $(E_6,\ubk)$ and $(E_6(a_3),\varepsilon)$ are two of these. We will see in \S\ref{ss:E6-mgsc} that the pair $(E_6(a_3),\ubk)$ belongs to the induction series attached to the cuspidal datum $(2A_2,[3]^2,\ubk)$, so the third cuspidal pair must be the only other pair supported on a distinguished orbit, namely $(E_6(a_1),\ubk)$.

It seems reasonable to guess that the following weaker form of Theorem~\ref{thm:cuspidal-facts} holds in arbitrary characteristic; it is true for types $A$--$D$ by~\cite{genspring2}, and for types $G_2$ and $E_6$ as we have just seen.

\begin{conj} \label{conj:distinct}
Let $G$ be any connected reductive group and suppose that $\bk$ is big enough for $G$.
For a fixed nilpotent orbit $\cO$, the cuspidal pairs $(\cO,\cE)\in\fN_{G,\bk}^\cusp$ involving that orbit have distinct central characters.  In other words, each nilpotent orbit supports at most one cuspidal pair with a given central character.
\end{conj}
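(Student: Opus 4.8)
The plan is to reduce, as in \cite[\S5.3]{genspring2}, to the case where $G$ is simply connected and quasi-simple, and then to argue type by type. For $G$ of type $A$ the only distinguished orbit is the regular one, on which $Z(G)/Z(G)^\circ\to A_G(x)$ is an isomorphism, so the central character already separates all the irreducible local systems; hence there is at most one cuspidal pair of each central character, a fortiori at most one on each orbit. For $G$ of classical type the statement is read off from the explicit classification of cuspidal pairs in \cite{genspring2}. For $G$ of exceptional type with $\ell$ good, Proposition~\ref{prop:0-cusp-not-A} already gives the stronger assertion that $G$ has at most one cuspidal pair of each central character. Thus the whole problem is concentrated in the bad-characteristic exceptional cases $F_4$ with $\ell\in\{2,3\}$, $E_7$ with $\ell\in\{2,3\}$, and $E_8$ with $\ell\in\{2,3,5\}$, the cases $G_2$ (all $\ell$) and $E_6$ (all $\ell$) having been settled in \S\ref{ss:determining-cuspidal}.

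For those remaining cases I would proceed as follows. When $\chi$ is trivial one may pass to the adjoint group $G/Z(G)$, again by \cite[\S5.3]{genspring2}, so that $A_{G/Z(G)}(x)$ replaces $A_G(x)$ and the claim becomes that each distinguished orbit of an adjoint exceptional group supports at most one cuspidal pair; for $\chi\neq1$ (only $E_6$, $E_7$) one works with the irreducible representations of $A_G(x)$ of fixed central character directly. Next I would combine the recursive count of cuspidal pairs from Table~\ref{tab:exc-cuspidal-pairs} and Appendix~\ref{sec:tables} with the list of distinguished orbits and their component groups $A_G(x)$ from \cite[\S8.4]{cm}. Several cuspidal pairs are already pinned down independently: $(\cO_\reg,\ubk)$ is cuspidal there by Theorem~\ref{thm:regular-series}, and the modular reductions of Lusztig's char-$0$ cuspidal pairs, which live on the minimal distinguished orbit by Proposition~\ref{prop:cuspidal-exc-char0}, are cuspidal of known central character. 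Subtracting these from the count leaves only a handful of possibilities, which one would try to locate on distinguished orbits using Proposition~\ref{prop:levi-rule} together with the induction-series computations of Section~\ref{sec:series}; whenever the surviving possibilities fall on distinct orbits the conjecture follows in that case.

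The genuine gap — and the reason this is a conjecture — lies precisely in the cases where the count of cuspidal pairs exceeds the number of distinguished orbits, forcing two cuspidal pairs onto a single orbit $\cO$ whose group $A_G(x)$ has at least two modular irreducibles sharing a central character (the dangerous orbits being those with $A_G(x)$ one of $\fS_3,\fS_4,\fS_5$ at the relevant bad prime). To dispose of these without a full determination of the cuspidal pairs, one would want a conceptual obstruction: a strengthening of the vanishing criterion ${}'\Res^G_{L\subset P}(\IC(\cO,\cE))=0$ showing that, among the irreducible $G$-equivariant local systems on a fixed distinguished orbit with a fixed central character, at most one can be cuspidal — for instance via a modular analogue of cleanness, or by analysing the Fourier transform $\bT_\fg$ of $\IC(\cO,\cE)$ and its modular reduction along the lines of \S\ref{ss:decomposition-numbers}. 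I expect this uniform statement to be the main obstacle: the case-by-case bookkeeping is routine once the cuspidal pairs are known, but determining them (or circumventing the need to) in $F_4$, $E_7$ and $E_8$ at bad primes is exactly the open part.
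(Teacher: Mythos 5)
This statement is labeled as a conjecture in the paper, and the paper offers no proof of it: it is only \emph{verified} in the cases discussed in the text (types $A$--$D$ by the explicit classification of cuspidal pairs in the predecessor paper, good characteristic by Proposition~\ref{prop:0-cusp-not-A}, and types $G_2$ and $E_6$ by the explicit determinations of \S\ref{ss:determining-cuspidal}). Your write-up correctly recognizes that it is a conjecture, correctly reproduces the same supporting evidence in the same order (reduction to $G$ simply connected quasi-simple, type $A$ via the fact that the only distinguished orbit is the regular one on which $Z(G)/Z(G)^\circ \to A_G(x)$ is an isomorphism, classical types by the known classification, good $\ell$ by Proposition~\ref{prop:0-cusp-not-A}, $G_2$ and $E_6$ by the explicit lists), and correctly isolates $F_4$, $E_7$ and $E_8$ at their bad primes as the genuinely open cases. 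Your suggestions for a uniform obstruction (a modular cleanness statement, or an analysis of the Fourier transform of $\IC(\cO,\cE)$ and its modular reduction) go beyond anything in the paper, which does not propose a strategy of proof; they are reasonable but purely speculative, and you are right to present them as such.

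One minor inaccuracy in your discussion of the dangerous cases: the component groups that can carry several $\ell$-modular irreducibles with the same central character are not only $\fS_3$, $\fS_4$, $\fS_5$ but also $\fS_2$ when $\ell\neq 2$, since then $\fS_2$ has two $\ell$-modular irreducibles ($1$ and $\varepsilon$), both of trivial central character after passing to the adjoint group. Concretely, in $F_4$ with $\ell=3$ the orbits $F_4(a_1)$ and $F_4(a_2)$ have $A_G(x)\cong \fS_2$ and are therefore dangerous in exactly the sense you describe; the unknown third cuspidal pair (besides $(F_4,\ubk)$ and $(F_4(a_3),\varepsilon)$) could \emph{a priori} land on one of these, or back on $F_4(a_3)$ with a different local system, and the conjecture asserts that it does not collide with the known one. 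This does not affect the substance of your answer, but the list of dangerous component groups should be corrected.
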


Simply assuming Conjecture~\ref{conj:distinct} is enough to complete the classification of cuspidal pairs in the following additional cases:
\begin{itemize}
\item In type $F_4$ when $\ell=2$, the 4 cuspidal pairs would have to be the four distinguished orbits with the trivial local systems, since $(F_4(a_3),\ubk)$ is the modular reduction of Lusztig's cuspidal pair, and Conjecture~\ref{conj:distinct} rules out the other pair supported on $F_4(a_3)$.
\item In type $E_7$ when $\ell=2$, the 6 cuspidal pairs would have to be the six distinguished orbits with the trivial local systems, since $(E_7(a_5),\ubk)$ is the modular reduction of Lusztig's cuspidal pair, and Conjecture~\ref{conj:distinct} rules out the other pair supported on $E_7(a_5)$.
\end{itemize}

\begin{rmk}
We see a curious partial pattern here for the $\ell=2$ case. In types $B$/$C$/$D$, the cuspidal pairs when $\ell=2$ are exactly the distinguished orbits with trivial local systems~\cite{genspring2}; this definitely also holds for $G_2$, and subject to Conjecture~\ref{conj:distinct} it holds for $F_4$ and $E_7$. However, it does not hold in type $A$, and the count shows that it cannot hold for $E_6$ or $E_8$ either.
\end{rmk}

%%%%%%%%%%%%%%%%%%%%%%%%%%%%%%%%%%%%%%%%%%%%%%%%%%%%%%%%%%%%%%%%%%%%%%%
\section{Determining induction series}
\label{sec:series}
%%%%%%%%%%%%%%%%%%%%%%%%%%%%%%%%%%%%%%%%%%%%%%%%%%%%%%%%%%%%%%%%%%%%%%%

Continue to let $G$ denote a simply connected quasi-simple group of exceptional type. The classification of cuspidal pairs considered in \S\ref{ss:determining-cuspidal} is part of the bigger problem of determining the modular generalized Springer correspondence~\eqref{eqn:mgsc}. Each row in one of the tables of Appendix~\ref{sec:tables} corresponds to an induction series $\fN_{G,\bk}^{(L,\cO_L,\cE_L)}$, whose size is given in the final column: to determine the correspondence in full, one needs to know which pairs $(\cO,\cE)\in\fN_{G,\bk}$ belong to each induction series, and an explicit description of the bijection~\eqref{eqn:bijection} between those pairs and the irreducible $\bk$-representations of $N_G(L)/L$. 

The third author solved this problem for the principal induction series $\fN_{G,\bk}^{(T,\{0\},\ubk)}$ in~\cite{juteau}, explicitly describing the modular Springer correspondence in each exceptional type in~\cite[Section 9]{juteau}. In Section~\ref{sec:basic-sets} we have explained how to solve the problem for the induction series labelled by the minimal cuspidal datum for a nontrivial central character. Thus, the answers are known for the top row of each table in Appendix~\ref{sec:tables} (in particular, see \S\ref{sss:E6ell2} for the top row of Table~\ref{tab:E6ell2chinot1} and \S\ref{sss:E7ell3} for the top row of Table~\ref{tab:E7ell3chinot1}). For the bottom row of each table, the problem is the determination of cuspidal pairs, already discussed in \S\ref{ss:determining-cuspidal}. As the reader can see, this still leaves many rows, especially for small values of $\ell$.

This problem is considerably more challenging than the characteristic-zero version solved (almost completely) by Spaltenstein~\cite{spaltenstein}. This is largely because there is no modular analogue of Lusztig's result~\cite[1.5(V)]{spaltenstein} locating the pairs in an induction series $\fN_{G,\bk}^{(L,\cO_L,\cE_L)}$ corresponding to the trivial and sign representations of $N_G(L)/L$, which Spaltenstein called the `starting point to apply [the restriction theorem] in a nontrivial way'. 
Therefore, in this section we restrict ourselves to some special cases where we can make progress.

%------------------------------------------------------------------------------
\subsection{The easy case}
\label{ss:easy-mgsc}
%------------------------------------------------------------------------------

We first show that, if $\ell$ is easy for $G$ (i.e.\ $\ell$ does not divide $|W|$), the modular generalized Springer correspondence is essentially the same as Lusztig's generalized Springer correspondence, determined in~\cite{lusztig,lus-spalt} for classical groups and~\cite{spaltenstein} for exceptional groups. (As in Section~\ref{sec:basic-sets}, one must twist Lusztig's correspondence by the sign character.)
In fact, we prove a slightly more general result, taking into account nontrivial central characters. 

\begin{prop} \label{prop:easy-mgsc}
Let $\chi:Z(G)/Z(G)^\circ\to\bk^\times$ be any central character, and assume that $\ell$ is rather good for $G$ and does not divide $|W(\chi)|$. Then the part of the modular generalized Springer correspondence~\eqref{eqn:mgsc} corresponding to $\fN^\chi_{G,\bk}$ is the same as in Lusztig's setting.
\end{prop}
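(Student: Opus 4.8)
The plan is to show that, under the stated hypotheses, nothing changes when passing from $\Qlb$-coefficients to $\bk$-coefficients in the part of the correspondence indexed by cuspidal data with central character $\chi$. First I would set up a triple $(\K,\O,\bk)$ with $\K$ and $\bk$ big enough for $G$ (using Lemma~\ref{lem:splitting}\eqref{it:indep-of-k} to reduce to this case), so that modular reduction makes sense; then I would invoke the hypothesis that $\ell$ is rather good for $G$, hence rather good for every Levi subgroup of $G$ by Lemma~\ref{lem:good-inherited}. By the characterisation in Lemma~\ref{lem:good} together with~\eqref{eqn:exact-seq}, ``rather good'' forces $\ell \nmid |A_L(y)|$ for every Levi $L$ and every $y \in \cN_L$, so the canonical identification of $\fN_{L,\bk}$ with $\fN_{L,\K}$ is a bijection that matches up local systems and their central characters. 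In particular the subsets $\fN^\chi_{L,\bk}$ and $\fN^{\tilde\chi}_{L,\K}$ correspond under this identification.

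Next I would run the same inclusion argument as in the proof of Proposition~\ref{prop:0-cusp-not-A}, but restricted to central character $\chi$. By~\cite[Proposition 2.22]{genspring1}, modular reduction sends cuspidal pairs to cuspidal pairs, so for every Levi $L$ we get inclusions $\fN^{\tilde\chi,\cusp}_{L,\K} \subset \fN^{\chi,\cusp}_{L,\bk}$ after the above identification. On the representation-theoretic side, for a cuspidal datum $(M,\cO_M,\cE_M)$ with $\cE_M$ of central character $\chi$, the group $N_G(M)/M$ is by~\eqref{eqn:generalized-weyl-isom} a subquotient of $W(\chi)$; since $\ell \nmid |W(\chi)|$ by hypothesis, it does not divide $|N_G(M)/M|$, so $\bk$ and $\K$ are both splitting fields (Lemma~\ref{lem:splitting}\eqref{it:cuspidal-splitting-field}) and $\Irr(\bk[N_G(M)/M])$ is canonically identified with $\Irr(\K[N_G(M)/M])$, with equal cardinality. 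Then the refined counting identity~\eqref{eqn:counting-chi}, applied over both $\K$ and $\bk$, forces all the inclusions $\fN^{\tilde\chi,\cusp}_{L,\K} \subset \fN^{\chi,\cusp}_{L,\bk}$ to be equalities: the two sides of~\eqref{eqn:counting-chi} already agree term by term except possibly for the cuspidal counts, which must therefore agree too. This identifies $\fM^\chi_{G,\bk}$ with $\fM^{\tilde\chi}_{G,\K}$.

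Finally I would compare the bijections themselves. With the cuspidal data and the groups $N_G(L)/L$ identified, the subdivision of $\fN^\chi_{G,\bk}$ (resp.\ $\fN^{\tilde\chi}_{G,\K}$) into induction series is determined by the supports of the Fourier transforms $\bT^\bk_\fg(\IC(\cO,\cE^\bk))$ (resp.\ over $\K$), via~\cite[Lemma 2.1]{genspring2}; since Fourier transform commutes with modular reduction~\cite[Proposition 2.8]{juteau} and the relevant local systems have irreducible modular reduction with unchanged $A$-group (again because $\ell$ is rather good), the support is unchanged, so the partitions into induction series match. Within a given series, the bijection~\eqref{eqn:bijection} is built out of the equivariant-local-system machinery of~\cite[Theorem 3.1]{genspring2} followed by an inverse Fourier transform; each ingredient commutes with extension of scalars and with modular reduction, and since $\ell \nmid |N_G(L)/L|$ there is no ambiguity from decomposition numbers (the decomposition matrix of $N_G(L)/L$ is the identity). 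Hence $\Psi^{(L,\cO_L,\cE^\bk_L)}_\bk$ corresponds to $\Psi^{(L,\cO_L,\cE^\K_L)}_\K$ under the identifications, which is exactly the claim. The main obstacle is the bookkeeping needed to see that every functor and identification in the construction of~\eqref{eqn:mgsc} genuinely commutes with modular reduction in this $\ell$-nondividing regime; but since each such compatibility has already been recorded in~\cite{genspring1,genspring2,juteau} (notably~\cite[Remark 2.23]{genspring1} and~\cite[Corollary 2.4, Proposition 2.8]{juteau}), this is routine rather than deep.
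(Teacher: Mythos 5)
Your proposal follows the same overall route as the paper: reduce to a triple $(\K,\O,\bk)$ via Lemma~\ref{lem:splitting}, use ``rather good'' (Lemmas~\ref{lem:good} and~\ref{lem:good-inherited}) to identify $\fN_{L,\bk}$ with $\fN_{L,\K}$, use $\ell\nmid|W(\chi)|$ and~\eqref{eqn:generalized-weyl-isom} to make all the decomposition matrices of the groups $N_G(L)/L$ identities, and repeat the counting argument of Proposition~\ref{prop:0-cusp-not-A} (restricted to central character $\chi$) to identify the cuspidal data. All of that is correct and matches the paper.

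The gap is in your final step, where you assert that ``each ingredient commutes with extension of scalars and with modular reduction'' and conclude that $\Psi^{(L,\cO_L,\cE^\bk_L)}_\bk$ corresponds to $\Psi^{(L,\cO_L,\cE^\K_L)}_\K$. This is the part that actually needs proof, and the appeal to support of Fourier transforms only locates the orbit of $\Psi_\bk(\imath_L(E))$, not the local system, so it does not pin down the bijection. The paper makes the claim precise by first invoking Proposition~\ref{prop:equality-decomp-number}, which — after the nontrivial Lemma~\ref{lem:Ebar} producing a compatible $\O$-form $\overline{\cE_L^\O}$ — yields the decomposition-number equality
\[
d^{\cN_G}_{\Psi^{(L,\cO_L,\cE_L^\K)}_\K(E),\ \Psi^{(L,\cO_L,\cE_L^\bk)}_\bk(\imath_L(E))}\ =\ d^{N_G(L)/L}_{E,\imath_L(E)}\ =\ 1,
\]
and then running an induction on nilpotent orbits ordered by closure inclusion: since the $A_G(x_\cO)$ decomposition matrices are also identities (rather good), the only way to have a single composition factor, with the right orbit-closure constraint, is for $\Psi_\bk(\imath_L(E))$ to equal $\theta^\chi(\Psi_\K(E))$ where $\theta^\chi$ is the canonical bijection $\fN^\chi_{G,\K}\simto\fN^\chi_{G,\bk}$. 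Without Proposition~\ref{prop:equality-decomp-number} (or a re-derivation of its conclusion) plus this induction, ``no ambiguity from decomposition numbers'' does not follow from $\ell\nmid|N_G(L)/L|$ alone: the identity decomposition matrix of $N_G(L)/L$ is input data, and one needs the machinery of Section~\ref{sec:basic-sets} to transfer it to decomposition numbers of perverse sheaves on $\cN_G$. Adding that step would complete your proof along exactly the paper's lines.
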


\begin{proof}
By Lemma~\ref{lem:splitting}\eqref{it:indep-of-k}, we can assume that $\bk$ is part of a triple $(\K,\O,\bk)$ as defined in \S\ref{ss:decomposition-numbers}. Since $\ell \nmid |Z(G)/Z(G)^\circ|$ (see Lemma~\ref{lem:good}), $\chi$ is the modular reduction of a unique $\K$-character of $Z(G)/Z(G)^\circ$, which we also denote by $\chi$ for simplicity.
For any cuspidal datum $(L,\cO_L,\cE_L^\K)$ over $\K$ such that $\cE_L^\K$ has central character $\chi$, the group $N_G(L)/L$ is a subquotient of $W(\chi)$, as explained in \S\ref{ss:min-cusp-datum}. So, by our assumption on $\ell$, there exists a bijection $\imath_{L} : \Irr(\K [N_G(L)/L]) \simto \Irr(\bk [N_G(L)/L])$ that makes the decomposition matrix $(d^{N_G(L)/L}_{E,\imath_L(E')})_{E,E'}$ the identity matrix. Similarly, since $\ell$ is rather good for $G$, for any nilpotent orbit $\cO \subset \cN_G$, if we choose $x_\cO \in \cO$, then there exists a bijection $\imath_{\cO} : \Irr(\K [A_G(x_\cO)]) \simto \Irr(\bk [A_G(x_\cO)])$ that makes the decomposition matrix $(d^{A_G(x_\cO)}_{E,\imath_\cO(E')})_{E,E'}$ the identity matrix. Using these bijections we deduce a canonical bijection $\theta : \fN_{G,\K} \simto \fN_{G,\bk}$, which restricts to a bijection $\theta^\chi : \fN_{G,\K}^\chi \simto \fN_{G,\bk}^\chi$.

By the same arguments as in the proof of Proposition~\ref{prop:0-cusp-not-A}, 
the cuspidal data over $\bk$ with central character $\chi$ are exactly the modular reductions of the corresponding cuspidal data over $\K$. If $(L,\cO_L,\cE_L^\K)$ is such a cuspidal datum over $\K$, and if $\cE_L^\bk$ denotes the modular reduction of $\cE_L^\K$, then~\eqref{eqn:assumption-dec-numbers} holds by Remark~\ref{rmk:modular-reduction-of-cuspidal} and Lemma~\ref{lem:good}, so that,
by Proposition~\ref{prop:equality-decomp-number}, for any $E\in\Irr(\K [N_G(L)/L])$ we have
\begin{equation}
\label{eqn:equality-dec-easy}
d^{\cN_G}_{\Psi^{(L,\cO_L,\cE_L^\K)}_\K(E), \Psi^{(L,\cO_L,\cE_L^\bk)}_\bk(\imath_L(E))}=1.
\end{equation}
Using an induction argument on nilpotent orbits (ordered by inclusion of closures), equality~\eqref{eqn:equality-dec-easy} and the fact that the decomposition matrix for $A_G(x_\cO)$ is the identity force the following equality for any cuspidal datum $(L,\cO_L,\cE_L^\K)$ such that $\cE_L^\K$ has central character $\chi$ and any $E \in \Irr(\K [N_G(L)/L])$:
\begin{equation}
\Psi^{(L,\cO_L,\cE_L^\bk)}_\bk(\imath_L(E)) = \theta^\chi \bigl( \Psi^{(L,\cO_L,\cE_L^\K)}_\K(E) \bigr).
\end{equation}
This proves the claim.
\end{proof}

\begin{rmk}
Under the assumptions of Proposition~\ref{prop:easy-mgsc}, we have moreover that for any cuspidal datum $(L,\cO_L,\cE_L)\in\fM_{G,\bk}^\chi$, the induced perverse sheaf $\Ind_{L\subset P}^G(\IC(\cO_L,\cE_L))$ is semisimple, so that the simple perverse sheaves in the corresponding induction series are its direct summands, as in Lusztig's setting. Indeed, $\Ind_{L\subset P}^G(\IC(\cO_L,\cE_L))$ is semisimple whenever $\ell$ does not divide $|N_G(L)/L|$, as follows immediately from~\cite[Corollary 2.18]{genspring1} and~\cite[Theorem 3.1(3)--(4)]{genspring2}. 
\end{rmk}

Proposition~\ref{prop:easy-mgsc} means that our problem is already solved for the final table listed for each $G$ and $\chi$ in Appendix~\ref{sec:tables}; that is, for Tables~\ref{tab:G2ell>3},~\ref{tab:F4ell>3},~\ref{tab:E6ell>3chinot1},~\ref{tab:E6ell>5chi1},~\ref{tab:E7ell>3chinot1},~\ref{tab:E7ell>7chi1}, and~\ref{tab:E8ell>7}.

%------------------------------------------------------------------------------
\subsection{The good-but-not-easy case}
\label{ss:good-mgsc}
%------------------------------------------------------------------------------

Now suppose that $\ell$ is a good prime for $G$ but not easy, i.e.\ $\ell=5$ in type $E_6$, $\ell\in\{5,7\}$ in type $E_7$, or $\ell=7$ in type $E_8$. If $G$ is of type $E_6$ or $E_7$, then the case of nontrivial central character $\chi$ is covered by Proposition~\ref{prop:easy-mgsc}, since $\ell$ does not divide $|W(\chi)|$. So we can restrict attention to the case of the trivial central character. 

As 
shown in Tables~\ref{tab:E6ell5chi1},~\ref{tab:E7ell5chi1},~\ref{tab:E7ell7chi1} and~\ref{tab:E8ell7}, the only non-principal cuspidal datum in these cases (apart from the cuspidal pairs, determined in Proposition~\ref{prop:0-cusp-not-A}) is the cuspidal datum $(A_{\ell-1},[\ell],\ubk)$ associated to the $\ell$-Sylow class of Levi subgroups (see Theorem~\ref{thm:regular-series}). Since we know the elements of the principal induction series from~\cite{juteau}, we can immediately determine the elements of the induction series $\fN_{G,\bk}^{(A_{\ell-1},[\ell],\ubk)}$ in each case. In accordance with Theorem~\ref{thm:regular-series}, the pair $(\cO_\reg,\ubk)$ is always an element of this series; the complete lists are as follows.
\begin{itemize}
\item $G=E_6$, $\chi=1$, $\ell=5$: $\fN_{G,\bk}^{(A_4,[5],\ubk)}$ consists of $(E_6(a_1),\ubk)$ and $(E_6,\ubk)$.
\item $G=E_7$, $\chi=1$, $\ell=5$: $\fN_{G,\bk}^{(A_4,[5],\ubk)}$ consists of $(E_6,\ubk)$, $(E_7(a_3),\ubk)$, $(E_7(a_3),\varepsilon)$, $(E_7(a_2),\ubk)$, $(E_7(a_1),\ubk)$, and $(E_7,\ubk)$.
\item $G=E_7$, $\chi=1$, $\ell=7$: $\fN_{G,\bk}^{(A_6,[7],\ubk)}$ consists of $(E_7(a_4),\varepsilon)$ and $(E_7,\ubk)$.
\item $G=E_8$, $\ell=7$: $\fN_{G,\bk}^{(A_6,[7],\ubk)}$ consists of $(D_7,\ubk)$, $(E_8(b_4),\varepsilon)$, $(E_8(a_1),\ubk)$, and $(E_8,\ubk)$.
\end{itemize}
We were not able to determine the bijection~\eqref{eqn:bijection} in these cases.

%------------------------------------------------------------------------------
\subsection{Type $G_2$}
\label{ss:G2-mgsc}
%------------------------------------------------------------------------------

In this subsection we determine the modular generalized Springer correspondence when $G$ is of type $G_2$. By Proposition~\ref{prop:easy-mgsc}, we need only consider the cases $\ell=2$ and $\ell=3$. 

In the $\ell=3$ case, as Table~\ref{tab:G2ell3} shows, we have only the principal induction series and the two cuspidal pairs. So the problem is just to determine the modular Springer correspondence, which was done in~\cite[\S 9.1.2]{juteau}.

In the $\ell=2$ case, the modular Springer correspondence is described in~\cite[\S 9.1.1]{juteau}: the induction series associated to $(T,\{0\},\ubk)$ consists of the pairs $(\{0\},\ubk)$ and $(\tilde A_1,\ubk)$, corresponding to the trivial and nontrivial irreducible $2$-modular representations of $W(G_2)$ respectively. The only information left to determine is which of the two other non-cuspidal pairs, $(A_1,\ubk)$ and $(G_2(a_1),\varphi_{21})$, belongs to which of the two remaining (singleton) induction series. (Here, $\varphi_{21}$ denotes the nontrivial irreducible $2$-modular representation of $\fS_3$, or rather the corresponding $\bk$-local system on the subregular orbit $G_2(a_1)$.) By Proposition~\ref{prop:levi-rule} we cannot have $(A_1,\ubk)\in\fN_{G,\bk}^{(\tilde A_1,[2],\ubk)}$, so it must be that $\fN_{G,\bk}^{(A_1,[2],\ubk)}=\{(A_1,\ubk)\}$ and $\fN_{G,\bk}^{(\tilde A_1,[2],\ubk)}=\{(G_2(a_1),\varphi_{21})\}$.

\subsection{Type $E_6$ with $\ell=3$}
\label{ss:E6-mgsc}

In this subsection we assume that $G$ is a simply connected quasi-simple group of type $E_6$ and that $\ell=3$ (see Table~\ref{tab:E6ell3}). We will determine the series $\fN_{G,\bk}^{(2A_2,[3]^2,\ubk)}$ and the bijection~\eqref{eqn:bijection} for this series.

By Lemma~\ref{lem:splitting}\eqref{it:indep-of-k},
we can assume that $\bk$ is part of a triple $(\K,\O,\bk)$ as in \S\ref{ss:decomposition-numbers}, and we will use the notation introduced in this subsection. We take $(L,\cO_L,\cE_L^\K)$ to be the cuspidal datum $(2A_2,[3]^2,\cE_\chi^\K)$, where the central character $\chi:Z(G)/Z(G)^\circ\to\K^\times$ is one of the two nontrivial characters. The induced character $Z(G)/Z(G)^\circ\to\bk^\times$ is trivial, and $\cE_L^\bk$ is the constant sheaf $\ubk$. (Note that we have made a change to the notation of \S\ref{ss:decomposition-numbers}, in that $\tilde\chi$ has become $\chi$ and $\chi$ has become $1$.) The assumption~\eqref{eqn:assumption-dec-numbers} holds in this case, see Remark~\ref{rmk:modular-reduction-of-cuspidal}.

In Table~\ref{tab:dec-matric-E6ell3} we display the decomposition matrix for the group $N_G(L)/L\cong W(G_2)$, in the same style as Table~\ref{tab:dec-matric-E6ell2chinot1}. We let $\phi_1,\phi_2,\phi_3,\phi_4$ denote the $3$-modular irreducible representations (which are all one-dimensional); their order is clearly determined by the decomposition matrix. We will determine the corresponding pairs $(\cO_i,\cE_i):=\Psi_\bk^{(2A_2,[3]^2,\ubk)}(\phi_i)\in\fN_{G,\bk}$ using
Proposition~\ref{prop:levi-rule} and Proposition~\ref{prop:equality-decomp-number}.
See~\cite[\S13.4]{carter} for the closure order on nilpotent orbits, and~\cite[\S8.4]{cm} for the groups $A_G(x)$.

\begin{table}
\[
\begin{array}{|c|c|cccc|}
\hline
E & (\cO,\cE^\K)&\phi_1&\phi_2&\phi_3&\phi_4 \\
\hline\hline
\chi_{1,0}   & (2A_2, \chi)       &1&.&.&.\\
\chi_{1,3}'' & (2A_2{+}A_1, \chi)  &.&1&.&.\\
\chi_{2,1}  & (A_5, \chi)         &.&1&1&.\\
\chi_{2,2}  & (E_6(a_3), 1\times\chi) &1&.&.&1\\
\chi_{1,3}' & (E_6(a_1), \chi)    &.&.&1&.\\
\chi_{1,6}  & (E_6, \chi)         &.&.&.&1\\
\hline
\end{array}
\]
\caption{Decomposition matrix for the $2A_2$ series in $E_6$ when $\ell=3$}
\label{tab:dec-matric-E6ell3}
\end{table}

By Proposition~\ref{prop:levi-rule}, each orbit $\cO_i$ must have a Bala--Carter Levi subgroup that contains a Levi subgroup of type $2A_2$. Thus, the only possibilities for the pairs $(\cO_i,\cE_i)$, after ruling out the known cuspidal pairs $(E_6,\ubk)$ and $(E_6(a_3),\varepsilon)$, are:
\[
(2A_2,\ubk),\ (2A_2+A_1,\ubk),\ (A_5,\ubk),\ (E_6(a_3),\ubk),\ (E_6(a_1),\ubk).
\]
From Proposition~\ref{prop:equality-decomp-number} we know that $d^{\cN_G}_{\IC(2A_2,\chi),\IC(\cO_1,\cE_1)}=1$, so $\cO_1\subset\overline{2A_2}$. Similarly $\cO_2\subset\overline{2A_2+A_1}$, $\cO_3\subset\overline{A_5}$ and $\cO_4\subset\overline{E_6(a_3)}$. Considering these facts in succession, we deduce that
\[
(\cO_1,\cE_1)=(2A_2,\ubk),\ (\cO_2,\cE_2)=(2A_2+A_1,\ubk),\ (\cO_3,\cE_3)=(A_5,\ubk),\ (\cO_4,\cE_4)=(E_6(a_3),\ubk).
\]

As mentioned in \S\ref{ss:determining-cuspidal}, the occurrence of $(E_6(a_3),\ubk)$ here means that the third cuspidal pair must be $(E_6(a_1),\ubk)$. Since the principal series in this case was determined in~\cite[\S9.3.2]{juteau}, the elements of the remaining induction series $\fN_{G,\bk}^{(A_2,[3],\ubk)}$ must be
\[
(D_4(a_1),\varepsilon),\ (D_4,\ubk),\ (A_4+A_1,\ubk),\ (D_5(a_1),\ubk),\ (D_5,\ubk).
\]
We were not able to determine the bijection~\eqref{eqn:bijection} for the latter series.

\subsection{Type $E_7$ with $\ell=2$}
\label{ss:E7-mgsc}

Similarly to the previous subsection, for $G$ simply connected quasi-simple of type $E_7$ and $\ell = 2$, 
one can determine the series $\fN_{G,\bk}^{((3A_1)'',[2]^3,\ubk)}$ using Proposition~\ref{prop:equality-decomp-number}
and the known characteristic zero series $\fN_{G,\K}^{((3A_1)'',[2]^3,\cE^\K_\chi)}$, where $\chi$ is the nontrivial central character and $\cE^\K_\chi$ is the corresponding cuspidal local system. The corresponding modular pairs are
\[
((3A_1)'', \bk),\ (4A_1, \bk),\ ((A_3{+}A_1)'',\bk),\ (A_3{+}2A_1,\bk),
\]
as can be seen from Table \ref{tab:dec-matric-E7ell2} showing the first lines of the $2$-modular decomposition matrix of $W(F_4)$, using a similar reasoning.

\begin{table}[hb]
\[
\begin{array}{|c|c|cccc|}
\hline
E & (\cO,\cE^\K)&\phi_1&\phi_2&\phi_3&\phi_4 \\
\hline\hline
\chi_{1,0}&((3A_1)'',\chi)&1&.&.&.\\
\chi_{2,4}''&(4A_1,\chi)&.&1&.&.\\
\chi_{1,12}''&(A_2{+}3A_1,\chi)&1&.&.&.\\
\chi_{4,1}&((A_3{+}A_1)'',\chi)&.&1&1&.\\
\chi_{8,3}''&(A_3{+}2A_1,\chi)&2&1&.&1\\
%\chi_{2,4}'&(D_4(a_1){+}A_1,2,\chi)&.&.&1&.\\
%\chi_{9,2}&(D_4(a_1){+}A_1,\varepsilon\times\chi)&1&1&1&1\\
%\chi_{4,7}''&(A_3{+}A_2{+}A_1,\chi)&.&1&1&.\\
%\chi_{9,6}''&(D_4{+}A_1,\chi)&1&1&1&1\\
%\chi_{8,3}'&(A_5'',\chi)&2&.&1&1\\
%\chi_{6,6}''&(A_5{+}A_1,\chi)&2&.&.&1\\
%\chi_{4,8}&(D_5(a_1){+}A_1,\chi)&.&.&.&1\\
%\chi_{16,5}&(D_6(a_2),\chi)&.&2&2&2\\
%\chi_{12,4}&(E_7(a_5),3,\chi)&.&2&2&1\\
%\chi_{6,6}'&(E_7(a_5),21,\chi)&2&.&.&1\\
%\chi_{2,16}''&(D_5{+}A_1,\chi)&.&.&1&.\\
%\chi_{9,6}'&(D_6(a_1),\chi)&1&1&1&1\\
%\chi_{8,9}''&(E_7(a_4),1\times\chi)&2&.&1&1\\
%\chi_{4,7}'&(E_7(a_4),\varepsilon\times\chi)&.&1&1&.\\
%\chi_{9,10}&(D_6,\chi)&1&1&1&1\\
%\chi_{1,12}'&(E_7(a_3),1\times\chi)&1&.&.&.\\
%\chi_{8,9}'&(E_7(a_3),\varepsilon\times\chi)&2&1&.&1\\
%\chi_{4,13}&(E_7(a_2),\chi)&.&1&1&.\\
%\chi_{2,16}'&(E_7(a_1),\chi)&.&1&.&.\\
%\chi_{1,24}&(E_7,\chi)&1&.&.&.\\
\hline
\end{array}
\]
\caption{Part of the decomposition matrix for the $(3A_1)''$ series in $E_7$ when $\ell=2$}
\label{tab:dec-matric-E7ell2}
\end{table}

\vfill
\pagebreak
\clearpage
\appendix

%%%%%%%%%%%%%%%%%%%%%%%%%%%%%%%%%%%%%%%%%%%%%%%%%%%%%%%%%%%%%%%%%%%%%%%
\section{Tables of cuspidal data and sizes of induction series}
\label{sec:tables}
%%%%%%%%%%%%%%%%%%%%%%%%%%%%%%%%%%%%%%%%%%%%%%%%%%%%%%%%%%%%%%%%%%%%%%%

This appendix collects tables of cuspidal data and cuspidal pairs for all simply-connected quasi-simple exceptional groups $G$ (with some indeterminacies), displaying the size of the corresponding induction series in each case. See \S\ref{ss:strategy} for the explanation of how to read these tables and \S\S\ref{ss:computing}--\ref{ss:determining-cuspidal} for the explanation of how they were calculated.

%-------------------------------------------------------------
\subsection{Type $G_2$}
\label{ss:G2}
%%-------------------------------------------------------------

The distinguished nilpotent orbits and the corresponding groups $A_G(x)$ are:
\[
\begin{array}{|c||c|c|c|c|c|c|c|c|c|c|c|}
\hline
\cO & 
G_2  & 
G_2(a_1) 
\\
\hline
A_G(x) &
1&
\fS_3 
\\
\hline
\end{array}
\]
\noindent
The tables for $G_2$ are as follows.

\begin{table}[ht]
\[
\begin{array}{|c|c|c|}
\hline
(L, \cO_L, \cE_L)\in\fM_{G,\bk} & N_G(L)/L & |\fN_{G,\bk}^{(L,\cO_L,\cE_L)}| \\
\hline\hline
(T, \{0\}, \ubk) & W(G_2) & 2\\
\hline
(A_1, [2], \ubk) & \fS_2 & 1\\
\hline
(\tilde A_1, [2], \ubk) & \fS_2 & 1\\
\hline\hline
\text{2 cuspidal pairs: } & 1& 2 \times 1\\
(G_2,\ubk), (G_2(a_1),\ubk) &&\\
\hline\hline
&& |\fN_{G,\bk}|=6 \\
\hline
\end{array}
\]
\caption{Induction series for $G_2$ when $\ell=2$}\label{tab:G2ell2}
\end{table}

\begin{table}[ht]
\[
\begin{array}{|c|c|c|}
\hline
(L, \cO_L, \cE_L)\in\fM_{G,\bk} & N_G(L)/L & |\fN_{G,\bk}^{(L,\cO_L,\cE_L)}| \\
\hline\hline
(T, \{0\}, \ubk) & W(G_2) & 4\\
\hline\hline
\text{2 cuspidal pairs: } & 1& 2 \times 1\\
(G_2,\ubk), (G_2(a_1),\varepsilon)  &&\\
\hline\hline
&& |\fN_{G,\bk}|=6 \\
\hline
\end{array}
\]
\caption{Induction series for $G_2$ when $\ell=3$}\label{tab:G2ell3}
\end{table}

\begin{table}[ht]
\[
\begin{array}{|c|c|c|}
\hline
(L, \cO_L, \cE_L)\in\fM_{G,\bk} & N_G(L)/L & |\fN_{G,\bk}^{(L,\cO_L,\cE_L)}| \\
\hline\hline
(T, \{0\}, \ubk) & W(G_2) & 6\\
\hline \hline
\text{1 cuspidal pair: } (G_2(a_1),\varepsilon) & 1& 1\\
\hline\hline
&& |\fN_{G,\bk}|=7 \\
\hline
\end{array}
\]
\caption{Induction series for $G_2$ when $\ell>3$}\label{tab:G2ell>3}
\end{table}

\clearpage

%-------------------------------------------------------------
\subsection{Type $F_4$}
\label{ss:F4}
%-------------------------------------------------------------

The distinguished nilpotent orbits and the corresponding groups $A_G(x)$ are:
\[
\begin{array}{|c||c|c|c|c|}
\hline
\cO & 
F_4  & 
F_4(a_1) & 
F_4(a_2) & 
F_4(a_3)
\\
\hline
A_G(x) &
1&
\fS_2 &
\fS_2 &
\fS_4
\\
\hline
\end{array}
\]
\noindent
The tables for $F_4$ are as follows.

\begin{table}[ht]
\[
\begin{array}{|c|c|c|}
\hline
(L, \cO_L, \cE_L)\in\fM_{G,\bk} & N_G(L)/L & |\fN_{G,\bk}^{(L,\cO_L,\cE_L)}|\\
\hline\hline
(T, \{0\}, \ubk) & W(F_4) & 4\\
\hline
(A_1, [2], \ubk) & W(B_3) & 2\\
\hline
(\tilde A_1, [2], \ubk) & W(B_3) & 2\\
\hline
(A_1 + \tilde A_1, [2] \times [2], \ubk) & \fS_2 \times \fS_2 & 1\\
\hline
(B_2, [5], \ubk) & W(B_2) & 1\\
\hline
(B_3, [7], \ubk) & \fS_2 & 1\\
\hline
(C_3, [6], \ubk) & \fS_2 & 1\\
(C_3, [4,2], \ubk) &  & 1\\
\hline \hline
\text{4 cuspidal pairs, incl.: } & 1& 4\times 1\\
(F_4,\ubk), (F_4(a_3),\ubk) && \\
\hline\hline
&& |\fN_{G,\bk}|=17 \\
\hline
\end{array}
\]
\caption{Induction series for $F_4$ when $\ell=2$}\label{tab:F4ell2}
\end{table}

\begin{table}[ht]
\[
\begin{array}{|c|c|c|}
\hline
(L, \cO_L, \cE_L)\in\fM_{G,\bk} & N_G(L)/L & |\fN_{G,\bk}^{(L,\cO_L,\cE_L)}|\\
\hline\hline
(T, \{0\}, \ubk) & W(F_4) & 14\\
\hline
(A_2, [3], \ubk) & W(G_2) & 4\\
\hline
(\tilde A_2, [3], \ubk) & W(G_2) & 4\\
\hline \hline
\text{3 cuspidal pairs, incl.: } & 1& 3\times 1\\
(F_4,\ubk), (F_4(a_3),\varepsilon) && \\
\hline\hline
&& |\fN_{G,\bk}|=25 \\
\hline
\end{array}
\]
\caption{Induction series for $F_4$ when $\ell=3$}\label{tab:F4ell3}
\end{table}

\begin{table}[ht]
\[
\begin{array}{|c|c|c|}
\hline
(L, \cO_L, \cE_L)\in\fM_{G,\bk} & N_G(L)/L & |\fN_{G,\bk}^{(L,\cO_L,\cE_L)}|\\
\hline\hline
(T, \{0\}, \ubk) & W(F_4) & 25\\
\hline \hline
\text{1 cuspidal pair: } (F_4(a_3),\varepsilon) & 1& 1\\
\hline\hline
&& |\fN_{G,\bk}|=26 \\
\hline
\end{array}
\]
\caption{Induction series for $F_4$ when $\ell>3$}\label{tab:F4ell>3}
\end{table}

\clearpage

%-------------------------------------------------------------
\subsection{Type $E_6$}
\label{ss:E6}
%-------------------------------------------------------------

The distinguished nilpotent orbits and the corresponding groups $A_G(x)$ are:
\[
\begin{array}{|c||c|c|c|c|c|c|c|c|c|c|c|}
\hline
\cO & 
E_6  & 
E_6(a_1) & 
E_6(a_3) \\
\hline
A_G(x) &
Z(G)&
Z(G)&
\fS_2\times Z(G) \\
\hline
\end{array}
\]
\noindent
The tables for $E_6$ are as follows.

\begin{table}[ht]
\[
\begin{array}{|c|c|c|}
\hline
(L, \cO_L, \cE_L)\in\fM_{G,\bk}^1 & N_G(L)/L & |\fN_{G,\bk}^{(L,\cO_L,\cE_L)}|\\
\hline\hline
(T, \{0\}, \ubk) & W(E_6) & 6\\
\hline
(A_1, [2], \ubk) & \fS_6 & 4\\
\hline
(2A_1, [2]^2, \ubk) & W(B_3) & 2\\
\hline
(3A_1, [2]^3, \ubk) & \fS_2 \times \fS_3 & 2\\
\hline
(A_3, [4], \ubk) & W(B_2) & 1\\
\hline
(A_3 + A_1, [4] \times [2], \ubk) & \fS_2 & 1\\
\hline
(D_4, [7,1], \ubk) & \fS_3 & 2\\
(D_4, [5,3], \ubk) &  & 2\\
\hline
(D_5, [9,1], \ubk) & 1 & 1\\
(D_5, [7,3], \ubk) & & 1\\
\hline\hline
\text{No cuspidal pairs } & & \\
\hline\hline
&& |\fN_{G,\bk}^1|=22 \\
\hline
\end{array}
\]
\caption{Induction series for $E_6$ when $\ell=2$ and $\chi=1$}\label{tab:E6ell2chi1}
\end{table}

\begin{table}[ht]
\[
\begin{array}{|c|c|c|}
\hline
(L, \cO_L, \cE_L)\in\fM_{G,\bk}^\chi & N_G(L)/L & |\fN_{G,\bk}^{(L,\cO_L,\cE_L)}| \\
\hline\hline
(2 A_2, [3]^2, \cE_\chi) & W(G_2) & 2\\
\hline
(2 A_2 + A_1, [3]^2\times[2], \cE_\chi\boxtimes\ubk) & \fS_2 & 1\\
\hline
(A_5, [6], \cE_\chi) & \fS_2 & 1\\
\hline\hline
\text{2 cuspidal pairs, incl.\ } (E_6(a_3),1\times\chi) & 1& 2\times 1\\
\hline\hline
&& |\fN_{G,\bk}^\chi|=6 \\
\hline
\end{array}
\]
\caption{Induction series for $E_6$ when $\ell=2$ and $\chi\neq1$}\label{tab:E6ell2chinot1}
\end{table}

\begin{table}[ht]
\[
\begin{array}{|c|c|c|c|}
\hline
(L, \cO_L, \cE_L)\in\fM_{G,\bk} & N_G(L)/L & |\fN_{G,\bk}^{(L,\cO_L,\cE_L)}|\\
\hline\hline
(T, \{0\}, \ubk) & W(E_6) & 12\\
\hline
(A_2, [3], \ubk) & \fS_3^2 \rtimes \fS_2 & 5\\
\hline
(2 A_2, [3]^2, \ubk) & W(G_2) & 4\\
\hline\hline
\text{3 cuspidal pairs: } & 1 & 3\times 1\\
(E_6,\ubk), (E_6(a_1),\ubk), (E_6(a_3),\varepsilon) &&\\
\hline\hline
&& |\fN_{G,\bk}|=24 \\
\hline
\end{array}
\]
\caption{Induction series for $E_6$ when $\ell=3$}\label{tab:E6ell3}
\end{table}

\begin{table}[ht]
\[
\begin{array}{|c|c|c|}
\hline
(L, \cO_L, \cE_L)\in\fM_{G,\bk}^1 & N_G(L)/L & |\fN_{G,\bk}^{(L,\cO_L,\cE_L)}|\\
\hline\hline
(T, \{0\}, \ubk) & W(E_6) & 23\\
\hline
(A_4, [5], \ubk) & \fS_2 & 2\\
\hline\hline
\text{No cuspidal pairs } & & \\
\hline\hline
&& |\fN_{G,\bk}^1|=25 \\
\hline
\end{array}
\]
\caption{Induction series for $E_6$ when $\ell=5$ and $\chi=1$}\label{tab:E6ell5chi1}
\end{table}

\begin{table}[ht]
\[
\begin{array}{|c|c|c|}
\hline
(L, \cO_L, \cE_L)\in\fM_{G,\bk}^\chi & N_G(L)/L & |\fN_{G,\bk}^{(L,\cO_L,\cE_L)}| \\
\hline\hline
(2 A_2, [3]^2, \cE_\chi) & W(G_2) & 6\\
\hline\hline
\text{1 cuspidal pair: } (E_6(a_3), \varepsilon \times \chi) & 1 & 1 \\
\hline\hline
&& |\fN_{G,\bk}^\chi|=7 \\
\hline
\end{array}
\]
\caption{Induction series for $E_6$ when $\ell>3$ and $\chi\neq1$}\label{tab:E6ell>3chinot1}
\end{table}

\begin{table}[ht]
\[
\begin{array}{|c|c|c|}
\hline
(L, \cO_L, \cE_L)\in\fM_{G,\bk}^1 & N_G(L)/L & |\fN_{G,\bk}^{(L,\cO_L,\cE_L)}|\\
\hline\hline
(T, \{0\}, \ubk) & W(E_6) & 25\\
\hline\hline
\text{No cuspidal pairs } & & \\
\hline\hline
&& |\fN_{G,\bk}^1|=25 \\
\hline
\end{array}
\]
\caption{Induction series for $E_6$ when $\ell>5$ and $\chi=1$}\label{tab:E6ell>5chi1}
\end{table}

\clearpage

%-------------------------------------------------------------
\subsection{Type $E_7$}
\label{ss:E7}
%-------------------------------------------------------------

The distinguished nilpotent orbits and the corresponding groups $A_G(x)$ are:
\[
\begin{array}{|c||c|c|c|c|c|c|c|c|c|c|c|}
\hline
\cO & 
E_7  & 
E_7(a_1) & 
E_7(a_2) & 
E_7(a_3) & 
E_7(a_4) & 
E_7(a_5)\\
\hline
A_G(x) &
Z(G)&
Z(G)&
Z(G)&
\fS_2\times Z(G) &
\fS_2\times Z(G) &
\fS_3\times Z(G) \\
\hline
\end{array}
\]
\noindent
The tables for $E_7$ are as follows.

\begin{table}[ht]
\[
\begin{array}{|c|c|c|}
\hline
(L, \cO_L, \cE_L)\in\fM_{G,\bk} & N_G(L)/L & |\fN_{G,\bk}^{(L,\cO_L,\cE_L)}| \\
\hline\hline
(T,\{0\}, \ubk) & W(E_7) & 8\\
\hline
(A_1, [2], \ubk) & W(D_6) & 4\\
\hline
(2A_1, [2]^2, \ubk) & W(B_4) \times \fS_2 & 2\\
\hline
((3A_1)', [2]^3, \ubk) & W(C_3) \times \fS_2 & 2\\
\hline
((3A_1)'', [2]^3, \ubk) & W(F_4) & 4\\
\hline
(A_3, [4], \ubk) & W(B_3) \times \fS_2 & 2\\
\hline
(4A_1, [2]^4, \ubk) & W(B_3) & 2\\
\hline
((A_3+A_1)', [4]\times[2], \ubk) & \fS_2\times\fS_2\times\fS_2 & 1\\
\hline
((A_3+A_1)'', [4]\times[2], \ubk) & W(B_3) & 2\\
\hline
(D_4, [7,1], \ubk) & W(C_3) & 2\\
(D_4, [5,3], \ubk) & & 2\\
\hline
(A_3 + 2A_1, [4]\times[2]^2, \ubk) & \fS_2\times\fS_2 & 1\\
\hline
(D_4 + A_1, [7,1] \times [2], \ubk) & W(B_2) & 1\\
(D_4 + A_1, [5,3] \times [2], \ubk) & & 1\\
\hline
(D_5, [9,1], \ubk) & \fS_2\times\fS_2 & 1\\
(D_5, [7,3], \ubk) & & 1\\
\hline
(D_5 + A_1, [9,1]\times[2], \ubk) & \fS_2 & 1\\
(D_5 + A_1, [7,3]\times[2], \ubk) & & 1\\
\hline
(D_6, [11,1], \ubk) & \fS_2 & 1\\
(D_6, [9,3], \ubk) & & 1\\
(D_6, [7,5], \ubk) & & 1\\
\hline\hline
\text{6 cuspidal pairs, incl.: } & 1 & 6\times 1\\
(E_7,\ubk), (E_7(a_5),\ubk) &&\\
\hline\hline
&& |\fN_{G,\bk}|=47 \\
\hline
\end{array}
\]
\caption{Induction series for $E_7$ when $\ell=2$}\label{tab:E7ell2}
\end{table}

\begin{table}[ht]
\[
\begin{array}{|c|c|c|}
\hline
(L, \cO_L, \cE_L)\in\fM_{G,\bk}^1 & N_G(L)/L & |\fN_{G,\bk}^{(L,\cO_L,\cE_L)}| \\
\hline\hline
(T, \{0\}, \ubk) & W(E_7) & 30\\
\hline
(A_2, [3], \ubk) & \fS_6 \times \fS_2 & 14\\
\hline
(2A_2, [3]^2, \ubk) & W(G_2) \times \fS_2 & 8\\
\hline
(E_6, E_6, \ubk) & \fS_2 & 2\\
(E_6, E_6(a_1), \ubk) & & 2\\
(E_6, E_6(a_3), \varepsilon) & & 2\\
\hline\hline
\text{No cuspidal pairs } & & \\
\hline\hline
&& |\fN_{G,\bk}^1|=58 \\
\hline
\end{array}
\]
\caption{Induction series for $E_7$ when $\ell=3$ and $\chi=1$}\label{tab:E7ell3chi1}
\end{table}

\begin{table}[ht]
\[
\begin{array}{|c|c|c|}
\hline
(L, \cO_L, \cE_L)\in\fM_{G,\bk}^\chi & N_G(L)/L & |\fN_{G,\bk}^{(L,\cO_L,\cE_L)}| \\
\hline\hline
((3A_1)'', [2]^3, \cE_\chi) & W(F_4) & 14\\
\hline
(A_2+3A_1, [3]\times[2]^2, \ubk\boxtimes\cE_\chi) & W(G_2) & 4\\
\hline
((A_5)'', [6], \cE_\chi) & W(G_2) & 4\\ 
\hline\hline
\text{3 cuspidal pairs, incl.\ } (E_7(a_5),\varepsilon\times\chi) & 1& 3\times 1\\
\hline\hline
&& |\fN_{G,\bk}^\chi|=25 \\
\hline
\end{array}
\]
\caption{Induction series for $E_7$ when $\ell=3$ and $\chi\neq1$}\label{tab:E7ell3chinot1}
\end{table}

\begin{table}[ht]
\[
\begin{array}{|c|c|c|}
\hline
(L, \cO_L, \cE_L)\in\fM_{G,\bk}^1 & N_G(L)/L & |\fN_{G,\bk}^{(L,\cO_L,\cE_L)}| \\
\hline\hline
(T, \{0\}, \ubk) & W(E_7) & 54\\
\hline
(A_4, [5], \ubk) & \fS_3 \times \fS_2 & 6\\
\hline\hline
\text{No cuspidal pairs } & & \\
\hline\hline
&& |\fN_{G,\bk}^1|=60 \\
\hline
\end{array}
\]
\caption{Induction series for $E_7$ when $\ell=5$ and $\chi=1$}\label{tab:E7ell5chi1}
\end{table}

\begin{table}[ht]
\[
\begin{array}{|c|c|c|}
\hline
(L, \cO_L, \cE_L)\in\fM_{G,\bk}^\chi & N_G(L)/L & |\fN_{G,\bk}^{(L,\cO_L,\cE_L)}| \\
\hline\hline
((3A_1)'', [2]^3, \cE_\chi) & W(F_4) & 25\\
\hline\hline
\text{1 cuspidal pair: } (E_7(a_5), \varepsilon \times \chi) & 1 & 1 \\
\hline\hline
&& |\fN_{G,\bk}^\chi|=26 \\
\hline
\end{array}
\]
\caption{Induction series for $E_7$ when $\ell>3$ and $\chi\neq1$}\label{tab:E7ell>3chinot1}
\end{table}

\begin{table}[ht]
\[
\begin{array}{|c|c|c|}
\hline
(L, \cO_L, \cE_L)\in\fM_{G,\bk}^1 & N_G(L)/L & |\fN_{G,\bk}^{(L,\cO_L,\cE_L)}| \\
\hline\hline
(T, \{0\}, \ubk) & W(E_7) & 58\\
\hline
(A_6, [7], \ubk) & \fS_2 & 2\\
\hline\hline
\text{No cuspidal pairs } & & \\
\hline\hline
&& |\fN_{G,\bk}^1|=60 \\
\hline
\end{array}
\]
\caption{Induction series for $E_7$ when $\ell=7$ and $\chi=1$}\label{tab:E7ell7chi1}
\end{table}

\begin{table}[ht]
\[
\begin{array}{|c|c|c|}
\hline
(L, \cO_L, \cE_L)\in\fM_{G,\bk}^1 & N_G(L)/L & |\fN_{G,\bk}^{(L,\cO_L,\cE_L)}| \\
\hline\hline
(T, \{0\}, \ubk) & W(E_7) & 60\\
\hline\hline
\text{No cuspidal pairs } & & \\
\hline\hline
&& |\fN_{G,\bk}^1|=60 \\
\hline
\end{array}
\]
\caption{Induction series for $E_7$ when $\ell>7$ and $\chi=1$}\label{tab:E7ell>7chi1}
\end{table}

\clearpage

%-------------------------------------------------------------
\subsection{Type $E_8$}
\label{ss:E8}
%-------------------------------------------------------------

The distinguished nilpotent orbits and the corresponding groups $A_G(x)$ are:
\[
\begin{array}{|c||c|c|c|c|c|c|c|c|c|c|c|}
\hline
\cO & 
E_8  & 
E_8(a_1) & 
E_8(a_2) & 
E_8(a_3) & 
E_8(a_4) & 
E_8(b_4) & 
E_8(a_5) & 
E_8(b_5) & 
E_8(a_6) & 
E_8(b_6) & 
E_8(a_7) 
\\
\hline
A_G(x) &
1&
1&
1&
\fS_2 &
\fS_2 &
\fS_2 &
\fS_2 &
\fS_3 &
\fS_3 &
\fS_3 &
\fS_5
\\
\hline
\end{array}
\]
\noindent
The tables for $E_8$ are as follows.

\begin{table}[ht]
\[
\begin{array}{|c|c|c|}
\hline
(L, \cO_L, \cE_L)\in\fM_{G,\bk} & N_G(L)/L & |\fN_{G,\bk}^{(L,\cO_L,\cE_L)}|\\
\hline\hline
(T, \{0\}, \ubk) & W(E_8) & 12\\
\hline
(A_1, [2], \ubk) & W(E_7) & 8\\
\hline
(2A_1, [2], \ubk) & W(B_6) & 4\\
\hline
(3A_1, [2]^3, \ubk) & W(F_4) \times \fS_2 & 4\\
\hline
(A_3, [4], \ubk) & W(B_5) & 3\\
\hline
(4A_1, [2]^4, \ubk) & W(B_4) & 2\\
\hline
(A_3 + A_1, [4] \times [2], \ubk) & W(B_3) \times \fS_2 & 2\\
\hline
(D_4, [7,1], \ubk) & W(F_4) & 4\\
(D_4, [5,3], \ubk) & & 4\\
\hline
(A_3 + 2A_1, [4] \times [2]^2, \ubk) & W(B_2) \times \fS_2 & 1\\
\hline
(D_4 + A_1, [7,1] \times [2], \ubk) & W(B_3) & 2\\
(D_4 + A_1, [5,3] \times [2], \ubk) & & 2\\
\hline
(D_5, [9,1], \ubk) & W(B_3) & 2\\
(D_5, [7,3], \ubk) & & 2\\
\hline
(2A_3, [4]^2, \ubk) & W(B_2) & 1\\
\hline
(D_5 + A_1, [9,1] \times [2], \ubk) & \fS_2\times\fS_2 & 1\\
(D_5 + A_1, [7,3] \times [2], \ubk) & & 1\\
\hline
(D_6, [11,1], \ubk) & W(B_2) & 1\\
(D_6, [9,3], \ubk) & & 1\\
(D_6, [7,5], \ubk) & & 1\\
\hline
(A_7, [8], \ubk) & \fS_2 & 1\\
\hline
(D_7, [13,1], \ubk) & \fS_2 & 1\\
(D_7, [11,3], \ubk) & & 1\\
(D_7, [9,5], \ubk) & & 1\\
\hline
(E_7,E_7,\ubk) & \fS_2 & 1\\
(E_7,E_7(a_5),\ubk) & & 1\\
(E_7, \text{4 other cuspidals}) & & 4 \times 1\\
\hline\hline
\text{10 cuspidal pairs, incl.:} & 1 & 10\times 1\\
(E_8,\ubk), (E_8(a_7),\ubk) &  & \\
\hline
\hline
&& |\fN_{G,\bk}|=78 \\
\hline
\end{array}
\]
\caption{Induction series for $E_8$ when $\ell=2$}\label{tab:E8ell2}
\end{table}

\begin{table}[ht]
\[
\begin{array}{|c|c|c|}
\hline
(L, \cO_L, \cE_L)\in\fM_{G,\bk} & N_G(L)/L & |\fN_{G,\bk}^{(L,\cO_L,\cE_L)}|\\
\hline\hline
(T, \{0\}, \ubk) & W(E_8) & 47\\
\hline
(A_2, [3], \ubk) & W(E_6) \times \fS_2 & 24\\
\hline
(2 A_2, [3]^2, \ubk) & W(G_2)^2 \rtimes \fS_2 & 14\\
\hline
(E_6, E_6, \ubk) & W(G_2) & 4\\
(E_6, E_6(a_1), \ubk) & & 4\\
(E_6, E_6(a_3),\varepsilon) & & 4\\
\hline\hline
\text{8 cuspidal pairs, incl.:} & 1 & 8\times 1\\
(E_8,\ubk), (E_8(a_7),\varepsilon) &  & \\
\hline\hline
&&|\fN_{G,\bk}|=105\\
\hline
\end{array}
\]
\caption{Induction series for $E_8$ when $\ell=3$}\label{tab:E8ell3}
\end{table}

\begin{table}[ht]
\[
\begin{array}{|c|c|c|}
\hline
(L, \cO_L, \cE_L)\in\fM_{G,\bk} & N_G(L)/L & |\fN_{G,\bk}^{(L,\cO_L,\cE_L)}|\\
\hline\hline
(T, \{0\}, \ubk) & W(E_8) & 95\\
\hline
(A_4, [5], \ubk) & \fS_5\times\fS_2 & 12\\
\hline\hline
\text{5 cuspidal pairs, incl.:} & 1 & 5\times 1\\
(E_8,\ubk), (E_8(a_7),\varepsilon) &  & \\
\hline\hline
&&|\fN_{G,\bk}|=112\\
\hline
\end{array}
\]
\caption{Induction series for $E_8$ when $\ell=5$}\label{tab:E8ell5}
\end{table}

\begin{table}[ht]
\[
\begin{array}{|c|c|c|}
\hline
(L, \cO_L, \cE_L)\in\fM_{G,\bk} & N_G(L)/L & |\fN_{G,\bk}^{(L,\cO_L,\cE_L)}|\\
\hline\hline
(T, \{0\}, \ubk) & W(E_8) & 108\\
\hline
(A_6, [7], \ubk) & \fS_2\times\fS_2 & 4\\
\hline\hline
\text{1 cuspidal pair: } (E_8(a_7), \varepsilon) & 1 & 1\\
\hline\hline
&&|\fN_{G,\bk}|=113\\
\hline
\end{array}
\]
\caption{Induction series for $E_8$ when $\ell=7$}\label{tab:E8ell7}
\end{table}

\begin{table}[ht]
\[
\begin{array}{|c|c|c|}
\hline
(L, \cO_L, \cE_L)\in\fM_{G,\bk} & N_G(L)/L & |\fN_{G,\bk}^{(L,\cO_L,\cE_L)}|\\
\hline\hline
(T, \{0\}, \ubk) & W(E_8) & 112\\
\hline\hline
\text{1 cuspidal pair: } (E_8(a_7), \varepsilon) & 1 & 1\\
\hline\hline
&&|\fN_{G,\bk}|=113\\
\hline
\end{array}
\]
\caption{Induction series for $E_8$ when $\ell>7$}\label{tab:E8ell>7}
\end{table}

\clearpage

%%%%%%%%%%%%%%%%%%%%%%%%%%%%%%%%%%%%%%%%%%%%%%%%%%%%%%%%%%%%%%%%%%%%%%%%%%%

\end{document}